 \newcommand\thmsname{Theorem}
 \newcommand\nm@thmtype{thm}
 \theoremstyle{plain}
 \newenvironment{namedthm}[1]{
   \renewcommand\thmsname{#1}\renewcommand\nm@thmtype{namedtheorem}
   \begin{\nm@thmtype}
}
   {\end{\nm@thmtype}
}
\theoremstyle{plain}
\numberwithin{equation}{section}
\newtheorem{thm}{Theorem}[section]
\newtheorem*{thm*}{Theorem}
\newtheorem{cor}[thm]{Corollary}
\newtheorem*{cor*}{Corollary}
\newtheorem{lem}[thm]{Lemma}
\newtheorem*{lem*}{Lemma}
\newtheorem{prop}[thm]{Proposition}
\newtheorem*{prop*}{Proposition}
\newtheorem*{conjecture*}{Conjecture}
\newtheorem*{fact*}{Fact}
\newtheorem*{criterion*}{Criterion}
\newtheorem*{algorithm*}{Algorithm}
\newtheorem*{ax*}{Axiom}
\newtheorem*{assumption*}{Assumption}
\newtheorem*{question*}{Question}
\theoremstyle{remark}
\newtheorem{rem}[thm]{Remark}
\newtheorem*{rem*}{Remark}
\newtheorem*{rems*}{Remarks}
\newtheorem*{claim*}{Claim}
\newtheorem*{exercise*}{Exercise}
\newtheorem*{note*}{Note}
\newtheorem{notation}[thm]{Notation}
\newtheorem*{notation*}{Notation}
\newtheorem*{summary*}{Summary}
\newtheorem*{acknowledgement*}{Acknowledgement}
\newtheorem*{conclusion*}{Conclusion}
\theoremstyle{definition}
\newtheorem{defn}[thm]{Definition}
\newtheorem*{defn*}{Definition}
\newtheorem{example}[thm]{Example}
\newtheorem*{example*}{Example}
\newtheorem*{examples*}{Examples}
\newtheorem*{problem*}{Problem}
\newtheorem*{xca*}{Exercise}
\newtheorem*{xcas*}{Exercises}
\newtheorem*{condition*}{Condition}
\def\sideremark#1{\ifvmode\leavevmode\fi\vadjust{\vbox to0pt{\vss 
      \hbox to 0pt{\hskip\hsize\hskip1em           
 \vbox{\hsize2cm\tiny\raggedright\pretolerance10000
 \noindent #1\hfill}\hss}\vbox to8pt{\vfil}\vss}}}%
 \subjclass[2010]{34C20,\ 37C10,\ 39B12,\ 46A19,\ 28A75,\ 58K50,\ 26A12}
\keywords{Iteration theory, Dulac map, normal forms, embedding in a flow, transseries}
\begin{document}

\title[normal forms and embeddings for power-log transseries]{NORMAL FORMS AND EMBEDDINGS \\
FOR POWER-LOG TRANSSERIES}

\author{P. Marde\v si\'c$^{1}$, M. Resman$^{2}$, J.-P. Rolin$^{3}$,
V. \v Zupanovi\'c$^{4}$}
\begin{abstract}
\emph{Dulac series} are asymptotic expansions of first return maps
in a neighborhood of a hyperbolic polycycle. In this article, we consider
two algebras of power-log transseries (generalized series) which extend
the algebra of Dulac series. We give a formal normal form and prove
a formal embedding theorem for transseries in these algebras. \end{abstract}
\maketitle
\begin{acknowledgement*}
This research was supported by: Croatian Science Foundation (HRZZ)
project no. 2285, French ANR project STAAVF, French-Croatian bilateral
Cogito project \emph{Classification de points fixes et de singularit\'es
}\`a\emph{ l'aide d'epsilon-voisinages d'orbites et de courbes},
and University of Zagreb research support for 2015 and 2016
\end{acknowledgement*}
`\tableofcontents{}

\section{Introduction and main results}

\subsection{Description of the problem and motivation\label{sub:description-problem}}

In the study of discrete dynamical systems, two problems are particularly
important: the search of a \emph{normal form} and the \emph{embedding}
problem. The search of a normal form means a definite process of choosing
a representative of the class of conjugacy of the original system
under topological, smooth or holomorphic conjugacies. This representative
should be simpler than the original one. The embedding problem consists
in finding a vector field such that the original system is the time-one
map of its flow. These two problems are obviously connected: it should
be easier to embed a normal form in a flow than the original system
itself.

The study of holomorphic or real analytic systems at the origin of
the ambient space naturally leads to the problems of \emph{formal
normal form} and \emph{formal embedding}. These questions are discussed
in detail, e.g., in \cite[Chapter I, Sections 3 and 4]{ilya}. The
case of dimension $1$ is well understood. Consider, for example,
a \emph{parabolic} formal series 
\[
f\left(z\right)=z+a_{1}z^{p+1}+z^{p+1}\varepsilon\left(z\right),
\]
where $a_{1}\in\mathbb{C}^{*}$, $\varepsilon\left(z\right)\in\mathbb{C}\left[\left[z\right]\right]$
and $\varepsilon\left(0\right)=0$. It is well known that the formal
conjugacy class of $f$ has a polynomial representative $f_{0}\left(z\right)=z+z^{p+1}+az^{2p+1}$,
where the \emph{residual index} $a\in\mathbb{C}$ is a formal invariant
(see \cite[Prop. 1.3.1]{loray}, \cite[Proposition 3.10]{abate} or
\cite{milnor}). The polynomial $f_{0}$ is classically called the
\emph{formal normal form} of $f$. Moreover, $f_{0}$ embeds formally
in the flow of the vector field $X_{p,a}=\Big(z^{p+1}+\big(a-\frac{p+1}{2}\big)z^{2p+1}\Big)\frac{d}{dz}$,
or in the formally equivalent flow of the vector field $X_{p,a}=\frac{z^{p+1}}{1+\big(\frac{p+1}{2}-a\big)z^{p}}\frac{d}{dz}$.
\\

It turns out that there exist interesting maps in one real variable
which do not admit asymptotic expansion in integer powers of the variable
at the origin. Consider for example a hyperbolic monodromic polycycle
$\Gamma$ of an analytic planar vector field (see \cite[Chapter $0$]{ilyalim}
for precise definitions). It has been proved by Dulac in \cite{Dulac}
that a semitransversal to $\Gamma$ can be chosen in such a way that
the corresponding \emph{first return map} (or \emph{Poincar\'e}\emph{
map}) admits at the origin an asymptotic expansion, called a \emph{Dulac
series} (see \emph{e.g.} \cite[Chapter $0$]{ilyalim} or \cite[Chapter 3.3]{roussarie}).
It is a formal series of the form: 
\begin{equation}
D\left(x\right)=c_{0}x^{\lambda_{0}}+\sum_{i=1}^{\infty}x^{\lambda_{i}}P_{i}(\log x),\quad c_{0}>0,\label{eq:dulac}
\end{equation}
where $\left(\lambda_{i}\right)_{i\in\mathbb{N}_{0}}$ is an increasing
sequence of strictly positive real numbers tending to infinity and
each $P_{i}$, $i\in\mathbb{N},$ is a polynomial with real coefficients.
Here, it is understood that $D\left(x\right)$ is an asymptotic expansion
of $P\left(x\right)$ at $0$ if, for any $N\in\mathbb{N}$, there
exists $k_{N}\in\mathbb{N}$ such that $P\left(x\right)-\sum_{i=1}^{k_{N}}x^{\lambda_{i}}P_{i}\left(\log x\right)=o\left(x^{N}\right)$.
The Dulac germs, as well as their asymptotic expansions, form a group
for the composition (see \cite[Chapter $0$]{ilyalim}). In particular,
notice that the condition $c_{0}>0$ in \eqref{eq:dulac} guarantees
that the composition of two Dulac series is well defined and is also
a Dulac series (no iterated logarithms are generated in the composition).
It can be proved that the exponents $\lambda_{i},\ i\in\mathbb{N}$,
in \eqref{eq:dulac} for the first return maps of hyperbolic monodromic
polycycles belong to a finitely generated additive semigroup of $\mathbb{R}$.
We will say, following the terminology of \cite[Section 7]{dries},
that such series is of \emph{finite type}. By \cite[Section 7]{dries},
the \emph{collection of Dulac series of finite type} form a subgroup
of all Dulac series for the composition, and is denoted by $\mathfrak{D}$
in the present work. \\

These examples lead us to consider formal normal form and formal embedding
problems for series with real coefficients in the monomials $x^{\alpha}\left(\log x\right)^{k}$,
$\alpha>0$, $k\in\mathbb{Z}$, considered as germs of functions at
$0^{+}$. More precisely, we are looking for classes of formal series
which extend the collection $\mathfrak{D}$ and inside which both
questions can be solved. It turns out that the set $\mathfrak{D}$
itself does not fit this purpose, mainly because Dulac series contain
only \emph{polynomials} in $\log x$, see Example~\ref{insuffic}
in Section~\ref{examp} for explanation. Hence, we introduce two
classes $\mathcal{L}_{\mathfrak{D}}$ and $\mathcal{L}$ of \emph{generalized
series} (or \emph{transseries} if one follows the terminology introduced
by \'Ecalle in \cite[Chapter 4]{ecalleb}), with $\mathfrak{D}\subset\mathcal{L}_{\mathfrak{D}}\subset\mathcal{L}$,
and we prove that both of them have the required properties. Compared
to Dulac series \eqref{eq:dulac}, the elements of $\mathcal{L}_{\mathfrak{D}}$
and $\mathcal{L}$ are of \emph{transfinite nature}: they involve
not only polynomials in $\log x$, but also \emph{infinite series}
in $\log x$. We do not address here the question of \emph{summability}
of transseries on some domain, nor the meaning of \emph{transseries
asymptotic expansions} of germs in general. The question of the existence
of an analytic function on some domain with an asymptotic expansion
in the form of a given transseries is left for future research (possibly
related to \'Ecalle's \emph{accelero--summability of transseries}
\cite{ecalle-acc-operators}).\\

We denote the set of positive real numbers by $\left(0,\infty\right)$
or $\mathbb{R}_{>0}$.\\

The elements of the class $\mathcal{L}_{\mathfrak{D}}$ are the transseries
\begin{equation}
f\left(x\right)=\sum_{\alpha\in S}\sum_{k=N_{\alpha}}^{\infty}a_{\alpha,k}x^{\alpha}\left(-\frac{1}{\log x}\right)^{k},\, a_{\alpha,k}\in\mathbb{R},\, N_{\alpha}\in\mathbb{Z},\label{eq:class-finite}
\end{equation}
where $S\subset\mathbb{R}_{>0}$, and the pairs $\left(\alpha,k\right)$
are contained in a sub-semigroup of the additive semigroup $\mathbb{R}_{>0}\times\mathbb{Z}$
generated by $\{(0,1)\}$ and by finitely many elements of $\mathbb{R}_{>0}\times\mathbb{Z}$.
Obviously, $\mathfrak{D}\subseteq\mathcal{L}_{\mathfrak{D}}$. For
short, we will say that the \emph{support} of $f$ (namely, the set
$\mathcal{S}\left(f\right)=\left\{ \left(\alpha,k\right)\in\mathbb{R}_{>0}\times\mathbb{Z}\colon a_{\alpha,k}\neq0\right\} $)
is of \emph{finite type}, or equivalently, that $f$ itself is of
finite type. Notice that, while the germ $x$ at the origin $0^{+}$
of $\mathbb{R}_{>0}$ is a positive infinitesimal, the germ $\log x$
at $0^{+}$ is negative and infinitely large. This is why we prefer
to work with the germ $-1/\log x$ instead and to introduce the symbol
\[
\boldsymbol{\ell}=-\dfrac{1}{\log x}.
\]
We denote the elements of $\mathcal{L}_{\mathfrak{D}}$ as in \eqref{eq:class-finite}
indifferently by $f$ or by $f\left(x\right)$. We call a germ $x^{\alpha}\boldsymbol{\ell}^{k}$,
$\alpha>0$, $k\in\mathbb{Z}$, a \emph{power-log monomial}.\emph{
}Finally, the series $f\left(x\right)=x$ will often be denoted simply
by $f=\text{id}$.\\

In order to define the class $\mathcal{L}$, let us recall that an
ordered set $X$ is called \emph{well-ordered} if every non empty
subset of $X$ has a smallest element. This property implies in particular
that $X$ is totally ordered. The elements of $\mathcal{L}$ are the
formal transseries of the following form: 
\begin{align}
f(x)= & \sum_{\alpha\in S}\sum_{k\in\mathbb{Z}}a_{\alpha,k}\, x^{\alpha}(-\frac{1}{\log x})^{k}=\sum_{\alpha\in S}\sum_{k\in\mathbb{Z}}a_{\alpha,k}x^{\alpha}\boldsymbol{\ell}^{k},\ a_{\alpha,k}\in\mathbb{R},\label{class}
\end{align}
where $S\subset\mathbb{R}_{>0}$, and the support $\mathcal{S}\left(f\right)=\left\{ \left(\alpha,k\right)\in\mathbb{R}_{>0}\times\mathbb{Z}\colon a_{\alpha,k}\neq0\right\} $
is a well-ordered subset of $\mathbb{R}_{>0}\times\mathbb{Z}$, equipped
with the \emph{lexicographic order} $\preceq$. It is equivalent to
assume: 
\begin{itemize}
\item (1) $\mathcal{S}\left(f\right)$ is a well-ordered subset of $\mathbb{R}_{>0}\times\mathbb{Z}$; 
\item (2) $S$ is a well-ordered subset of $\mathbb{R}_{>0}$ and, for every
$\alpha\in S$, there exists $N_{\alpha}\in\mathbb{Z}$, such that
a pair $\left(\alpha,k\right)$ belongs to $\mathcal{S}\left(f\right)$
only if $k\ge N_{\alpha}$. 
\end{itemize}
In particular, one can easily check that $\mathcal{L}_{\mathfrak{D}}\subset\mathcal{L}$.
Recall that a well-ordered subset of $\mathbb{R}$ is countable; hence,
the supports of the elements of $\mathcal{L}$ are countable. The
lexicographic order $\preceq$ on pairs of exponents corresponds to
the usual order $\leq$ on germs of functions at the origin, in the
following way: given two pairs $\left(\alpha,k\right)$ and $\left(\alpha',k'\right)$
in $\mathbb{R}_{>0}\times\mathbb{Z}$, we have 
\[
\left(\alpha,k\right)\preceq\left(\alpha',k'\right)\Longleftrightarrow\lim_{x\to0^{+}}\frac{x^{\alpha'}\boldsymbol{\ell}^{k'}}{x^{\alpha}\boldsymbol{\ell}^{k}}<\infty\Longleftrightarrow x^{\alpha'}\boldsymbol{\ell}^{k'}\leq x^{\alpha}\boldsymbol{\ell}^{k}.
\]
Similarly $\left(\alpha,k\right)\prec\left(\alpha',k'\right)$ means
that $\left(\alpha,k\right)\preceq\left(\alpha',k'\right)$ and $\left(\alpha,k\right)\ne\left(\alpha',k'\right)$.
We call the pair $(\alpha,k)$ the \emph{order} of the monomial $x^{\alpha}\boldsymbol{\ell}^{k}$.
The \emph{order of a transseries} $f\in\mathcal{L}$, denoted by $\text{ord}\left(f\right)$,
is the smallest element of $\mathcal{S}\left(f\right)$. If $\left(\alpha,k\right)$
is the order of $f$ then $x^{\alpha}\boldsymbol{\ell}^{k}$ is called
the \emph{leading monomial} of $f$ and is denoted by $\mathrm{Lm}(f)$,
$a_{\alpha,k}$ is called the \emph{leading coefficient} of $f$ and
is denoted by $\mathrm{Lc}(f)$, and $a_{\alpha,k}x^{\alpha}\boldsymbol{\ell}^{k}$
is called the \emph{leading term} of $f$ and is denoted by $\mathrm{Lt}(f)$. 
\begin{notation*}
We will sometimes denote by $\left[f\right]_{\alpha,k}$ the coefficient
of the monomial $x^{\alpha}\boldsymbol{\ell}^{k}$ in the series $f\in\mathcal{L}$. 
\end{notation*}
While the questions of formal embeddings and formal normal forms in
$\mathcal{L}$ and $\mathcal{L}_{\mathfrak{D}}$ can be considered
as natural problems of independent interest, our motivation for this
research lies in fractal analysis of orbits of germs. Given an orbit
$\mathcal{O}$ of a germ, by its fractal analysis we mean understanding
the function $\varepsilon\mapsto A(\varepsilon)$, that assigns to
each $\varepsilon>0$ the Lebesgue measure of the $\varepsilon$-neighborhood
of the orbit $\mathcal{O}$. The question that we pose is if we can
recognize a germ by fractal properties of its realizations (orbits).
Fractal properties of orbits of Poincar{\'e} maps around limit periodic
sets have been studied in \cite{mrz} and \cite{belgproc}. In the
differentiable cases of elliptic points and limit cycles, it was proven
in \cite{belgproc} that fractal analysis of orbits of Poincar{\'e}
maps gives the multiplicity and the cyclicity. As already mentioned,
in the nondifferentiable cases of hyperbolic polycycles, Poincar{\'e}
maps have an expansion in $\mathcal{L}_{\mathfrak{D}}$. Furthermore,
fractal analysis was performed on holomorphic complex germs in \cite{resman}
and \cite{nonlin}. It was shown in \cite{resman} that the function
$\varepsilon\mapsto A(\varepsilon),\ \varepsilon>0,$ characterizes
the formal class of a parabolic germ. The analytic class cannot be
characterized by $A(\varepsilon)$, since it does not have an asymptotic
expansion, as $\varepsilon\to0$, see \cite{nonlin}. In a subsequent
work, we plan to introduce a new definition of the \emph{formal area}
$A(\varepsilon)$, based on the formal embedding theorem proven in
the present paper. With this new definition, we further hope for a
sectorially analytic function which will reveal the analytic class
of a germ. This would give a way to \emph{see} the analytic class
of a germ by \emph{looking} at its orbits.

\subsection{Overview of the results\label{overview}}

Our main results (Theorems A and B) hold for a subclass of elements
of $\mathcal{L}_{\mathfrak{D}}$ and $\mathcal{L}$. We say that an
element $f$ of $\mathcal{L}$ \emph{contains no logarithms in the
leading term} $\mathrm{Lt}(f)$ if $f$ is of the form 
\[
(H)\qquad f(x)=\lambda x^{\alpha}+\sum_{\left(\alpha,0\right)\prec\left(\beta,k\right)}a_{\beta,k}\, x^{\beta}\boldsymbol{\ell}^{k},\hspace{1em}\lambda>0,\ \alpha_{\beta,k}\in\mathbb{R}.
\]
We denote by $\mathcal{L}^{H}$ the subset of transseries from $\mathcal{L}$
that satisfy $(H)$, and by $\mathcal{L}_{\mathfrak{D}}^{H}$ the
intersection $\mathcal{L}_{\mathfrak{D}}\cap\mathcal{L}^{H}$. There
are two reasons for this additional assumption on the leading monomial.
First, we have already mentioned that the Dulac series which are asymptotic
expansions of Poincar{\'e} maps of hyperbolic polycyles belong to
$\mathcal{L}_{\mathfrak{D}}^{H}$. Second, unlike $\mathcal{L}$,
where iterated logarithms may be generated by compositions, the class
$\mathcal{L}^{H}$ is a group for the composition of transseries.

The leading term in the asymptotic expansion at $0$ of a germ indicates
the rate of convergence of its orbits (or backward orbits) towards
$0$. According to the standard terminology used for holomorphic diffeomorphisms
(see for example \cite{abate}, \cite{milnor}), we distinguish three
cases: 
\begin{defn}
Let $f\in\mathcal{L}^{H}$, $f(x)=\lambda x^{\alpha}+\cdots$, $\lambda>0$,
$\alpha>0$. We say that $f$ is 
\begin{enumerate}[1., font=\textup, nolistsep, leftmargin=0.6cm]
\item \emph{strongly} \emph{hyperbolic},\emph{ }if $\alpha\neq1$, 
\item \emph{hyperbolic}, if $\alpha=1$ and $\lambda\neq1$, 
\item \emph{parabolic}, if $\alpha=1$ and $\lambda=1$. 
\end{enumerate}
Additionally, we say that a hyperbolic $f$ is a \emph{hyperbolic
contraction} if $0<\lambda<1$. If $\lambda>1$, we call $f$ a \emph{hyperbolic
expansion}.
\end{defn}
Intuitively, strongly hyperbolic cases $\alpha>1$ correspond to strong
contractions in the first term, and cases $\alpha<1$ to strong expansions.
Hyperbolic cases $0<\lambda<1$ correspond to exponential contractions,
and cases $\lambda>1$ to exponential expansions.\\

We denote by $\mathcal{L}^{0}\subset\mathcal{L}^{H}$ the set of \emph{formal
changes of variables in $\mathcal{L}$}: 
\[
\mathcal{L}^{0}=\{\varphi\in\mathcal{L}^{H}:\,\varphi(x)=ax+\text{h.o.t},\ a>0\}.
\]
We use here the shortcut ``h.o.t.'' for ``higher order terms''.
Similarly, put $\mathcal{L}_{\mathfrak{D}}^{0}=\mathcal{L}_{\mathfrak{D}}\cap\mathcal{L}^{0}$.
Unlike $\mathcal{L}$, the classes $\mathcal{L}^{H},\mathcal{L}_{\mathfrak{D}}^{H},\mathcal{L}^{0}$
and $\mathcal{L}_{\mathfrak{D}}^{0}$ are closed under formal compositions
of transseries and they are groups with respect to this operation.
We say that $f,g\in\mathcal{L}^{H}$ (resp. $\mathcal{L}_{\mathfrak{D}}^{H}$)
are \emph{formally equivalent in $\mathcal{L}^{0}$} (\emph{resp.}
$\mathcal{L}_{\mathfrak{D}}^{0}$) if there exists a change of variables
$\varphi\in\mathcal{L}^{0}$ (\emph{resp.} $\varphi\in\mathcal{L}_{\mathfrak{D}}^{0}$)
transforming $f$ to $g$, $g=\varphi^{-1}\circ f\circ\varphi$.\\

We now recall the definitions needed to state our formal embedding
theorem. In the settings of usual power series, similar definitions
may be found in, for example, \cite{ilya} or \cite{loray}. 
\begin{defn}[The formal flow of a formal vector field]
\label{def:time-one-map} Consider a family $\left(f^{t}\right)_{t\in\mathbb{R}}$
of elements of $\mathcal{L}^{H}$.
\begin{enumerate}[1., font=\textup, nolistsep, leftmargin=0.6cm]
\item We say that $\left(f^{t}\right)$ forms a \emph{one-parameter group}
(we also say for short: defines a \emph{flow}) if $f^{0}=\mathrm{id}$
and $f^{s}\circ f^{t}=f^{s+t}$, for all $s,\, t\in\mathbb{R}$. An
element $f\in\mathcal{L}^{H}$ \emph{embeds} in the flow $\left(f^{t}\right)_{t\in\mathbb{R}}$
if $f=f^{1}$. 
\item The family $\left(f^{t}\right)_{t\in\mathbb{R}}$ is called a \emph{$\mathcal{C}^{1}$-one-parameter
group} or a\emph{ $\mathcal{C}^{1}$-flow} if it defines a flow, and
moreover :

\begin{enumerate}[(i), font=\textup, nolistsep,leftmargin=0.6cm]
\item there exists a well-ordered subset $S$ of $\mathbb{R}_{>0}\times\mathbb{Z}$
such that $\mathcal{S}\left(f^{t}\right)\subseteq S$ for every $t\in\mathbb{R}$,
and
\item for every $\left(\alpha,m\right)\in S$, the function $t\mapsto\left[f^{t}\right]_{\alpha,m}$
is $\mathcal{C}^{1}\left(\mathbb{R}\right)$.
\end{enumerate}
\item \noindent Assume that $\left(f^{t}\right)$ is a $\mathcal{C}^{1}$-flow
and let $\xi:=\frac{\mathrm{d}f^{t}}{\mathrm{d}t}|_{t=0}\in\mathcal{L}$.
Then we say that $\left(f^{t}\right)$ is the \emph{$\mathcal{C}^{1}$-formal
flow} of the vector field $X=\xi\frac{\mathrm{d}}{\mathrm{d}x}$.
In that case, $f^{t}$ is called the \emph{formal $t$-map of $X$},
$t\in\mathbb{R}$.
\end{enumerate}
\end{defn}
\begin{rem}
The third point of the former definition means that, if we write
\[
f^{t}\left(x\right)=\sum_{\alpha,k}\left[f^{t}\right]_{\alpha,k}x^{\alpha}\boldsymbol{\ell}^{k},\quad\forall t\in\mathbb{R},
\]
then 
\[
\xi\left(x\right)=\sum_{\alpha,k}\frac{\mathrm{d}\left[f^{t}\right]_{\alpha,m}}{\mathrm{d}t}\Big|_{t=0}x^{\alpha}\boldsymbol{\ell}^{k}.
\]

\end{rem}
We show in Propositions~\ref{prop: existence-formal-flow-parabolic},
\ref{prop: existence-formal-flow-hyperbolic} and \ref{prop:unic}
in Section~\ref{sub:fivetwo} that a vector field $X=\xi\frac{d}{dx}$,
$\xi\in\mathcal{L}$, such that $\left(1,0\right)\preceq\mathrm{ord}\left(\xi\right)$
admits a \emph{unique} $\mathcal{C}^{1}$-formal flow $(f^{t})_{t}$
in $\mathcal{L}^{H}$, which is given by: 
\begin{equation}
f^{t}=\exp(tX)\cdot\mathrm{id}=\mathrm{id}+t\xi+\frac{t^{2}}{2!}{\xi}'\xi+\frac{t^{3}}{3!}(\xi'\xi)'\xi+\cdots,\ t\in\mathbb{R}.\label{eq:eksp}
\end{equation}

\begin{rem}
\label{rem:two_topologies} We prove the convergence of formula \eqref{eq:eksp}
in Propositions~\ref{prop: existence-formal-flow-parabolic} and
\ref{prop: existence-formal-flow-hyperbolic} in Section~\ref{sub:fivetwo}.
We will actually need two notions of convergence. The first one is
relevant of what we call the \emph{formal topology}, see Section~\ref{sec:topologies}.
To describe it roughly, let us say that the formal topology takes
into account the orders of monomials, but not the size of coefficients.
The series \eqref{eq:eksp} converges in this topology when $\left(1,0\right)\prec\mathrm{ord}\left(\xi\right)$.
Nevertheless, it does not converge when $\mathrm{ord}\left(\xi\right)=\left(1,0\right)$.
Hence, we introduce a coarser \emph{weak topology} (later: the product
topology with respect to the Euclidean topology), in which the coefficients
of the monomials play a role for the convergence of series. In this
weak topology, the series \eqref{eq:eksp} converges even when $\mathrm{ord}\left(\xi\right)=\left(1,0\right)$,
see Proposition~\ref{prop: existence-formal-flow-parabolic}.

Finally, if $\text{ord}(\xi)\prec(1,0)$, the series \eqref{eq:eksp}
does not converge in any of these topologies (Proposition~\ref{prop:not-weakly-well-defined}).
\end{rem}
We now state the two main theorems of this paper. The precise, but
more technical, formulations are given in Sections \ref{sec:proof-theorem-A}
and \ref{sec:proof-theorem-B}.
\begin{namedthm}
{Theorem A} Let $f\in\mathcal{L}^{H}$ (\emph{resp.} $f\in\mathcal{L}_{\mathfrak{D}}^{H}$).
Then: 
\begin{enumerate}[1., font=\textup, nolistsep, leftmargin=0.6cm]
\item $f$ is formally equivalent to a normal form $f_{0}\in\mathcal{L}^{H}$
(\emph{resp.} $f_{0}\in\mathcal{L}_{\mathfrak{D}}^{H}$), given as
a finite sum of power-log monomials. 
\item If $f$ is parabolic or hyperbolic, then $f$ is formally equivalent
to the formal time-one map $\widehat{f}_{0}\in\mathcal{L}^{H}$ (\emph{resp.
}$\widehat{f}_{0}\in\mathcal{L}_{\mathfrak{D}}^{H}$) of a (formal)
vector field $X=\xi\frac{\mathrm{d}}{{\mathrm{d}x}}$, where $\xi\in\mathcal{L}$
(resp. $\xi\in\mathcal{L}_{\mathfrak{D}}$) is a rational function
in power-log monomials. 
\end{enumerate}
\end{namedthm}
The formal normal forms of Theorem A are described by \emph{at most
$4$ scalars}. The actual number of scalars depends on the type (parabolic,
hyperbolic or strongly hyperbolic) of the diffeomorphism.

The proof of Theorem A in Section~\ref{sec:proof-theorem-A} is actually
based on a \emph{transfinite algorithm} which transforms any transseries
$f$ in $\mathcal{L}^{H}$ or $\mathcal{L}_{\mathfrak{D}}^{H}$ into
its finite formal normal form $f_{0}$.
\begin{namedthm}
{Theorem B}Let $f\in\mathcal{L}^{H}$. Then $f$ embeds in a flow
$\left(f^{t}\right)_{t\in\mathbb{R}}$, $f^{t}\in\mathcal{L}^{H}$.
Moreover, if $f$ is parabolic or hyperbolic, $f$ embeds in the $\mathcal{C}^{1}$-flow
of a unique vector field $X=\xi\frac{\mathrm{d}}{{\mathrm{d}x}}$,
$\xi\in\mathcal{L}$ (see Definition~\ref{def:time-one-map}).
\end{namedthm}
For the detailed statements discussing all cases (parabolic, hyperbolic,
strongly hyperbolic) and their proofs, see Sections~ \ref{sec:proof-theorem-A},
\ref{sec:proof-theorem-B} respectively. \\

\section{\label{sub:Hahn-fields}Hahn fields and the structures of $\mathcal{L}$,
$\mathcal{L}^{H}$ and $\mathcal{L}^{0}$ \emph{(resp.} $\mathcal{L}_{\mathfrak{D}}$,
$\mathcal{L}_{\mathfrak{D}}^{H}$ and $\mathcal{L}_{\mathfrak{D}}^{0}$).}

Various descriptions of the notion of \emph{transseries} have been
given in several publications.\emph{ }The detailed study of classical
operations, such as the operations of fields, as well as derivation,
integration or composition, in this setting, is given in detail in
\cite{dries}. The classes of transseries considered in the present
work are proper sub-classes of the general field $\mathbb{R}\left(\left(x^{-1}\right)\right)^{\mathrm{LE}}$
of \emph{logarithmic-exponential series} (or \emph{LE-series}) introduced
in \cite{dries}. Therefore, the operations we have to deal with are
mostly specializations, in our framework, of the similar operations
described there. In particular, the proof of the closure of $\mathcal{L}^{H}$
and $\mathcal{L}^{0}$ under composition can be checked by a careful,
but straightforward, adaptation of the corresponding statement in
$\mathbb{R}\left(\left(x^{-1}\right)\right)^{\mathrm{LE}}$.

Hence, we just provide in this section the vary basic notions needed
to perform the description of our classes $\mathcal{L},\ \mathcal{L}^{H}$
and $\mathcal{L}^{0}$. We use, as in \cite{dries}, the language
of \emph{Hahn fields}.\emph{ }Recall that given a multiplicative ordered
abelian group $\Gamma$ with unit $1$, the \emph{Hahn field $\mathbb{R}\left(\left(\Gamma\right)\right)$}
consists of \emph{generalized series} with real coefficients and monomials
in $\Gamma$. The elements of $\mathbb{R}\left(\left(\Gamma\right)\right)$
are the formal sums 
\[
f=\sum_{\gamma\in\Gamma}f_{\gamma}\gamma,
\]
with coefficients $f_{\gamma}\in\mathbb{R}$, such that $\mathrm{Supp}\left(f\right)=\left\{ \gamma\in\Gamma\colon f_{\gamma}\ne0\right\} $
is a \emph{reverse well-ordered} subset of $\Gamma$. If $f\ne0$
and $\gamma_{0}$ is the biggest element of $\text{Supp}\left(f\right)$,
then the \emph{leading term} $\text{Lt}\left(f\right)$ of $f$ is
$f_{\gamma_{0}}\gamma_{0}$, its \emph{leading monomial} $\text{Lm}\left(f\right)$
is $\gamma_{0}$ and its \emph{leading coefficient} is $f_{\gamma_{0}}$.
\\

One of the most useful tools when dealing with algebraic operations
on Hahn fields is a result due to Neumann \cite{neumann}. Its statement
requires the following notations.
\begin{notation}
\label{not:notation-appendix}Consider an ordered (multiplicative)
abelian group $\Gamma$ and two subsets $A$ and $B$ of $\Gamma$.
We denote: 
\begin{enumerate}[a), font=\textup, nolistsep,leftmargin=0.6cm]
\item $AB:=\left\{ ab\colon a\in A,b\in B\right\} $, 
\item $\left\langle A\right\rangle $: the sub-semigroup of $\Gamma$ generated
by $A$ (i.e. the smallest sub-semigroup of $G$ containing all elements
of $A$),
\item $\Gamma_{<\gamma_{0}}:=\left\{ \gamma\in\Gamma:\,\gamma<\gamma_{0}\right\} $,
$\Gamma_{\leq\gamma_{0}}:=\left\{ \gamma\in\Gamma:\,\gamma\leq\gamma_{0}\right\} $,
$\gamma_{0}\in\Gamma$. Note that $\Gamma_{<1}$ denotes the \emph{infinitesimals}
of the group $\Gamma.$
\end{enumerate}
\end{notation}
\begin{lem}[Neumann's Lemma]
 \label{lem:neumann_lemma} Consider an ordered (multiplicative)
abelian group $\Gamma$ and two reverse well-ordered subsets $A$
and $B$ of $\Gamma$. Then: 
\begin{enumerate}[1., font=\textup, nolistsep, leftmargin=0.6cm]
\item The product $AB$ is reverse well-ordered. 
\item For $g\in AB$, there are only finitely many pairs $\left(a,b\right)\in A\times B$
such that $g=ab$. 
\item If $A\subseteq\Gamma_{<1}=\{g\in\Gamma\,:\, g<1\}$ is reverse well-ordered,
then $\left\langle A\right\rangle $ is also reverse well-ordered.
Moreover, for each $g\in\left\langle A\right\rangle $ there are only
finitely many tuples $\left(a_{1},\ldots a_{n}\right)$ with $n\in\mathbb{N}$,
$a_{1},\ldots,a_{n}\in A$, such that $g=a_{1}\cdots a_{n}$. 
\end{enumerate}
\end{lem}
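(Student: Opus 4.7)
The plan is to exploit the standard equivalent formulation that reverse well-orderedness of a totally ordered set amounts to the absence of any strictly increasing infinite sequence, or equivalently, every infinite sequence admits a non-increasing subsequence. Parts (1) and (2) will then follow by an extraction argument. For (1), if $g_n = a_n b_n$ were strictly increasing in $AB$, I would extract a non-increasing subsequence of $(a_n)$ from $A$; then $(a_n^{-1})$ is non-decreasing, and since $\Gamma$ is an ordered abelian group, $b_n = a_n^{-1} g_n$ is strictly increasing in $B$, contradicting reverse well-orderedness of $B$. For (2), infinitely many distinct representations $(a_i,b_i)$ of a fixed $g$ would force pairwise distinct $a_i$ (since $b_i = g a_i^{-1}$ is determined by $a_i$), and extracting a strictly decreasing subsequence of $(a_i)$ would produce a strictly increasing sequence $(b_i)$ in $B$, contradiction.

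The main obstacle is part (3), which requires controlling products of arbitrary length simultaneously. I would route this through Higman's lemma on well-quasi-orderings. Equip $A$ with the reverse of the ambient order: since $A$ is reverse well-ordered in $\Gamma$, it is well-ordered under this reversed order, hence in particular a well-quasi-ordering. Higman's theorem then guarantees that the set of finite tuples $A^{*}$, equipped with the sub-embedding quasi-order induced by the reverse $\Gamma$-order, is again a well-quasi-ordering.

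With this tool in hand, represent each $g \in \langle A \rangle$ by the tuple of its factors arranged in non-increasing $\Gamma$-order. To prove reverse well-orderedness of $\langle A \rangle$, assume for contradiction that $(g_n)$ is strictly increasing with associated tuples $t_n = (a_{n,1}, \ldots, a_{n,k_n})$. Higman's lemma produces indices $i < j$ such that $t_i$ embeds in $t_j$ via some $\ell_1 < \cdots < \ell_{k_i}$ with $a_{i,s} \geq a_{j,\ell_s}$ for every $s$. Since every factor of $t_j$ not indexed by some $\ell_s$ lies in $\Gamma_{<1}$, the product of these omitted factors is at most $1$, and hence
\[
g_j \;\leq\; \prod_{s} a_{j,\ell_s} \;\leq\; \prod_{s} a_{i,s} \;=\; g_i,
\]
contradicting $g_i < g_j$. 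The same framework handles the finiteness claim: if $g$ admitted infinitely many distinct tuple-representations, Higman would produce $i < j$ with $t_i \preceq t_j$ and $t_i \neq t_j$; a proper embedding then forces a strict inequality in the display above, either via an omitted factor strictly below $1$, or, at equal tuple lengths, via some strict $a_{i,s} > a_{j,s}$. Since both products equal $g$, this is the desired contradiction.
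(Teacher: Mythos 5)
Your proof is correct. For the record, the paper does not actually prove Lemma~\ref{lem:neumann_lemma}: it is stated and then cited from Neumann's original paper and from van den Dries--Macintyre--Marker, so there is no in-paper argument to compare your proposal against. Your reduction of parts (1) and (2) to the extraction of monotone subsequences (equivalently, the characterization of reverse well-ordered subsets of a linearly ordered set as those containing no strictly increasing sequence) is the standard elementary argument, and it is carried out correctly: in (1) the compatibility of the group order with multiplication is exactly what turns a non-increasing $(a_{n_k})$ and a strictly increasing $(g_{n_k})$ into a strictly increasing $(b_{n_k})$; in (2) the map $a \mapsto g a^{-1}$ is an order-reversing bijection, which is what you use. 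For part (3), routing the argument through Higman's lemma (after reversing the order on $A$ so that it becomes well-ordered, hence a well-quasi-order) is the standard modern proof, and your use of $A \subseteq \Gamma_{<1}$ to absorb the omitted factors in the chain $g_j \leq \prod_s a_{j,\ell_s} \leq \prod_s a_{i,s} = g_i$ is exactly the right mechanism; the same display, with ``strict somewhere'' supplied either by a genuinely omitted factor in $\Gamma_{<1}$ or by a strict componentwise inequality at equal lengths, correctly yields the finiteness-of-representations claim. One minor presentational point: since a given $g \in \langle A \rangle$ may admit several factorizations, ``represent each $g$ by \emph{a} tuple of its factors'' is more accurate than ``\emph{the} tuple''; this does not affect the argument, as you pick one representation per term of the hypothetical bad sequence. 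It is also worth remarking that Neumann's original 1949 proof necessarily predates Higman's lemma (1952) and is a more intricate direct combinatorial argument; your version is the cleaner route that most modern expositions follow.
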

These series can be added and multiplied in the following way: if
$f=\sum_{\gamma\in\Gamma}f_{\gamma}\gamma$ and $g=\sum_{\gamma\in\Gamma}g_{\gamma}\gamma$
belong to $\mathbb{R}\left(\left(\Gamma\right)\right)$, then 
\[
f+g=\sum_{\gamma\in\Gamma}\left(f_{\gamma}+g_{\gamma}\right)\gamma,\quad f\cdot g=\sum_{\gamma\in\Gamma}\left(\sum_{\lambda\mu=\gamma}f_{\lambda}g_{\mu}\right)\gamma.
\]
Notice that the reverse well-ordering of the supports of $f$ and
$g$ guarantees, thanks to Neuman's Lemma (see \cite{neumann} or
\cite[p. 64]{dries} for example), that the product is well defined.
Moreover, it is known that every nonzero element of $\mathbb{R}\left(\left(\Gamma\right)\right)$
admits a multiplicative inverse, so that $\mathbb{R}\left(\left(\Gamma\right)\right)$
is actually a field. If now $\Gamma'$ is a sub-semigroup of $\Gamma$,
then the set 
\[
\mathbb{R}\left[\left[\Gamma'\right]\right]=\left\{ f\in\mathbb{R}\left[\left[\Gamma\right]\right]\colon\text{Supp}\left(f\right)\subseteq\Gamma'\right\} 
\]
is a subring (actually an $\mathbb{R}$-algebra) of $\mathbb{R}\left(\left(\Gamma\right)\right)$.\\

The LE-series\emph{ }introduced in \cite{dries} are generalized series
in one variable whose monomials involve the logarithm and the exponential
functions. Our classes $\mathcal{L}$ and $\mathcal{L}_{\mathfrak{D}}$
are contained in the field of LE-series (up to the obvious modification
which comes from the fact that the variable $x$ is thought as ``infinitely
big'' there, while it is infinitesimal in our work). Let us show
how $\mathcal{L}$ and $\mathcal{L}_{\mathfrak{D}}$ can be described
by following the above Hahn's construction. Consider the multiplicative
group $G$:

\[
G=\{x^{\alpha}\boldsymbol{\ell}^{k}:\alpha\in\mathbb{R},\ k\in\mathbb{Z}\},
\]
and the multiplicative sub-semigroup $G'=\left\{ x^{\alpha}\boldsymbol{\ell}^{k}:\alpha\in\left(0,\infty\right),k\in\mathbb{Z}\right\} $
of $G$, equipped with the order $\leq$ introduced in Section \ref{sub:description-problem}.
Then the class $\mathcal{L}$ is equal to the ring $\mathbb{R}\left[\left[G'\right]\right]$.
It is a subring of a Hahn field $\mathbb{R}((G))$, which is itself
a subfield of the general field of LE-series. Notice that the support
$\mathcal{S}\left(f\right)$ of a series $f\in\mathcal{L}$ is in
Section \ref{sub:description-problem} defined as a subset of $\mathbb{R}_{>0}\times\mathbb{Z}$.
It differs from the support $\text{Supp}\left(f\right)$ defined above
for elements of general Hahn fields, which would be a set of monomials.
The reason is that, in our situation, it is more convenient to work
directly with exponents than to work with monomials. For the same
reason, we will often refer to the additive version of Neumann's Lemma
adapted to sets of exponents rather than to the multiplicative version
stated above, which is adapted to sets of monomials.

Finally, as a straightforward consequence of Neumann's Lemma, $\mathcal{L}^{H}$
is an additive and multiplicative sub-semigroup of $\mathcal{L}$,
and $\mathcal{L}^{0}$ is an additive sub-semigroup of $\mathcal{L}^{H}$.
Furthermore, $\mathcal{L}_{\mathfrak{D}}^{H}$ is an additive and
multiplicative sub-semigroup of $\mathcal{L}_{\mathfrak{D}}$, and
$\mathcal{L}_{\mathfrak{D}}^{0}$ is an additive sub-semigroup of
$\mathcal{L}_{\mathfrak{D}}^{H}$. \medskip{}

We consider now the operation of composition, as an imported operation
from the general field of LE-series. As mentioned above, it is proved
in \cite{dries} that the field of transseries can be equipped with
a composition operator, and that each nonzero LE-series admits a compositional
inverse. The proof of these facts requires an elaborate construction,
which was previously sketched in \cite[Chapter 4]{ecalleb}. Fortunately,
the action of the restriction of these two operators to our classes
is much simpler, due to the particular shape of the monomials in $G'$.
To be more precise, the composition of two series is understood by
classical \emph{Taylor expansions}. It is mainly based on the following
observations, which are used in almost all subsequent computations
of this paper. Every series $f\in\mathcal{L}^{H}$ can be written
as: 
\[
f\left(x\right)=ax^{\lambda}\left(1+\varepsilon\left(x\right)\right)\text{ with }\varepsilon\in\mathcal{L},\text{ }\varepsilon\left(0\right)=0\text{ and }\lambda>0.
\]
For every real number $\alpha>0$, the composition defined in \cite{dries}
leads to: 
\[
\left(x^{\alpha}\circ f\right)\left(x\right)=\left(f\left(x\right)\right)^{\alpha}=a^{\alpha}x^{\lambda\alpha}\sum_{j=0}^{\infty}\binom{\alpha}{j}\varepsilon\left(x\right)^{j}.
\]
In the same way, if $f$ is positive (that is, if $a>0$), we have:
\[
\log\left(f\left(x\right)\right)=\log a+\lambda\log\left(x\right)+\sum_{j=1}^{\infty}\frac{\left(-1\right)^{j+1}}{j}\varepsilon^{j}\left(x\right).
\]
The analysis made in \cite{dries} shows how these formulas extend
to composition of series in the following way. If $g\left(x\right)=\sum_{\left(\alpha,k\right)}c_{\alpha,k}x^{\alpha}\boldsymbol{\ell}^{k}\in\mathcal{L}$
and $f\in\mathcal{L}^{H}$, then 
\[
\left(g\circ f\right)\left(x\right)=\sum_{\left(\alpha,k\right)}c_{\alpha,k}\big(f\left(x\right)\big)^{\alpha}\Big(\frac{-1}{\log f\left(x\right)}\Big)^{k}
\]
is a well defined element of $\mathcal{L}$. As a consequence of the
results proved in \cite[Section 7]{dries}, the similar conclusion
holds for $\mathcal{L}_{\mathfrak{D}},\mathcal{L}_{\mathfrak{D}}^{H}$
and $\mathcal{L}_{\mathfrak{D}}^{0}$ (the finite type property of
the support is preserved). We summarize the former results in the
following proposition. 
\begin{prop}[Properties of $\mathcal{L}$, $\mathcal{L}^{H}$ and $\mathcal{L}^{0}$]
 \label{prop:properties-of-L} \ 
\begin{enumerate}[1., font=\textup, nolistsep, leftmargin=0.6cm]
\item $\mathcal{L}$ (\emph{resp.} $\mathcal{L}_{\mathfrak{D}}$) is an
$\mathbb{R}$-algebra without unity. 
\item The algebra $\mathcal{L}$ (\emph{resp.} $\mathcal{L}_{\mathfrak{D}}$)
is closed under right compositions with elements from $\mathcal{L}^{H}$
(resp. $\mathcal{L}_{\mathfrak{D}}^{H}$). 
\item The sets $\mathcal{L}^{H}$ and $\mathcal{L}^{0}$ (\emph{resp.} $\mathcal{L}_{\mathfrak{D}}^{H}$
and $\mathcal{L}_{\mathfrak{D}}^{0}$) are groups under composition.
In particular, they are closed under compositional inverses.
\end{enumerate}
\end{prop}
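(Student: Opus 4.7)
The plan is to assemble the observations already developed in this section: the Hahn-field description $\mathcal{L} = \mathbb{R}[[G']]$, Neumann's Lemma, and the Taylor-expansion formulas for compositions recalled just before the statement.

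\textbf{For (1)}, I would argue that addition and multiplication in $\mathcal{L}$ are inherited from the ambient Hahn field $\mathbb{R}((G))$: coefficientwise addition preserves well-orderedness of supports (union of two well-ordered subsets of $\mathbb{R}_{>0}\times\mathbb{Z}$ is well-ordered), and convolution multiplication is justified by Neumann's Lemma parts~(1)--(2), which guarantee that the product of two reverse well-ordered supports is reverse well-ordered and that each coefficient is a finite sum. The algebra has no unity because $1 = x^{0}\boldsymbol{\ell}^{0}\notin G'$, the exponent set of $\mathcal{L}$ requiring $\alpha>0$. For $\mathcal{L}_\mathfrak{D}$ the only additional check is that the finite-type property (containment of the support in the sub-semigroup generated by $\{(0,1)\}$ together with finitely many elements of $\mathbb{R}_{>0}\times\mathbb{Z}$) is preserved under sum and product, which follows by merging generating sets.

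\textbf{For (2)}, given $f\in\mathcal{L}^H$ written as $f(x)=\lambda x^{\alpha_f}(1+\varepsilon(x))$ with $\varepsilon\in\mathcal{L}$ infinitesimal, I would substitute each monomial $c_{\alpha,k}x^{\alpha}\boldsymbol{\ell}^k$ of $g$ into the Taylor formulas displayed in the text: $(f(x))^{\alpha}$ expands as a generalized power series in $\varepsilon$ via the binomial formula, well defined thanks to Neumann (3) applied to the infinitesimal support of $\varepsilon$. Hypothesis $(H)$ is essential for the second factor: the absence of a logarithm in $\mathrm{Lt}(f)$ gives
\[
\log f(x) = -\frac{\alpha_f}{\boldsymbol{\ell}} + \log\lambda + \log(1+\varepsilon(x)),
\]
so that $-1/\log f(x) = (\boldsymbol{\ell}/\alpha_f)\bigl(1-(\boldsymbol{\ell}/\alpha_f)(\log\lambda+\log(1+\varepsilon))\bigr)^{-1}$ expands as a geometric series without producing any iterated logarithm. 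Each summand then lies in $\mathcal{L}$ with leading order $(\alpha\alpha_f,k)$; since $\mathcal{S}(g)$ is well-ordered these leading orders grow strictly, hence the union of the individual supports remains well-ordered. For $\mathcal{L}_\mathfrak{D}$ the preservation of the finite-type property is a direct bookkeeping of generators, as is already made explicit in \cite[Section~7]{dries}.

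\textbf{For (3)}, closure of $\mathcal{L}^H$, $\mathcal{L}^0$, $\mathcal{L}_\mathfrak{D}^H$, $\mathcal{L}_\mathfrak{D}^0$ under composition follows from (2) together with the direct computation $\mathrm{Lt}(f\circ g) = \lambda\mu^{\alpha_f}\,x^{\alpha_f\alpha_g}$ when $\mathrm{Lt}(f)=\lambda x^{\alpha_f}$ and $\mathrm{Lt}(g)=\mu x^{\alpha_g}$ with $\lambda,\mu>0$; this preserves $(H)$, and collapses to $\lambda\mu\, x$ when $\alpha_f=\alpha_g=1$, giving closure of $\mathcal{L}^0$. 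For compositional inverses I would solve $f\circ g = \mathrm{id}$ by the ansatz $g(x) = \mu x^{1/\alpha_f}(1+\eta(x))$ with $\mu = \lambda^{-1/\alpha_f}$: the correction $\eta\in\mathcal{L}$, infinitesimal, is built by a (possibly transfinite) recursion that at each step annihilates the current leading term, and thus strictly advances the order $\prec$ being addressed. This adapts the general inverse construction of \cite{dries} to our subclasses; because $(H)$ rules out logarithms in $\mathrm{Lt}(f)$, no iterated logarithms are produced, and the finite-type property is preserved in the $\mathcal{L}_\mathfrak{D}^H$/$\mathcal{L}_\mathfrak{D}^0$ case.

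\textbf{Main obstacle.} The delicate point in all three items is the well-orderedness of supports under the infinitary sums arising from the Taylor expansions. Neumann's Lemma handles additive and multiplicative operations cleanly, but for (3) one must further verify that the recursive construction of the inverse terminates at a countable ordinal and that hypothesis $(H)$ is actually preserved along the recursion, since this is what prevents iterated logarithms from creeping into the inverted series.
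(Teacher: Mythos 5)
The paper offers no self-contained proof of this proposition: it states that the closure of $\mathcal{L}^{H}$ and $\mathcal{L}^{0}$ under composition and inversion ``can be checked by a careful, but straightforward, adaptation of the corresponding statement in $\mathbb{R}((x^{-1}))^{\mathrm{LE}}$,'' deferring to van den Dries--Macintyre--Marker for the details, and it justifies the rest by Neumann's Lemma together with the Taylor/binomial/$\log$ expansion formulas recorded just before the statement. Your proposal reconstructs exactly that route --- Hahn-field algebra plus Neumann for item (1), the binomial and $\log$ expansions made explicit in your factorization of $-1/\log f(x)$ for item (2), leading-term bookkeeping and a leading-order-annihilation recursion for item (3) --- so it is essentially the same approach, spelled out in somewhat more detail than the paper itself gives. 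One small precision worth making in (2): well-ordered leading orders alone do not yield a well-ordered union of supports; what one really needs (and what Neumann's Lemma delivers, as in the paper's proof of the later Proposition~\ref{prop:characterization-changes-variables}) is that every summand's support lies inside a single set of the form $W+\langle R\rangle$ with $W$ and $R$ well-ordered and $R\subset\Gamma_{>0}$, so the union is contained in one well-ordered superset.
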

The next section is dedicated to adaptations of standard \emph{Lie
bracket} techniques to our transseries setting. These techniques are
used in the proofs in Sections~\ref{sec:proof-theorem-A} and \ref{sec:proof-theorem-B}.

\section{Lie brackets in search of normal forms}

\noindent \label{sec:lie-brackets-normal-forms}

The method of producing normal forms for analytic or formal diffeomorphisms
is an adaptation of the \emph{Lie bracket} technique for normal forms
of vector fields, which is described for example in \cite{arnold}
or \cite{takens}. As we plan to adapt the method for elements of
our algebra $\mathcal{L}$, we first recall briefly how it works in
the classical case, more precisely, for formal power series in one
variable. 
\begin{rem}
In the sequel, the \emph{h.o.t}., meaning \emph{higher-order terms},
stands for monomials of higher order than the last one written.
\end{rem}

\subsection{The effect of a change of variables on the elements of $\mathbb{R}\left[\left[x\right]\right]$}

Consider a series $f\in\mathbb{R}\left[\left[x\right]\right]$ such
that $f\left(0\right)=0$. In order to transform $f$ into its normal
form, a classical approach consists in describing the effect on $f$
of a change of variables $\varphi\in\mathbb{R}\left[\left[x\right]\right]$,
such that $\varphi\left(x\right)=x+\text{h.o.t}$. The simplest method
consists in considering the leading term $\psi=\mathrm{Lt}(f\circ\varphi-\varphi\circ f)$.
This leading term $\psi$ is the same as the leading term of the difference
$\varphi^{-1}\circ f\circ\varphi-f$. Using Taylor formula, we have:
\begin{align}
f\circ\varphi & =\varphi\circ f+\psi\cdot\left(1+\eta\right),\hspace{1em}\eta\in x\mathbb{R}\left[\left[x\right]\right]\nonumber \\
\varphi^{-1}\circ f\circ\varphi & =\varphi^{-1}\left(\varphi\circ f+\psi\cdot\left(1+\eta\right)\right)\nonumber \\
\varphi^{-1}\circ f\circ\varphi\left(x\right) & =f\left(x\right)+\left(\varphi^{-1}\right)'\left(\varphi\circ f\left(x\right)\right)\cdot\psi\left(x\right)+\text{h.o.t.}\nonumber \\
 & =f\left(x\right)+\psi\left(x\right)+\text{h.o.t.},\label{eq:formule-conjugaison}
\end{align}
since $\varphi'\left(x\right)=1+\text{h.o.t.}$

Recall that the goal of formal normalization is to produce a series
in the class of formal conjugacy of $f$ which contains the smallest
possible number of terms. So, given a term $\tau$ in the expansion
of $f$, the main step consists in removing $\tau$ (if possible)
via an appropriate change of variables $\varphi$. To do this, we
choose the change of variables $\varphi$ such that the leading term
$\mathrm{Lt}(f\circ\varphi-\varphi\circ f)$ is the opposite of $\tau$.
This procedure is then repeated term by term.

In particular, if $f$ is \emph{parabolic}, that is if $f\left(x\right)=x+\varepsilon\left(x\right)=x+ax^{p}+\text{h.o.t}$,
where $p>1$, then we look for a change of variables $\varphi\left(x\right)=x+\eta\left(x\right)=x+cx^{m}$,
$m>1$,\ $c\in\mathbb{R}$. We obtain: 
\begin{align}
\left(f\circ\varphi-\varphi\circ f\right)\left(x\right) & =f\left(x+\eta\left(x\right)\right)-\varphi\left(x+\varepsilon\left(x\right)\right)\nonumber \\
 & =f\left(x\right)+f'\left(x\right)\eta\left(x\right)-\varphi\left(x\right)-\varphi'\left(x\right)\varepsilon\left(x\right)+\text{h.o.t.}\nonumber \\
 & =x+\varepsilon\left(x\right)+\left(1+\varepsilon'\left(x\right)\right)\eta\left(x\right)-x-\eta\left(x\right)-\left(1+\eta'\left(x\right)\right)\varepsilon\left(x\right)+\text{h.o.t.}\nonumber \\
 & =\varepsilon'\left(x\right)\eta\left(x\right)-\varepsilon\left(x\right)\eta'\left(x\right)+\text{h.o.t.}\label{eq:adjoint}
\end{align}
The series $\eta\varepsilon'-\eta'\varepsilon$ is called the \emph{Lie
bracket (the commutator)} of $\eta$ and $\varepsilon$ and is denoted
by $\left\{ \eta,\varepsilon\right\} $. The leading term $\psi$
of $f\circ\varphi-\varphi\circ f$ is given by the Lie bracket $\left\{ cx^{m},ax^{p}\right\} $
of the leading terms of $\eta$ and $\varepsilon$.

\subsection{\label{sub:poisson-brackets-homologic}Lie brackets in $\mathbb{R}[[x]]$
and the homological equation}

The action of the Lie bracket of\emph{ $g$} is given by the following
linear operator on $\mathbb{R}[[x]]$: 
\begin{equation}
\mathrm{ad}_{g}(f)=[f,g],\ f\in\mathbb{R}[[x]].\label{pb}
\end{equation}
Denote by $H_{k}$ the vector space of monomials of degree $k$, $k\in\mathbb{N}$:
\[
H_{k}=\big\{ ax^{k}:\, a\in\mathbb{R}\big\},\ k\geq1.
\]
The \emph{grading} of the space $H_{k}$ is given by the degree $k$
of its monomials.\\

Let $f\left(x\right)=x+ax^{p}+$$\text{h.o.t.}$ be a parabolic element
of $\mathbb{R}\left[\left[x\right]\right]$. It can be reduced to
its formal normal form by solving a series of \emph{Lie bracket $($commutator$)$
equations}, considering the action of the Lie bracket of the leading
monomial of $f-\mathrm{id}$ to spaces $H_{l}$, $l\in\mathbb{N}$.
The idea is to work step by step and, in each step, to eliminate the
monomial of a given degree, if possible. Here we describe a single
step. 

Applying the change of variables $\varphi(x)=x+cx^{m}$, $c\in\mathbb{R},\ m\in\mathbb{N},\ m>1$,
we obtain, according to formula \eqref{eq:adjoint}: 
\begin{align}
\varphi^{-1}\circ f\circ\varphi & =f+\mathrm{ad}_{ax^{m}}(cx^{m})+\mathrm{h.o.t.}\nonumber \\
 & =f+ac(p-m)x^{p+m-1}+\mathrm{h.o.t.}\label{eq:lb}
\end{align}
Since, $\mathrm{ad}_{x^{m}}(H_{l})\subseteq H_{m+l-1}$, the action
of the Lie bracket of any power \emph{preserves the grading}. Moreover,
for $m,\, l\in\mathbb{N}$, we have: 
\[
H_{m+l-1}=\mathrm{ad}_{x^{m}}(H_{l})\oplus G_{m+l-1},\quad G_{m+l-1}=\begin{cases}
\emptyset, & m\neq l,\\
H_{2m-1}, & m=l.
\end{cases}
\]
Here, the spaces $G_{k}$, $k\in\mathbb{N}$, are subspaces of $H_{k}$
that are not in the image of $\mathrm{ad}_{x^{m}}$ (consequently,
these terms cannot be eliminated by changes of variables).\\

Consider now a term $\psi=bx^{r}$ of the expansion of $f$, with
$r>p$. According to \eqref{eq:lb}, in order to remove this term,
we look for a change of variables $\varphi\left(x\right)=x+cx^{m}$
such that 
\begin{equation}
ac\left(p-m\right)x^{p+m-1}=-bx^{r}.\label{eq:homological-classical}
\end{equation}
This equation is classically called the \emph{homological equation}.
We find $m=r-p+1$, so $p-m=2p-r-1$. It can be solved if and only
if $r\ne2p-1$. So the term of degree $r=2p-1$ cannot be removed
from the expansion of $f$. In other words, the subspace $H_{2p-1}$
is not in the image of the Lie bracket action operator of the leading
monomial of $f$. In order to remove all possible monomials of $f$,
we proceed with a sequence of changes of variable of the previous
type. The normal form appears to be a formal limit of a sequence of
elements of $\mathbb{R}\left[\left[x\right]\right]$, and to have
the form: 
\[
f_{0}\left(x\right)=x+ax^{p}+\beta x^{2p-1},\hspace{1em}\beta\in\mathbb{R}.
\]
This procedure is an adaptation of a similar algorithm from \cite{takens2}
for reducing vector fields to their normal forms.

\subsection{Lie brackets in $\mathcal{L}$}

\label{lie-brackets-L} The general idea of the proof of Theorem A
is to mimic the former method for the elements of algebra $\mathcal{L}$.
However, because of the presence of logarithms in monomials, several
complications are to be expected. Let us first explain the action
of Lie brackets in our framework.\\

By $H_{\gamma,m}\subset\mathcal{L}$ we denote the one-dimensional
vector spaces spanned by the monomial $x^{\gamma}\ell^{m}$, $\gamma\geq1,\ m\in\mathbb{Z}$.
We introduce the \emph{grading of $H_{\gamma,m}$} by the order $(\gamma,m)$
of its monomials. Notice that, for $\left(\alpha,k\right)\in\left(0,\infty\right)\times\mathbb{Z}$,
we have 
\[
\left(x^{\alpha}\boldsymbol{\ell}^{k}\right)'=\alpha x^{\alpha-1}\boldsymbol{\ell}^{k}+kx^{\alpha-1}\boldsymbol{\ell}^{k+1}.
\]
Hence, the action of the Lie bracket, as defined in \eqref{pb}, of
a monomial $x^{\alpha}\boldsymbol{\ell}^{k}$ on a space $H_{\beta,l}$
is given by: 
\begin{align}
\mathrm{ad}_{x^{\alpha}\boldsymbol{\ell}^{k}}(cx^{\beta}\boldsymbol{\ell}^{l}) & =\left[cx^{\beta}\boldsymbol{\ell}^{l},x^{\alpha}\boldsymbol{\ell}^{k}\right]\nonumber \\
 & =c(\alpha-\beta)x^{\alpha+\beta-1}\boldsymbol{\ell}^{k+l}+c(l-k)x^{\alpha+\beta-1}\boldsymbol{\ell}^{k+l+1}.
\end{align}
We conclude that, on spaces $H_{\gamma,m}$, the action of the Lie
bracket of a power-log monomial does not preserve the \emph{grading},
as in power series case. Therefore, we introduce the appropriate quotient
spaces. By $K_{\gamma,m}^{0}$ and $K_{\gamma,m}$, we denote the
direct sums: 
\[
K_{\gamma,m}^{0}=\bigoplus_{(\gamma,m)\preceq(\gamma',m')}H_{\gamma',m'},\quad K_{\gamma,m}=\bigoplus_{(\gamma,m)\prec(\gamma',m')}H_{\gamma',m'}.
\]
Recall that the order $\preceq$ (\emph{resp}. $\prec$) is the lexicographic
order (\emph{resp.} strict lexicographic order ) on $\mathbb{R}\times\mathbb{Z}$.
We define the quotient spaces: 
\[
J_{\gamma,m}=\frac{K_{\gamma,m}^{0}}{K_{\gamma,m}}.
\]
Note that the quotient space $J_{\gamma,m}$ can be identified with
the vector space $H_{\gamma,m}$ of monomials of order $(\gamma,m)$.
The grading of $J_{\gamma,m}$ is given by the order $(\gamma,m)$
of any representative. Based on these remarks, we can state the next
proposition which claims that the grading is preserved on quotient
spaces $J_{\gamma,m}$. 
\begin{prop}[Action of the Lie bracket operator on quotient spaces $J_{\gamma,m}$]
\label{prop1} Let 
\[
T_{\alpha,k}\in J_{\alpha,k},\ (\alpha,k)\in\mathbb{R}_{>0}\times\mathbb{Z},\ (1,0)\prec(\alpha,k),
\]
be an element of the class $J_{\alpha,k}$ of the monomial $x^{\alpha}\boldsymbol{\ell}^{k}$.
Let $(\gamma,m)\in\mathbb{R}_{>0}\times\mathbb{Z},\ (1,0)\prec(\gamma,m)$.
The operator $\mathrm{ad}{}_{T_{\alpha,k}}$ acts on the quotient
space $J_{\gamma,m}$ by the following rule: 
\begin{align}
\begin{cases}
 & J_{\alpha+\gamma-1,k+m}=\mathrm{ad}{}_{T_{\alpha,k}}(J_{\gamma,m}),\quad\gamma\neq\alpha,\\
 & J_{2\alpha-1,k+m+1}=\mathrm{ad}{}_{T_{\alpha,k}}(J_{\alpha,m})\oplus G_{2\alpha-1,k+l+1},
\end{cases}\label{taa}
\end{align}
where 
\[
G_{2\alpha-1,k+m+1}=\begin{cases}
\emptyset, & \ m\neq k,\\
J_{2\alpha-1,2k+1}, & \ m=k.
\end{cases}
\]

\end{prop}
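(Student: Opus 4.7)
The plan is to read off the action of $\mathrm{ad}_{T_{\alpha,k}}$ on each quotient $J_{\gamma,m}$ directly from the bracket formula for pure power-log monomials displayed just above the statement, and then to verify separately that this action descends to a well-defined map on the quotients.

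For the main computation, I pick the pure-monomial representatives $x^{\alpha}\boldsymbol{\ell}^{k}\in J_{\alpha,k}$ and $cx^{\gamma}\boldsymbol{\ell}^{m}\in J_{\gamma,m}$ and apply the displayed formula to obtain
\[
[cx^{\gamma}\boldsymbol{\ell}^{m},\,x^{\alpha}\boldsymbol{\ell}^{k}] \;=\; c(\alpha-\gamma)\,x^{\alpha+\gamma-1}\boldsymbol{\ell}^{k+m} + c(m-k)\,x^{\alpha+\gamma-1}\boldsymbol{\ell}^{k+m+1}.
\]
Three subcases follow. If $\gamma\ne\alpha$, the first term is nonzero for $c\ne 0$, so the map sends the class of $cx^{\gamma}\boldsymbol{\ell}^{m}$ to a nonzero element of order $(\alpha+\gamma-1,k+m)$, producing an isomorphism between the one-dimensional spaces $J_{\gamma,m}$ and $J_{\alpha+\gamma-1,k+m}$. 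If $\gamma=\alpha$ and $m\ne k$, the first term vanishes while the second has nonzero coefficient $c(m-k)$ and order $(2\alpha-1,k+m+1)$, giving an isomorphism $J_{\alpha,m}\to J_{2\alpha-1,k+m+1}$. If $\gamma=\alpha$ and $m=k$, both terms vanish, so $\mathrm{ad}_{T_{\alpha,k}}$ maps $J_{\alpha,k}$ to zero in $J_{2\alpha-1,2k+1}$, which forces the complement $G_{2\alpha-1,2k+1}$ to fill the whole space.

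To descend to the quotients, I write $T_{\alpha,k}=x^{\alpha}\boldsymbol{\ell}^{k}+h$ with $h\in K_{\alpha,k}$ and any representative of the class of $cx^{\gamma}\boldsymbol{\ell}^{m}$ as $cx^{\gamma}\boldsymbol{\ell}^{m}+h'$ with $h'\in K_{\gamma,m}$. By bilinearity, the full bracket decomposes as the main term above plus three correction brackets $[cx^{\gamma}\boldsymbol{\ell}^{m},h]$, $[h',x^{\alpha}\boldsymbol{\ell}^{k}]$ and $[h',h]$. Using Neumann's Lemma (Lemma~\ref{lem:neumann_lemma}) to control the supports under products and derivatives, I check term by term that each correction bracket has order strictly above the order of the main term, so each vanishes in the target quotient space. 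The main obstacle is precisely this bookkeeping: since the order on $\mathbb{R}_{>0}\times\mathbb{Z}$ is lexicographic rather than graded, and differentiating $x^{\alpha}\boldsymbol{\ell}^{k}$ creates a parasitic $\boldsymbol{\ell}^{k+1}$-term that raises the log-exponent, one must carefully track how strict inequality of orders is preserved through multiplication and differentiation before concluding. Once this is in place, the case analysis on monomials propagates to the quotients $J_{\gamma,m}$ and the proposition follows.
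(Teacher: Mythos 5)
Your proof is correct and takes essentially the same approach as the paper: you read off the three cases directly from the explicit bracket formula $\mathrm{ad}_{x^{\alpha}\boldsymbol{\ell}^{k}}(cx^{\beta}\boldsymbol{\ell}^{l})$ displayed just before the proposition, which is precisely what the authors intend by the phrase ``based on these remarks.'' Your additional check that the bracket of perturbed representatives differs from the main term only by monomials of strictly higher lexicographic order, so that the map descends well to the one-dimensional quotients $J_{\gamma,m}$, is a routine verification that the paper leaves implicit, and your sketch of it is sound.
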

Obviously, this different behavior of the action of the Lie bracket
compared to its behavior in $\mathbb{R}\left[\left[x\right]\right]$
will induce a different treatment of the homological equation. These
aspects are examined in details in the next section, where we give
the precise form and the proof of Theorem A.

\section{Proof of Theorem A}

\label{sec:proof-theorem-A}

In this Section we construct changes of variables that transform a
transseries from $\mathcal{L}^{H}$ or $\mathcal{L}_{\mathfrak{D}}^{H}$
to its formal normal form. These changes of variables will be obtained
via \emph{transfinite compositions} of elementary changes of variables.
This important difference with the classical case comes from the fact
that the supports of the elements of $\mathcal{L}$ are not any more
contained in the set of positive integers, but are well-ordered subsets
of $\left(0,\infty\right)\times\mathbb{Z}$. In order to define properly
the notion of a transfinite composition, we recall (for a non-specialized
reader) in the next section a few well known facts about well-ordered
sets and about transfinite sequences.

\subsection{\label{sub:well-ordered-ordinals}Basic properties of ordinal numbers,
well-ordered sets and transfinite sequences}

A set is an \emph{ordinal number} (or an \emph{ordinal} for short)
if it is transitive and well-ordered by $\in$. Recall that a set
$X$ is called \emph{transitive} if every element of $X$ is also
a subset of $X$. Usually, the class of all ordinals is denoted by
$\text{\textbf{On}}$. It is totally ordered (moreover, well-ordered)
by the relation: $\alpha<\beta$ if and only if $\alpha\in\beta$.
Recall the \emph{von Neumann} construction of the class $\text{\textbf{On}}$.
The empty set is the \emph{smallest} ordinal, denoted by $0$. Every
ordinal $\alpha$ coincides with the set of all ordinals smaller than
$\alpha$, that is $\alpha=\left\{ \beta\in\mathbf{On}\colon\beta<\alpha\right\} $.

\noindent There are two \emph{types} of ordinals:

(1) \emph{The successor ordinal}: The \emph{successor} of an ordinal
$\alpha$, denoted by $\alpha+1$, is the ordinal $\alpha\cup\left\{ \alpha\right\} $.

(2) \emph{The limit ordinal}: If $\alpha$ is not a successor ordinal,
then $\alpha=\sup\left\{ \beta\colon\beta<\alpha\right\} $. Such
$\alpha$ is called a \emph{limit ordinal}.

\noindent The smallest limit ordinal is the set of non-negative integers,
usually denoted by $\omega$.\\

The classical principle of induction is generalized by the following
principle, called the \emph{principle of transfinite induction}. Consider
a class $C$ of ordinals, such that: 
\begin{enumerate}[1., font=\textup, nolistsep, leftmargin=0.6cm]
\item $0\in C$; 
\item if $\alpha\in C$ then $\alpha+1\in C$ (\emph{non-limit case}); 
\item if $\alpha$ is a nonzero limit ordinal and $\beta\in C$ for all
$\beta<\alpha$, then $\alpha\in C$ (\emph{limit case}). 
\end{enumerate}
Then $C$ is the class $\text{\textbf{On}}$ of \emph{all} ordinals.\\

Consider now a set $A$. A \emph{transfinite sequence} (or \emph{$\theta$-sequence})
of elements of $A$ is a function that takes values in $A$ and whose
domain is an ordinal $\theta\in\mathbf{On}$. We denote such sequence
by $(a_{\beta})_{\beta<\theta}$, $a_{\beta}\in A$. Suppose that
$A$ is a topological space. We say that the $\theta$-sequence $\left\{ a_{\beta}\colon\beta<\theta\right\} $
of elements of $A$ \emph{converges to} $a\in A$ when $\beta$ goes
to $\theta$ if, for every neighborhood $U$ of $a$, there exists
an ordinal $\beta_{0}<\theta$ such that $a_{\beta}\in U$ for all
$\beta$ such that $\beta_{0}<\beta<\theta$. We put $a:=\lim_{\beta\rightarrow\theta}a_{\beta}$
or $a:=\lim a_{\beta}$ for short.\\

Recall that two totally ordered sets $\left(P,<\right)$ and $\left(Q,<\right)$
are called \emph{isomorphic} if there exists an order-preserving one-to-one
function $f\colon P\rightarrow Q$. Finally, the strong connection
between well-ordered sets and ordinals is established by the following
result: \emph{every well-ordered set is isomorphic to a unique ordinal
number.} This ordinal number will be called \emph{the order type}
of the well-ordered set. It implies that the elements of a well-ordered
set can be enumerated as an increasing $\theta$-sequence (transfinite
sequence). The ordinal $\theta$ is then its order type. Also conversely:
the elements of a well-ordered set can be used as the indices of a
transfinite sequence. In particular, given a well-ordered set $W$
and a sequence $\left(a_{w}\right)_{w\in W}$ of elements of a topological
space $A$, we say that $\left(a_{w}\right)$ converges to $a\in A$
if, for every neighborhood $U$ of $a$, there exists $w_{0}\in W$
such that, for every $w\in W$, $w_{0}<w$, it holds that $a_{w}\in U$.
We denote this limit by $\lim_{w\in W}a_{w}.$

Notice that, due to the density of the set of rational numbers in
$\mathbb{R}$, every well-ordered subset of $\mathbb{R}$ or of $\mathbb{R}\times\mathbb{Z}$
is countable.\\

In the sequel, we build transfinite sequences of elements of $\mathcal{L}$
algorithmically, and we study their convergence in $\mathcal{L}$.

\subsection{\label{sub:transfinite-sequences-elements-L}Transfinite sequences
of elements of $\mathcal{L}$}

\label{sec:topologies}

In order to study convergence of (transfinite) sequences of elements
of $\mathcal{L}$, we endow $\mathcal{L}$ with the following topologies,
introduced in order of the decreasing strength.\\

1. \emph{The formal topology} on $\mathcal{L}$. Consider $f\in\mathcal{L}$
and $\left(\alpha,k\right)\in\mathbb{R}_{>0}\times\mathbb{Z}$. Then
the (open) \emph{ball} $B\left(f,\left(\alpha,k\right)\right)$ centered
at $f$ is the set 
\[
\left\{ g\in\mathcal{L}\colon\text{ord}\left(g-f\right)\succ\left(\alpha,k\right)\right\} .
\]
Given two different balls $B_{1}$ and $B_{2}$ centered at $f\in\mathcal{L}$,
either $B_{1}\subset B_{2}$ or $B_{2}\subset B_{1}$. Hence, the
collection of balls centered at $f$ form a \emph{fundamental system
of neighborhoods}. The family of all balls generates a Hausdorff topology
on $\mathcal{L}$. 

Consider now an ordinal $\theta$ and a transfinite sequence $\left(f_{\mu}\right)_{\mu<\theta}$
of elements of $\mathcal{L}$. Then the sequence $\left(f_{\mu}\right)$
\emph{converges} to $f\in\mathcal{L}$ when $\mu$ goes to $\theta$
in the formal topology if, for every $\left(\alpha,k\right)\in\mathbb{R}_{>0}\times\mathbb{Z}$,
there exists an ordinal $\mu_{0}<\theta$ such that $\mathrm{ord}\left(f-f_{\mu}\right)\succ\left(\alpha,k\right)$
for every $\mu_{0}<\mu<\theta$.\\

2. \emph{The product topology on $\mathcal{L}$ with respect to the
discrete topology on $\mathbb{R}$.} Let us endow $\mathbb{R}$ with
the discrete topology, and the product $\mathbb{R}^{\mathbb{R}_{>0}\times\mathbb{Z}}$
with the product topology. Each transseries $f\in\mathcal{L}$ is
understood as a function $f:\mathbb{R}_{>0}\times\mathbb{Z}\to\mathbb{R}$,
which assigns to each pair $(\alpha,k)$ the coefficient of the monomial
$x^{\alpha}\boldsymbol{\ell}^{k}$ in $f$. We will denote that coefficient
by $[f]_{\alpha,k}$. Hence, we can consider $\mathcal{L}$ as a subspace
of $\mathbb{R}^{\mathbb{R}_{>0}\times\mathbb{Z}}$, equipped with
the induced topology. 

Let $(f_{\mu})_{\mu<\theta}$ be a transfinite sequence of elements
from $\mathcal{L}$. In this product topology, the sequence $(f_{\mu})$
converges to $f\in\mathcal{L}$ when $\mu\to\theta$ if, for every
$\left(\alpha,k\right)\in\mathbb{R}_{>0}\times\mathbb{Z}$, there
exists an ordinal $\mu_{0}<\theta$ such that the coefficient $[f_{\mu}]_{\alpha,k}$
equals the coefficient $[f]_{\alpha,k}$, for every $\mu_{0}<\mu<\theta$.\\

3. \emph{The weak topology on $\mathcal{L}$ (i.e. the product topology
with respect to the Euclidean topology on $\mathbb{R}$)}. The topology
is similar to the one described in 2. The only difference is that
we endow $\mathbb{R}$ with the Euclidean topology instead of the
discrete one. The sequence $(f_{\mu})$ converges to $f\in\mathcal{L}$
when $\mu\to\theta$ in the weak topology if, for every $\left(\alpha,k\right)\in\mathbb{R}_{>0}\times\mathbb{Z}$
and $\varepsilon>0$, there exists an ordinal $\mu_{0}<\theta$ such
that $\big[f-f_{\mu}\big]_{\alpha,k}\in(-\varepsilon,\varepsilon)$,
for every $\mu_{0}<\mu<\theta$. \\

The three topologies introduced above will be used in this work. We
need the \emph{product topology with respect to the discrete topology}
in the proof of Theorem A. In the proof of Theorem B, depending on
the type of elements of $\mathcal{L}$ considered (parabolic or hyperbolic),
we use \emph{formal} or \emph{weak topology}.
\begin{rem}
As has already been mentioned, the above topologies are ordered by
their strength. For example, the sequence $(f_{n})_{n\in\mathbb{N}}\in\mathcal{L}$,
\[
f_{n}(x)=x^{2-\frac{1}{n}},
\]
converges to $f\equiv0$ in the product topology with respect to the
discrete topology, but not in the formal topology.

Likewise, the sequence 
\[
f_{n}(x)=\frac{1}{n}x,
\]
converges to $f\equiv0$ in the weak topology, but not in the product
topology with respect to the discrete topology nor in the formal topology.
\end{rem}
\bigskip{}
In all three cases, we set $f=\lim_{\mu\to\theta}f_{\mu}$, with an
indication of the topology to which we refer. From now on, unless
explicitely stated otherwise, we endow $\mathcal{L}$ with \emph{the
product topology with respect to the discrete topology}, so every
limit or convergence to be mentioned in the sequel is understood with
respect to this topology.
\begin{rem}
\label{rem:two-types-limit}Given a well-ordered subset $W$ of $\mathbb{R}_{>0}\times\mathbb{Z}$,
we can define in the same way, if it exists, the limit $f=\lim_{\left(\alpha,k\right)\in W}f_{\alpha,k}$
of a transfinite sequence $\left(f_{\alpha,k}\right)$ of elements
of $\mathcal{L}$. In the rest of this article, we will deal indifferently
with sequences indexed by ordinals or by elements of a well-ordered
subset of $\mathbb{R}_{>0}\times\mathbb{Z}$. 
\end{rem}
We define the \emph{elementary changes of variables} in $\mathcal{L}^{0}$
by: 
\begin{equation}
\begin{cases}
\varphi_{1,0}(x)=ax, & a\in\mathbb{R},\ a>0,\ a\ne1,\\
\varphi_{1,m}(x)=x+cx\boldsymbol{\ell}^{m}, & m\in\mathbb{N},\ m\neq0,\ c\in\mathbb{R},\\
\varphi_{\beta,m}(x)=x+cx^{\beta}\boldsymbol{\ell}^{m}, & \beta>1,\ m\in\mathbb{Z},\ c\in\mathbb{R}.
\end{cases}\label{eq:elementary-change}
\end{equation}
Notice that $\text{ord}(\varphi_{\beta,m}-\text{id})=(\beta,m)$.

We now define the notion of a composition of a transfinite sequence
in $\mathcal{L}$. We will apply this notion to transfinite compositions
of elementary changes of variables in $\mathcal{L}^{0}$.
\begin{defn}
\label{def:transfinite-composition}Consider an ordinal $\theta$
and a transfinite sequence $\left(\varphi_{\mu}\right)_{\mu<\theta}$
of elements from $\mathcal{L}^{0}$. We say that the \emph{transfinite
composition} $\circ_{\mu<\theta}\,\varphi_{\mu}$ \emph{exists} and
is equal to $\varphi\in\mathcal{L}^{0}$ if the following conditions
are satisfied: 
\begin{enumerate}[1., font=\textup, nolistsep, leftmargin=0.6cm]
\item We can define a sequence $\left(\psi_{\mu}\right)_{\mu<\theta}$
of elements of $\mathcal{L}^{0}$ (which we call the \emph{partial
compositions}) in the following way:

\begin{enumerate}[(a), font=\textup, nolistsep, leftmargin=0.6cm]
\item $\psi_{0}:=\mathrm{id}$; 
\item If $\mu=\nu+1$ is a \emph{successor ordinal}, then $\psi_{\nu+1}:=\varphi_{\nu}\circ\psi_{\nu}$
(\emph{non-limit case}); 
\item If $\mu<\theta$ is a \emph{limit ordinal}, the sequence $\left(\psi_{\nu}\right)_{\nu<\mu}$
converges to $\psi_{\mu}\in\mathcal{L}^{0}$ when $\nu$ goes to $\mu$
(\emph{limit case}). 
\end{enumerate}
\item The sequence $\left(\psi_{\mu}\right)_{\mu<\theta}$ converges to
$\varphi\in\mathcal{L}^{0}$ when $\mu$ goes to $\theta$. 
\end{enumerate}
We write: $\varphi=\circ_{\mu<\theta}\,\varphi_{\mu}$.\end{defn}
\begin{prop}[Convergence of \emph{partial normal forms} $(f_{\mu})_{\mu<\theta}\in\mathcal{L}^{H}$]
\label{prop:changement-transfini} Let $f\in\mathcal{L}^{H}$. Let
$\left(\varphi_{\mu}\right)_{\mu<\theta}$, $\varphi_{\mu}\in\mathcal{L}^{0},$
be a transfinite sequence of changes of variables such that the composition
$\psi=\circ_{\mu<\theta}\,\varphi_{\mu}$ exists in $\mathcal{L}^{0}$
(as the limit of the transfinite sequence $\left(\psi_{\mu}\right)_{\mu<\theta}$
introduced in the former definition). Let $\left(f_{\mu}\right)_{\mu<\theta}$
be a transfinite sequence in $\mathcal{L}^{H}$, defined by: 
\[
f_{\mu}:=\psi_{\mu}^{-1}\circ f\circ\psi_{\mu},\ \mu\leq\theta,\ \text{with }\psi_{\theta}:=\psi.
\]
Then $f_{\mu}\to f_{\theta}$, as $\mu\to\theta$. 
\end{prop}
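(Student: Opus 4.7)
The goal is to verify the convergence $f_\mu \to f_\theta$ coefficient-wise: for each fixed monomial order $(\alpha_0, k_0) \in \mathbb{R}_{>0} \times \mathbb{Z}$, one must show that $[f_\mu]_{\alpha_0, k_0}$ is eventually equal to $[f_\theta]_{\alpha_0, k_0}$ as $\mu \to \theta$. The plan is to reduce this to a statement of continuity of the conjugation map $\psi \mapsto \psi^{-1} \circ f \circ \psi$ on $\mathcal{L}^0$ with respect to the product topology (coefficients taken in the discrete topology).

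The key ingredient I would establish is a \emph{finite-dependence} lemma for composition in $\mathcal{L}^0$: for each target order $(\alpha_0, k_0)$, there exists a \emph{finite} set $F = F(f, \alpha_0, k_0) \subset \mathbb{R}_{>0} \times \mathbb{Z}$ such that $[f \circ \psi]_{\alpha_0, k_0}$ is a polynomial function of the coefficients $\{[\psi]_{\beta, l} : (\beta, l) \in F\}$ (and of the fixed coefficients of $f$). To prove this I would use the Taylor-type composition formula of Section~\ref{sub:Hahn-fields}: writing $\psi = ax(1+\varepsilon)$ with $\varepsilon$ infinitesimal and $a>0$, expand $\psi^\alpha = a^\alpha x^\alpha \sum_{j\ge 0}\binom{\alpha}{j}\varepsilon^j$ and correspondingly $(-1/\log \psi)^k$, then apply Neumann's Lemma (Lemma~\ref{lem:neumann_lemma}) to count the finitely many decompositions giving a fixed monomial. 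The hypothesis $\psi \in \mathcal{L}^0$ is essential: the leading order $(1,0)$ of $\psi$ bounds $\alpha \le \alpha_0$ in the outer sum, while $\mathrm{ord}(\varepsilon) \succ (0,0)$ bounds the powers of $\varepsilon$ that can contribute below $(\alpha_0, k_0)$. An analogous finite-dependence for $\psi \mapsto \psi^{-1}$ is obtained either by solving the identity $\psi^{-1}\circ \psi = \mathrm{id}$ recursively (by transfinite induction on the well-ordered support), or by combining the group structure of $\mathcal{L}^0$ (Proposition~\ref{prop:properties-of-L}) with finite-dependence of composition.

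Once finite-dependence is in place, the conclusion is immediate. By hypothesis $\psi_\mu \to \psi$ in the product topology, i.e.\ for each order $(\beta,l)$, the coefficient $[\psi_\mu]_{\beta,l}$ is eventually equal to $[\psi]_{\beta,l}$. Applied to the finite set $F(f,\alpha_0,k_0)$ (together with the corresponding finite set for $\psi_\mu^{-1} \to \psi^{-1}$), and taking the maximum of the finitely many threshold ordinals, one obtains a single $\mu_0 < \theta$ such that for all $\mu$ with $\mu_0 < \mu < \theta$ the relevant coefficients of $\psi_\mu$ and $\psi_\mu^{-1}$ agree with those of $\psi$ and $\psi^{-1}$, and therefore
\[
[f_\mu]_{\alpha_0,k_0} = [\psi_\mu^{-1}\circ f \circ \psi_\mu]_{\alpha_0,k_0} = [\psi^{-1}\circ f \circ \psi]_{\alpha_0,k_0} = [f_\theta]_{\alpha_0,k_0}.
\]

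The main obstacle is making the finite-dependence precise in full generality, and in particular verifying it cleanly for the inversion map: the recursive formula for the coefficients of $\psi^{-1}$ must be controlled by transfinite induction on the well-ordered support of $\psi$, and one must check at each step that only finitely many previously-computed coefficients of $\psi^{-1}$ (and finitely many coefficients of $\psi$) enter the formula. Once this technical point is handled, the remainder of the argument is a formal unwinding, and in particular the assumption that the transfinite composition $\psi = \circ_{\mu<\theta}\varphi_\mu$ exists in $\mathcal{L}^0$ is used only through the pointwise convergence $\psi_\mu \to \psi$.
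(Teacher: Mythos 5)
Your core idea --- that each coefficient of a composition (and of an inverse) in $\mathcal{L}^0$ depends on only finitely many coefficients of the inputs, via the Taylor expansion and Neumann's Lemma~\ref{lem:neumann_lemma}.3 --- is exactly the engine of the paper's proof, which establishes it as Lemma~\ref{lem:conti} (transfinite sequential continuity). Where you differ is in the architecture. You propose to prove continuity of the full conjugation map $\psi\mapsto\psi^{-1}\circ f\circ\psi$ head-on, which means you need \emph{joint} finite-dependence of $(\chi,g)\mapsto\chi\circ g$ in both arguments with both arguments varying simultaneously. The paper deliberately avoids this: Lemma~\ref{lem:conti}(1) only gives continuity of one-sided composition $g\mapsto g\circ h$ and $g\mapsto h\circ g$ for a \emph{fixed} $h$, plus inversion, and Lemma~\ref{lem:conti}(2) handles the single two-sided case needed ($h_\mu\circ g_\mu\to 0$ when $h_\mu\to 0$). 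The proof then threads these together by an algebraic manipulation: from $\psi_\mu\to\psi$ one deduces $\psi_\mu\circ\psi^{-1}\to\mathrm{id}$, then $\psi\circ\psi_\mu^{-1}\to\mathrm{id}$, then $f\circ\psi\circ\psi_\mu^{-1}-\psi\circ\psi_\mu^{-1}\circ f\to 0$, then composes on the right by $\psi_\mu$ using (2) to conclude. Your route is conceptually more direct but requires you to carefully justify the joint-dependence claim and the two-step dependence through the intermediate series $f\circ\psi$ and $\psi^{-1}$, which your sketch gestures at but does not pin down.

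One point you should make explicit: the finite set $F(f,\alpha_0,k_0)$ of controlling coefficients can only be chosen \emph{uniformly in $\mu$} if the supports of all the $\psi_\mu$ lie in a common well-ordered subset of $\mathbb{R}_{>0}\times\mathbb{Z}$. Without that, Neumann's lemma bounds the number of contributing monomials for each fixed $\mu$, but the bound can drift as $\mu\to\theta$, and your "take the maximum of finitely many threshold ordinals" step fails. The paper builds this into the very definition of \emph{transfinitely sequentially continuous} used in Lemma~\ref{lem:conti} ("such that the supports of all the $g_\mu$ are contained in a well-ordered subset"), and the hypothesis is verified in the applications (e.g.\ Proposition~\ref{prop:characterization-changes-variables}, where all partial compositions have supports in $\overline{W}$). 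Your proof should state and use this condition; as written, it is used silently when you pass from finite-dependence of a single composition to uniformity across the sequence.
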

In the proof, we use the following auxiliary lemma. Consider a topology
$\mathcal{T}$ on $\mathcal{L}$. We say that an application $F\colon\mathcal{L}^{H}\rightarrow\mathcal{L}^{H}$
is \emph{transfinitely sequentially continuous with respect to $\mathcal{T}$}
if, for every transfinite sequence $\left(g_{\mu}\right)_{\mu<\theta}$
in $\mathcal{L}^{H}$ \emph{such that the supports of all the $g_{\mu}$
are contained in a well-ordered subset of $\mathbb{R}_{>0}\times\mathbb{Z}$,}
and such that $g_{\mu}\rightarrow g$ with respect to $\mathcal{T}$,
then $F\left(g_{\mu}\right)\rightarrow F\left(g\right)$ with respect
to $\mathcal{T}$.
\begin{lem}[\emph{Transfinite sequential continuity}]
\emph{}\label{lem:conti}Assume $\mathcal{L}$ equipped with the
product topology (the discrete case).\textup{ }
\begin{enumerate}[1., font=\textup, nolistsep, leftmargin=0.6cm]
\item Let $h\in\mathcal{L}^{H}$. The applications defined on $\mathcal{L}^{0}$
by
\[
\text{\emph{(i)} }g\longmapsto g\circ h,\text{ }g\longmapsto h\circ g,\quad\text{\emph{(ii)} }g\longmapsto g^{-1}
\]
are transfinitely sequentially continuous.
\item Consider two transfinite sequences $(h_{\mu})_{\mu<\theta}$ in $\mathcal{L}^{H}$,
and $\left(g_{\mu}\right)_{\mu<\theta}$ in $\mathcal{L}^{0}$, such
that $h_{\mu}\to0$ as $\mu\to\theta$ and the supports of all the
$h_{\mu}$ and $g_{\mu}$ are contained in a common well-ordered subset
of $\mathbb{R}_{>0}\times\mathbb{Z}$. Then $h_{\mu}\circ g_{\mu}\to0$
as $\mu\to\theta$. 
\end{enumerate}
\end{lem}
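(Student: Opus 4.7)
The plan is to prove all four statements by a common mechanism: I will show that in the product topology on $\mathcal{L}$ with respect to the discrete topology on $\mathbb{R}$, each target coefficient of any of the compositions considered depends polynomially on only finitely many coefficients of the input data, through an index set determined solely by the target index and by the common well-ordered set $W$ (and crucially \emph{not} by $\mu$). Once those finitely many input coefficients have stabilized (respectively, vanished), the target coefficient will have stabilized (respectively, vanished). The underlying uniform finiteness will be produced by Neumann's Lemma (Lemma~\ref{lem:neumann_lemma}) applied to the support-expansions of composition recalled after Proposition~\ref{prop:properties-of-L}.

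For part~1, case $g\mapsto g\circ h$ with $h\in\mathcal{L}^H$ fixed, I would factor $h = \lambda x^{\alpha_0}(1+\epsilon)$ with $\mathrm{ord}(\epsilon)\succ(0,0)$, so that each monomial $h^{\alpha}(-1/\log h)^{k}$ has support contained in $(\alpha\alpha_0,k)+\Omega_h$ for a fixed well-ordered $\Omega_h$ depending only on $h$. For a fixed target index $(\beta,l)$, Neumann's Lemma applied to the well-ordered sets $\{(\alpha\alpha_0,k):(\alpha,k)\in W\}$ and $\Omega_h$ provides a finite subset $F(\beta,l)\subseteq W$ such that $[g_\mu\circ h]_{\beta,l} = \sum_{(\alpha,k)\in F(\beta,l)}[g_\mu]_{\alpha,k}\,[h^{\alpha}(-1/\log h)^{k}]_{\beta,l}$, with $\mu$-independent scalar coefficients; discrete convergence $g_\mu\to g$ then gives eventual agreement of these finitely many $[g_\mu]_{\alpha,k}$, hence of $[g_\mu\circ h]_{\beta,l}$. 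The case $g\mapsto h\circ g$ is handled analogously: since $g_\mu\to g$ in the discrete product topology, eventually $[g_\mu]_{1,0}=[g]_{1,0}=a$, and I would factor $g_\mu = ax(1+\delta_\mu)$ with $\mathcal{S}(\delta_\mu)$ lying in a fixed well-ordered set $W_\delta$ determined by $W$. Expanding $g_\mu^\beta$ and $(-1/\log g_\mu)^l$ as binomial and logarithmic series in $\delta_\mu$ and summing over $\mathcal{S}(h)$, Neumann's Lemma again produces a $\mu$-independent finite index set governing the polynomial dependence of $[h\circ g_\mu]_{\alpha,k}$ on finitely many coefficients of $g_\mu$.

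For part~1(ii), $g\mapsto g^{-1}$, I would proceed by transfinite induction on the (well-ordered) support of $g^{-1}$. Using the identity $g\circ g^{-1}=\mathrm{id}$, each coefficient $[g^{-1}]_{\alpha,k}$ can be expressed, via the same kind of expansion used in the second case above, as a rational expression with non-vanishing denominator (a positive power of $a>0$) in finitely many coefficients of $g$ and in finitely many earlier coefficients $[g^{-1}]_{\gamma,j}$ with $(\gamma,j)\prec(\alpha,k)$. The inductive step then transports discrete convergence $g_\mu\to g$ to $g_\mu^{-1}\to g^{-1}$, coefficient by coefficient.

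For part~2, I would apply the same decomposition as in the case $g\mapsto h\circ g$: for each $(\alpha,k)$, there is a $\mu$-independent finite set $F\subseteq W$ such that $[h_\mu\circ g_\mu]_{\alpha,k} = \sum_{(\beta,l)\in F}[h_\mu]_{\beta,l}\,Q_{\beta,l}(g_\mu)$, with each $Q_{\beta,l}(g_\mu)$ a polynomial in finitely many coefficients of $g_\mu$. Since $h_\mu\to0$, eventually $[h_\mu]_{\beta,l}=0$ for every $(\beta,l)\in F$, so $[h_\mu\circ g_\mu]_{\alpha,k}$ vanishes eventually, proving $h_\mu\circ g_\mu\to0$. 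The main technical hurdle throughout will be establishing this $\mu$-uniformity of $F$: the well-definedness of each individual composition only provides a finite set $F_\mu$ that a priori depends on $\mu$, and extracting a single $F$ serving all $\mu$ simultaneously requires exploiting the common well-ordered $W$ via Neumann's Lemma.
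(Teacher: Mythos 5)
Your proposal is correct and follows essentially the same route as the paper. The paper's proof is exactly your mechanism: using Neumann's Lemma on the common well-ordered support $W$ to extract a $\mu$-independent finite index set governing each target coefficient of the composition, noting that every coefficient of $h\circ g$ is a finite sum of finite products of coefficients of $h$ and $g$, and (for part 2) that each such product contains exactly one factor drawn from $h$, so it vanishes as soon as the finitely many relevant coefficients of $h_\mu$ vanish. The only genuine divergence is in part 1(ii): you propose a direct transfinite recursion for the coefficients of $g^{-1}$ via the identity $g\circ g^{-1}=\mathrm{id}$, while the paper first uses part 1(i) to reduce to the special case $g_\mu\to\mathrm{id}$ and then applies the same Neumann's-Lemma finiteness to $g_\mu^{-1}-\mathrm{id}$; both routes rest on the same finiteness principle, so this is a presentational choice rather than a substantive difference. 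One small imprecision worth flagging: the dependence of a target coefficient of $h\circ g_\mu$ on the leading coefficient $a_\mu=[g_\mu]_{1,0}$ is not polynomial (it involves $a_\mu^{\beta}$ for real $\beta$ and $\log a_\mu$), but this is harmless precisely because in the discrete product topology $a_\mu$ stabilizes exactly, which your own argument already uses.
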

\begin{proof}
All these statements can be proven by analyzing the supports of the
composition and of the inverse as in \eqref{eq:composition-by-phi-1}
and applying Neumann's Lemma \ref{lem:neumann_lemma}.3. Concluding
similarly as in \eqref{eq:composition-by-phi-1}, for two transseries
$g,\, h\in\mathcal{L}^{H}$, such that $\text{ord}(g)=(\alpha_{0},0),$
we obtain: 
\[
\mathcal{S}(h\circ g)\subset\mathcal{S}(h)\cup H.
\]
Here, $H$ is a sub-semigroup of $\mathbb{R}_{\geq0}\times\mathbb{Z}$
generated by $(\alpha_{0}\beta,k)$ for $(\beta,k)\in\mathcal{S}(h)$,
$(\alpha-\alpha_{0},m)$ for $(\alpha,m)\in\mathcal{S}(g)$ and $(0,1)$.
Moreover, every coefficient of the composition is a sum of \emph{finitely
many} finite products of coefficients of $h$ and $g$ by Neumann's
lemma. Additionally, each product contains exactly one coefficient
from $h$ among its factors. (1) (i) and (2) follow.

To prove (1) (ii), due to (1), it suffices to prove the easier statement:
if $g_{\mu}\to\mathrm{id}$ as $\mu\to\theta,$ then $g_{\mu}^{-1}\to\mathrm{id}$,
as $\mu\to\theta$. It can be checked that the coefficients of $g_{\mu}^{-1}-\mathrm{id}$
are sums of finitely many finite products of coefficients of $g_{\mu}-\mathrm{id}$,
which eventually vanish, by Neumann's lemma. Therefore, the coefficients
of $g_{\mu}^{-1}-\mathrm{id}$ eventually vanish.
\end{proof}

\begin{proof}[Proof of Proposition~\ref{prop:changement-transfini}.]
\emph{} Let $f,\ (f_{\mu})_{\mu<\theta}\in\mathcal{L}^{H}$ be as
defined in the proposition. Knowing that $\psi_{\mu}\to\psi$, we
prove that $\psi_{\mu}^{-1}\circ f\circ\psi_{\mu}\to\psi^{-1}\circ f\circ\psi$,
as $\mu\to\theta$ (in the product topology, the discrete case).

Since $\psi_{\mu}\to\psi$, it follows by Lemma~\ref{lem:conti}
(1)$(i)$ that $\psi_{\mu}\circ\psi^{-1}\to\mathrm{id}$, and further
by (1)$(ii)$ that $\psi\circ\psi_{\mu}^{-1}\to\mathrm{id}$. By (1)$(i)$,
$f\circ\psi\circ\psi_{\mu}^{-1}\to f\circ\mathrm{id}=f$ and $\psi\circ\psi_{\mu}^{-1}\circ f\to\mathrm{id}\circ f=f$.
Therefore, 
\begin{align*}
f\circ\psi\circ\psi_{\mu}^{-1}-\psi\circ\psi_{\mu}^{-1}\circ f & \to0,\\
\stackrel{(2),\,\psi_{\mu}\to\psi}{\Longrightarrow}\, f\circ\psi-\psi\circ\psi_{\mu}^{-1}\circ f\circ\psi_{\mu} & \to0,\\
\psi\circ\psi_{\mu}^{-1}\circ f\circ\psi_{\mu} & \to f\circ\psi,\\
\psi_{\mu}^{-1}\circ f\circ\psi_{\mu} & \to\psi^{-1}\circ f\circ\psi.
\end{align*}

\end{proof}
As we did above for a composition of a transfinite sequence of elements
of $\mathcal{L}^{0}$, we can define in particular, if it exists,
a composition of a sequence $\left(\varphi_{\beta,m}\right)$ of elementary
changes of variables indexed by elements of a well-ordered subset
$W\subset\mathbb{R}_{>0}\times\mathbb{Z}$, via the sequence $\left(\psi_{\beta,m}\right)$
of partial compositions. Again, we have to consider non-limit cases
and limit cases.

The following proposition gives an important characterization of elements
of $\mathcal{L}^{0}$ or $\mathcal{L}_{\mathfrak{D}}^{0}$ in terms
of transfinite compositions of elementary changes of variables. 
\begin{prop}[Characterization of changes of variables in $\mathcal{L}^{0}$ or
$\mathcal{L}_{\mathfrak{D}}^{0}$]
 \label{prop:characterization-changes-variables}

Let $\mathcal{L}^{0}$ be endowed with the product topology with respect
to the discrete topology.
\begin{enumerate}[1., font=\textup, nolistsep, leftmargin=0.6cm]
\item Let $W\subset\mathbb{R}_{>0}\times\mathbb{Z}$ be well-ordered. Let
$\left(\varphi_{\alpha,m}\right)_{\left(\alpha,m\right)\in W}$ be
a transfinite sequence of elementary changes of variables, such that
the sequence of orders $\mathrm{ord}(\varphi_{\alpha,m}-\text{id})=\left(\alpha,m\right)$
is strictly increasing. Then the transfinite composition $\varphi=\circ_{\left(\alpha,m\right)\in W}\varphi_{\alpha,m}$
is well defined in $\mathcal{L}^{0}$. Moreover, if $W\subset\mathbb{R}_{>0}\times\mathbb{Z}$
is of finite type, then $\varphi\in\mathcal{L}_{\mathfrak{D}}^{0}$. 
\item For every transseries $\varphi\in\mathcal{L}^{0}$ (\emph{resp.} $\varphi\in\mathcal{L}_{\mathfrak{D}}^{0}$)
there exist a well-ordered subset (\emph{resp.} a subset of finite
type) $W\subset\mathbb{R}_{>0}\times\mathbb{Z}$ and a transfinite
sequence $\left(\varphi_{\alpha,m}\right)_{\left(\alpha,m\right)\in W}$
of elementary changes of variables such that $\varphi=\circ_{\left(\alpha,m\right)\in W}\varphi_{\alpha,m}$. 
\end{enumerate}
\end{prop}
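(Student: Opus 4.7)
The plan is to prove both parts by transfinite induction on the well-ordered indexing set, using Neumann's Lemma (Lemma~\ref{lem:neumann_lemma}) to control supports of partial compositions and to force stabilization of individual coefficients in the product discrete topology.

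For part 1, the successor step is immediate: if $\psi_\nu \in \mathcal{L}^0$, then $\psi_{\nu + 1} = \varphi_{\alpha_\nu, m_\nu} \circ \psi_\nu \in \mathcal{L}^0$ by Proposition~\ref{prop:properties-of-L}. At limit stages and for the global limit, one must prove convergence of $(\psi_\nu)$. Using the Taylor and logarithmic expansion formulas of Section~\ref{sub:Hahn-fields} to unfold $\varphi_{\alpha,m} \circ \psi_\nu$, a direct induction shows that the support of $\psi_\nu$ lies in $\{(1,0)\} \cup \bigl((1,0) + \Sigma_W\bigr)$, where $\Sigma_W$ is the additive sub-semigroup of the lex-positive part of $\mathbb{R}_{\geq 0} \times \mathbb{Z}$ generated by $(0,1)$ and by $\{(\alpha - 1, m) : (\alpha, m) \in W, (\alpha, m) \succ (1,0)\}$. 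These generators are strictly lex-positive, so the additive version of Neumann's Lemma~\ref{lem:neumann_lemma}.3 ensures that $\Sigma_W$ is well-ordered and that every element of $\Sigma_W$ admits only finitely many decompositions as a sum of generators. Analysing the step $\psi_\nu \mapsto \psi_{\nu + 1}$ shows that the coefficient $[\psi_\nu]_{\beta, k}$ can change only when $(\beta, k) - (1,0)$ admits a decomposition involving $(\alpha_\nu - 1, m_\nu)$; by the finiteness assertion, only finitely many indices $\nu$ affect a given coefficient, so $([\psi_\nu]_{\beta,k})_\nu$ stabilizes. This yields convergence in the product discrete topology at every limit stage and at the global limit $\psi = \circ_{(\alpha,m)\in W}\, \varphi_{\alpha, m}$, which therefore lies in $\mathcal{L}^0$. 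If $W$ is of finite type, $\Sigma_W$ is of finite type as well, hence $\psi \in \mathcal{L}_{\mathfrak{D}}^0$.

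For part 2, given $\varphi \in \mathcal{L}^0$, I would construct $W$ and the $\varphi_{\alpha, m}$ greedily by transfinite recursion. Set $\psi_0 = \mathrm{id}$. At a successor stage $\mu + 1$: if $\psi_\mu = \varphi$ the process terminates; otherwise let $(\alpha_\mu, m_\mu) = \mathrm{ord}(\varphi - \psi_\mu)$ and choose the unique elementary change $\varphi_{\alpha_\mu, m_\mu}$ from \eqref{eq:elementary-change} whose scalar parameter is fixed so that $[\varphi_{\alpha_\mu, m_\mu} \circ \psi_\mu]_{\alpha_\mu, m_\mu} = [\varphi]_{\alpha_\mu, m_\mu}$. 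This is always possible because a direct expansion shows that $[\varphi_{\alpha,m} \circ \psi_\mu - \psi_\mu]_{\alpha, m}$ depends linearly on the scalar parameter with nonzero coefficient (a positive power of the leading coefficient of $\psi_\mu$). By construction $\mathrm{ord}(\varphi - \psi_{\mu+1}) \succ (\alpha_\mu, m_\mu)$, so the sequence of orders strictly increases. At a limit stage set $\psi_\mu := \lim_{\nu\to\mu}\psi_\nu$, whose existence follows from part~1 applied to the already constructed initial segment. Since the orders $(\alpha_\mu, m_\mu)$ are contained in the well-ordered set $\{(1,0)\}\cup\bigl((1,0) + \Sigma_{\mathcal{S}(\varphi - \mathrm{id})}\bigr)$, the recursion has countable length and terminates with $\psi_\theta = \varphi$. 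If $\varphi \in \mathcal{L}_{\mathfrak{D}}^0$ then $\mathcal{S}(\varphi - \mathrm{id})$ is of finite type and the constructed $W$ is of finite type as well.

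The main obstacle is the convergence of partial compositions at limit stages, which is not a priori clear in either the formal or the weak topology. The crucial ingredient is Neumann's Lemma~\ref{lem:neumann_lemma}.3: its finiteness assertion is precisely what ensures that each fixed coefficient is affected by only finitely many steps of the transfinite composition and therefore stabilizes, making the product discrete topology the natural setting in which to formulate the statement.
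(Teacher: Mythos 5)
Your proof is correct and follows essentially the same route as the paper's. Both parts rest on exactly the same ideas: for part (1), tracking the support of partial compositions inside a fixed well-ordered sub-semigroup built from the shifted orders of $W$ (your $\Sigma_W$ is the same, up to a harmless $(1,0)$-translate, as the paper's $\overline{W}$), and invoking Neumann's Lemma~\ref{lem:neumann_lemma}.3 to conclude that each coefficient is touched only finitely often, hence stabilizes in the product discrete topology; for part (2), a greedy transfinite recursion that at each successor stage kills the leading discrepancy, with limits handled by part (1). The one cosmetic difference is that you absorb the linear change $\varphi_{1,0}(x)=ax$ into stage $0$ of the greedy recursion, while the paper factors it out first as $\varphi = a\,\mathrm{id}\circ\varphi_0$ and only decomposes the tangent-to-identity part $\varphi_0$; your choice is in fact the cleaner of the two, since it keeps the orders of the applied elementary changes monotone from the start, whereas the paper's presentation leaves the position of the linear factor in the transfinite composition implicit. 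Your claim that the coefficient of the elementary change can always be solved for relies on the leading coefficient $b^{\alpha_\mu}$ of $\psi_\mu^{\alpha_\mu}$ being nonzero, which indeed holds since $\psi_\mu\in\mathcal{L}^0$; this is the same homological-equation observation the paper uses implicitly.
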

\begin{proof}
(1) We first give a preliminary computation which describes the change
in the support of an element of $\mathcal{L}^{0}$ after composition
with an elementary change of variables, and prove implicitely that
the composition remains in $\mathcal{L}^{0}$. Let $h=\text{id}+\varepsilon\in\mathcal{L}^{0}$
(i.e. $\text{ord}(\varepsilon)\succ\left(1,0\right)$). Consider an
elementary change of variables $\varphi_{\beta,\ell}\left(x\right)=x+cx^{\beta}\boldsymbol{\ell}^{\ell}$,
$c\in\mathbb{R}$, $(\beta,\ell)\succ(1,0)$. A straightforward computation
shows that, for every integer $p\ge1$, the support of the $p$-th
derivative $\left(x^{\beta}\boldsymbol{\ell}^{\ell}\right)^{\left(p\right)}$
is contained in the set $\left\{ \left(\beta-p,\ell\right),\left(\beta-p,\ell+1\right),\ldots,\left(\beta-p,\ell+p\right)\right\} $.
Hence, it follows from Taylor formula that 
\begin{align}
\left(\varphi_{\beta,\ell}\circ h\right)\left(x\right) & =\left(x+cx^{\beta}\boldsymbol{\ell}^{\ell}\right)\circ\left(x+\varepsilon\left(x\right)\right)\nonumber \\
 & =h(x)+cx^{\beta}\boldsymbol{\ell}^{\ell}+\sum_{p=1}^{\infty}\sum_{j_{p}=0}^{p}b_{p,j_{p}}x^{\beta-p}\boldsymbol{\ell}^{\ell+j_{p}}\varepsilon\left(x\right)^{p},\quad b_{p,j_{p}}\in\mathbb{R}.\label{eq:composition-by-phi-1}
\end{align}
Notice that, for each $p\ge1$, every element $\left(\gamma,r\right)$
of the support of $\varepsilon\left(x\right)^{p}$ has the form 
\[
\left(\gamma,r\right)=\left(\alpha_{i_{1}}+\cdots+\alpha_{i_{p}},k_{i_{1}}+\cdots+k_{i_{p}}\right),
\]
where the exponents $\left(\alpha_{i_{s}},k_{i_{s}}\right)$, $s=1,\ldots,p$,
belong to $\mathcal{S}\left(\varepsilon\right)$. Hence, every element
of the support of the double sum in the formula \eqref{eq:composition-by-phi-1}
has the form 
\begin{equation}
\left(\beta,\ell\right)+\left(\alpha_{i_{1}}-1+\cdots+\alpha_{i_{p}}-1,k_{i_{1}}+\cdots+k_{i_{p}}\right)+\left(0,j_{i_{p}}\right),\label{eq:suppor-composition-by-monomial}
\end{equation}
where $p\ge1$, $\left(\alpha_{i_{s}},k_{i_{s}}\right)\in\mathcal{S}\left(h\right)$
and $j_{i_{p}}\in\left\{ 0,\ldots,p\right\} $.\\

Two main facts follow from this computation. Denote by $H$ the (additive)
sub-semigroup of $\mathbb{R}_{\geq0}\times\mathbb{Z}$ generated by
$\left(\beta,\ell\right)$, the elements $\left(\alpha-1,k\right)$
for $\left(\alpha,k\right)\in\mathcal{S}\left(h\right)$ and $\left(0,1\right)$.
First, by \eqref{eq:composition-by-phi-1}, the support $\mathcal{S}\left(\varphi_{\beta,\ell}\circ h\right)$
of the composition is contained in the union $\mathcal{S}\left(h\right)\cup H$.
Since $\mathcal{S}\left(h\right)$ is well-ordered, and since, by
Neumann's Lemma \ref{lem:neumann_lemma}, $H$ is well-ordered, the
support $\mathcal{S}\left(\varphi_{\beta,\ell}\circ h\right)$ is
also well-ordered. Second, by Neumann's Lemma \ref{lem:neumann_lemma},
the composition $\varphi_{\beta,\ell}\circ h$ is well-defined, meaning
that every monomial in the support of $\varphi_{\beta,\ell}\circ h$
has a well-defined coefficient. This in particular implies that $\varphi_{\beta,\ell}\circ h\in\mathcal{L}^{0}$.
Now, assume additionally that $h\in\mathcal{L}_{\mathfrak{D}}^{0}$,
so $\mathcal{S}\left(h\right)$ is contained in a (additive) sub-semigroup
of $\mathbb{R}_{>0}\times\mathbb{Z}$ generated by finitely many elements
$\left(\gamma_{1},p_{1}\right),\ldots,\left(\gamma_{r},p_{r}\right)$
of $\mathbb{R}_{>0}\times\mathbb{Z}$. For $n\in\mathbb{N}$, we can
write $\left(n\gamma_{i}-1,np_{i}\right)=\left(n-1\right)\left(\gamma_{i},p_{i}\right)+\left(\gamma_{i}-1,p_{i}\right)$.
Hence, $\mathcal{S}\left(\varphi_{\beta,\ell}\circ h\right)$ is contained
in the (additive) sub-semigroup of $\mathbb{R}_{\geq0}\times\mathbb{Z}$
generated by $\left(\beta,\ell\right)$, $\left(0,1\right)$ and the
elements $\left(\gamma_{i},p_{i}\right)$, $\left(\gamma_{i}-1,p_{i}\right)$
for $i=1,\ldots,r$. In particular, $\varphi_{\beta,\ell}\circ h\in\mathcal{L}_{\mathfrak{D}}^{0}$.\\

Consider now the sequence $\left(\varphi_{\alpha,m}\right)_{\left(\alpha,m\right)\in W}$
given in the statement of the proposition. We prove the existence
of the composition $\circ_{(\alpha,m)\in W}\,\varphi_{\alpha,m}$
by transfinite induction. Let $(\alpha_{0},m_{0})$ be the smallest
element of $W$. Put $\psi_{\alpha_{0},m_{0}}:=\varphi_{(\alpha_{0},m_{0})}\in\mathcal{L}^{0}$.
Consider the sub-semigroup $\overline{W}\subset\mathbb{R}_{\geq0}\times\mathbb{Z}$
generated by the elements of $W$, the elements $\left(\alpha-1,p\right)$
for $\left(\alpha,p\right)\in W$, and $\left(0,1\right)$. We already
know that $\overline{W}$ is well-ordered, and of finite type if $W$
is of finite type.

Existence of partial compositions in $\mathcal{L}^{0}$ in the \emph{non-limit
case} follows directly by the above considerations. Moreover, the
support of the partial compositions is contained in $\overline{W}$.
Consider the \emph{limit case}. Suppose $(\alpha_{\theta},m_{\theta})$
is a limit ordinal (or the order type of $W$), and for every $(\beta,l)\in W,\ (\beta,l)\prec(\alpha_{\theta},m_{\theta})$,
it holds that $\psi_{\beta,l}\in\mathcal{L}^{0}$ and $\mathcal{S}(\psi_{\beta,l})\subset\overline{W}$.
We prove that $\psi_{\beta,l}$ converge in $\mathcal{L}^{0}$ in
the product topology, as $(\beta,l)\to(\alpha_{\theta},m_{\theta})$,
and that the support of the limit belongs to $\overline{W}$.

By \eqref{eq:composition-by-phi-1}, $\mathcal{S}(\psi)\subset\mathcal{S}(\varphi_{\beta,l}\circ\psi)\subseteq\overline{W}$
for every partial sum $\psi$ and every change of variables $\varphi_{\beta,l}$,
$(\beta,l)\in W$. Thus, if $(\gamma,k)\in\mathcal{S}(\psi_{\beta,l})$,
then $(\gamma,k)\in\mathcal{S}(\psi_{\alpha,m})$, for all $(\beta,l)\prec(\alpha,m)\prec(\alpha_{\theta},m_{\theta})$.
To prove the convergence in the product topology, we have to prove
that the coefficient of monomial $x^{\gamma}\ell^{k}$ eventually
stabilizes in the sequence of partial sums $(\psi_{\beta,l})_{(\beta,l)\prec(\alpha_{\theta},m_{\theta})}$.
Since $\left(\beta,l\right)$ is a summand of \eqref{eq:suppor-composition-by-monomial}
and the sequence $\left(\beta,l\right)\in W$ is strictly increasing,
it follows from Neumann's Lemma \ref{lem:neumann_lemma}.3. that each
$\left(\gamma,k\right)\in\overline{W}$ is realized at most finitely
many times as sum of the type \eqref{eq:suppor-composition-by-monomial}.
That is, the coefficient of monomial $x^{\gamma}\ell^{k}$ in the
support of $(\psi_{\beta,l})$ changes only \emph{finitely many} times
in the course of compositions $\circ_{(\beta,l)\prec(\alpha_{\theta},k_{\theta})}\varphi_{\beta,l}$.
This guarantees the convergence in $\mathcal{L}^{0}$ of partial compositions
in the limit case, for the product topology. The limit is the partial
composition for the limit ordinal $(\alpha_{\theta},m_{\theta})$:
\[
\psi_{\alpha_{\theta},m_{\theta}}=\circ_{(\beta,l)\prec(\alpha_{\theta},m_{\theta})}\,\varphi_{\beta,l}:=\lim_{(\beta,l)\to(\alpha_{\theta},m_{\theta})}\psi_{\beta,l},
\]
with $\mathcal{S}(\psi_{\alpha_{\theta},m_{\theta}})\subseteq\overline{W}$
by construction. \\

$(2)$ Let $\varphi\in\mathcal{L}^{0}$, $\varphi(x)=ax+\mathrm{h.o.t},$
$a\in\mathbb{R}$. Obviously, $\varphi(x)=ax\circ\varphi_{0}(x)$,
where $\varphi_{0}(x)=x+ax^{\alpha_{0}}\boldsymbol{\ell}^{m_{0}}+\mathrm{h.o.t.}$
tangent to the identity. By Neumann's lemma \ref{lem:neumann_lemma},
the sub-semigroup $W$ of $\mathbb{R}_{>0}\times\mathbb{Z}$ generated
by $\mathcal{S}(\varphi-a\cdot\mathrm{id})$ is well-ordered, with
$(\alpha_{0},m_{0})$ its smallest element. We prove that $\varphi_{0}$
can be decomposed in a transfinite composition of elementary changes
of variables $\left(\varphi_{\beta,m}\right)_{\left(\beta,m\right)\in V}$,
for some $V\subseteq W$. More precisely: we build, by transfinite
induction, a sequence $\left(\varphi_{\beta,m}\right)_{(\beta,m)\in V}$,
$V\subseteq W$, of elementary changes of variables, such that, for
every $\left(\alpha,k\right)\in W$, there exists $\left(\beta(\alpha,k),m(\alpha,k)\right)\in V$
and a partial composition 
\[
\psi_{\beta(\alpha,k),m(\alpha,k)}=\circ_{(\beta,m)\prec\left(\beta(\alpha,k),m(\alpha,k)\right)}\varphi_{(\beta,m)}
\]
with $\text{ord}\left(\varphi-\psi_{\beta(\alpha,k),m(\alpha,k)}\right)\succ\left(\alpha,k\right)$.
Moreover, the function $(\alpha,m)\in W\mapsto\left(\beta(\alpha,k),m(\alpha,k)\right)\in V$
is increasing, but not necessarily strictly. Since $W$ contains arbitrarily
big elements with respect to the order topology, it means that the
sequence $\left(\psi_{\beta,m}\right)_{\left(\beta,m\right)\in V}$
converges towards $\varphi$ in the formal topology. In particular,
it converges towards $\varphi$ in the product topology with respect
to the discrete topology.

Put $\psi_{\alpha_{0},m_{0}}(x):=\varphi_{\alpha_{0},m_{0}}(x)=x+ax^{\alpha_{0}}\ell^{m_{0}}$.
Consider $\left(\alpha,k\right)\in W$. By the induction hypothesis,
for all $\left(\gamma,r\right)\in W$, $\left(\gamma,r\right)\prec\left(\alpha,k\right)$,
there exists a transfinite composition $\psi_{\beta(\gamma,r),m(\gamma,r)}\in\mathcal{L}^{0}$,
such that $\left(\gamma,r\right)\prec\text{ord}(\varphi-\psi_{\beta(\gamma,r),m(\gamma,r)})$.
We prove that then there exists a transfinite composition $\psi_{\beta(\alpha,k),m(\alpha,k)}$,
such that $\left(\alpha,k\right)\prec\text{ord}(\varphi-\psi_{\beta(\alpha,k),m(\alpha,k)})$.\\

In the \emph{non-limit case}, consider the predecessor $\left(\alpha',k'\right)$
of $\left(\alpha,k\right)$ in $W$, and the partial composition $\psi_{\beta(\alpha',k'),m(\alpha',k')}\in\mathcal{L}^{0}$.
Then either $\psi_{\beta(\alpha',k'),m(\alpha',k')}=\psi_{\beta(\alpha,k),m(\alpha,k)}$,
or there exists an elementary change of variables $\varphi_{\alpha,k}(x)=x+cx^{\alpha}\boldsymbol{\ell}^{k}$,
with $c\in\mathbb{R}$ such that the term of order $\left(\alpha,k\right)$
from $\varphi-\psi_{\beta(\alpha',k'),m(\alpha',k')}$ is cancelled
in $\varphi-\psi_{\beta(\alpha,k),m(\alpha,k)}$: 
\[
\varphi-\psi_{\beta(\alpha,k),m(\alpha,k)}=\varphi-\varphi_{\alpha,k}\circ\psi_{\beta(\alpha',k'),m(\alpha',k')}=(\varphi-\psi_{\beta(\alpha',k'),m(\alpha',k')})-cx^{\alpha}\boldsymbol{\ell}^{k}+\text{h.o.t.}
\]
Obviously, $\psi_{\beta(\alpha,k),m(\alpha,k)}=\varphi_{\alpha,k}\circ\psi_{\beta(\alpha',k'),m(\alpha',k')}\in\mathcal{L}^{0}$.
Hence, the claim is proved in the non-limit case.

In the \emph{limit case}, when $(\alpha,k)$ is a limit ordinal, we
put $\psi_{\beta(\alpha,k),m(\alpha,k)}=\lim_{\left(\gamma,r\right)\prec(\alpha,k)}\psi_{\beta(\gamma,r),m(\gamma,r)}$,
as in Definition \ref{def:transfinite-composition}. By $(1)$, this
limit exists and belongs to $\mathcal{L}^{0}$.

We conclude the proof by noticing that, if $\varphi\in\mathcal{L}_{\mathfrak{D}}^{0}$,
that is, if $\mathcal{S}\left(\varphi\right)$ is of finite type,
so is the set $W$. 
\end{proof}
Propositions \ref{prop:changement-transfini} and \ref{prop:characterization-changes-variables}
will be used in the proof of Theorem A to derive the formal normal
forms of $f\in\mathcal{L}^{H}$ by \emph{transfinite induction}: eliminating
terms from $f$, step by step, by elementary changes of variables,
when possible.\\

By Definition~\ref{def:transfinite-composition}, the composition
of a transfinite sequence $(f_{\mu})_{\mu<\theta}\in\mathcal{L}^{0}$
\emph{exists}, if the transsequence of partial compositions $(\psi_{\mu})_{\mu<\nu}$
at any limit ordinal $\nu\leq\theta$ converges in the product topology
in $\mathcal{L}^{0}$. That is if, for every $(\beta,l)\in\mathbb{R}_{>0}\times\mathbb{Z}$,
there exists an index $\mu_{\beta,l}$ such that, for $\mu_{\beta,l}<\mu<\nu$,
the coefficient $[\psi_{\mu}]_{\beta,l}$ remains constant. In the
proof of Proposition~\ref{prop:characterization-changes-variables},
for transfinite compositions of \emph{elementary changes of variables}
we have proved (by Neumann's lemma) even more: for every $(\beta,l)\in\mathbb{R}_{>0}\times\mathbb{Z}$,
the coefficient $[\psi_{\mu}]_{\beta,l}$ changes in the sequence
of partial compositions $(\psi_{\mu})_{\mu<\nu}$ at most at \emph{finitely
many} indices.

\subsection{\label{sub:Precise-form-Theorem A}The precise form of Theorem A}

We now give the precise statement of Theorem A, which was given with
less details on page \pageref{eq:eksp}.

\begin{namedthm}
{Theorem A \rm{(Formal normal forms)}}Let $f\in\mathcal{L}^{H}$
(\emph{resp.} $f\in\mathcal{L}_{\mathfrak{D}}^{H}$). 
\begin{enumerate}[1., font=\textup, nolistsep, leftmargin=0.6cm]
\item $f$ is formally equivalent in $\mathcal{L}^{0}$ (resp. in $\mathcal{L}_{\mathfrak{D}}^{0}$)
to the finite normal form $f_{0}\in\mathcal{L}^{H}$ (actually in
$\mathcal{L}_{\mathfrak{D}}^{H}$):

\begin{enumerate}[(a), font=\textup, nolistsep,leftmargin=0.6cm]
\item \emph{(parabolic case)} 
\begin{align*}
f\left(x\right) & =x+ax^{\alpha}\boldsymbol{\ell}^{k}+\mathrm{h.o.t},\ \alpha\geq1,\ k\in\mathbb{Z},\ (\alpha,k)\succ(1,0);\ a\in\mathbb{R},\ a\neq0;\\
f_{0}\left(x\right) & =x+ax^{\alpha}\boldsymbol{\ell}^{k}+bx^{2\alpha-1}\boldsymbol{\ell}^{2k+1},\ b\in\mathbb{R}.
\end{align*}

\item \emph{(hyperbolic case)} 
\begin{align*}
f\left(x\right) & =\lambda x+ax\boldsymbol{\ell}+\mathrm{h.o.t},\ a\in\mathbb{R},\ \lambda>0,\ \lambda\ne1;\\
f_{0}\left(x\right) & =\lambda x+ax\boldsymbol{\ell}.
\end{align*}

\item \emph{(strongly hyperbolic case)} 
\begin{align*}
f(x)=\lambda x^{\alpha}+\mathrm{h.o.t},\ \lambda>0,\ \alpha\neq1;\qquad f_{0}(x)=x^{\alpha}.
\end{align*}

\end{enumerate}
\item Let $f$ be hyperbolic or parabolic. Then $f$ is formally equivalent
in $\mathcal{L}^{0}$ (\emph{resp}. $\in\mathcal{L}_{\mathfrak{D}}^{0}$)
to $\widehat{f}_{0}\in\mathcal{L}^{H}$ (\emph{resp}. in $\mathcal{L}_{\mathfrak{D}}^{H}$),
given as the formal time-one map of the following vector fields:

\begin{enumerate}[(a), font=\textup, nolistsep,leftmargin=0.6cm]
\item \emph{(parabolic case)} 
\begin{align*}
 & \qquad\quad f_{0}(x)=\exp(X_{\alpha,k,a,b}).\mathrm{id},\\
 & \qquad\quad X_{\alpha,k,a,b}=\frac{ax^{\alpha}\boldsymbol{\ell}^{k}}{1+\frac{a\alpha}{2}x^{\alpha-1}\boldsymbol{\ell}^{k}-(\frac{ak}{2}+\frac{b}{a})x^{\alpha-1}\boldsymbol{\ell}^{k+1}}\frac{\mathrm{d}}{\mathrm{d}x}.
\end{align*}

\item \emph{(hyperbolic case)} 
\begin{align*}
 & \ \widehat{f}_{0}(x)=\exp(X_{\lambda,a}).\mathrm{id},\quad X_{\lambda,a}=\frac{\log\lambda\cdot x}{1+\frac{a}{2(\lambda-1)}\boldsymbol{\ell}}\frac{\mathrm{d}}{\mathrm{d}x}.
\end{align*}

\end{enumerate}
\end{enumerate}
\noindent In the parabolic case, the formal normal forms are described
by the quadruples: 
\[
(\alpha,k,a,b);\ \alpha\geq1,\ k\in\mathbb{Z},\ b\in\mathbb{R},\ a\neq0.
\]
Additionally, if $\alpha>1$, $a$ can be replaced by $\mathrm{sgn}(a)$,
up to a linear change. 
\end{namedthm}
\noindent It is worth recalling that in the \emph{hyperbolic case},
the series $\exp\left(X_{\lambda,a}\right)\cdot\mathrm{id}$ does
not converge in $\mathcal{L}$ neither for the formal topology nor
for the product topology with respect to the discrete topology. Nevertheless,
it converges for the weak topology (the product topology with respect
to the Euclidean topology), which takes into account not only the
supports, but also the size of their coefficients. For details, see
the proof of Proposition~\ref{prop: existence-formal-flow-parabolic}.

Note that the formal normal form of a \emph{strongly hyperbolic} transseries
cannot be expressed as the formal time-one map of a vector field in
$\mathcal{L}$. The exponential of a parabolic vector field does not
converge in $\mathcal{L}$ in any of the three topologies that we
mentioned on page~\pageref{eq:elementary-change}. The formula \eqref{eq:eksp}
for the formal-time map of a parabolic field does not make sense in
$\mathcal{L}$. The detailed description of this phenomenon is given
in Proposition~\ref{prop:not-weakly-well-defined}.

Furthermore, notice that if $f\in\mathbb{R}[[x]]$ is a parabolic
formal power series, its formal normal form $f_{0}$ in $\mathcal{L}^{0}$
is different from the standard formal normal form (recalled in Section
\ref{sub:poisson-brackets-homologic}). Indeed, due to the fact that
we use a wider class of changes of variables, the residual term can
also be eliminated. See Example~\ref{fps} for details.

\subsection{\label{sub:proof-precise-form-A}Proof of the precise form of Theorem
A}

The proof is divided in three parts. Let $f\in\mathcal{L}^{H}$ (resp.
$f\in\mathcal{L}_{\mathfrak{D}}^{H}$). 
\begin{enumerate}[1., font=\textup, leftmargin=0.6cm]
\item \emph{Part }1 is \emph{the step} of the algorithm. We describe a
process which allows, by an appropriate elementary change of variables
as defined on page \pageref{eq:elementary-change}, to eliminate the
smallest possible monomial of $\mathcal{S}\left(f\right)$. \\

\item \emph{Part }2 is \emph{the convergence} of the algorithm. We prove
that the collection of consecutive changes of variables made in \emph{Part
1} is actually a \emph{transfinite sequence}, which can be indexed
by a well-ordered subset of $\mathbb{R}_{>0}\times\mathbb{Z}$. The
main difficulty here is the following one: each execution of a local
step of the algorithm, while eliminating a single monomial of the
support of the transseries to which it is applied, may at the same
time add infinitely many new monomials to the support. Hence, we have
to prove that, nevertheless, all the monomials which appear during
the process (except at most finitely many of them) will be ultimately
eliminated by a transfinite sequence of elementary changes of variables.
\\

\item Finally, in \emph{Part 3}, we show how to obtain another normal form,
which is the formal time-one map of a vector field in the sense of
Definition~\ref{def:time-one-map}. 
\end{enumerate}
\noindent \textbf{Part 1 (the step of the algorithm).}

\noindent Let $f\in\mathcal{L}^{H}$. We examine three possible situations:
$f$ parabolic, hyperbolic, or strongly hyperbolic. In each case,
we show how to construct an elementary change of variables which will
eliminate the smallest possible monomial of $\mathcal{S}\left(f\right)$.
We give a detailed description for $f$ parabolic. The other cases
follow the similar scheme. \\

(a) $f$ \emph{parabolic.} Let us write: 
\begin{align*}
f(x)=x+ax^{\alpha}\boldsymbol{\ell}^{k} & +a_{1}x^{\alpha}\boldsymbol{\ell}^{k+1}+\text{h.o.t},\\
 & \ a\in\mathbb{R},\ a\neq0,\ (\alpha,k)\succ(1,0).
\end{align*}
In \emph{Part 2} of the proof we want to eliminate the term of smallest
possible order in the expansion of $f$, and proceed by induction.
To see which terms can be eliminated, we examine the action of an
elementary change of variables $\varphi_{\beta,m}$: 
\begin{align*}
\varphi_{\beta,m}(x)=x+ & cx^{\beta}\boldsymbol{\ell}^{m},\quad c\in\mathbb{R},\ c\neq0,\ (\beta,m)\in\mathbb{R}_{>0}\times\mathbb{Z},\ (1,0)\prec(\beta,l).
\end{align*}
We apply the method described in Section \ref{sec:lie-brackets-normal-forms}.
Recall from \eqref{eq:formule-conjugaison} that if $\psi$ is the
leading term of the difference $f\circ\varphi_{\beta,m}-\varphi_{\beta,m}\circ f$,
then 
\begin{equation}
\varphi_{\beta,m}^{-1}\circ f\circ\varphi_{\beta,m}=f+\psi+\text{h.o.t}.\label{eq:lt}
\end{equation}
We prove that $\psi$ is exactly the leading term of $ad_{ax^{\alpha}\boldsymbol{\ell}^{k}}(cx^{\beta}\boldsymbol{\ell}^{m})$,
that is, 
\[
\psi=\text{Lt}\Big(ad_{ax^{\alpha}\boldsymbol{\ell}^{k}}(cx^{\beta}\boldsymbol{\ell}^{m})\Big),
\]
where $\text{Lt}()$ denotes the leading term of expression in brackets.
Indeed, write 
\[
f\left(x\right)=x+\varepsilon\left(x\right),\quad\varphi_{\beta,m}\left(x\right)=x+\eta\left(x\right),
\]
with $\left(1,0\right)\prec\text{ord}\left(\varepsilon\right)$ and
$\left(1,0\right)\prec\text{ord}\left(\eta\right)$. We obtain, using
Taylor formula: 
\begin{align*}
\big(\varphi_{\beta,m}\circ f-f\circ\varphi_{\beta,m}\big)\left(x\right) & =\varphi_{\beta,m}\left(x+\varepsilon\left(x\right)\right)-f\left(x+\eta\left(x\right)\right)\\
 & =\varphi_{\beta,m}\left(x\right)+\varphi_{\beta,m}'\left(x\right)\varepsilon\left(x\right)+\frac{1}{2}\varphi_{\beta,m}''\left(x\right)\varepsilon^{2}\left(x\right)\\
 & \quad-f\left(x\right)-f'\left(x\right)\eta\left(x\right)-\frac{1}{2}f''\left(x\right)\eta^{2}\left(x\right)+\text{h.o.t.}\\
 & =x+\eta\left(x\right)+\left(1+\eta'\left(x\right)\right)\varepsilon\left(x\right)+\frac{1}{2}\eta''\left(x\right)\varepsilon^{2}\left(x\right)\\
 & \quad-x-\varepsilon\left(x\right)-\left(1+\varepsilon'\left(x\right)\right)\eta\left(x\right)-\frac{1}{2}\varepsilon''\left(x\right)\eta^{2}\left(x\right)+\text{h.o.t.}\\
 & =\eta'\left(x\right)\varepsilon\left(x\right)-\eta\left(x\right)\varepsilon'\left(x\right)\\
 & \quad+\frac{1}{2}\left(\eta''\left(x\right)\varepsilon^{2}\left(x\right)-\varepsilon''\left(x\right)\eta^{2}\left(x\right)\right)+\text{h.o.t.}
\end{align*}
The expansion of this expression gives 
\begin{align*}
\big(\varphi_{\beta,m}\circ f-f\circ\varphi_{\beta,m}\big)\left(x\right)= & ca\left(\beta-\alpha\right)x^{\alpha+\beta-1}\boldsymbol{\ell}^{m+k}\\
 & +ca_{1}\left(\beta-\alpha\right)x^{\alpha+\beta-1}\boldsymbol{\ell}^{m+k+1}+ca\left(m-k\right)x^{\alpha+\beta-1}\boldsymbol{\ell}^{m+k+1}\\
 & +\frac{1}{2}\left(ca^{2}\beta\left(\beta-1\right)x^{\alpha+\beta-1+\left(\alpha-1\right)}\boldsymbol{\ell}^{m+2k}-c^{2}a\alpha\left(\alpha-1\right)x^{\alpha+\beta-1+\left(\beta-1\right)}\boldsymbol{\ell}^{2m+k}\right)\\
 & +\text{h.o.t.}
\end{align*}
Let us examine various possibilities for the leading term, depending
on $\alpha,\ \beta,\ k,\ m$. If $\beta\ne\alpha$, then the leading
term of this expression is $ca\left(\beta-\alpha\right)x^{\alpha+\beta-1}\boldsymbol{\ell}^{m+k}$.
If $\beta=\alpha$, notice that one of the terms of the second line
could contribute to the order $\left(\alpha+\beta-1,m+k+1\right)$,
when $\alpha=1$, or when $\beta=1$. But in this case, since $\alpha=\beta=1$,
the coefficients of these terms vanish. So, in any case, the leading
term $\psi$ of the former expression is exactly the leading term
of $\mathrm{ad}{}_{ax^{\alpha}\boldsymbol{\ell}^{k}}(cx^{\beta}\boldsymbol{\ell}^{m})=ca\left(\beta-\alpha\right)x^{\alpha+\beta-1}\boldsymbol{\ell}^{m+k}+ca\left(m-k\right)x^{\alpha+\beta-1}\boldsymbol{\ell}^{m+k+1}$.
By \eqref{eq:lt}, we now have: 
\[
\varphi_{\beta,m}^{-1}\circ f\circ\varphi_{\beta,m}=f+\text{Lt}\Big(\mathrm{ad}{}_{ax^{\alpha}\boldsymbol{\ell}^{k}}(cx^{\beta}\boldsymbol{\ell}^{m})\Big)+\mathrm{h.o.t.}
\]
In order to find the change of variables $\varphi_{\beta,m}$ whose
action would eliminate a given monomial $dx^{\gamma}\boldsymbol{\ell}^{l}$
in the expansion of $f$, we need to solve the \emph{homological equation}:
\begin{equation}
\text{Lt}\left(\mathrm{ad}{}_{ax^{\alpha}\boldsymbol{\ell}^{k}}(cx^{\beta}\boldsymbol{\ell}^{m})\right)=dx^{\gamma}\boldsymbol{\ell}^{r}.\label{solv}
\end{equation}
That is, as in the proof of Proposition~\ref{prop1}, we want to
see which monomials are not in the image of $\mathrm{ad}{}_{ax^{\alpha}\boldsymbol{\ell}^{k}}(J_{\beta,m})$
for any $(\beta,\, m)\succ(1,0)$, since these cannot be eliminated
by elementary changes of variables. The homological equation \eqref{solv}
leads to $\alpha+\beta-1=\gamma$. That is, to $\beta=\gamma-\alpha+1$.
We have three possibilities: 
\begin{enumerate}[(i), font=\textup, nolistsep,leftmargin=0.6cm]
\item If $\beta\ne\alpha$, i.e. if $\gamma\ne2\alpha-1$, then we put
$m=r-k$ and we can solve the homological equation. 
\item (If $\beta=\alpha$, i.e. if $\gamma=2\alpha-1$, then the homological
equation becomes $ca\left(m-k\right)x^{\gamma}\boldsymbol{\ell}^{m+k+1}=dx^{\gamma}\boldsymbol{\ell}^{r}$.
This equation leads to $m+k+1=r$, that is, $m=r-k-1$. If $m\ne k$,
i.e. if $r\ne2k+1$, the homological equation can be solved. 
\item If $r=2k+1$, then the homological equation cannot be solved, so the
term $dx^{\gamma}\boldsymbol{\ell}^{r}$ cannot be eliminated from
$f$. We have 
\[
dx^{\gamma}\boldsymbol{\ell}^{r}\notin\mathrm{ad}{}_{ax^{\alpha}\boldsymbol{\ell}^{k}}\Big(\bigcup_{(1,0)\prec(\beta,m)}J_{\beta,m}\Big).
\]

\end{enumerate}
Note that the assumption $(1,0)\prec(\beta,m)$ on order of elementary
changes of variables $\varphi_{\beta,m}-\text{id}$ is necessary so
that $\varphi_{\beta,m}\in\mathcal{L}^{0}$. \\

If $\alpha>1$, it follows from our computations that all the terms
in the expansion of $f$ can be eliminated except for the first term
$ax^{\alpha}\boldsymbol{\ell}^{k}$, and the \emph{residual term}
$dx^{2\alpha-1}\boldsymbol{\ell}^{2k+1}$.

If $\alpha=1$, along with these two terms, we observe that the term
$a_{1}x\boldsymbol{\ell}^{k+1}$ is not in the image of $\mathrm{ad}{}_{ax\boldsymbol{\ell}^{k}}$.
Indeed, to solve the homological equation, we need a change of variables
$\varphi_{1,0}$, which is impossible by the comment above. Nevertheless,
in that case, as initial step we apply the appropriate linear change
of variables, $\varphi_{1,0}\left(x\right)=cx$, $c\neq0$. The action
of the linear change of variables on the first terms of $f$ is explained
in the following computation: 
\begin{align}
\varphi_{1,0}^{-1}\circ f\circ & \varphi_{1,0}=x+ac^{\alpha-1}x^{\alpha}\boldsymbol{\ell}^{k}+\nonumber \\
 & +c^{\alpha-1}\big(a_{1}-ka\log c\big)x^{\alpha}\boldsymbol{\ell}^{k+1}+\text{h.o.t. }\label{pomm}
\end{align}
Hence, if $\left(1,k+1\right)\in\mathcal{S}\left(f\right)$, we eliminate
the term $a_{1}x\boldsymbol{\ell}^{k+1}$ with the linear change of
variables $\varphi_{1,0}(x)=e^{\frac{a_{1}}{k\cdot a}}x$.\\

Notice that, if $\alpha>1$, we can use the linear change of variables
$\varphi_{1,0}(x)=cx$, with $c=|a|^{-\frac{1}{\alpha-1}}$, to \emph{normalize}
the coefficient $a$ to $\text{sign}\left(a\right)$.\\

(b) \emph{$f$ hyperbolic.} Let 
\[
f(x)=\lambda x+ax\boldsymbol{\ell}+\text{h.o.t.}
\]
Applying the change of variables $\varphi_{\beta,m}(x)=x+cx^{\beta}\boldsymbol{\ell}^{m}$,
$c\in\mathbb{R}$, $(1,0)\prec(\beta,m)$, we obtain: 
\begin{align*}
\qquad\varphi_{\beta,m}\circ f & -f\circ\varphi_{\beta,m}=\\
 & =c(\lambda^{\beta}-\lambda)x^{\beta}\boldsymbol{\ell}^{m}+\\
 & \quad-\big(ac(1-\beta\lambda^{\beta-1})+cm\lambda^{\beta}\log\lambda\big)\, x^{\beta}\boldsymbol{\ell}^{m+1}+\text{h.o.t.}
\end{align*}
Then we proceed as in (a) above: every term can be eliminated, except
for the terms of order $(1,0)$ and $\left(1,1\right)$. \\

(c) \emph{$f$ strongly hyperbolic.} First, by the linear elementary
change of variables $\varphi_{1,0}(x)=\lambda^{-\frac{1}{\alpha-1}}x$,
we \emph{normalize} the first term. Then, as in the parabolic case,
we want to remove the other monomials by appropriate elementary changes
of variables $\varphi_{\beta,m}(x)=x+cx^{\beta}\boldsymbol{\ell}^{m}$,
$c\in\mathbb{R}$, $(1,0)\prec(\beta,m)$. Let: 
\begin{align*}
f(x)=x^{\alpha}+dx^{\gamma}\boldsymbol{\ell}^{r} & +\text{h.o.t},\quad(\alpha,0)\prec(\gamma,r),\ d\in\mathbb{R},\ d\neq0.
\end{align*}
As above, we consider the difference $\left(\varphi_{\beta,m}\circ f-f\circ\varphi_{\beta,m}\right)\left(x\right)=t_{\varphi_{\beta,m}}(x)+\text{h.o.t}$.
Here, the leading monomial $t_{\varphi_{\beta,m}}$ is given by: 
\begin{equation}
\qquad\ t_{\varphi_{\beta,m}}(x)=\begin{cases}
c(\alpha^{m}-\alpha)x^{\alpha}\boldsymbol{\ell}^{m}, & \beta=1,\\
-c\alpha x^{\alpha+\beta-1}\boldsymbol{\ell}^{m}, & \alpha>1,\ \beta\neq1,\\
c\alpha^{m}x^{\alpha\beta}\boldsymbol{\ell}^{m}, & \alpha<1,\ \beta\neq1.
\end{cases}\label{analy}
\end{equation}
By the change of variables $\varphi_{\beta,m}$ which solves the equation
$t_{\varphi_{\beta,m}}(x)=dx^{\gamma}\boldsymbol{\ell}^{r}$, we eliminate
the term $x^{\gamma}\boldsymbol{\ell}^{r}$ from $f$. Notice that,
unlike in the former cases, in the strongly hyperbolic case \emph{all
the monomials except for the first one can be eliminated}.\\

\noindent \textbf{Part 2 (the convergence of the algorithm).} Let
$f\in\mathcal{L}^{H}$. We repeatedly apply to $f$ the changes of
variables built in local \emph{Part 1} of the proof. This step by
step process leads to some \emph{collection} $(\varphi_{\mu})_{\mu\in I}$
of elementary changes of variables from $\mathcal{L}_{0}$, indexed
by some initial segment $I$ of the ordinals: 
\[
f\underset{\varphi_{0}}{\longrightarrow}f_{1}=\varphi_{0}^{-1}\circ f\circ\varphi_{0}\underset{\varphi_{1}}{\longrightarrow}f_{2}=\varphi_{1}^{-1}\circ f_{1}\circ\varphi_{1}\rightarrow\cdots
\]
For each step $\mu$, the change of variables $\varphi_{\mu}$ is
designed to eliminate the smallest possible monomial of the support
$\mathcal{S}\left(f_{\mu}\right)$. We have to prove that the collections
$\left(\varphi_{\mu}\right)$ and $\left(f_{\mu}\right)$ obtained
in the process are \emph{transfinite sequences}. That is, that there
exists a bounding ordinal $\theta$ such that $I$ is the set of ordinals
$\{\mu<\theta\}$. The idea is to analyze the orders of elementary
changes of variables used in step by step eliminations of all possible
monomials from $f$. The analysis for $f$ parabolic (other two cases
can be done similarly) is given in Section~\ref{sec:Appendix}, Subsections~\ref{sub:one}
and \ref{sub:two}. For $f(x)=x+ax^{\alpha}\boldsymbol{\ell}^{k}+h.o.t$,
$a\neq0$, we prove that the supports of all $(f_{\mu}-\textrm{id})$
belong to the set $\mathcal{R}\subset\mathbb{R}_{>0}\times\mathbb{Z}$:
\[
\mathcal{R}=\Big\langle\mathcal{S}(f-\mathrm{id})\setminus\{\alpha,k\}-\left(\alpha,k+1\right)\Big\rangle+\mathbb{N}_{*}\left(\alpha-1,k\right)+\left\{ 1\right\} \times\mathbb{N}_{*}.
\]
The orders of the elementary changes of variables used for normalization
thus belong to the set $\mathcal{R}_{1}\subset\mathbb{R}_{>0}\times\mathbb{Z}$
explicitely obtained from $\mathcal{R}$: 
\[
\mathcal{R}_{1}=\Big\langle\mathcal{S}(f-\mathrm{id})\setminus\{\alpha,k\}-\left(\alpha,k+1\right)\Big\rangle+\mathbb{N}\left(\alpha-1,k\right)+\left\{ 0\right\} \times\mathbb{N}.
\]
Both $\mathcal{R}$ and $\mathcal{R}_{1}$ are well-ordered by Neumann's
lemma~\ref{lem:neumann_lemma}, since $\mathcal{S}(f-\mathrm{id})$
is well-ordered. The computations in \emph{Part 1} of the proof show
that, in each step, not only the monomial of smallest order in $f_{\mu}$
is eliminated, but no other monomial of smaller order is added to
the support, so that the orders of $(f_{\mu}-\mathrm{id})$ and accordingly
of $(\varphi_{\mu}-\mathrm{id})$ strictly increase and at the same
time stay inside well-ordered sets $\mathcal{R}$ resp. $\mathcal{R}_{1}$.
The steps of eliminations can be carried through, since we know from
Propositions~\ref{prop:changement-transfini} and \ref{prop:characterization-changes-variables}
that the partial compositions $\psi_{\nu}$ and $f_{\nu}$ at any
limit ordinal $\nu$ do exist in $\mathcal{L}^{0}$.

Let us now index the collection $(\varphi_{\mu})$ of elementary changes
of variables by the orders $\text{ord}(\varphi_{\mu}-\mathrm{id})$.
The orders form a \emph{strictly increasing}, \emph{well-ordered}
subset $W$ of $\mathbb{R}_{\ge0}\times\mathbb{Z}$. Therefore, we
obtain a transfinite sequence of elementary changes of variables and
we use the notation $\left(\varphi_{\beta,m}\right)_{\left(\beta,m\right)\in W}$.
According to Proposition \ref{prop:characterization-changes-variables},
the transfinite composition $\varphi=\circ_{\left(\beta,m\right)\in W}\varphi_{\beta,m}$
is a well-defined element of $\mathcal{L}^{0}$. On the other hand,
Proposition~\ref{prop:changement-transfini} guarantees that our
transfinite process of eliminations \emph{ends}, that is, \emph{converges
to an element from $\mathcal{L}^{H}$}. By construction in the algorithm,
the limit is the normal form $f_{0}\in\mathcal{L}^{H}$. \\

If additionally $f\in\mathcal{L}_{\mathfrak{D}}^{H}$, that is if
$f$ is of finite type, we prove that the normalizing change of variables
$\varphi$ is also of finite type. The proof of this fact is quite
long and technical. It is described in detail for the parabolic case
in Section \ref{sec:Appendix}: Appendix. The proof of the hyperbolic
and the strongly hyperbolic case follow the same lines and are left
to the reader.\\

\noindent \textbf{Part 3 (the 2nd normal form).}

\emph{(i) }Take the vector field $X$ as in $(2.a)$ or $(2.b)$ of
the theorem. Expanding the \emph{coefficient} $\xi(x)$ of the vector
field in the geometric series, we see that $\xi\in\mathcal{L}$ and
that $\left(1,0\right)\preceq\mathrm{ord}\left(\xi\right)$. By Propositions
\ref{prop: existence-formal-flow-parabolic} and \ref{prop: existence-formal-flow-hyperbolic}
of Sections \ref{sub:fivetwo} and \ref{sub:fivefour}, the exponential
of $X$ (formula \eqref{eq:eksp}) converges in $\mathcal{L}$ and
gives a normal form as the formal time-one map $\widehat{f}_{0}\in\mathcal{L}^{H}$.
It should be mentioned that the appropriate topology for the convergence
of the series in \eqref{eq:eksp} depends on whether $\mathrm{ord}\left(\xi\right)$
is equal to or bigger than $\left(1,0\right)$. This is why the proof
of \ref{prop: existence-formal-flow-parabolic} is split between Subsection
\ref{sub:fivetwo} and \ref{sub:fivefour}. Finally, we simply observe
from this expansion that $\widehat{f}_{0}=f_{0}+\text{h.o.t.}$

\smallskip{}

\emph{(ii) }Let $f\in\mathcal{L}^{H}$, and let $f_{0}$ and $\widehat{f}_{0}$
be as in the statements $1.$ and $2.$ of the theorem. We show that
there exists a change of variables $\widehat{\varphi}\in\mathcal{L}^{0}$
that conjugates $f$ to $\widehat{f}_{0}$. Indeed, by Theorem~A$(1)$,
there exists $\varphi\in\mathcal{L}^{0}$ such that 
\begin{equation}
f=\varphi\circ f_{0}\circ\varphi^{-1}.\label{cvv}
\end{equation}
On the other hand, by $(i)$ above, $\widehat{f}_{0}\in\mathcal{L}^{H}$
and $\widehat{f}_{0}=f_{0}+\mathrm{h.o.t.}$ Applying the transfinite
algorithm described in \emph{Parts 1-2} of the proof to $\widehat{f}_{0}\in\mathcal{L}^{H}$,
we obtain an element $\psi\in\mathcal{L}^{0}$ such that: 
\begin{equation}
\widehat{f}_{0}=\psi\circ f_{0}\circ\psi^{-1}.\label{cvv1}
\end{equation}
By \eqref{cvv} and \eqref{cvv1}, it follows that $f$ can be transformed
into $\widehat{f}_{0}$ by composition $\widehat{\varphi}=\varphi\circ\psi^{-1},\ \widehat{\varphi}\in\mathcal{L}^{0}$.
That is, $f$ is conjugated to $\widehat{f}_{0}$ in $\mathcal{L}^{0}$.
Note that by Proposition~\ref{prop:characterization-changes-variables}(2)
$\widehat{\varphi}$ can be considered as a transfinite step by step
process of elementary changes of variables applied to $f$. \hfill{}$\Box$ 
\begin{rem}
Let $f_{0}\in\mathcal{L}^{H}$ be already in the form as in Theorem
A$(1)$, $(a),(b)\text{ or }(c)$. Let the beginning of $f\in\mathcal{L}^{H}$
coincide with $f_{0}$: 
\[
f=f_{0}+\text{h.o.t.}
\]
It is easy to see that the algorithm described in the proof of Theorem
A transforms $f$ to $f_{0}$ itself. In other words, every transseries
from $\mathcal{L}^{H}$ which begins by one of the normal forms $f_{0}$
from Theorem A has $f_{0}$ itself as its normal form in $\mathcal{L}$.
\end{rem}

\section{Proof of Theorem B}

\label{sec:proof-theorem-B}

In this section we state and prove the precise form of Theorem B.
It turns out that, unlike in the proof of Theorem A, the techniques
involved depend strongly on the nature (parabolic, hyperbolic, or
strongly hyperbolic) of the element $f\in\mathcal{L}^{H}$. Hence,
we divide the statement and the proof in different subsections. In
Subsection~\ref{sub:fiveone}, we recall and state some useful facts
about \emph{linear} operators on $\mathcal{L}$, more specifically
about isomorphisms and derivations on $\mathcal{L}$. An important
part is the relationship between elements of $\mathcal{L}^{H}$ and
linear operators acting on $\mathcal{L}$. Subsection~\ref{sub:fivetwo}
is dedicated to vector fields, that present a particular class of
derivations on $\mathcal{L}$. Finally, Subsections~\ref{sub:fivethree},\,\ref{sub:fivefour},\,\ref{sub:fivefive}
contain the statement and the proof of Theorem~B respectively in
parabolic, hyperbolic and strongly hyperbolic case.

\subsection{\label{sub:operators_on_LH}Operators acting on $\mathcal{L}$, isomorphisms
and derivations}

\label{sub:fiveone}\
 Some notions considered in this section are similar to \cite[Chapter I.3]{ilya}
for formal power series.

By an \emph{operator on $\mathcal{L}$ (respectively $\mathcal{L}_{\mathfrak{\mathfrak{\mathfrak{D}}}}$)},
we denote a strongly linear map $B\colon\mathcal{L}\ (\text{resp. }\mathcal{L}_{\mathfrak{D}})\rightarrow\mathcal{L}\ (\text{resp. }\mathcal{L}_{\mathfrak{D}})$.
By \emph{strongly linear}, we mean that 
\[
B\left(\sum_{\alpha,k}c_{\alpha,k}x^{\alpha}\boldsymbol{\ell}^{k}\right)=\sum_{\alpha,k}c_{\alpha,k}B\left(x^{\alpha}\boldsymbol{\ell}^{k}\right),\quad c_{\alpha,k}\in\mathbb{R},
\]
for every transseries $\sum_{\alpha,k}c_{\alpha,k}x^{\alpha}\boldsymbol{\ell}^{k}\in\mathcal{L}$\ (resp.
$\mathcal{L}_{\mathfrak{D}}$).

We denote by $L(\mathcal{L})$, respectively $L(\mathcal{L}_{\mathcal{\mathfrak{D}}})$,
the set of all operators $B:\mathcal{L}\ (\text{resp. }\mathcal{L}_{\mathfrak{D}})\to\mathcal{L}\ (\text{resp. }\mathcal{L}_{\mathfrak{D}})$.

For an operator $B\in L(\mathcal{L})$ and an element $f\in\mathcal{L}$,
we denote indifferently by $B\cdot f$ or $B\left(f\right)$ the image
of $f$ under $B$. The identity operator will be denoted by $\mathrm{Id}$.
\begin{defn}[Operators defined as a series of operators]
 Let $(B_{j})_{j\in\mathbb{N}}$ be a sequence of operators in $L(\mathcal{L})$
(resp. in $L(\mathcal{L}_{\mathfrak{D}})$).
\begin{enumerate}[1., font=\textup, nolistsep, leftmargin=0.6cm]
\item We say that the operator $B\in L(\mathcal{L})$ (resp. $B\in L(\mathcal{L}_{\mathfrak{D}})$)
is \emph{well-defined by the series $\sum_{j=0}^{\infty}B_{j}$} if,
for every $f\in\mathcal{L}$ (resp. $f\in\mathcal{L}_{\mathfrak{D}}$),
the sequence $\sum_{j=0}^{N}B_{j}\cdot f$ converges towards $B\cdot f$
in the \emph{formal topology}, as $N\to\infty$. 
\item If for every $f\in\mathcal{L}$ (resp. $\mathcal{L}_{\mathfrak{D}}$)
the sequence $\sum_{j=0}^{N}B_{j}\cdot f$ converges towards $B\cdot f$
in the \emph{weak topology}, we say that $B$ is \emph{weakly well-defined
by $\sum_{j=0}^{\infty}B_{j}$}. 
\end{enumerate}
In both cases, we write $B:=\sum_{j=0}^{\infty}B_{j}$.
\end{defn}
The notion \emph{weakly} used throughout the article indicates relation
to the weak topology on $\mathcal{L}$, see also the Definition~\ref{def:sows}
of the \emph{small operator in the weak sense} in Section~\ref{sub:fivefour}.

Note that \emph{well-defined} is a stronger notion than \emph{weakly
well-defined}, since it relates to the stronger formal topology. That
is, if an operator $B$ is well-defined by a series of operators,
then it is also weakly well-defined by the same series. Note also
that the operator series defines an operator in $L(\mathcal{L})$
\emph{as soon as the convergence of the series is weak}.
\begin{defn}[Formal differential operator in $L(\mathcal{L})$]
\label{def:form_diff} We say that an operator $B\in L(\mathcal{L})$
(resp. $B\in L(\mathcal{L}_{\mathfrak{D}})$) is a \emph{formal differential
operator} if there exists a sequence $(h_{j})_{j\in\mathbb{N}}$ of
elements of $\mathcal{L}$ (resp. $\mathcal{L}_{\mathfrak{D}}$) such
that $B$ is (weakly) well-defined by the series 
\[
B=\sum_{j=0}^{\infty}h_{j}\frac{\mathrm{d}^{j}}{{\mathrm{d}x}^{j}}.
\]
 
\end{defn}
The following definition of a \emph{small operator} is inspired by
\cite[Section 1.3]{dries}. 
\begin{defn}
\label{def:small_operator}An operator $B:\mathcal{L}\to\mathcal{L}$
is \emph{small} if there exists a well-ordered set $R\subseteq\mathbb{R}_{\ge0}\times\mathbb{Z}$
of exponents strictly bigger than $\left(0,0\right)$ such that $\mathcal{S}\left(B.f\right)\subseteq\mathcal{S}\left(f\right)+R$,
for every $f\in\mathcal{L}$. An operator $B:\mathcal{L}_{\mathfrak{D}}\to\mathcal{L}_{\mathfrak{D}}$
is small if the set $R$ is in addition of finite type. \end{defn}
\begin{prop}
\label{lem:series_small_operator} Let $B$ be a small operator on
$\mathcal{L}$ (resp. $\mathcal{L}_{\mathfrak{D}}$) and let $\left(c_{k}\right)_{k\in\mathbb{N}\cup\{0\}}$
be a sequence of real numbers. The sum 
\begin{equation}
S:=\sum_{k=0}^{\infty}c_{k}B^{k}\label{eq:seri}
\end{equation}
is a well-defined operator on $\mathcal{L}$ (resp. $\mathcal{L}_{\mathfrak{D}}$).
Here, $B^{k}$ denotes the $k^{\mathrm{th}}$ iterate of $B$.
\end{prop}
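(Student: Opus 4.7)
The plan is to iterate the smallness estimate, invoke the additive form of Neumann's Lemma to control the support of $S\cdot f$, and then show that for every fixed exponent the coefficient series $\sum_k c_k [B^k f]_{\alpha,m}$ reduces to a finite sum, so that $S\cdot f$ makes sense as a transseries converging in the formal topology.

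First, by strong linearity and Definition~\ref{def:small_operator}, an induction on $k$ yields $\mathcal{S}(B^k f)\subseteq \mathcal{S}(f) + \underbrace{R + \cdots + R}_{k}$ for every $k\ge 1$ (with the convention $B^0 f = f$). Let $\langle R\rangle$ denote the additive sub-semigroup of $\mathbb{R}_{\ge 0}\times\mathbb{Z}$ generated by $R$. Since every element of $R$ is strictly greater than $(0,0)$ in the lexicographic order, these elements correspond in the multiplicative picture of Section~\ref{sub:Hahn-fields} to infinitesimal monomials in $G_{<1}$. The third point of Neumann's Lemma~\ref{lem:neumann_lemma} therefore applies: $\langle R\rangle$ is well-ordered, and each element of $\langle R\rangle$ admits only finitely many factorizations as a sum of elements of $R$.

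Next, I would establish that, for every fixed $(\alpha,m)\in \mathbb{R}_{>0}\times\mathbb{Z}$, only finitely many pairs $\bigl(k,(\beta,n)\bigr)$ with $(\beta,n)\in\mathcal{S}(f)$ satisfy $(\alpha,m) - (\beta,n) \in kR$. Indeed, given such a pair, $(\alpha,m)-(\beta,n)$ is a nonzero element of $\langle R\rangle$ (or equals $(0,0)$, handled separately for $k=0$); the well-ordering of $\langle R\rangle$ combined with the finite-factorization statement of Neumann's Lemma bounds both the number of admissible $(\beta,n)$ and the number of admissible $k$. This has two consequences: (i) each individual coefficient $[B^k f]_{\alpha,m} = \sum_{(\beta,n)\in\mathcal{S}(f)} [f]_{\beta,n}\,[B^k(x^{\beta}\boldsymbol{\ell}^{n})]_{\alpha,m}$ is a finite sum of reals, hence well-defined, and (ii) the series $[S\cdot f]_{\alpha,m}:=\sum_{k\ge 0} c_k\,[B^k f]_{\alpha,m}$ has only finitely many nonzero terms. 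Coefficient stabilization is exactly convergence in the formal topology, so $S\cdot f$ is well-defined, and strong linearity of $S$ follows immediately from strong linearity of each $B^k$.

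Finally, I would collect support information: $\mathcal{S}(S\cdot f)\subseteq \mathcal{S}(f)\cup\bigl(\mathcal{S}(f)+\langle R\rangle\bigr)$, which is well-ordered as the sum of two well-ordered subsets of $\mathbb{R}_{>0}\times\mathbb{Z}$ by Neumann's Lemma~\ref{lem:neumann_lemma}.1, so $S\cdot f\in\mathcal{L}$. For the $\mathcal{L}_{\mathfrak{D}}$ statement, when both $\mathcal{S}(f)$ and $R$ are of finite type, the union of their generating sets (together with $(0,1)$) is still finite, so $\mathcal{S}(S\cdot f)$ is contained in a finitely generated additive sub-semigroup and $S\cdot f\in \mathcal{L}_{\mathfrak{D}}$. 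The principal technical point — and the only place where anything nontrivial happens — is the application of the additive Neumann Lemma to turn the \emph{a priori} double series $\sum_{k,(\beta,n)}$ into finite sums uniformly at every exponent; everything else is bookkeeping.
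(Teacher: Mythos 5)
Your argument is correct and supplies the details the paper omits: both rest on the smallness inclusion $\mathcal{S}(B^k f)\subseteq\mathcal{S}(f)+\underbrace{R+\cdots+R}_{k}$ and on Neumann's Lemma (parts 2 and 3), which make each coefficient $[S\cdot f]_{\alpha,m}$ a finite sum and keep $\mathcal{S}(S\cdot f)$ inside the well-ordered set $\mathcal{S}(f)\cup\big(\mathcal{S}(f)+\langle R\rangle\big)$; the finite-type case follows as you say. One caveat, which you inherit from the paper's own wording: coefficient stabilization is convergence in the product topology with the \emph{discrete} topology on $\mathbb{R}$, not the formal topology — when $\min R=(0,j)$ with $j>0$ (which does occur, e.g.\ for $X=\xi\,\mathrm{d}/\mathrm{d}x$ with $\mathrm{ord}(\xi)=(1,p)$, $p>0$), the orders $\mathrm{ord}(B^k\cdot f)$ increase only in the $\boldsymbol{\ell}$-exponent, so the partial sums fail to enter a ball $B(S\cdot f,(\alpha',m'))$ with $\alpha'$ larger than the leading $x$-exponent of $f$, yet $S\cdot f$ is still a perfectly well-defined element of $\mathcal{L}$ by your summability argument.
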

Proposition~\ref{lem:series_small_operator} is a special case of
a more general fact used repeatedly in \cite{dries}. Note that the
proof is based on the \emph{smallness property} of operator $B$.
It implies indeed that $\mathcal{S}(S.f)\subseteq\mathcal{S}(f)+\langle R\rangle$,
where $\langle R\rangle$ denotes the (additive) sub-semigroup of
$\mathbb{R}_{\geq0}\times\mathbb{Z}$ generated by $R$. Furthermore,
for any $f\in\mathcal{L}$, the order $\text{ord}(B^{k}.f)$ strictly
increases as $k$ increases, by at least $\min\{R\}$ in every step.
Consequently, the series $S_{n}:=\sum_{k=0}^{n}c_{k}B^{k}.f\in\mathcal{L}$
converges to $S\in\mathcal{L}$ in the \emph{formal topology}. We
omit the details of the proof. 
\begin{prop}
\label{def:exponential_logarithm_operator}Let $B$ be a small operator
on $\mathcal{L}$ (resp. $\mathcal{L}_{\mathfrak{D}}$). Then 
\[
\exp\left(B\right):=\sum_{k=0}^{\infty}\frac{B^{k}}{k!},\quad\log\left(\mathrm{Id}+B\right):=\sum_{k=1}^{\infty}\frac{\left(-1\right)^{k+1}B^{k}}{k},
\]
$\exp\big(\log(\mathrm{Id}+B)\big)$ and $\log\exp(B)$ are well-defined
operators on $\mathcal{L}$ (resp. $\mathcal{L}_{\mathfrak{D}}$).
Moreover, 
\begin{equation}
\exp\big(\log(\mathrm{Id}+B)\big)=\mathrm{Id}+B\text{ and }\log\exp(B)=B.\label{eq:equal}
\end{equation}
\end{prop}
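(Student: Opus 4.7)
The plan is to reduce every assertion to Proposition~\ref{lem:series_small_operator} together with the scalar identities $\exp(\log(1+X))=1+X$ and $\log(\exp(X))=X$ in $\mathbb{R}[[X]]$. Since $B$ is small by assumption, applying Proposition~\ref{lem:series_small_operator} to the coefficient sequences $c_k=1/k!$ and $c_0=0,\ c_k=(-1)^{k+1}/k$ for $k\geq 1$ immediately gives the well-definedness of $\exp(B)$ and $\log(\mathrm{Id}+B)$ on $\mathcal{L}$ (resp.\ $\mathcal{L}_{\mathfrak{D}}$).

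Next I would verify that $\log(\mathrm{Id}+B)$ and $\exp(B)-\mathrm{Id}$ are themselves small operators, so that Proposition~\ref{lem:series_small_operator} can be invoked a second time to give meaning to $\exp\bigl(\log(\mathrm{Id}+B)\bigr)$ and $\log\exp(B)$. Concretely, if $R\subseteq\mathbb{R}_{\geq 0}\times\mathbb{Z}$ is a well-ordered witness for the smallness of $B$ with $R$ strictly above $(0,0)$, then $\mathcal{S}(B^k.f)\subseteq\mathcal{S}(f)+kR$ for every $f$ and every $k\geq 1$. Summing over $k\geq 1$, both operators are small with witness $\langle R\rangle$, which Neumann's Lemma~\ref{lem:neumann_lemma}.3 guarantees to be well-ordered and strictly above $(0,0)$; in the $\mathcal{L}_{\mathfrak{D}}$ variant, $\langle R\rangle$ stays of finite type whenever $R$ is.

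For the identities \eqref{eq:equal}, my plan is to fix $f\in\mathcal{L}$ and expand formally
\[
\exp\bigl(\log(\mathrm{Id}+B)\bigr).f=\sum_{j=0}^{\infty}\frac{1}{j!}\sum_{k_1,\dots,k_j\geq 1}\frac{(-1)^{k_1+\cdots+k_j+j}}{k_1\cdots k_j}\,B^{k_1+\cdots+k_j}.f,
\]
then regroup by the total exponent $n=k_1+\cdots+k_j$. By the scalar identity $\exp(\log(1+X))=1+X$, the coefficient multiplying $B^n.f$ in the regrouped sum equals $\delta_{n,0}+\delta_{n,1}$, so the expression collapses to $f+B.f=(\mathrm{Id}+B).f$. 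The analogous manipulation based on $\log(\exp(X))=X$ yields $\log\exp(B)=B$.

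The main obstacle is justifying the rearrangement that underlies this regrouping. For each target exponent $(\alpha,m)$, only finitely many tuples $(j,k_1,\dots,k_j)$ can contribute a nonzero amount to the coefficient $[\,\cdot\,]_{\alpha,m}$ of the output: first, the inclusion $\mathcal{S}(B^n.f)\subseteq\mathcal{S}(f)+nR$ combined with the well-ordering of $\langle R\rangle$ forces only finitely many values of $n$ to produce monomials of order $(\alpha,m)$; second, Neumann's Lemma~\ref{lem:neumann_lemma}.3 bounds the number of decompositions $n=k_1+\cdots+k_j$ that can reach a prescribed monomial. This coefficient-wise finiteness legitimises the Fubini-type exchange of summation and reduces the operator identities to the classical scalar ones. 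I would carry this argument out once in full for $\exp\circ\log$ and treat $\log\circ\exp$ by the same template.
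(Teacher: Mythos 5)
Your proposal is correct and follows essentially the same route as the paper: well-definedness of the two series comes from Proposition~\ref{lem:series_small_operator}, smallness of the resulting operators (with $\langle R\rangle$ as witness via Neumann's Lemma) allows a second application of that proposition, and the identities reduce to the scalar exp-log identities. You are, if anything, a bit more careful than the paper, which writes ``$\exp B$ is small'' where it means $\exp B - \mathrm{Id}$ and dispatches the identity by ``symbolic computation $\ldots$ similarly as in Proposition~\ref{prop:derivative}'' without spelling out the coefficient-wise Fubini rearrangement you justify via Neumann's Lemma.
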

\begin{proof}
Since $B$ is a small operator, by Proposition~\ref{lem:series_small_operator},
$\log(\mathrm{Id}+B)$ and $\exp{B}$ are well-defined operators on
$\mathcal{L}$ ($\mathcal{L}_{\mathfrak{D}}$). Moreover, by Definition~\ref{def:small_operator}
of small operators on $\mathcal{L}$ (resp. $\mathcal{L}_{\mathfrak{D}}$),
we obtain inductively: 
\[
\mathcal{S}(B.f)\subseteq\mathcal{S}(f)+R,\ \mathcal{S}(B^{k}\cdot f)\subseteq\mathcal{S}(f)+\langle R\rangle,\ k\in\mathbb{N}_{0},
\]
where $R$ is as in Definition~\ref{def:small_operator}. Therefore,
\[
\mathcal{S}(\log\left(\mathrm{Id}+B\right)),\ \mathcal{S}(\exp\left(B\right))\subseteq\mathcal{S}(f)+\langle R\rangle.
\]
The operators $\exp{B}$ and $\log\left(\mathrm{Id}+B\right)$ are
small in $L(\mathcal{L})$ (resp. $L(\mathcal{L}_{\mathfrak{D}})$).
It follows from Proposition~\ref{lem:series_small_operator} that
$\exp\big(\log(\mathrm{Id}+B)\big)$ and $\log\exp(B)$ are also well-defined
operators in $L(\mathcal{L})$ (resp. $L(\mathcal{L}_{\mathfrak{D}})$).
The equality \eqref{eq:equal} now follows by symbolic computation
from the standard properties of formal exp-log series, similarly as
in the proof of Proposition~\ref{prop:derivative}.\end{proof}
\begin{defn}
Let $B\colon\mathcal{L}\rightarrow\mathcal{L}$ be an operator on
$\mathcal{L}$. 
\begin{enumerate}[1., font=\textup, nolistsep, leftmargin=0.6cm]
\item We say that $B$ is a \emph{derivation} if it satisfies the usual
Leibniz's rule. 
\item We say that $B:\mathcal{L}\to\mathcal{L}$ is a \emph{morphism} if
it satisfies the \emph{morphism property}: $B\left(f\cdot g\right)=B\left(f\right)\cdot B\left(g\right)$,\ $f,\, g\in\mathcal{L}$.
\item We say that $B:\mathcal{L}\to\mathcal{L}$ is an \emph{isomorphism}
of $\mathcal{L}$ if $B$ is a bijective morphism. 
\end{enumerate}
\end{defn}
\begin{rem}[Isomorphisms associated with $f\in\mathcal{L}^{H}$ parabolic or hyperbolic]
 \label{ex:automorphism_associated} Let $f\in\mathcal{L}^{H}$ be
parabolic or hyperbolic. The map $F\colon\mathcal{L}\rightarrow\mathcal{L}$
defined by 
\begin{equation}
F(g)=g\circ f,\ g\in\mathcal{L},\label{eq:aut}
\end{equation}
is an isomorphism of $\mathcal{L}$. Moreover, the same conclusion
holds in finitely generated case. If $f\in\mathcal{L}_{\mathfrak{D}}^{H}$
is parabolic or hyperbolic, then $F$ defined by \eqref{eq:aut} is
an isomorphism of $\mathcal{L}_{\mathfrak{D}}^{H}$.

We call such $F$ the \emph{isomorphism associated with $f$} and
denote it by 
\[
F=\text{iso}(f).
\]
The morphism property is easily checked. Moreover, since a parabolic
(resp. hyperbolic) element $f\in\mathcal{L}^{H}$ admits a parabolic
(resp. hyperbolic) compositional inverse $f^{-1}\in\mathcal{L}^{H}$,
then $F$ is bijective, with the inverse $F^{-1}:\mathcal{L}\to\mathcal{L}$,
$F^{-1}(g)=g\circ f^{-1}$, $g\in\mathcal{L}$. \end{rem}
\begin{lem}
\label{lem:expo-log_and_log-exp} Let $f\in\mathcal{L}^{H}$ $($resp.
$f\in\mathcal{L}_{\mathfrak{D}}^{H})$ be parabolic or hyperbolic
contraction. Let the operator $F$ be defined as in \eqref{eq:aut}.
Then the formal operators $\log F\in L(\mathcal{L}),\ \exp\log F\in L(\mathcal{L})$
$\big($resp. $L(\mathcal{L}_{\mathfrak{D}})\big)$ are weakly well-defined.
Moreover, 
\[
\exp\log F=F.
\]
Finally, if $f$ is parabolic, these operators are well-defined.\end{lem}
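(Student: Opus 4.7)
The approach is to write $F = \mathrm{Id} + B$ with $B(g) := g \circ f - g$ for $g \in \mathcal{L}$, and to realize
\[
\log F = \sum_{k=1}^{\infty} \frac{(-1)^{k+1}}{k} B^k, \qquad \exp \log F = \sum_{k=0}^{\infty} \frac{(\log F)^k}{k!},
\]
as operator series. Once convergence is established in the appropriate topology, the identity $\exp \log F = F$ follows by the same formal algebraic manipulation as in Proposition~\ref{def:exponential_logarithm_operator}. All the content therefore lies in verifying the correct kind of (weak) smallness of $B$.

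\emph{Parabolic case.} Write $f(x) = x + \varepsilon(x)$ with $\mathrm{ord}(\varepsilon) \succ (1,0)$. Taylor's formula gives
\[
B(g) = \sum_{j=1}^{\infty} \frac{\varepsilon^j}{j!}\, g^{(j)}.
\]
The differentiation operator $g \mapsto g^{(j)}$ shifts the support of a power-log monomial by $(-j,r)$ with $0 \le r \le j$, while multiplication by $\varepsilon^j$ shifts it by $j \cdot \mathrm{ord}(\varepsilon) \succ j \cdot (1,0)$. The net shift is therefore strictly $\succ (0,0)$. Summing over $j$ and applying Neumann's Lemma, one gets a well-ordered set $R_0 \subset \mathbb{R}_{\geq 0} \times \mathbb{Z}$ with $\min R_0 \succ (0,0)$ (of finite type when $f \in \mathcal{L}_{\mathfrak{D}}^H$) such that $\mathcal{S}(Bg) \subseteq \mathcal{S}(g) + R_0$. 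Hence $B$ is a small operator in the sense of Definition~\ref{def:small_operator}, and Proposition~\ref{def:exponential_logarithm_operator} applies directly to give well-definedness of $\log F$ and $\exp \log F$, together with $\exp \log F = F$.

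\emph{Hyperbolic contraction case.} Now $f(x) = \lambda x + \mathrm{h.o.t.}$ with $0 < \lambda < 1$. A direct expansion shows
\[
B(x^\beta \boldsymbol{\ell}^l) = (\lambda^\beta - 1)\, x^\beta \boldsymbol{\ell}^l + (\text{terms of order strictly} \succ (\beta,l)),
\]
so $B$ preserves the leading order of every monomial and is \emph{not} small in the formal-topology sense. However, $|\lambda^\beta - 1| < 1$ for every $\beta > 0$ because $0 < \lambda^\beta < 1$. Split $B = D + R$, where $D(x^\beta \boldsymbol{\ell}^l) = (\lambda^\beta - 1) x^\beta \boldsymbol{\ell}^l$ is a diagonal norm-contraction on each order level, and $R := B - D$ is small in the formal sense (it strictly raises orders). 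Expanding $B^k = (D + R)^k$ and exploiting that $D$ commutes with the ``level'' decomposition while each occurrence of $R$ produces a strict order increase (bounded below by $\min \mathcal{S}(R) \succ (0,0)$), one shows that for any fixed $g \in \mathcal{L}$ and any target pair $(\gamma, m)$, only words involving at most $N(\gamma,m,g)$ factors of $R$ contribute to $[B^k g]_{\gamma,m}$, for all $k$. The coefficient $[B^k g]_{\gamma,m}$ is then a finite sum (by Neumann's Lemma) of products, each of which contains a factor $(\lambda^\beta - 1)^{k - O(1)}$ for some $\beta$ taken in a fixed finite subset determined by $(\gamma,m)$ and $g$. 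Since $\max_\beta |\lambda^\beta - 1| < 1$, summing $\frac{(-1)^{k+1}}{k}[B^k g]_{\gamma,m}$ over $k$ yields a convergent real series, which is precisely the weak well-definedness of $\log F$. The same argument, applied to the series for $\exp$, gives weak well-definedness of $\exp \log F$, and the formal identity $\exp \log F = F$ survives the passage to the weak topology. This argument is the weak-topology analog of Proposition~\ref{def:exponential_logarithm_operator}, formulated with the ``small in the weak sense'' notion of Definition~\ref{def:sows}.

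The main obstacle is precisely the hyperbolic contraction case: $B$ fails to raise orders strictly, so one cannot invoke the formal-topology machinery of Proposition~\ref{def:exponential_logarithm_operator} as a black box. The bookkeeping that mixes the scalar contraction from $D$ with the small-operator estimate for $R$ is the technical heart of the argument, and the hypothesis $\lambda < 1$ is essential: for $\lambda > 1$ one has $|\lambda^\beta - 1| > 1$ for large $\beta$, and the scalar $\log$ series diverges on the corresponding order levels, so no reasonable notion of convergence can rescue $\log F$.
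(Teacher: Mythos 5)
Your parabolic case is essentially identical to the paper's: write $F=\mathrm{Id}+P$ with $P=\sum_k\frac{\varepsilon^k}{k!}\frac{\mathrm{d}^k}{\mathrm{d}x^k}$, verify $P$ is small, invoke Proposition~\ref{def:exponential_logarithm_operator}. Your treatment of $\log F$ in the hyperbolic case is also the same idea as the paper's Lemma~\ref{lem:expo_log_hyp}, just phrased more cleanly: your operator decomposition $B=D+R$ (diagonal contraction plus order-raising small part), combined with Neumann's Lemma to restrict the contributing $\beta$'s to a finite set with $\max_\beta|\lambda^\beta-1|<1$, is exactly what the paper's ``chain'' bookkeeping tracks monomial-by-monomial. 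The reformulation is arguably neater, and both routes culminate in the same $\binom{k}{r}C\,(\text{geometric})^k$ bound and an appeal to the ``small in the weak sense'' framework of Definition~\ref{def:sows} and Proposition~\ref{prop:small_weak_series}.

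The gap is in the last step of the hyperbolic case. You claim that ``the same argument, applied to the series for $\exp$, gives weak well-definedness of $\exp\log F$.'' This does not follow. Your $D+R$ decomposition is a decomposition of $B=F-\mathrm{Id}$; it does not transfer to $\log F$ in any obvious way. The diagonal action of $\log F$ on $x^\beta\boldsymbol{\ell}^l$ is multiplication by $\beta\log\lambda$, which is unbounded in $\beta$ (not a contraction), so the geometric-decay mechanism that made the $\log$-series converge is simply absent for the $\exp$-series; convergence there relies instead on the $1/j!$ denominators and on an entirely different estimate of $[(\log F)^j\cdot g]_{\alpha,m}$, which you do not carry out. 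The paper does not attempt a direct estimate either: it first proves (Lemmas~\ref{lem:diff_hypH}--\ref{lem:diff_hyp}, using the Moore--Osgood theorem to justify rearranging the double sum) that $\log F$ is a \emph{vector field} $\xi\frac{\mathrm{d}}{\mathrm{d}x}$ with $\mathrm{ord}(\xi)=(1,0)$, and only then invokes Proposition~\ref{prop: existence-formal-flow-hyperbolic} for $\exp$ of such a field. That detour is where the real work is, and your proposal omits it. For the same reason, your claim that the identity $\exp\log F=F$ ``survives the passage to the weak topology'' needs justification: the formal identity requires rearranging a double series, which in the weak topology is not automatic and is precisely what Moore--Osgood is invoked for in the surrounding lemmas.

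Your closing remark about $\lambda>1$ is a correct and helpful observation, but it explains the \emph{hypothesis} of the lemma (which already restricts to contractions) rather than filling the gap above.
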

\begin{proof}[Proof of Lemma \ref{lem:expo-log_and_log-exp} ]
We prove here the lemma for $f$ parabolic. For $f$ hyperbolic,
the proof is postponed to Section~\ref{sub:fivefour}. We write $f\left(x\right)=x+\varepsilon\left(x\right)$,
with $\text{ord}\left(\varepsilon\right)\succ\left(1,0\right)$. By
Taylor expansion, for every $g\in\mathcal{L}$, we have: 
\begin{align}
g\left(f\left(x\right)\right) & =g\left(x+\varepsilon\left(x\right)\right)\nonumber \\
 & =g\left(x\right)+\sum_{k=1}^{\infty}\dfrac{g^{\left(k\right)}\left(x\right)}{k!}\varepsilon\left(x\right)^{k}.\label{eq:act}
\end{align}
Hence, we can write $F=\text{Id}+P=\mathrm{Id}+\sum_{k=1}^{\infty}\frac{\varepsilon\left(x\right)^{k}}{k!}\frac{\mathrm{d}^{k}}{\mathrm{d}x^{k}}$.
Obviously, $P\cdot g=g\circ f-g\in\mathcal{L}$, $g\in\mathcal{L}$.
We show that $P$ is a small operator. By \eqref{eq:act}, we have:
\[
\mathcal{S}(P\cdot g)=\bigcup_{k\in\mathbb{N}}\mathcal{S}\big(g^{(k)}\varepsilon^{k}\big).
\]
The support $\mathcal{S}\big(g^{(k)}\varepsilon^{k}\big)$ contains
pairs of the form: 
\[
\Big((\beta_{1}-1)+\cdots+(\beta_{k}-1)+\alpha,\ l_{1}+\cdots+l_{k}+m+j\Big),
\]
where $\left(\beta_{i},l_{i}\right)\in\mathcal{S}\left(\varepsilon\right)$,
$i=1,\ldots,k$, $\left(\alpha,m\right)\in\mathcal{S}\left(g\right)$
and $j\in\{0,\ldots,k\}$. Therefore, 
\begin{equation}
\mathcal{S}(P\cdot g)\subseteq\mathcal{S}(g)+R,\label{eq:prv}
\end{equation}
where $R$ is a sub-semigroup $R\subseteq\mathbb{R}_{\geq0}\times\mathbb{Z}$
generated by elements $\left(\beta-1,l\right)$ for $\left(\beta,l\right)\in\mathcal{S}\left(\varepsilon\right)$,
and $(0,1)$. By Neumann's lemma and since $(1,0)\prec\text{ord}(\varepsilon)$,
$R$ is well ordered and its elements are of order strictly greater
than $\left(0,0\right)$. Therefore, the operator $P$ is small. By
Proposition~\ref{def:exponential_logarithm_operator}, the operators
$\log F$ and $\exp(\log F):\mathcal{L}\to\mathcal{L}$ are well-defined.\\

It remains to be proven that 
\begin{equation}
\exp(\log F).f=F.f,\ f\in\mathcal{L}.\label{symb}
\end{equation}
But once formal convergence is proven, this property follows from
the well-known formal identities concerning $\exp-\log$ series.

In the finitely generated case ($f\in\mathcal{L}_{\mathfrak{D}}$),
the semigroup $R$ above is in addition of finite type (a subset of
a finitely generated sub-semigroup of $\mathbb{R}_{\geq0}\times\mathbb{Z}$),
for details see the ``finite part'' of the proof of Proposition~\ref{prop:characterization-changes-variables}.
The operator $P$ is small in $L(\mathcal{L}_{\mathfrak{D}})$, and
the result follows by Proposition~\ref{def:exponential_logarithm_operator}.
\end{proof}
\noindent We suspect that the next statement is already known, but
we could not find it in the literature. Therefore, we give a short
proof. 
\begin{prop}
\noindent \label{prop:derivative} Let $A:\mathcal{L}\to\mathcal{L}$
be a linear morphism. Assume that the operator $\log A:\mathcal{L}\to\mathcal{L}$
is (weakly) well-defined. Then $\log A$ is a derivation.\end{prop}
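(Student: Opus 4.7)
The plan is to exploit that every positive iterate $A^n$, $n\in\mathbb{N}$, is itself a morphism, together with the operator identity $A^n=\exp(nD)$ for $D:=\log A$, and then to extract Leibniz's rule by comparing the coefficient of $n^1$ on both sides of the resulting family of identities.

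A routine induction on $n$ shows that each $A^n$ is a morphism: assuming $A^n(fg)=A^n(f)\,A^n(g)$, we have
\[
A^{n+1}(fg)=A\bigl(A^n(f)\cdot A^n(g)\bigr)=A(A^n(f))\cdot A(A^n(g))=A^{n+1}(f)\cdot A^{n+1}(g).
\]
Next, since $D$ commutes with itself, the standard formal power-series identity $\exp((s+t)X)=\exp(sX)\exp(tX)$ (valid for any single operator $X$ wherever the exponentials are defined) combined with $\exp\log A=A$ (the analogue of Proposition~\ref{def:exponential_logarithm_operator} in the weak setting) yields $\exp(nD)=A^n$ for every $n\in\mathbb{N}$. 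Putting these two observations together gives
\[
\exp(nD)(fg)=\exp(nD)(f)\cdot\exp(nD)(g),\qquad n\in\mathbb{N},\ f,g\in\mathcal{L}.
\]

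Fix $f,g\in\mathcal{L}$ and a monomial $x^\alpha\boldsymbol{\ell}^r$, and project the above identity to the coefficient at $(\alpha,r)$. Expanding $\exp(nD)=\mathrm{Id}+nD+\tfrac{n^2}{2!}D^2+\cdots$, both sides become (coefficient-wise convergent) power series in $n$ whose values agree for every $n\in\mathbb{N}$. The coefficient of $n^1$ on the left equals $[D(fg)]_{\alpha,r}$, and using $\exp(nD)(h)=h+nD(h)+O(n^2)$ for $h\in\{f,g\}$, the coefficient of $n^1$ on the right equals $[D(f)\,g+f\,D(g)]_{\alpha,r}$. Equating these Taylor coefficients in $n$ and varying $(\alpha,r)$ yields $D(fg)=D(f)\,g+f\,D(g)$, which is the Leibniz rule.

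The main obstacle is justifying the coefficient-by-coefficient Taylor comparison in $n$: one must show that for every fixed $f\in\mathcal{L}$ and every monomial order $(\alpha,r)$, the scalar series $\sum_{k\ge 0}\tfrac{n^k}{k!}[D^k(f)]_{\alpha,r}$ defines a real-analytic (ideally polynomial) function of $n$, so that agreement of values on $\mathbb{N}$ upgrades to agreement of Taylor coefficients. When $D$ is small (as in the parabolic applications covered by Lemma~\ref{lem:expo-log_and_log-exp}), this is immediate because $D^k(f)$ has order growing with $k$, so only finitely many $k$ contribute to each monomial. In the hyperbolic case, where $\log A$ is merely weakly well-defined, one has to appeal instead to the explicit structure of $\log F$ developed in Section~\ref{sub:fivefour} in order to control the above scalar series.
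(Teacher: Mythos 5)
Your approach is genuinely different from the paper's, and it has a real gap at its central step. The paper proves the proposition by a direct symbolic identity: setting $H=A-\mathrm{Id}$, one computes $H(fg)=H(f)g+fH(g)+H(f)H(g)$ and shows by induction that, under the substitution $x^iy^j\mapsto H^i(f)H^j(g)$, the iterate $H^k(fg)$ corresponds to the commutative polynomial $(x+xy+y)^k$. Then the one-line identity $\log(1+x+xy+y)=\log\big((1+x)(1+y)\big)=\log(1+x)+\log(1+y)$ gives, after back-substitution, $\log A(fg)=\log A(f)\cdot g+f\cdot\log A(g)$. This sidesteps any analytic comparison: once convergence (formal or weak) is granted, the argument is purely combinatorial. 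Your proposal instead reads off the coefficient of $n^1$ from scalar power series in $n$.

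The gap you half-acknowledge at the end is not a technicality; it is the heart of the matter. Even when the scalar series $\phi(n)=\sum_{k\ge0}\tfrac{n^k}{k!}\,[D^k(fg)]_{\alpha,r}$ and $\psi(n)$ (built from $\exp(nD)(f)\exp(nD)(g)$) converge for all real (or complex) $n$ and hence define entire functions, their agreement for $n\in\mathbb{N}$ does \emph{not} force agreement of Taylor coefficients: $\sin(\pi n)$ vanishes on $\mathbb{Z}$ without having zero Taylor expansion. To upgrade agreement on $\mathbb{N}$ to an identity of entire functions one needs a growth constraint in the spirit of Carlson's theorem (exponential type strictly less than $\pi$ in a half-plane plus boundedness on the imaginary axis), and such a bound must actually be verified from estimates on $[D^k(\cdot)]_{\alpha,r}$. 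In the parabolic case the series in $n$ are polynomials (only finitely many $k$ contribute to each monomial, since $D$ is small) and the argument closes; in the hyperbolic case the series are genuine power series and you supply no bound. Pointing to ``the explicit structure of $\log F$'' is not a proof. In addition, two subsidiary operator identities are used without justification in the weak setting: that $\exp(\log A)=A$ (the hypothesis only grants that $\log A$ is weakly well-defined, not that its exponential converges), and that $\exp(sD)\exp(tD)=\exp((s+t)D)$ (a double-series rearrangement that needs an absolute-convergence argument, cf.\ the paper's Proposition~\ref{rem:dif}). As it stands, the proposal proves the parabolic case and leaves the hyperbolic case open.
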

\begin{proof}[Proof of Proposition \ref{prop:derivative}]
\noindent   Take any $f,\ g\in\mathcal{L}$. We prove the Newton-Leibniz
rule, that is, 
\[
\log A(fg)=\log A\left(f\right)g+f\log A\left(g\right).
\]
Put $H=A-Id$. Using the fact that $A$ is a morphism acting on $\mathcal{L}$,
we compute: 
\begin{align}
H(fg)=A(fg)-fg=A\left(f\right)A\left(g\right)-fg= & (f+H\left(f\right))\cdot(g+H\left(g\right))-fg=\nonumber \\
= & H\left(f\right)g+fH\left(g\right)+H\left(f\right)H\left(g\right).\label{upp}
\end{align}
Using the linearity of $H$ and \eqref{upp}, we compute $H^{2}(fg)$:
\[
H^{2}(fg)=H^{2}\left(f\right)g+2H^{2}\left(f\right)H\left(g\right)+2H\left(f\right)H\left(g\right)+2H\left(f\right)H^{2}\left(g\right)+fH^{2}\left(g\right).
\]
We proceed by symbolic computation. We substitute 
\[
x^{i}\text{ for }H^{i}\left(f\right),\ y^{i}\text{ for }H^{i}\left(g\right),\ i\in\mathbb{N}_{0}.
\]
By induction, the symbolic computation allows to substitute 
\[
(x+xy+y)^{k}\text{ for }H^{k}(fg),\ k\in\mathbb{N}_{0}.
\]
Hence, we have: 
\begin{align*}
\log A(fg)= & \text{(substitution})=\sum_{i=1}^{\infty}\frac{(-1)^{i+1}}{i}(x+xy+y)^{i}=\\
= & \log(1+x+xy+y)=\log\big((1+x)(1+y)\big)=\\
= & \log(1+x)+\log(1+y)=\\
= & (\text{substitution})=\log A\left(f\right)g+f\log A\left(g\right),
\end{align*}
which proves that $\log A$ is a derivation.
\end{proof}

\subsection{Vector fields and differential operators}

\label{sub:fivetwo}

We focus in this subsection on a special type of derivations. We denote
by $\frac{\mathrm{d}}{\mathrm{d}x}$ the usual derivation on germs
of functions. Note that, by strong linearity, the derivation $\frac{\mathrm{d}}{\mathrm{d}x}$
can be extended as an \emph{operator on $\mathcal{L}$.} 
\begin{defn}
An operator $B$ on $\mathcal{L}$ is a \emph{vector field} if there
exists $\xi\in\mathcal{L}$ such that $B=\xi\frac{\mathrm{d}}{\mathrm{d}x}$.
\end{defn}
Notice that there is an important difference here between $\mathcal{L}$
and $\mathbb{R}\left[\left[x\right]\right]$. A vector field is determined
by its value on the element $x\in\mathcal{L}$. But since $\mathcal{L}$
contains infinitely many elements, which are, on $\mathbb{R}$, algebraically
independent of $x$ (such as, for example, the powers $x^{\alpha},\alpha\in\mathbb{R}_{>0}\setminus\mathbb{Q}$),
then all the derivations on $\mathcal{L}$ cannot be vector fields.\\

Theorem B discusses the possible embedding of an element $f\in\mathcal{L}^{H}$
(in the three cases) in a formal flow of a vector field from $\mathcal{L}$.
Let us recall the definition of a formal flow, adapted from the standard
definition in the usual setting of formal power series to our class
$\mathcal{L}$. The next discussions follow the lines of similar results
for usual power series (see \cite[Chapter I.3]{ilya}, for example).
\begin{prop}[The existence of a formal flow of a formal vector field in $\mathcal{L}$,
the parabolic case]
\label{prop: existence-formal-flow-parabolic} Let $X=\xi\frac{\mathrm{d}}{{\mathrm{d}x}}$,
$\xi\in\mathcal{L}$, be a vector field in $\mathcal{L}$ such that
$(1,0)\prec\mathrm{ord}\left(\xi\right)$. Then the vector field $X$
admits the $\mathcal{C}^{1}$-formal flow $\{h^{t}\in\mathcal{L}^{0}:\ t\in\mathbb{R}\}$
defined by $h^{t}:=H^{t}\cdot\text{id}$, where $\{H^{t}\in L(\mathcal{L}):\ t\in\mathbb{R}\}$
is the one-parameter group of isomorphisms of $\mathcal{L}$ well-defined
by: 
\begin{equation}
H^{t}:=\exp(tX)=\sum_{k=0}^{\infty}\frac{t^{k}}{k!}X^{k}.\label{eq:exponential-operator}
\end{equation}
Moreover, $H^{t}$ are the isomorphisms associated to $h_{t},\ t\in\mathbb{R}$,
in the sense of Remark~\ref{ex:automorphism_associated}.

If, in addition, $\xi\in\mathcal{L}_{\mathfrak{D}}$, then $H^{t}\in L(\mathcal{L}_{\mathfrak{D}})$,
$h^{t}\in\mathcal{L}_{\mathfrak{D}}^{0}$, \textup{$t\in\mathbb{R}$.}
\end{prop}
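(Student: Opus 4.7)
The plan is to recognize $X=\xi\frac{\mathrm{d}}{\mathrm{d}x}$ as a small operator on $\mathcal{L}$ in the sense of Definition~\ref{def:small_operator}, and then to read the entire statement off the exponential machinery developed for small operators. Since $(x^{\alpha}\boldsymbol{\ell}^{k})'=\alpha x^{\alpha-1}\boldsymbol{\ell}^{k}+k x^{\alpha-1}\boldsymbol{\ell}^{k+1}$, the usual derivation shifts supports by $\{(-1,0),(-1,1)\}$, hence $\mathcal{S}(X\cdot f)\subseteq\mathcal{S}(f)+R$ with $R:=\mathcal{S}(\xi)+\{(-1,0),(-1,1)\}$. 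Neumann's Lemma yields that $R$ is well-ordered, and the hypothesis $(1,0)\prec\mathrm{ord}(\xi)$ forces every element of $R$ to be strictly above $(0,0)$ (either the first coordinate is positive, or it vanishes and then the second coordinate is positive). In the finite-type case $R$ remains of finite type. Proposition~\ref{def:exponential_logarithm_operator} then guarantees that $H^{t}:=\exp(tX)$ is a well-defined operator on $\mathcal{L}$, respectively on $\mathcal{L}_{\mathfrak{D}}$, for every $t\in\mathbb{R}$.

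Next I would establish the algebraic structure of $(H^{t})_{t\in\mathbb{R}}$. Because $X$ is a derivation, iterating Leibniz gives $X^{n}(fg)=\sum_{k=0}^{n}\binom{n}{k}X^{k}(f)X^{n-k}(g)$; combined with the formal Cauchy product this shows $H^{t}(fg)=H^{t}(f)H^{t}(g)$, so each $H^{t}$ is a morphism. The same rearrangement, applied to $\exp(sX)\exp(tX)$, together with the trivial $\exp(0\cdot X)=\mathrm{Id}$, yields the group identities $H^{s}\circ H^{t}=H^{s+t}$ and $H^{0}=\mathrm{Id}$, so each $H^{t}$ is invertible with inverse $H^{-t}$ and hence an isomorphism. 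All these rearrangements are coefficient-wise finite sums because $X$ is small.

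The main obstacle is identifying $H^{t}$ with the isomorphism $\mathrm{iso}(h^{t})$ associated to the candidate flow $h^{t}:=H^{t}\cdot\mathrm{id}$. First, the expansion $h^{t}=x+t\xi+\frac{t^{2}}{2}X(\xi)+\cdots$ and the assumption $\mathrm{ord}(\xi)\succ(1,0)$ show that $h^{t}\in\mathcal{L}^{0}$. To compare $H^{t}$ with $\Phi_{t}(g):=g\circ h^{t}$, I would set up a common formal ODE in $L(\mathcal{L})$. Termwise differentiation of the exponential series (legitimate by smallness of $X$) yields $\partial_{t}H^{t}=X\circ H^{t}$; applying this identity to $\mathrm{id}$ gives $\partial_{t}h^{t}=X\cdot h^{t}=\xi\cdot(h^{t})'$. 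The chain rule then produces $\partial_{t}\Phi_{t}(g)=g'(h^{t})\cdot\xi\cdot(h^{t})'=X\cdot(g\circ h^{t})=X\circ\Phi_{t}(g)$. Both operator-valued families satisfy $\partial_{t}F_{t}=X\circ F_{t}$ with $F_{0}=\mathrm{Id}$; expanding $F_{t}=\sum t^{n}F_{n}/n!$ in the formal topology reduces the ODE to the recursion $(n{+}1)F_{n+1}=X\circ F_{n}$, forcing $F_{n}=X^{n}$ and hence $H^{t}=\Phi_{t}=\mathrm{iso}(h^{t})$. From this the flow law follows: $h^{s+t}=H^{s+t}(\mathrm{id})=H^{s}(h^{t})=\mathrm{iso}(h^{s})(h^{t})=h^{t}\circ h^{s}$, and commutativity $h^{t}\circ h^{s}=h^{s}\circ h^{t}$ is inherited from $s+t=t+s$.

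It remains to verify the $\mathcal{C}^{1}$-flow conditions of Definition~\ref{def:time-one-map} and the finite-type preservation. Every support $\mathcal{S}(h^{t})$ is contained in the $t$-independent set $S:=\{(1,0)\}+\langle R\rangle$, which is well-ordered (respectively of finite type) by Neumann's Lemma. For fixed $(\alpha,m)\in S$, every contribution to $[h^{t}]_{\alpha,m}=\sum_{k\ge 0}(t^{k}/k!)[X^{k}\cdot\mathrm{id}]_{\alpha,m}$ arises from a decomposition $(\alpha,m)=(1,0)+r_{1}+\cdots+r_{k}$ with $r_{i}\in R$, and Neumann's Lemma forbids infinitely many such decompositions across all $k$; hence $[h^{t}]_{\alpha,m}$ is a polynomial in $t$, \emph{a fortiori} $\mathcal{C}^{1}$, and the $k=1$ term recovers $\partial_{t}[h^{t}]_{\alpha,m}|_{t=0}=[\xi]_{\alpha,m}$. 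This confirms that $(h^{t})_{t\in\mathbb{R}}$ is the $\mathcal{C}^{1}$-formal flow of $X$. The finite-type conclusion is preserved throughout, so $\xi\in\mathcal{L}_{\mathfrak{D}}$ implies $H^{t}\in L(\mathcal{L}_{\mathfrak{D}})$ and $h^{t}\in\mathcal{L}_{\mathfrak{D}}^{0}$.
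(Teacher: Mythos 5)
Your argument is correct and opens exactly as the paper's: you recognize $X=\xi\frac{\mathrm{d}}{\mathrm{d}x}$ as a small operator (the support analysis and the strict positivity of $R$ match the paper's), and you invoke Proposition~\ref{def:exponential_logarithm_operator} to get that $H^{t}=\exp(tX)$ is well-defined. Where you diverge is in proving $H^{t}=\mathrm{iso}(h^{t})$, i.e.\ $H^{t}.g=g\circ h^{t}$: the paper forward-references Lemma~\ref{lem:correspondance-diffeo-automorphism} together with Proposition~\ref{rem:dif}, first exhibiting $H^{t}$ as a formal differential operator and then applying the diffeomorphism--isomorphism correspondence, whereas you set up the formal operator ODE $\partial_{t}F_{t}=X\circ F_{t}$, $F_{0}=\mathrm{Id}$, verify that both $H^{t}$ and $\Phi_{t}:=\mathrm{iso}(h^{t})$ satisfy it, and conclude by comparing $t$-power-series coefficients. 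Your route is more elementary and self-contained (it bypasses the formal-differential-operator machinery and its regrouping argument altogether), at the cost of implicitly assuming that $\Phi_{t}(g)=g\circ h^{t}$ admits an expansion $\sum_{n}t^{n}F_{n}(g)/n!$ with $F_{n}\in L(\mathcal{L})$. That assumption is in fact justified by the polynomiality of $[h^{t}]_{\alpha,m}$ (via Neumann's Lemma, composition is a coefficient-wise finite polynomial expression) — but you only establish that polynomiality in your last paragraph, so the logic reads out of order; moving that verification ahead of the ODE step would close the presentational loop. You also spell out the morphism property and the group law via Leibniz and formal Cauchy rearrangement, and you explicitly check the $\mathcal{C}^{1}$-flow conditions of Definition~\ref{def:time-one-map}; the paper dismisses these as ``routine'' with a reference to Ilyashenko--Yakovenko, so your additions are a welcome tightening rather than a deviation.
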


\begin{prop}[The existence of a formal flow of a formal vector field, the hyperbolic
case]
\label{prop: existence-formal-flow-hyperbolic}Let $X=\xi\frac{\mathrm{d}}{{\mathrm{d}x}}$,
$\xi\in\mathcal{L}$, be a vector field in $\mathcal{L}$ such that
$\mathrm{ord}(\xi)=(1,0)$. Then the statements of Proposition~\ref{prop: existence-formal-flow-parabolic}
hold in this case as well, with the difference that $H^{t}$ is just
\emph{weakly} well-defined by \eqref{eq:exponential-operator}.
\end{prop}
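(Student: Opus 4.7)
The plan is to reduce to an Euler-plus-small decomposition: writing $\xi = \lambda x + \eta$ with $\lambda := [\xi]_{1,0}\neq 0$ and $\mathrm{ord}(\eta)\succ(1,0)$, I split $X = X_0 + Y$ with $X_0 := \lambda x\frac{\mathrm{d}}{\mathrm{d}x}$ and $Y := \eta\frac{\mathrm{d}}{\mathrm{d}x}$. As in the proof of Proposition~\ref{prop: existence-formal-flow-parabolic}, $Y$ is a small operator on $\mathcal{L}$. However, $X_0$ is not small, since $X_0(x^\alpha\boldsymbol{\ell}^k) = \lambda\alpha x^\alpha\boldsymbol{\ell}^k + \lambda k x^\alpha\boldsymbol{\ell}^{k+1}$ preserves the $\alpha$-exponent, so iterates of $X$ produce infinitely many contributions of bounded order. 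This is exactly the obstruction to formal-topology convergence of $\sum \frac{t^k}{k!}X^k$, and forces us into the weak topology. The strategy is to first build the flow $h^t$ directly, then recognise the operator series $\exp(tX)$ coefficient-wise as $H^t := \mathrm{iso}(h^t)$.

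For the construction of $h^t$, I would solve the formal ODE $\partial_t h^t = \xi\circ h^t$, $h^0 = \mathrm{id}$, by postulating
\[
h^t(x) = e^{t\lambda} x + \sum_{(\alpha,k)\succ(1,0)} a_{\alpha,k}(t)\, x^\alpha\boldsymbol{\ell}^k.
\]
Using the Taylor-type substitution formulas of Section~\ref{sub:Hahn-fields} to expand $(h^t)^\beta = e^{t\lambda\beta}x^\beta(1+\delta)^\beta$ and $-1/\log(h^t)$ as a geometric series in $\boldsymbol{\ell}$, and matching coefficients of $x^\alpha\boldsymbol{\ell}^k$, one obtains at every order a linear ODE
\[
a'_{\alpha,k}(t) = \lambda\, a_{\alpha,k}(t) + F_{\alpha,k}(t),\qquad a_{\alpha,k}(0)=0,
\]
where $F_{\alpha,k}$ is polynomial in $t$, in factors $e^{t\lambda\nu}$ (with $\nu$ a finite sum of $\alpha$-exponents drawn from $\mathcal{S}(\xi)$), and in already-solved $a_{\beta,l}(t)$ with $(\beta,l)\prec(\alpha,k)$. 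Neumann's lemma applied to the semigroup generated by $\mathcal{S}(\xi)$ and $(0,1)$ shows that the support of $h^t$ stays well-ordered (of finite type when $\xi\in\mathcal{L}_{\mathfrak{D}}$), so the system is solved by transfinite induction: each $a_{\alpha,k}(t)$ comes out as a finite linear combination of monomials $t^m e^{t\lambda\nu}$, in particular entire in $t$. Uniqueness of solutions gives the semigroup property $h^{s+t}=h^s\circ h^t$; Proposition~\ref{prop:properties-of-L} then places $h^t$ in $\mathcal{L}^0$ (respectively $\mathcal{L}_{\mathfrak{D}}^0$).

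Set $H^t := \mathrm{iso}(h^t)$; by Remark~\ref{ex:automorphism_associated}, $H^t$ is an isomorphism of $\mathcal{L}$ (resp.\ $\mathcal{L}_{\mathfrak{D}}$). The chain rule, applied coefficient-wise via the substitution expansions of Section~\ref{sub:Hahn-fields}, gives
\[
\partial_t(g\circ h^t) = \xi(h^t)\cdot g'(h^t) = (Xg)\circ h^t,
\]
hence $\partial_t H^t = H^t\circ X$ coefficient-wise and, by iteration, $\partial_t^n H^t|_{t=0} = X^n$. For any $f\in\mathcal{L}$ and $(\alpha,k)$, Neumann's lemma applied to $\mathcal{S}(f)+\langle\mathcal{S}(h^t - e^{t\lambda}x)\rangle$ shows only finitely many $(\beta,l)\in\mathcal{S}(f)$ contribute to $[H^tf]_{\alpha,k}$, and each contribution is a finite combination of $t^m e^{t\lambda\nu}$ by the previous paragraph. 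Consequently $t\mapsto[H^tf]_{\alpha,k}$ is entire and equals its Taylor series at $t=0$:
\[
[H^tf]_{\alpha,k} = \sum_{n\geq 0}\frac{t^n}{n!}[X^n f]_{\alpha,k},
\]
which is precisely the required weak well-definedness and weak convergence $\exp(tX)\to H^t$. The $\mathcal{C}^1$-flow properties (common well-ordered support and $\mathcal{C}^1$ dependence of each coefficient on $t$) are then read off from the explicit polynomial-exponential form of the $a_{\alpha,k}(t)$.

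The main obstacle is the second step: producing an explicit analytic description of $h^t$ globally in $t\in\mathbb{R}$. The polynomial-exponential character of the $a_{\alpha,k}(t)$ rests on the simple eigenstructure of $X_0$ --- in the coordinate $u = \log x$ it is the translation generator $\lambda\frac{\mathrm{d}}{\mathrm{d}u}$ --- which is precisely the feature that prevents $X_0$ from being small. Without this analytic control, the interchange of the Taylor limit $N\to\infty$ with the infinite sum over $\mathcal{S}(f)$ and with the geometric expansions of $(h^t)^\beta$ and $\log(h^t)$ inside the weak topology could not be justified.
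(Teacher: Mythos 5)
You take a genuinely different route from the paper. The paper keeps the exponential series $\sum_n\frac{t^n}{n!}X^n$ in the foreground: it fixes a monomial $x^\alpha\boldsymbol{\ell}^m$, tracks via Neumann's lemma the finitely many ``chains'' of exponent evolution through the iterates $X^n\cdot f$, bounds each chain's contribution to $[X^n\cdot f]_{\alpha,m}$ by a quantity of the form $C\binom{n}{k}|\alpha\lambda|^n$, and concludes by absolute convergence of $\sum_n\frac{t^n}{n!}C\binom{n}{k}|\alpha\lambda|^n$ --- all packaged as the statement that $X$ is small in the weak sense with respect to $(t^n/n!)_n$, so that Proposition~\ref{prop:small_weak_series} applies. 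You instead first build $h^t$ by solving the formal flow ODE coefficient by coefficient (transfinite induction), observe that each $a_{\alpha,k}(t)$ is a finite polynomial--exponential expression and hence entire, and only then recognise $\sum_n\frac{t^n}{n!}[X^n f]_{\alpha,k}$ as the Taylor series of the entire function $t\mapsto[H^tf]_{\alpha,k}$ via $\partial_t^n H^t|_{t=0}=X^n$. What your route buys is an explicit closed form for the flow coefficients, which makes the $\mathcal{C}^1$ (indeed analytic) dependence on $t$ transparent and bypasses the chain-by-chain estimates; the paper's route avoids discussing the ODE and the structure of $h^t$ and stays entirely inside the ``weakly small operator'' formalism it sets up. Two points in your sketch need filling in, but both are sound: (i) the coefficient-wise chain rule $\partial_t[g\circ h^t]_{\alpha,k}=[(Xg)\circ h^t]_{\alpha,k}$, which you flag but do not verify --- it holds because, by Neumann's lemma, each coefficient is a finite sum of finite products of the entire functions $a_{\beta,l}(t)$ and $e^{t\lambda\nu}$, so differentiation under a finite sum is licit; and (ii) the triangularity of the system, i.e.\ that $F_{\alpha,k}$ involves only $a_{\beta,l}$ with $(\beta,l)\prec(\alpha,k)$ --- this holds because $\xi-\lambda x$ has order $\succ(1,0)$, and expanding $(h^t)^\gamma(-1/\log h^t)^r$ for $(\gamma,r)\succ(1,0)$ shows that any $a_{\beta,l}$ entering the coefficient of $x^\alpha\boldsymbol{\ell}^k$ satisfies $(\beta-1,l)\preceq(\alpha-\gamma,k-r)\prec(\alpha-1,k)$, hence $(\beta,l)\prec(\alpha,k)$.
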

\noindent Note that the time-$t$ map $h^{t}$ of $X$ in Propositions~\ref{prop: existence-formal-flow-parabolic}
and \ref{prop: existence-formal-flow-hyperbolic} is given by the
following formula: 
\begin{equation}
h^{t}=H^{t}\cdot\text{id}=\text{id}+t\xi+\frac{t^{2}}{2!}\xi'\xi+\frac{t^{3}}{3!}\big(\xi'\xi\big)'\xi+\cdots\label{con}
\end{equation}
Note also that in the case $(1,0)\prec\text{ord}(\xi)$, $h^{t}\in\mathcal{L}$
are \emph{parabolic}, while in the case $\text{ord}(\xi)=(1,0)$ they
are \emph{hyperbolic}. Moreover, the formula \eqref{con} converges
in the formal topology if $(1,0)\prec\text{ord}(\xi)$, and in the
weak topology if $\text{ord}(\xi)=(1,0)$. 
\begin{proof}[Proof of Proposition~\ref{prop: existence-formal-flow-parabolic}\emph{.}]
The assumption $\mathrm{ord}(\xi)$ guarantees that $X=\xi\frac{\mathrm{d}}{{\mathrm{d}x}}$
is a \emph{small operator} in the sense of Definition~\ref{def:small_operator}.
It is easy to check that 
\begin{equation}
\mathcal{S}(X.g)=\mathcal{S}(\xi\cdot g')\subseteq\mathcal{S}(g)+R,\ g\in\mathcal{L},\label{eq:s}
\end{equation}
where $R$ is a sub-semigroup of $\mathbb{R}_{\geq0}\times\mathbb{Z}$
generated by elements $(\beta-1,l)$, $(\beta,l)\in\mathcal{S}(\xi)$,
and $(0,1)$. All elements of $R$ are of order strictly bigger than
$(0,0)$. Hence, the sum \eqref{eq:exponential-operator} gives by
Proposition~\ref{lem:series_small_operator} a \emph{well-defined}
operator $H^{t}$ for all $t\in\mathbb{R}$.

The statement in $\mathcal{L}_{\mathfrak{D}}$ follows as in the proof
of finite part of Lemma~\ref{lem:expo-log_and_log-exp}. By \eqref{eq:s},
we have that $\mathcal{S}(H^{t}.g)\subseteq\mathcal{S}(g)+R,\ g\in\mathcal{L},\ t\in\mathbb{R}$.

Finally, the proof of the morphism property of operators $H^{t}$,
$t\in\mathbb{R}$, and the proof that the family $\left(h^{t}\right)_{t}$
is a flow of $X$ (see Definition~\ref{def:time-one-map}) are routine,
following the lines of similar results for formal power series, see
for example \cite[Chapter I.3]{ilya}.

To prove that $H^{t}.f=f\circ h_{t}$, $f\in\mathcal{L},$ we combine
Lemma~\ref{lem:correspondance-diffeo-automorphism} below in this
section and Proposition~\ref{rem:dif}.
\end{proof}

The proof of Proposition~\ref{prop: existence-formal-flow-hyperbolic}
(the case $\mathrm{ord}\left(\xi\right)=\left(1,0\right)$) is more
involved and is postponed to Subsection \ref{sub:fivefour}. In fact,
in this case $X$ is not a small operator in the sense of Definition~\ref{def:small_operator}.
Therefore, $H^{t}$ is not well-defined by \eqref{eq:exponential-operator}.
Nevertheless, we prove in Subsection \ref{sub:fivefour} that it is
\emph{weakly well-defined} by \eqref{eq:exponential-operator}.
\begin{prop}[Uniqueness of the $\mathcal{C}^{1}$-formal flow of a vector field]
\label{prop:unic} Consider a vector field $X=\xi\frac{d}{dx}$,
$\xi\in\mathcal{L}$.
\begin{enumerate}[1., font=\textup, nolistsep,leftmargin=0.6cm]
\item If $\left(1,0\right)\preceq\mathrm{ord}\left(\xi\right)$, then there
exists a \emph{unique} $\mathcal{C}^{1}$-formal flow $(f^{t})_{t\in\mathbb{R}}$
of $X$, $f^{t}\in\mathcal{L}^{H}$, in the sense of Definition~\ref{def:time-one-map}.
Moreover:

\begin{enumerate}[(i), font=\textup, nolistsep,leftmargin=0.6cm]
\item if $\left(1,0\right)\prec\mathrm{ord}(\xi)$, then the $f^{t}\in\mathcal{L}^{H}$
are parabolic; 
\item if $\mathrm{ord}(\xi)=(1,0)$, then the $f^{t}\in\mathcal{L}^{H}$
are hyperbolic.
\end{enumerate}
\item If $\mathrm{ord}\left(\xi\right)\prec\left(1,0\right)$, then $X$
does not admit any $\mathcal{C}^{1}$-flow.
\end{enumerate}
\end{prop}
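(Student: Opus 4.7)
The plan is to dispatch existence and the type assertion of (1) directly from the constructions already in hand, to derive uniqueness in (1) by a one-parameter-group argument at the level of the associated isomorphisms, and to obtain non-existence in (2) by extracting a multiplicative homomorphism from the leading exponent and ruling out nontrivial images via the well-ordering constraint.

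For existence and type in (1), Propositions~\ref{prop: existence-formal-flow-parabolic} and \ref{prop: existence-formal-flow-hyperbolic} already supply $h^{t}=\exp(tX)\cdot\mathrm{id}$ as in \eqref{con}. I would read off the type by isolating contributions to the leading monomial in~\eqref{con}: if $(1,0)\prec\mathrm{ord}(\xi)$, every summand of~\eqref{con} beyond $\mathrm{id}$ has order strictly above $(1,0)$, so $h^{t}$ is parabolic; if $\mathrm{ord}(\xi)=(1,0)$, writing $\xi=\lambda x+\mathrm{h.o.t.}$ with $\lambda\neq 0$ and collecting the contributions $\tfrac{t^{n}\lambda^{n}}{n!}x$ to the monomial $x$ produces leading coefficient $e^{t\lambda}$, so $h^{t}$ is hyperbolic for every $t\neq 0$.

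For uniqueness, let $(f^{t})_{t}$ be any $\mathcal{C}^{1}$-flow of $X$ with $f^{t}\in\mathcal{L}^{H}$, and let $G^{t}=\mathrm{iso}(f^{t})\colon g\mapsto g\circ f^{t}$ be the associated isomorphism of $\mathcal{L}$ (Remark~\ref{ex:automorphism_associated}). Differentiating $f^{s+t}=f^{s}\circ f^{t}$ in $s$ at $s=0$ gives $\tfrac{d}{dt}f^{t}=\xi\circ f^{t}$; a coefficient-wise chain rule (legitimate because all $f^{t}$ share the common well-ordered support $S$ from Definition~\ref{def:time-one-map} and because Neumann's lemma makes each coefficient of $g\circ f^{t}$ a finite polynomial in finitely many coefficients of $f^{t}$) yields
\[
\frac{d}{dt}G^{t}(g)=(g'\circ f^{t})(\xi\circ f^{t})=G^{t}(\xi g')=G^{t}(Xg),\quad g\in\mathcal{L},
\]
i.e.\ $\tfrac{d}{dt}G^{t}=G^{t}\circ X$. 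On the other hand, the operator $H^{t}:=\exp(tX)$, (weakly) well-defined by Propositions~\ref{prop: existence-formal-flow-parabolic}--\ref{prop: existence-formal-flow-hyperbolic}, satisfies $\tfrac{d}{dt}H^{t}=XH^{t}=H^{t}X$. Combining, $\tfrac{d}{dt}(G^{t}\circ H^{-t})=G^{t}XH^{-t}-G^{t}XH^{-t}=0$, and $G^{0}\circ H^{0}=\mathrm{Id}$ forces $G^{t}=H^{t}$ for every $t$; evaluating at $\mathrm{id}$ gives $f^{t}=h^{t}$.

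For non-existence when $\mathrm{ord}(\xi)\prec(1,0)$, suppose for contradiction such a $\mathcal{C}^{1}$-flow exists. Write the leading term of $f^{t}$ as $\lambda_{t}x^{\beta_{t}}$, $\lambda_{t}>0$, $\beta_{t}>0$; matching leading monomials in $f^{s+t}=f^{s}\circ f^{t}$ yields $\beta_{s+t}=\beta_{s}\beta_{t}$, so $t\mapsto\beta_{t}$ is a group homomorphism $(\mathbb{R},+)\to(\mathbb{R}_{>0},\cdot)$ with $\beta_{0}=1$. Since $(\beta_{t},0)\in S$ and the first-coordinate projection of a well-ordered subset of $\mathbb{R}_{>0}\times\mathbb{Z}$ is well-ordered in $\mathbb{R}_{>0}$, there is $\beta_{*}>0$ with $\beta_{t}\ge\beta_{*}$ for all $t$; combined with $\beta_{-t}=\beta_{t}^{-1}\ge\beta_{*}$, this confines $\{\beta_{t}\}$ to $[\beta_{*},1/\beta_{*}]$. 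If $\beta_{t_{0}}\neq 1$ for some $t_{0}$, then $\beta_{nt_{0}}=\beta_{t_{0}}^{n}$ escapes this interval as $n\to\infty$, a contradiction; hence $\beta_{t}\equiv 1$. Then $\mathcal{S}(f^{t})\subseteq\{(\alpha,k):(\alpha,k)\succeq(1,0)\}$ for every $t$, so coefficient-wise differentiation at $t=0$ gives $\mathcal{S}(\xi)\subseteq\{(\alpha,k):(\alpha,k)\succeq(1,0)\}$, contradicting $\mathrm{ord}(\xi)\prec(1,0)$. The main obstacle I anticipate is the justification of the operator identity $\tfrac{d}{dt}G^{t}=G^{t}\circ X$ in the merely weakly convergent hyperbolic regime; once this is in place, the rest reduces to bookkeeping with Neumann's lemma and the leading-exponent homomorphism argument.
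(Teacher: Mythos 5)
Your proposal is correct in its broad strokes but takes a genuinely different route from the paper in both parts, and the uniqueness argument in part (1) has gaps that the paper's approach avoids.

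For uniqueness the paper fixes the smallest pair $(\alpha,m)$ in the common support $S$ whose coefficient in $f^{t}-g^{t}$ is not identically zero, passes to the scalar integral equation $h(t)=\int_{0}^{t}\big[\xi(f^{s})-\xi(g^{s})\big]_{\alpha,m}\,\mathrm{d}s$, and dispatches the two subcases $\mathrm{ord}(\xi)\succ(1,0)$ (order comparison forces the integrand to vanish) and $\mathrm{ord}(\xi)=(1,0)$ (Gronwall). You instead argue at the level of the associated isomorphisms $G^{t}=\mathrm{iso}(f^{t})$ and $H^{t}=\exp(tX)$ and show that $G^{t}\circ H^{-t}$ has vanishing derivative. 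This is a legitimate and elegant idea, but it opens several flanks the paper never has to defend. First, $\mathrm{iso}(f^{t})$ is only stated (Remark~\ref{ex:automorphism_associated}) for parabolic or hyperbolic $f^{t}$, so before you may form $G^{t}$ you must first show that every $f^{t}$ in the $\mathcal{C}^{1}$-flow has leading exponent $\beta_{t}=1$; your own homomorphism argument from part (2) does exactly this, but you never invoke it in part (1), so as written the uniqueness proof begs the question of what kind of elements the flow consists of. Second, the operator calculus needed — term-by-term differentiation $\frac{\mathrm{d}}{\mathrm{d}t}\exp(tX)=X\exp(tX)$, the commutation $X H^{t}=H^{t}X$, and the Leibniz rule for $\frac{\mathrm{d}}{\mathrm{d}t}(G^{t}\circ H^{-t})$ (a two-variable chain rule at the operator level) — all require justification in the weak topology when $\mathrm{ord}(\xi)=(1,0)$, since then $\exp(tX)$ is only weakly well-defined. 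You flag a concern, but you misplace it: the identity $\frac{\mathrm{d}}{\mathrm{d}t}G^{t}=G^{t}X$ is actually safe regardless of $\lambda$ (each coefficient of $g\circ f^{t}$ is a finite polynomial in coefficients of $f^{t}$ by Neumann's lemma), whereas the delicate object is $H^{t}$ and its term-by-term manipulations. The paper's Gronwall route avoids every one of these issues because it only ever differentiates and integrates scalar coefficient functions.

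For non-existence (part (2)) your leading-exponent homomorphism argument is correct and is a genuinely different, and arguably more conceptual, proof than the paper's: the paper again singles out the smallest $(\alpha,m)\in S$ with non-vanishing coefficient and compares orders of the two sides of $\frac{\mathrm{d}f^{t}}{\mathrm{d}t}\big|_{t=t_{0}}=\xi(f^{t_{0}})$, whereas you observe that $t\mapsto\beta_{t}$ is a homomorphism $(\mathbb{R},+)\to(\mathbb{R}_{>0},\cdot)$ confined, by the well-ordering of $S$, to a compact interval $[\beta_{*},1/\beta_{*}]$, hence trivial. This observation is worth keeping because, as noted above, it also supplies the missing first step of your uniqueness argument.
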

\begin{proof}
1. The existence of a $\mathcal{C}^{1}$-flow for the vector field
$X$ is shown by an explicit construction in Propositions \ref{prop: existence-formal-flow-parabolic}
and \ref{prop: existence-formal-flow-hyperbolic}. Suppose now that
$X$ admits two $\mathcal{C}^{1}$-flows $\left(f^{t}\right)_{t\in\mathbb{R}}$
and $\left(g^{t}\right)_{t\in\mathbb{R}}$ in $\mathcal{L}^{H}$.
Let $S$ be a well-ordered subset of $\mathbb{R}_{>0}\times\mathbb{Z}$
such that $\mathcal{S}\left(f^{t}\right)$ and $\mathcal{S}\left(g^{t}\right)$
are contained in $S$ for all $t\in\mathbb{R}$. Let $\left(\alpha,m\right)$
be the smallest element of $S$ (which exists since $S$ is well-ordered)
such that the coefficient $h\left(t\right)$ of $x^{\alpha}\boldsymbol{\ell}^{m}$
in $f^{t}\left(x\right)-g^{t}\left(x\right)$ does not vanish identically.
Since $\left(f^{t}\right)$ and $\left(g^{t}\right)$ are both $\mathcal{C}^{1}$-formal
flows of $X=\xi\frac{\mathrm{d}}{\mathrm{d}x}$, we have the integral
equation in $\mathcal{L}$:
\begin{equation}
f^{t}\left(x\right)-g^{t}\left(x\right)=\int_{0}^{t}\big(\xi\left(f^{s}\right)\left(x\right)-\xi\left(g^{s}\right)\left(x\right)\big)\mathrm{d}s,\quad\forall t\in\mathbb{R},\label{eq:flow-integral}
\end{equation}
where the integral on \eqref{eq:flow-integral} is applied on each
coefficient of the integrand. The coefficient of the monomial $x^{\alpha}\boldsymbol{\ell}^{m}$
on the left-hand side of \eqref{eq:flow-integral} is $h\left(t\right)$.
Hence, in order to estimate the coefficient of the same monomial on
the right-hand side of this equation, we write, based on the definition
of $\left(\alpha,m\right)$:
\begin{align*}
f^{s}\left(x\right) & =M\left(s;x\right)+h_{1}\left(s\right)x^{\alpha}\boldsymbol{\ell}^{m}+\mathrm{h.o.t.}\\
g^{s}\left(x\right) & =M\left(s;x\right)+h_{2}\left(s\right)x^{\alpha}\boldsymbol{\ell}^{m}+\mathrm{h.o.t.}
\end{align*}
Here, $M\left(s;x\right)=b\left(s\right)x^{\beta}+\mathrm{h.o.t.}$,
$\left(\beta,0\right)\prec\left(\alpha,m\right)$, is a transseries
with monomials in $S$ and coefficients in $\mathcal{C}^{1}\left(\mathbb{R}\right)$
such that $b$ is not identically zero, and $h_{1}$, $h_{2}$ are
$\mathcal{C}^{1}$-functions. Obviously, $h=h_{1}-h_{2}$.

Let $ax^{\gamma}\boldsymbol{\ell}^{n}$ be the leading term of $\xi$.
We see that the leading term of the difference $\xi\left(f^{s}\right)-\xi\left(g^{s}\right)$
is
\[
\begin{aligned}a\left(\frac{1}{\beta}\right)^{n}\gamma b\left(s\right)^{\gamma-1} & \left(h_{1}\left(s\right)-h_{2}\left(s\right)\right)x^{\alpha+\beta\left(\gamma-1\right)}\boldsymbol{\ell}^{m+n}\\
 & =a\left(\frac{1}{\beta}\right)^{n}\gamma b\left(s\right)^{\gamma-1}h\left(s\right)x^{\alpha+\beta\left(\gamma-1\right)}\boldsymbol{\ell}^{m+n}.
\end{aligned}
\]
If $\left(1,0\right)\prec\left(\gamma,n\right)$, the order of the
right-hand side is bigger than $\left(\alpha,m\right)$. It would
imply $h\equiv0$, which is a contradiction. On the other hand, if
$\left(\gamma,n\right)=\left(1,0\right)$, by comparing the coefficients
of $x^{\alpha}\boldsymbol{\ell}^{m}$ on both sides of \eqref{eq:flow-integral},
we see that:
\[
h\left(t\right)=a\int_{0}^{t}h\left(s\right)\mathrm{d}s\text{, so }\left|h\left(t\right)\right|\leq\left|a\right|\int_{0}^{t}\left|h\left(s\right)\right|\mathrm{d}s,\quad a\in\mathbb{R}.
\]
It follows from \emph{Gronwall's lemma} applied to $\left|h\right|$
that $h\equiv0$, which is again a contradiction.

The points (i) and (ii) follow by uniqueness on one hand, and by the
explicit construction of the flow done in the proof of Propositions
\ref{prop: existence-formal-flow-parabolic} and \ref{prop: existence-formal-flow-hyperbolic}
on the other hand.

2. Assume now that $\left(\beta,m\right):=\mathrm{ord}\left(\xi\right)\prec\left(1,0\right)$
and that $X$ admits a $\mathcal{C}^{1}$-flow $\left(f^{t}\right)_{t\in\mathbb{R}}$.
We show that this assumption leads to a contradiction. As above, let
$S$ be a well-ordered subset of $\mathbb{R}_{>0}\times\mathbb{Z}$
such that $\mathcal{S}\left(f^{t}\right)\subseteq S$ for all $t\in\mathbb{R}$.
Let $\left(\alpha,m\right)$ be the smallest element of $S$ such
that the coefficient $h\left(t\right)$ of $x^{\alpha}\boldsymbol{\ell}^{m}$
in $f^{t}$ does not vanish identically. Let $t_{0}\in\mathbb{R}$
with $h\left(t_{0}\right)\ne0$. In particular, $\mathrm{ord}\left(f^{t_{0}}\left(x\right)\right)=\left(\alpha,m\right)$.
We have:
\[
\frac{\mathrm{d}f^{t}}{\mathrm{d}t}\Big|_{t=t_{0}}\left(x\right)=\xi\left(f^{t_{0}}\left(x\right)\right).
\]
The order of the left-hand side of this equation is bigger than or
equal to $\left(\alpha,m\right)$. But since $\mathrm{ord}\left(\xi\right)\prec\left(1,0\right)$,
the order of the right-hand side is strictly smaller than $\left(\alpha,m\right)$,
and we get a contradiction.\end{proof}
\begin{cor}
\label{cor:formula}Let $X=\xi\frac{d}{dx}$, $\xi\in\mathcal{L}$
(resp. $\mathcal{L}_{\mathfrak{D}}$) and let $\left(1,0\right)\preceq\text{ord}(\xi)$.
Then its $\mathcal{C}^{1}$-flow $(f^{t})_{t}$, $f^{t}\in\mathcal{L}^{H}$
(resp. $\mathcal{L}_{\mathfrak{D}}^{H}$), is given uniquely by the
formula: 
\[
f^{t}:=\exp(tX).\mathrm{id},\ t\in\mathbb{R}.
\]
\end{cor}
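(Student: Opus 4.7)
The plan is to assemble this result directly from the three preceding propositions, treating it as a synthesis rather than a new calculation. I would split the argument according to the order of $\xi$.

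First, for existence of a $\mathcal{C}^1$-flow given by the exponential formula, I would invoke Proposition~\ref{prop: existence-formal-flow-parabolic} when $(1,0) \prec \mathrm{ord}(\xi)$ and Proposition~\ref{prop: existence-formal-flow-hyperbolic} when $\mathrm{ord}(\xi) = (1,0)$. Both propositions explicitly construct a one-parameter family $(f^t)_{t\in\mathbb{R}}$ in $\mathcal{L}^H$ as $f^t = H^t \cdot \mathrm{id}$, where $H^t := \exp(tX) = \sum_{k\geq 0} \frac{t^k}{k!} X^k$; the only difference between the two cases is that $H^t$ is well-defined (in the formal topology) in the parabolic case and merely weakly well-defined in the hyperbolic case, but in either case the resulting $f^t$ lies in $\mathcal{L}^H$ and the stated formula $f^t = \exp(tX)\cdot\mathrm{id}$ holds. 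The finite-type version follows from the corresponding assertions in the same two propositions: when $\xi \in \mathcal{L}_{\mathfrak{D}}$, the operators $H^t$ stabilize $\mathcal{L}_{\mathfrak{D}}$ and hence $f^t \in \mathcal{L}_{\mathfrak{D}}^H$.

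Second, for uniqueness, I would apply Proposition~\ref{prop:unic}(1), which asserts that under the hypothesis $(1,0) \preceq \mathrm{ord}(\xi)$ there is a \emph{unique} $\mathcal{C}^1$-formal flow of $X$ inside $\mathcal{L}^H$. Since the flow produced above is such a $\mathcal{C}^1$-flow, uniqueness forces it to coincide with any other candidate, proving that $f^t = \exp(tX)\cdot\mathrm{id}$ is the sole possibility.

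There is no genuine obstacle here: the statement is essentially a packaging of Propositions~\ref{prop: existence-formal-flow-parabolic}, \ref{prop: existence-formal-flow-hyperbolic}, and \ref{prop:unic}, and the only bookkeeping is to record that the two existence propositions, taken together, cover exactly the range $(1,0) \preceq \mathrm{ord}(\xi)$ addressed by the uniqueness result. The real work has been absorbed into those earlier propositions (the smallness estimate making $\exp(tX)$ a well-defined operator in the parabolic case, and the weak convergence argument deferred to Subsection~\ref{sub:fivefour} in the hyperbolic case).
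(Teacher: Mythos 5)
Your proposal is correct and follows exactly the paper's own route: the paper's proof of this corollary is a single line citing Propositions~\ref{prop: existence-formal-flow-parabolic}, \ref{prop: existence-formal-flow-hyperbolic} (existence, split by order of $\xi$) and Proposition~\ref{prop:unic} (uniqueness). You have simply spelled out the bookkeeping that the paper leaves implicit.
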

\begin{proof}
The proof follows by Propositions~\ref{prop: existence-formal-flow-parabolic}
and \ref{prop: existence-formal-flow-hyperbolic} and the uniqueness
result in Proposition~\ref{prop:unic}.\end{proof}
\begin{lem}
\label{lem:uniq} Let $X=\xi\frac{\mathrm{d}}{\mathrm{d}x}$, $\xi\in\mathcal{L}$
(resp. $\mathcal{L}_{\mathfrak{D}}$), be such that $\left(1,0\right)\prec\mathrm{\text{ord}}(\xi)$
or $\xi(x)=\lambda x+\mathrm{h.o.t.}$ with $\lambda<0$. The operators
$\exp(X)$ and $\log\exp(X)$ are weakly well-defined in $L(\mathcal{L})$
(resp. $L(\mathcal{L}_{\mathfrak{D}})$) and 
\[
\log\,\exp(X)=X.
\]
Moreover, in the case $\text{ord}(\xi)\succ(1,0)$, the operators
are well-defined.\end{lem}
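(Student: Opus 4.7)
\emph{Proof plan.} The argument splits according to which of the two hypotheses on $\xi$ is in force.

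\emph{Small case: $(1,0) \prec \mathrm{ord}(\xi)$.} A direct support analysis, identical to the one carried out in the proof of Proposition~\ref{prop: existence-formal-flow-parabolic}, gives $\mathcal{S}(X \cdot g) \subseteq \mathcal{S}(g) + R$ for every $g \in \mathcal{L}$, where $R$ is the sub-semigroup of $\mathbb{R}_{\geq 0} \times \mathbb{Z}$ generated by $\{(\beta-1,l) : (\beta,l) \in \mathcal{S}(\xi)\} \cup \{(0,1)\}$. Since $\beta > 1$ on $\mathcal{S}(\xi)$, every element of $R$ is strictly greater than $(0,0)$; moreover $R$ is of finite type as soon as $\xi \in \mathcal{L}_{\mathfrak{D}}$. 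Hence $X$ is a small operator in the sense of Definition~\ref{def:small_operator}, and Proposition~\ref{def:exponential_logarithm_operator} applies verbatim to produce well-defined operators $\exp(X)$ and $\log \exp(X)$ together with the equality $\log\exp(X) = X$.

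\emph{Hyperbolic case: $\xi(x) = \lambda x + \mathrm{h.o.t.}$ with $\lambda < 0$.} Here $X$ is no longer small, but Proposition~\ref{prop: existence-formal-flow-hyperbolic} guarantees that $\exp(X) = H^{1}$ is weakly well-defined on $\mathcal{L}$ (resp.~$\mathcal{L}_{\mathfrak{D}}$) and coincides with the isomorphism associated with the time-one map $h^1 = H^{1}\cdot\mathrm{id}$. Reading the coefficient of $x$ off the series $h^{1} = \mathrm{id} + \xi + \tfrac{1}{2!}\xi'\xi + \cdots$, each iterate $X^{n}\cdot\mathrm{id}$ has leading monomial $\lambda^{n}x$, and summing the resulting geometric/exponential series yields $h^{1} = e^{\lambda}x + \mathrm{h.o.t.}$; since $\lambda < 0$, we have $e^{\lambda} < 1$, so $h^{1}$ is a hyperbolic contraction in $\mathcal{L}^{H}$ (resp.~$\mathcal{L}_{\mathfrak{D}}^{H}$). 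Therefore Lemma~\ref{lem:expo-log_and_log-exp}, applied to $F := \exp(X) = \mathrm{iso}(h^{1})$, ensures that $\log F = \log\exp(X)$ is weakly well-defined and that $\exp\log F = F$.

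\emph{Identifying $\log\exp(X)$ with $X$.} It remains to pass from $\exp\log\exp(X) = \exp(X)$ to $\log\exp(X) = X$. I propose to mimic the symbolic-computation step at the end of Proposition~\ref{def:exponential_logarithm_operator}, but carried out in the weak topology. Applying the double series
\[
\log\exp(X)\cdot f \;=\; \sum_{j\geq 1}\frac{(-1)^{j+1}}{j}\,\Big(\sum_{k\geq 1}\frac{X^{k}}{k!}\Big)^{\!j}\cdot f
\]
to an arbitrary $f \in \mathcal{L}$ and tracking the coefficient of a target monomial $x^{\alpha}\boldsymbol{\ell}^{m}$, I exploit the decomposition of $X$ on each one-dimensional graded piece: $X|_{H_{\alpha,m}}$ equals $\lambda\alpha\cdot\mathrm{Id} + N_{\alpha,m}$ with $N_{\alpha,m}(H_{\alpha,m}) \subseteq K_{\alpha,m}$. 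The diagonal contributions to the coefficient of $x^{\alpha}\boldsymbol{\ell}^{m}$ are then governed by the absolutely convergent scalar series $\sum_{n}(\lambda\alpha)^{n}/n! = e^{\lambda\alpha}$, while the off-diagonal contributions arise from finitely many strictly increasing paths in the lexicographic grading. This legitimises a Fubini-type rearrangement of the double sum by total degree in $X$, which collapses to $X$ thanks to the classical $\mathbb{Q}[[Y]]$-identity $\log\exp(Y) = Y$.

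\emph{Main obstacle.} The delicate part is clearly the hyperbolic case: because $X$ fails to be small, Proposition~\ref{def:exponential_logarithm_operator} cannot be invoked, and every step must be performed in the weak topology. Rigorously justifying the coefficient-by-coefficient rearrangement described above, by simultaneously controlling the combinatorial structure of paths through $(H_{\gamma,n})$ and the analytic size of the scalar factors they generate, is the principal technical hurdle I expect in turning this sketch into a complete proof.
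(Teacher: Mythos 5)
Your proposal follows essentially the same route as the paper's proof. In the small case both arguments reduce to $X$ being a small operator and then invoke Proposition~\ref{def:exponential_logarithm_operator}; in the hyperbolic case both rely on Proposition~\ref{prop: existence-formal-flow-hyperbolic} to get weak well-definedness of $\exp(X)$, compute the leading term $e^{\lambda}x$ to recognise a hyperbolic contraction, and then invoke Lemma~\ref{lem:expo_log_hyp} (the paper cites that subsidiary lemma, you cite the parent Lemma~\ref{lem:expo-log_and_log-exp}; they do the same work here) to obtain weak well-definedness of $\log\exp(X)$. For the final identity $\log\exp(X)=X$, the paper is terse (``follows by symbolic computation with formal exp-log series''); your proposed elaboration — decomposing $X$ on each graded piece into a scalar plus a strictly order-raising part, then justifying a Fubini-type rearrangement of the double sum by absolute convergence of the diagonal scalar series — is exactly the kind of Moore--Osgood argument the paper deploys in Lemmas~\ref{lem:diff_hypH} and~\ref{lem:diff_hyp}, so it is a legitimate, somewhat more explicit version of the paper's claim rather than a genuinely different method. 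Nothing in your sketch is wrong; you have correctly identified the remaining technical hurdle, and the paper itself passes over it at the same level of detail.
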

\begin{proof}
The result in the case $\text{ord}(\xi)\succ(1,0)$ follows directly
from Proposition~\ref{def:exponential_logarithm_operator}, since
$X$ is a small operator in this case. The case $\text{ord}(\xi)=(1,0)$
is proven in Section~\ref{sub:fivefour}.\end{proof}
\begin{prop}[The convergence of the Taylor expansion]
\label{prop:taylor} Let $f\in\mathcal{L}^{H}$ (resp. $\mathcal{L}_{\mathfrak{D}}^{H}$)
be parabolic or hyperbolic contraction. Let $F=\mathrm{iso}(f)\in L(\mathcal{L})$
(resp. $L(\mathcal{L}_{\mathfrak{D}})$). Put $f=\mathrm{id}+\varepsilon$.
Then $F$ is weakly well-defined as the formal differential operator:
\[
F=\mathrm{Id}+\sum_{k=1}^{\infty}\frac{\varepsilon^{k}}{k!}\frac{\mathrm{d}^{k}}{\mathrm{d}x^{k}},
\]
Moreover, if $f$ is parabolic, then $F$ is well-defined by the above
series.
\end{prop}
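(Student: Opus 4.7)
The plan is to verify the identity $F(g) = g + \sum_{k\ge 1} \tfrac{\varepsilon^{k}}{k!}\, g^{(k)}$ for every $g \in \mathcal{L}$ (resp.\ $\mathcal{L}_{\mathfrak{D}}$) by reducing to monomials and then checking convergence in the appropriate topology. By strong linearity of both sides in $g$, it is enough to fix a single monomial $M = x^{\alpha}\boldsymbol{\ell}^{l}$, prove that $M \circ f = \sum_{k\ge 0} \tfrac{\varepsilon^{k}}{k!}\, M^{(k)}$ (in the prescribed topology), and then reassemble via the strong linearity used in Section~\ref{sub:Hahn-fields} to define $g \circ f$ in the Hahn field.

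In the parabolic case the argument is essentially a repackaging of the proof of Lemma~\ref{lem:expo-log_and_log-exp}: since $\mathrm{ord}(\varepsilon) \succ (1,0)$, each differentiation shifts the order of a monomial by $(-1,0)$ (up to a logarithmic shift), while each multiplication by $\varepsilon$ shifts it by at least $\mathrm{ord}(\varepsilon) \succ (1,0)$. Hence $\mathrm{ord}(\varepsilon^{k} g^{(k)}/k!)$ strictly increases with $k$, and more precisely
\[
\mathcal{S}\bigl(\varepsilon^{k} g^{(k)}/k!\bigr) \;\subseteq\; \mathcal{S}(g) + \langle R \rangle,
\]
where $R$ consists only of exponents strictly bigger than $(0,0)$. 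By Neumann's Lemma~\ref{lem:neumann_lemma} each element of $\mathcal{S}(g) + \langle R \rangle$ is reached by only finitely many $k$, so the series converges in the formal topology and defines a well-defined operator on $\mathcal{L}$. In the finite-type case, the same computation together with Neumann's Lemma shows $R$ is of finite type, and we conclude that $F$ is well-defined on $\mathcal{L}_{\mathfrak{D}}$. The identification with $M \circ f$ is then a formal consequence of the Taylor formula applied to the Hahn-field composition rule recalled in Section~\ref{sub:Hahn-fields}.

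In the hyperbolic contraction case one has $\varepsilon(x) = (\lambda-1)x + \mathrm{h.o.t.}$ with $\mathrm{ord}(\varepsilon) = (1,0)$ and $|\lambda-1| < 1$. Formal convergence is lost, because $\mathrm{ord}(\varepsilon^{k} M^{(k)})$ remains bounded in $k$, and we aim instead for weak convergence. Factoring $f(x) = \lambda x\bigl(1+\eta(x)\bigr)$ with $\eta \in \mathcal{L}$, $\mathrm{ord}(\eta) \succ (0,0)$, we expand
\[
M \circ f \;=\; \lambda^{\alpha}\, x^{\alpha}(1+\eta)^{\alpha}\Bigl(-\tfrac{1}{\log\lambda + \log x + \log(1+\eta)}\Bigr)^{l},
\]
which is defined in $\mathcal{L}$ exactly by the binomial and $\log$-series expansions described in Section~\ref{sub:Hahn-fields}. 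On the other hand, the Taylor series $\sum_{k}\tfrac{\varepsilon^{k}}{k!} M^{(k)}$ can be rewritten, after regrouping, as the binomial expansion of $x^{\alpha}(1+\varepsilon/x)^{\alpha}$ combined with the analogous series for $\bigl(-1/\log f(x)\bigr)^{l}$. Because the leading part of $\varepsilon/x$ is the constant $(\lambda-1)$ with $|\lambda-1|<1$, every coefficient of the formal series in $\varepsilon/x$, when collected monomial by monomial, reduces to an absolutely convergent numerical series dominated by $\sum_{k}\binom{\alpha}{k}|\lambda-1|^{k} = \lambda^{\alpha}$ together with its $\log$-series analogue. This yields coefficient-wise (i.e.\ weak) convergence and matches $[M\circ f]_{\beta,m}$ by construction; strong linearity then extends the identity to all of $\mathcal{L}$ (resp.\ $\mathcal{L}_{\mathfrak{D}}$).

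The main obstacle is the bookkeeping in the hyperbolic case: one must check that, for each fixed $(\beta,m)\in\mathbb{R}_{>0}\times\mathbb{Z}$, all $k$ contribute in a controlled way to the coefficient of $x^{\beta}\boldsymbol{\ell}^{m}$ and that the total contribution is absolutely convergent in $\mathbb{R}$ with limit equal to $[M \circ f]_{\beta,m}$. The key quantitative input is the convergence of the binomial series $\sum_{k}\binom{\alpha}{k}(\lambda-1)^{k}$, which is precisely where the assumption \emph{hyperbolic contraction} (ensuring $|\lambda-1|<1$) is used; in the expansion case $\lambda > 1$ absolute convergence fails and one would need a different representation of $F$, consistent with the fact that the statement is restricted to the contractive regime.
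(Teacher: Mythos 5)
Your proposal follows essentially the same route as the paper: handle the parabolic case by formal (order-increasing) convergence, and in the hyperbolic contraction case reduce to a monomial $x^\alpha\boldsymbol{\ell}^l$, compare the Taylor series against the direct binomial/log expansion of $f(x)^\alpha$ and $(-1/\log f(x))^l$, and use $|\lambda-1|<1$ to get absolute coefficient-wise (weak) convergence, then extend to all $g$ by products of absolutely convergent series. One small slip: you write $\sum_{k}\binom{\alpha}{k}|\lambda-1|^{k}=\lambda^{\alpha}$; the identity $\sum_{k}\binom{\alpha}{k}(\lambda-1)^{k}=\lambda^{\alpha}$ holds without absolute values, while the absolute-convergence claim requires $\sum_{k}\bigl|\binom{\alpha}{k}\bigr||\lambda-1|^{k}<\infty$ (true for $|\lambda-1|<1$, but not equal to $\lambda^{\alpha}$); the paper makes this precise via the Vandermonde identity $\sum_{k\ge k_0}\binom{\alpha}{k}\binom{k}{k_0}(\lambda-1)^{k-k_0}=\binom{\alpha}{k_0}\lambda^{\alpha-k_0}$, which your sketch should invoke (or reproduce) to nail down the regrouped coefficients.
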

Note that Proposition~\ref{prop:taylor} claims that in parabolic
and hyperbolic cases the \emph{Taylor expansions converge in $\mathcal{L}$}
(in the respective topologies). That is, for every $g\in\mathcal{L}$,
we can write: 
\begin{equation}
F.g\left(x\right)=g\circ f(x)=g\left(x+\varepsilon\left(x\right)\right)=g\left(x\right)+\sum_{k=1}^{\infty}\frac{\varepsilon\left(x\right)^{k}}{k!}g^{\left(k\right)}\left(x\right).\label{taylor}
\end{equation}

\begin{proof}
$(i)$ $f$ \emph{parabolic.} Since $\text{ord}(\varepsilon)\succ(1,0)$,
the Taylor expansion \eqref{taylor} converges in the formal topology
to $g\circ f$. Indeed, the orders $\text{ord}(\varepsilon^{k}g^{\left(k\right)})$
strictly increase by a fixed value $\left(0,0\right)\prec\text{ord}(h)-(1,0)$,
as $k$ increases. \\

$(ii)$ $f$ \emph{a hyperbolic contraction.} We prove that if $\text{ord}(\varepsilon)=(1,0)$
the Taylor expansion \eqref{taylor} converges in the weak topology.
Additionally, we prove that \emph{the coefficients of respective monomials
converge absolutely}.

Let $f(x)=\lambda x+\mathrm{h.o.t.}$ be hyperbolic, with $0<\lambda<1$.
Then $\varepsilon(x)=f(x)-x=(\lambda-1)x+\Psi(x)$, $\left(1,0\right)\prec\text{ord}(\Psi)$.
We prove that the Taylor expansion \eqref{taylor} for monomials $g(x)=x^{\alpha}$,
$\alpha>0,$ and $g(x)=\big(\frac{1}{-\log x}\big)^{m}$, $m\in\mathbb{Z}$,
converges in the weak topology. The convergence is then deduced for
all elements $g\in\mathcal{L}$, since products of absolutely convergent
series converge absolutely.

1. $g(x)=x^{\alpha}$. By definition of compositions in $\mathcal{L}$,
see Section~\ref{sub:Hahn-fields}, we have: 
\begin{equation}
\big(f(x)\big)^{\alpha}=\big(\lambda x+\Psi(x)\big)^{\alpha}=\lambda^{\alpha}x^{\alpha}\Big(1+\frac{\Psi(x)}{\lambda x}\Big)^{\alpha}=\lambda^{\alpha}x^{\alpha}\sum_{k=0}^{\infty}{\alpha \choose k}\lambda^{-k}\Big(\frac{\Psi(x)}{x}\Big)^{k}.\label{first}
\end{equation}
Since $\text{ord}(\frac{\Psi(x)}{x})\succ(1,0)$, the above series
converges in the formal topology.

Consider the series corresponding to the Taylor expansion \eqref{taylor}:
\begin{align}
\sum_{k=0}^{\infty}\frac{\big(x^{\alpha}\big)^{(k)}}{k!}\big((\lambda-1)x+\Psi(x)\big)^{k}= & \sum_{k=0}^{\infty}\frac{\big(x^{\alpha}\big)^{(k)}\cdot x^{k}}{k!}(\lambda-1)^{k}\Big(1+\frac{\Psi(x)}{(\lambda-1)x}\Big)^{k}=\nonumber \\
 & =\sum_{k=0}^{\infty}\frac{\alpha(\alpha-1)\cdots(\alpha-k+1)\cdot x^{\alpha}}{k!}(\lambda-1)^{k}\Big(1+\frac{\Psi(x)}{(\lambda-1)x}\Big)^{k}=\nonumber \\
 & =\sum_{k=0}^{\infty}{\alpha \choose k}x^{\alpha}(\lambda-1)^{k}\Big[\sum_{l=0}^{k}{k \choose l}\Big(\frac{\Psi(x)}{(\lambda-1)x}\Big)^{l}\Big].\label{second}
\end{align}
We show that the series converges in $\mathcal{L}$ in the weak topology
to $\big(f(x))^{\alpha}$ above. It can be easily seen that both the
monomials of \eqref{first} and of \eqref{second} belong to $S=\bigcup_{k\in\mathbb{N}_{0}}\mathcal{S}\Big(\big(\frac{\Psi(x)}{x}\big)^{k}x^{\alpha}\Big)$.
For every $k_{0}\in\mathbb{N}_{0}$, $x^{\alpha}\Big(\frac{\Psi(x)}{x}\Big)^{k_{0}}$
is present in infinitely many elements of \eqref{second}, but, due
to the fact that $|\lambda-1|<1$, its coefficient converges to the
coefficient of $x^{\alpha}\Big(\frac{\Psi(x)}{x}\Big)^{k_{0}}$ in
\eqref{first}: 
\[
\sum_{k=k_{0}}^{\infty}{\alpha \choose k}(\lambda-1)^{k-k_{0}}{k \choose k_{0}}=\sum_{k=k_{0}}^{\infty}{\alpha \choose k_{0}}{\alpha-k_{0} \choose k-k_{0}}(\lambda-1)^{k-k_{0}}={\alpha \choose k_{0}}(1+\lambda-1)^{\alpha-k_{0}}={\alpha \choose k_{0}}\lambda^{\alpha-k_{0}}.
\]
Moreover, the convergence is absolute.

On the other hand, since $\left(1,0\right)\prec\text{ord}\big(\frac{\Psi(x)}{x}\big)$,
for every monomial $x^{\beta}\boldsymbol{\ell}^{n}\in S$ there exists
$N\in\mathbb{N}$ such that $x^{\beta}\boldsymbol{\ell}^{n}\notin\mathcal{S}\Big(\big(\frac{\Psi(x)}{x}\big)^{k}x^{\alpha}\Big)$
for $k>N$. Together with the above analysis, this proves the convergence
of coefficients of every monomial in \eqref{second} to its coefficient
in \eqref{first}. That is, the Taylor expansion \eqref{second} converges
in the weak topology.

2. $g(x)=\big(\frac{1}{-\log x}\big)^{m}$. The proof of convergence
of Taylor expansion for $g(x)=\big(\frac{1}{-\log x}\big)^{m},\ m\in\mathbb{Z},$
follows the same idea, so we omit it.
\end{proof}
The next Lemma~\ref{lem:correspondance-diffeo-automorphism} is a
weaker version of the well-known diffeomorphism-isomorphism correspondence
for the algebra $\mathbb{C}[[x]]$ of formal power series, see \cite[Section 3A]{ilya}.
It is used, together with Proposition~\ref{rem:dif} below, to finish
the proof of Propositions \ref{prop: existence-formal-flow-parabolic}
and \ref{prop: existence-formal-flow-hyperbolic} concerning the correspondence
$h_{t}\leftrightarrow H^{t}$. Their Corollary~\ref{cor} is used
to prove uniqueness in Theorem B. 
\begin{lem}[Formal diffeomorphism - isomorphism correspondence for $\mathcal{L}$]
 \label{lem:correspondance-diffeo-automorphism}Let $B\in L(\mathcal{L})$
(resp. $L(\mathcal{L}_{\mathfrak{D}})$) be a morphism which is also
a (weakly) well-defined formal differential operator (in the sense
of Definition~\ref{def:form_diff}), and such that $h:=B\cdot\mathrm{id}\in\mathcal{L}^{0}$
is parabolic or a hyperbolic contraction. Then $B=\mathrm{iso}(h)$,
$h\in\mathcal{L}^{0}$ (resp. $\mathcal{L}_{\mathfrak{D}}^{0}$).\end{lem}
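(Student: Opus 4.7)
The plan is to exploit both the morphism property of $B$ and its representation as a formal differential operator in order to identify the coefficients $h_{j}$ with the Taylor coefficients associated to $h$, and then to invoke Proposition~\ref{prop:taylor} to identify the resulting operator with $\mathrm{iso}(h)$.

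Write $B=c\,\mathrm{Id}+\sum_{j\geq 1}h_{j}\frac{d^{j}}{dx^{j}}$, following the form adopted in Proposition~\ref{prop:taylor} (so that $c$ is the scalar identity part and the $h_{j}$ for $j\geq 1$ are elements of $\mathcal{L}$), and set $\varepsilon:=h-\mathrm{id}$, so that $\mathrm{ord}(\varepsilon)\succ(1,0)$ in the parabolic case and $\mathrm{ord}(\varepsilon)=(1,0)$ in the hyperbolic contraction case. Applying $B$ to $x$ gives $cx+h_{1}=h=x+\varepsilon$, and applying $B$ to $x^{2}$ together with the morphism identity $B(x^{2})=B(x)^{2}=h^{2}$ gives $cx^{2}+2h_{1}x+2h_{2}=(x+\varepsilon)^{2}=x^{2}+2x\varepsilon+\varepsilon^{2}$. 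Parsing these two equations simultaneously in the ambient Hahn field forces $c=1$, $h_{1}=\varepsilon$, and hence $h_{2}=\varepsilon^{2}/2$.

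The core step is an induction on $n\geq 1$. Assume $h_{j}=\varepsilon^{j}/j!$ for every $j<n$. The morphism identity $B(x^{n})=B(x)^{n}=h^{n}=(x+\varepsilon)^{n}$ and the differential operator expansion
\[
B(x^{n})=c\,x^{n}+\sum_{j=1}^{n}h_{j}\frac{n!}{(n-j)!}x^{n-j}
\]
combine, via the binomial expansion $(x+\varepsilon)^{n}=\sum_{j=0}^{n}\binom{n}{j}\varepsilon^{j}x^{n-j}$ and the identity $\binom{n}{j}=\frac{n!}{j!(n-j)!}$, so that all the terms indexed by $j<n$ cancel after substituting the inductive values. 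What remains is $n!\,h_{n}=\varepsilon^{n}$, yielding $h_{n}=\varepsilon^{n}/n!$.

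With all the $h_{j}$ identified, $B$ takes the form $\mathrm{Id}+\sum_{k\geq 1}\frac{\varepsilon^{k}}{k!}\frac{d^{k}}{dx^{k}}$, which by Proposition~\ref{prop:taylor} is (weakly, or formally, depending on the case) well-defined on the whole of $\mathcal{L}$ and coincides with $\mathrm{iso}(h)$; in particular $B(g)=g\circ h$ for every $g\in\mathcal{L}$. The finitely generated statement $B\in L(\mathcal{L}_{\mathfrak{D}})$ follows verbatim using the finite-type part of Proposition~\ref{prop:taylor}. The delicate point I expect to require the most care is the base step: because $\mathcal{L}$ contains no constants, the single equation $B(x)=h$ alone leaves a one-parameter family of admissible pairs $(c,h_{1})$, and it is precisely the second morphism equation $B(x^{2})=h^{2}$ which rigidifies the system and pins down $c=1$ and $h_{1}=\varepsilon$, thereby unlocking the induction.
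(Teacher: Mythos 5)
Your overall strategy coincides with the paper's: use the morphism identity on integer powers $x^{n}$ to identify the coefficients $h_{j}$ of the differential-operator expansion with $\varepsilon^{j}/j!$, then match against the expansion of $\mathrm{iso}(h)$ furnished by Proposition~\ref{prop:taylor}. The inductive step for $j\geq 2$ is correct, as is the final appeal to Proposition~\ref{prop:taylor} and the finitely-generated remark.

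The problem lies exactly at the point you single out as delicate. You write $B=c\,\mathrm{Id}+\sum_{j\geq1}h_{j}\frac{\mathrm{d}^{j}}{\mathrm{d}x^{j}}$ and claim that the two relations $B(x)=h$ and $B(x^{2})=h^{2}$ together force $c=1$. They do not. Substituting $h_{1}=(1-c)x+\varepsilon$ (from the first relation) into the second yields $2h_{2}=(c-1)x^{2}+\varepsilon^{2}$, which has a solution $h_{2}\in\mathcal{L}$ for every $c$. More is true: setting
\[
h_{j}:=\frac{1}{j!}\Big(\varepsilon^{j}+(-1)^{j+1}(1-c)\,x^{j}\Big),\qquad j\geq 1,
\]
and using $\sum_{j=0}^{n}(-1)^{j}\binom{n}{j}=0$, one checks directly that $B(x^{n})=(x+\varepsilon)^{n}=h^{n}$ for every integer $n\geq 1$ and every $c\in\mathbb{R}$. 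Hence the system of integer-power morphism relations leaves $c$ completely free, and your assertion that the second equation ``rigidifies the system and pins down $c=1$'' is false.

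The correct way to dispose of $c$ is to note that no such free scalar enters the hypothesis in the first place: in the paper's usage (see the expansion of $F$ in Proposition~\ref{prop:taylor} and the opening line $B=\mathrm{Id}+h_{1}\frac{\mathrm{d}}{\mathrm{d}x}+\cdots$ of the paper's own proof), a formal differential operator is written with zeroth-order part equal to $\mathrm{Id}$ itself, not a scalar multiple of it. Adopt that normalization; then $h_{1}=B(\mathrm{id})-\mathrm{id}=\varepsilon$ is immediate, your induction runs unchanged, and the conclusion follows as you state.
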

\begin{proof}
Since $B$ is a formal differential operator, put 
\begin{equation}
B=\mathrm{Id}+h_{1}\frac{\mathrm{d}}{\mathrm{d}x}+h_{2}\frac{\mathrm{d}^{2}}{\mathrm{d}x^{2}}+\cdots,\quad h_{i}\in\mathcal{L}\ (\text{resp. }\mathcal{L}_{\mathfrak{D}}),\ i\in\mathbb{N}.\label{field}
\end{equation}
Given an integer $p>1$, we compute $B\left(x^{p}\right)$ in two
different ways and compare. First, by \eqref{field}, we have: 
\[
B\left(x^{p}\right)=x^{p}+\sum_{n=1}^{p}h_{n}(x)\frac{\mathrm{d}^{n}(x^{p})}{\mathrm{d}x^{n}}=x^{p}+\sum_{n=1}^{p}h_{n}(x)n!\binom{p}{n}x^{p-n}.
\]
Since $B$ is a morphism, we have: 
\[
B\left(x^{p}\right)=\left(B\cdot x\right)^{p}=\left(x+h_{1}(x)\right)^{p}=x^{p}+\sum_{n=1}^{p}\binom{p}{n}h_{1}^{n}(x)x^{p-n}.
\]
Identifying these two expressions for every integer $p>1$, we see
that 
\begin{equation}
h_{n}=\frac{h_{1}^{n}}{n!},\ n\in\mathbb{N}\cup\{0\}.\label{eq:ha}
\end{equation}
Let $h=B\cdot\text{id}=\text{id}+h_{1}$, $h\in\mathcal{L}$ (resp.
$\mathcal{L}_{\mathfrak{D}}$). Let $H=\text{iso}(h)$, as defined
in Remark~\ref{ex:automorphism_associated}. It follows from Proposition~\ref{prop:taylor}
that: 
\[
H=\textrm{Id}+\sum_{n=1}^{\infty}\frac{h_{1}^{n}}{n!}\frac{\mathrm{d}^{n}}{\mathrm{d}x^{n}}.
\]
By \eqref{field} and \eqref{eq:ha}, $B=H$. Note that $B$ is additionally
well-defined by differential series \eqref{field} if $f$ is parabolic.\end{proof}
\begin{prop}
\label{rem:dif} Let $X=\xi\frac{\mathrm{d}}{\mathrm{d}x}$, $\xi\in\mathcal{L}$
(resp. $\mathcal{L}_{\mathfrak{D}}$) with $\left(1,0\right)\preceq\mathrm{ord}(\xi)$.
The operators $H^{t}=\exp(tX)$ from Propositions~\ref{prop: existence-formal-flow-parabolic}
and \ref{prop: existence-formal-flow-hyperbolic} are weakly well-defined
formal differential operators. If moreover $\mathrm{ord}(\xi)=(1,0)$,
they are well-defined formal differential operators.\end{prop}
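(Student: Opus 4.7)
The plan is to reduce the claim to Proposition~\ref{prop:taylor} via the identification $H^{t}=\mathrm{iso}(h^{t})$, where $h^{t}:=H^{t}\cdot\mathrm{id}\in\mathcal{L}^{0}$ is the time-$t$ map of the flow. This identification is already packaged inside Propositions~\ref{prop: existence-formal-flow-parabolic} and~\ref{prop: existence-formal-flow-hyperbolic} (combined with Lemma~\ref{lem:correspondance-diffeo-automorphism}), and Proposition~\ref{prop:unic}(1) specifies that $h^{t}$ is parabolic precisely when $(1,0)\prec\mathrm{ord}(\xi)$ and hyperbolic precisely when $\mathrm{ord}(\xi)=(1,0)$.

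First, I would fix $t\in\mathbb{R}$ and set $\varepsilon_{t}:=h^{t}-\mathrm{id}\in\mathcal{L}$. Proposition~\ref{prop:taylor}, applied to $h^{t}$, gives the Taylor representation
\[
H^{t}=\mathrm{Id}+\sum_{k=1}^{\infty}\frac{\varepsilon_{t}^{k}}{k!}\frac{\mathrm{d}^{k}}{\mathrm{d}x^{k}},
\]
which is by Definition~\ref{def:form_diff} a formal differential operator representation. The same proposition gives the convergence type for free: weak well-definedness in all cases it covers (parabolic $h^{t}$, or hyperbolic contraction $h^{t}$), and well-definedness in the formal topology whenever $h^{t}$ is parabolic. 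This yields the general weak well-definedness claim immediately in the parabolic situation, and in the hyperbolic-contraction subcase of $\mathrm{ord}(\xi)=(1,0)$.

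Second, in the hyperbolic case $\mathrm{ord}(\xi)=(1,0)$, the flow $h^{t}$ is a hyperbolic expansion for half of the values of $t$, and Proposition~\ref{prop:taylor} does not apply directly. I would reduce this to the contraction subcase via $H^{-t}=(H^{t})^{-1}$: when $h^{t}$ is an expansion, $h^{-t}$ is a contraction, Proposition~\ref{prop:taylor} furnishes the formal differential operator description of $H^{-t}$, and $H^{t}$ is recovered by formal inversion (its coefficients are finite polynomial expressions in the coefficients of $H^{-t}$ by Neumann's Lemma, preserving the topology of convergence). The finite-type statement in $\mathcal{L}_{\mathfrak{D}}$ is then inherited from the finite-type conclusion of Proposition~\ref{prop:taylor}, exactly as in the finite-type paragraph at the end of the proof of Lemma~\ref{lem:expo-log_and_log-exp}.

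The main obstacle will be reconciling the convergence type claimed in the statement with what Proposition~\ref{prop:taylor} directly yields: Proposition~\ref{prop:taylor} naturally sharpens weak convergence to formal convergence on the parabolic side, so obtaining the stronger well-definedness in the case $\mathrm{ord}(\xi)=(1,0)$ requires either an additional cancellation argument on the Taylor coefficients in the hyperbolic regime or, more likely, a bookkeeping fix of the index convention in the statement; in either case, once the identification $H^{t}=\mathrm{iso}(h^{t})$ is in hand, the rest is a direct invocation of Proposition~\ref{prop:taylor} plus the inverse-flow trick and Neumann's lemma for the finite-type case.
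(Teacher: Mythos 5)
Your proposal has a genuine circularity. You take as input the identification $H^{t}=\mathrm{iso}(h^{t})$, citing Propositions~\ref{prop: existence-formal-flow-parabolic} and \ref{prop: existence-formal-flow-hyperbolic} together with Lemma~\ref{lem:correspondance-diffeo-automorphism}. But in the paper that identification is itself deduced \emph{from} Proposition~\ref{rem:dif}: Lemma~\ref{lem:correspondance-diffeo-automorphism} has as a hypothesis that the operator is already known to be a (weakly) well-defined formal differential operator, and the proof of Proposition~\ref{prop: existence-formal-flow-parabolic} explicitly says that the claim $H^{t}\cdot f=f\circ h^{t}$ is obtained by combining Lemma~\ref{lem:correspondance-diffeo-automorphism} with Proposition~\ref{rem:dif}. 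So you are using the conclusion to establish your starting point. Without an independent proof that $\exp(tX)\cdot g=g\circ(\exp(tX)\cdot\mathrm{id})$ for all $g\in\mathcal{L}$ (which for non-integer powers and logarithmic monomials is not a purely algebraic consequence of $X$ being a derivation), the reduction to Proposition~\ref{prop:taylor} does not get off the ground. The paper instead argues directly on the exponential series: it expands $\exp(tX)\cdot f=f+t\xi f'+\frac{t^{2}}{2!}(\xi f')'\xi+\cdots$, and regroups the terms in front of $f',f'',\ldots$; in the case $(1,0)\prec\mathrm{ord}(\xi)$ the regrouping is licensed by formal convergence (orders increase by a fixed positive amount), and in the case $\mathrm{ord}(\xi)=(1,0)$ by the absolute convergence of coefficients established in Proposition~\ref{prop: existence-formal-flow-hyperbolic} together with the Moore--Osgood theorem, since each row of the double array contains only finitely many terms.

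Two further remarks. First, you are right that the convergence types in the statement appear to be interchanged: the proof yields well-definedness (formal topology) when $(1,0)\prec\mathrm{ord}(\xi)$ and only weak well-definedness when $\mathrm{ord}(\xi)=(1,0)$, so the ``bookkeeping fix'' you suspect is indeed the correct reading. Second, even granting the identification $H^{t}=\mathrm{iso}(h^{t})$, your treatment of the expansion subcase is under-justified: Proposition~\ref{prop:taylor} does not cover hyperbolic expansions, and recovering the formal differential operator form of $H^{t}$ from that of $H^{-t}$ by ``formal inversion'' is not a finite Neumann-type computation in the hyperbolic regime, where the coefficients of the inverse are themselves infinite sums whose weak (absolute) convergence would need the same kind of chain estimates the paper develops in Lemmas~\ref{lem:expo_log_hyp} and \ref{lem:diff_hypH}.
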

\begin{proof}
Let $f\in\mathcal{L}$. Then by \eqref{eq:exponential-operator} we
have: 
\begin{align}
H^{t}\cdot f & =\exp(tX)\cdot f=f+t\xi f'+\frac{t^{2}}{2!}(\xi f')'\xi+\frac{t^{3}}{3!}\big((\xi f')'\xi\big)'\xi+\cdots=\nonumber \\
 & =f+t\xi f'+\frac{t^{2}}{2!}(\xi\xi'f'+\xi^{2}f'')+\frac{t^{3}}{3!}\big(\xi(\xi')^{2}f'+\xi^{2}\xi''f'+\xi^{2}\xi'f''+2\xi^{2}\xi'f''+\xi^{3}f'''\big)+\cdots\label{eq:difi}
\end{align}
We prove in both cases \big($\left(1,0\right)\prec\mathrm{ord}(\xi)$
and $\mathrm{ord}(\xi)=(1,0)$\big) that we can \emph{change the
order of the summation} in the respective topologies so that we group
the terms multiplying $f$, $f'$, $f''$, etc: 
\begin{align}
H^{t}\cdot f & =f+\big(t\xi+\frac{t^{2}}{2!}\xi\xi'+\frac{t^{3}}{3!}\xi(\xi')^{2}+\frac{t^{3}}{3!}\xi^{2}\xi''+\cdots\big)f'+\big(\frac{t^{2}}{2!}\xi^{2}+\frac{t^{3}}{3!}3\xi^{2}\xi'+\cdots\big)f''+\big(\frac{t^{3}}{3!}\xi^{3}+\cdots\big)f'''+\cdots\nonumber \\
 & =f+h_{1}\, f'+h_{2}f''+\mathrm{h.o.t.}\label{eq:regroup}
\end{align}
Obviously, by \eqref{eq:difi}, $h_{1}=H^{t}.\mathrm{id}\in\mathcal{L}$,
$h_{2}=\frac{1}{2}(H^{t}.x^{2}-x^{2})-xh_{1}\in\mathcal{L}$, etc.
Thus, $h_{n}\in\mathcal{L}$, $n\in\mathbb{N}$. \\

$(i)$ $\left(1,0\right)\prec\text{ord}(\xi)$. The orders of the
summands in \eqref{eq:regroup} increase by the fixed value $\text{ord}(\xi)-(1,0)\succ(0,0)$,
so \eqref{eq:regroup} converges in the formal topology in $\mathcal{L}$
to an element of $\mathcal{L}$. Moreover, it converges to the same
limit as \eqref{eq:difi}, since the difference of partial sums of
\eqref{eq:difi} and \eqref{eq:regroup} converges to zero in the
formal topology. Indeed, by Proposition~\ref{prop: existence-formal-flow-parabolic}
\eqref{eq:difi} the order of summands increases by the fixed value
$\text{ord}(\xi)-(1,0)\succ(0,0)$ also in \eqref{eq:difi}.\\

$(ii)$ $\text{ord}(\xi)=(1,0)$. Let us represent $H^{t}.f$ by the
following grid: 
\begin{equation}
\begin{array}{lllll}
f\\
*f'\\
*f' & *f''\\
*f' & *f'' & *f'''\\
*f' & *f'' & *f''' & *f^{(4)}\\
\vdots & \ldots
\end{array}\label{eq:rowcol}
\end{equation}
Here, $*$ denotes the \emph{coefficients} (transseries in $\xi$)
of the respective powers of $f$ in \eqref{eq:difi}. The first row
represents the first bracket in \eqref{eq:difi}, the second row the
second bracket in \eqref{eq:difi} etc.

Let us fix a monomial from the support $\mathcal{S}(H^{t}.f)$. The
order of terms remains the same by rows and by columns, in contrast
with the parabolic case. Therefore, a fixed monomial may appear in
every term of every row and of every column of \eqref{eq:rowcol}.
Nevertheless, we have proven in Proposition~\ref{prop: existence-formal-flow-hyperbolic}
that \eqref{eq:difi} converges in the weak topology, meaning exactly
that the coefficients of the given monomial converge when summation
is done by rows. We prove that we can change the order of the summation
of coefficients of the chosen monomial from summation by rows as in
\eqref{eq:difi} to summation by columns as in \eqref{eq:regroup}.
It would give us the convergence of \eqref{eq:regroup} in the weak
topology in $\mathcal{L}$ (to the same limit as \eqref{eq:difi}).

By the proof of Proposition~\ref{prop: existence-formal-flow-hyperbolic},
the coefficient of a fixed monomial of the support $\mathcal{S}(H^{t}.f)$
converges \emph{absolutely} in \eqref{eq:difi}. Moreover, we see
that each row of \eqref{eq:rowcol} contains only \emph{finitely many}
elements of $\mathcal{L}$. Consequently, a fixed monomial can appear
only finitely many times in each row. By the \emph{Moore-Osgood theorem}
stated on p. \pageref{eq:tii-2h} (or see \cite[Theorem 8.3]{rudin}),
we are allowed to change the order of the summation in our double
sum and to sum coefficients by columns.

The finitely generated case follows easily.\end{proof}
\begin{cor}
\label{cor} Let $X=\xi\frac{\mathrm{d}}{\mathrm{d}x}$, $\xi\in\mathcal{L}$
(resp. $\mathcal{L}_{\mathfrak{D}}$), be such that $\left(1,0\right)\prec\mathrm{ord}(\xi)$
or $\xi(x)=\lambda x+\mathrm{h.o.t.}$ with $\lambda<0$. Then, for
any $t\neq0$, the following two statements are equivalent: 
\begin{enumerate}[1., font=\textup, nolistsep, leftmargin=0.6cm]
\item $\exp(tX)\cdot\mathrm{id}=f$, 
\item $\exp(tX)\cdot h=h\circ f$, $h\in\mathcal{L}$ (resp. $\mathcal{L}_{\mathfrak{D}}$). 
\end{enumerate}
\end{cor}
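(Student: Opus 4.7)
The implication (2)$\Rightarrow$(1) is immediate: taking $h = \mathrm{id}$ in (2) gives $\exp(tX) \cdot \mathrm{id} = \mathrm{id} \circ f = f$. For the converse, the plan is to apply the diffeomorphism-isomorphism correspondence (Lemma~\ref{lem:correspondance-diffeo-automorphism}) to the operator $B := \exp(tX)$, which identifies $B$ with $\mathrm{iso}(B \cdot \mathrm{id})$ and thereby yields (2). Three hypotheses must be checked: that $B$ is a morphism, that it is a (weakly) well-defined formal differential operator, and that $B \cdot \mathrm{id}$ is either parabolic or a hyperbolic contraction. The first follows from Propositions~\ref{prop: existence-formal-flow-parabolic} and \ref{prop: existence-formal-flow-hyperbolic}, which establish that $\{\exp(sX)\}_{s \in \mathbb{R}}$ is a one-parameter group of isomorphisms of $\mathcal{L}$ (resp.\ $\mathcal{L}_{\mathfrak{D}}$); the second is the content of Proposition~\ref{rem:dif}.

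For the third hypothesis a case split is needed. If $(1,0) \prec \mathrm{ord}(\xi)$, Proposition~\ref{prop:unic}(i) guarantees that $f := B \cdot \mathrm{id}$ is parabolic, so Lemma~\ref{lem:correspondance-diffeo-automorphism} applies directly and yields $\exp(tX) \cdot h = h \circ f$ for every $h$. If $\xi(x) = \lambda x + \mathrm{h.o.t.}$ with $\lambda < 0$, then formula \eqref{con} gives $f(x) = e^{t\lambda}x + \mathrm{h.o.t.}$, which is a hyperbolic contraction precisely when $t > 0$; in that range the lemma again closes the argument.

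The remaining subcase, $t < 0$ with $\lambda < 0$, produces a hyperbolic expansion $f$, which lies outside the scope of Lemma~\ref{lem:correspondance-diffeo-automorphism}, and this is the main obstacle. I would overcome it by passing to the inverse: $\exp(-tX)$ still satisfies the first two hypotheses, and by the one-parameter group property $\exp(-tX) \cdot \mathrm{id} = f^{-1}$, which is now a hyperbolic contraction since $-t\lambda < 0$ forces $e^{-t\lambda} < 1$. Applying Lemma~\ref{lem:correspondance-diffeo-automorphism} to $\exp(-tX)$ yields $\exp(-tX) \cdot k = k \circ f^{-1}$ for all $k \in \mathcal{L}$. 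Substituting $k := h \circ f$ gives $\exp(-tX) \cdot (h \circ f) = h$; applying $\exp(tX)$ to both sides and using $\exp(tX) \circ \exp(-tX) = \mathrm{Id}$ from the group property produces the desired equality $\exp(tX) \cdot h = h \circ f$. The $\mathcal{L}_{\mathfrak{D}}$ version of the statement goes through verbatim, since every result invoked has a finitely-generated counterpart.
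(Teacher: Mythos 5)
Your argument is correct and follows the paper's strategy: apply the diffeomorphism--isomorphism correspondence of Lemma~\ref{lem:correspondance-diffeo-automorphism} to the operator $\exp(tX)$, after verifying its morphism and formal-differential-operator properties via Propositions~\ref{prop: existence-formal-flow-parabolic}, \ref{prop: existence-formal-flow-hyperbolic} and \ref{rem:dif}. You have in fact been more careful than the paper: the published proof invokes the lemma directly, without noting that for $\lambda<0$ and $t<0$ the map $f=\exp(tX)\cdot\mathrm{id}=e^{t\lambda}x+\mathrm{h.o.t.}$ is a hyperbolic \emph{expansion}, which falls outside the lemma's hypothesis that $B\cdot\mathrm{id}$ be parabolic or a hyperbolic contraction. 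Your passage to $\exp(-tX)$ (whose time-one map is the contraction $f^{-1}$), followed by the substitution $k=h\circ f$ and the group identity $\exp(tX)\exp(-tX)=\mathrm{Id}$, closes this gap cleanly. As it happens the paper only ever applies Corollary~\ref{cor} with $t=1$, so the omission is harmless downstream, but your patch makes the corollary as stated (valid for all $t\neq 0$) genuinely rigorous.
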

\begin{proof} By Proposition~\ref{rem:dif}, the operator $\exp(tX)$
is a (weakly) well-defined formal differential operator. $(2)\Rightarrow(1)$
is obvious. We prove $(1)\Rightarrow(2)$. Suppose $(1)$ holds. By
Lemma~\ref{lem:correspondance-diffeo-automorphism}, $\exp(tX)$
is the isomorphism associated with $\exp(tX)\cdot\mathrm{id}=f$,
which proves $(2)$. \end{proof}

\subsection{Theorem B in the parabolic case}

\label{sub:fivethree} This section is dedicated to the precise statement
and the proof of Theorem B for parabolic elements of $\mathcal{L}$.
\begin{thm*}[Precise form of Theorem B for parabolic elements]
 Let $f\in\mathcal{L}^{H}$ (\emph{resp.} $f\in\mathcal{L}_{\mathfrak{D}}^{H}$)
be parabolic. Then there exists a unique formal vector field 
\[
X=\xi\frac{\mathrm{d}}{\mathrm{d}x},\ \xi\in\mathcal{L}\ (\text{ \emph{resp.} }\xi\in\mathcal{L}_{\mathfrak{D}}),
\]
such that $f$ embeds in its $\mathcal{C}^{1}$-flow. Moreover, 
\[
f=\exp(X)\cdot\mathrm{id}.
\]
Here, $\left(1,0\right)\prec\text{ord}(\xi)$, and $\exp(X)$ is well-defined
in $L(\mathcal{L})$ (resp. $L(\mathcal{L}_{\mathfrak{D}})$). 
\end{thm*}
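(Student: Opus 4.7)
The plan is to build $X$ by pulling back the normalising vector field of Theorem~A through the conjugating change of variable, and then to derive uniqueness and the alternative description $X=\log\mathrm{iso}(f)$ from the operator calculus of Sections~\ref{sub:operators_on_LH}--\ref{sub:fivetwo}.

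\textbf{Existence.} By Theorem~A(2)(a), there will exist $\varphi\in\mathcal{L}^{0}$ (resp.\ $\mathcal{L}_{\mathfrak{D}}^{0}$) and a parabolic vector field $X_{0}=\xi_{0}\tfrac{d}{dx}$ with $\xi_{0}\in\mathcal{L}$ (resp.\ $\mathcal{L}_{\mathfrak{D}}$) and $(1,0)\prec\mathrm{ord}(\xi_{0})$, such that $f=\varphi\circ\widehat{f}_{0}\circ\varphi^{-1}$ and $\widehat{f}_{0}=\exp(X_{0})\cdot\mathrm{id}$. I set $X:=\mathrm{iso}(\varphi^{-1})\circ X_{0}\circ\mathrm{iso}(\varphi)$. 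The formal chain rule then gives $X\cdot g=\xi\cdot g'$ for every $g\in\mathcal{L}$, with
\[
\xi:=\bigl(\xi_{0}\circ\varphi^{-1}\bigr)\cdot\bigl(\varphi'\circ\varphi^{-1}\bigr).
\]
Closure of $\mathcal{L}$ (resp.\ $\mathcal{L}_{\mathfrak{D}}$) under composition with $\mathcal{L}^{H}$ and under multiplication (Proposition~\ref{prop:properties-of-L}) shows $\xi\in\mathcal{L}$ (resp.\ $\mathcal{L}_{\mathfrak{D}}$), and a direct leading-term computation yields $\mathrm{ord}(\xi)=\mathrm{ord}(\xi_{0})\succ(1,0)$. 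Since $X$ is then small, $\exp(sX)$ is well defined in the formal topology by Proposition~\ref{prop: existence-formal-flow-parabolic}. The identity defining $X$ telescopes to $X^{k}=\mathrm{iso}(\varphi^{-1})\circ X_{0}^{k}\circ\mathrm{iso}(\varphi)$ for every $k\geq 0$, and summing termwise (using continuity of composition by $\mathrm{iso}(\varphi^{\pm 1})$ for the formal topology) will produce
\[
\exp(sX)=\mathrm{iso}(\varphi^{-1})\circ\exp(sX_{0})\circ\mathrm{iso}(\varphi),\qquad s\in\mathbb{R}.
\]
Applied to $\mathrm{id}$ at $s=1$, this gives $\exp(X)\cdot\mathrm{id}=\varphi\circ\widehat{f}_{0}\circ\varphi^{-1}=f$, so $f$ embeds in the $\mathcal{C}^{1}$-flow of $X$.

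\textbf{Uniqueness and the operator-theoretic description.} Suppose $X_{i}=\xi_{i}\tfrac{d}{dx}$, $i=1,2$, with $\xi_{i}\in\mathcal{L}$ satisfy $\exp(X_{i})\cdot\mathrm{id}=f$. Since $f$ is parabolic, Proposition~\ref{prop:unic} forces $(1,0)\prec\mathrm{ord}(\xi_{i})$. Corollary~\ref{cor} then gives $\exp(X_{i})=\mathrm{iso}(f)=:F$ as operators on $\mathcal{L}$, and Lemma~\ref{lem:uniq} yields $X_{i}=\log\exp(X_{i})=\log F$. Hence $X_{1}=X_{2}$, and in particular the vector field of the existence step is identified with $\log F$, whose well-definedness in $L(\mathcal{L})$ (resp.\ $L(\mathcal{L}_{\mathfrak{D}})$) is the parabolic case of Lemma~\ref{lem:expo-log_and_log-exp}.

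\textbf{Main obstacle.} The delicate part will be passing from the infinitesimal conjugation $\mathrm{iso}(\varphi)\circ X\circ\mathrm{iso}(\varphi^{-1})=X_{0}$ to the exponentiated identity for $\exp(sX)$ and $\exp(sX_{0})$: one has to check that the partial sums of $\exp(sX)$ converge in a topology preserved by composition with $\mathrm{iso}(\varphi^{\pm 1})$. The formal topology suffices because both $X$ and $X_{0}$ are small operators. For the finite-type ($\mathcal{L}_{\mathfrak{D}}$) refinement, an additional bookkeeping argument is needed to show that the supports stay in a finitely generated sub-semigroup, which follows from the finite-type statement of Theorem~A combined with the closure properties of $\mathcal{L}_{\mathfrak{D}}$.
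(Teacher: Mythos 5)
Your existence argument is correct but genuinely different from the paper's. The paper proves the parabolic case of Theorem~B intrinsically: it sets $F=\mathrm{iso}(f)$, shows $X=\log F$ is well-defined (Lemma~\ref{lem:expo-log_and_log-exp}), establishes separately that $X$ is a derivation (Proposition~\ref{prop:derivative}) and that $X$ is a formal differential operator $\sum_{\ell}h_\ell\tfrac{d^\ell}{dx^\ell}$ (via the technical Lemmas~\ref{lem:H-differential-operator} and \ref{lem:X-differential-operator} on interchanging double sums), and then uses Leibniz's rule on test monomials $x^n$ to force $h_\ell\equiv 0$ for $\ell\geq 2$, i.e.\ $X=h_1\tfrac{d}{dx}$. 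By contrast, you bypass all of this by invoking Theorem~A(2)(a) and pushing $X_{0}$ forward through $\varphi$: the coefficient $\xi=(\xi_0\circ\varphi^{-1})\cdot(\varphi'\circ\varphi^{-1})$ is manifestly a single transseries, the order computation $\mathrm{ord}(\xi)=\mathrm{ord}(\xi_0)$ is immediate, and the exponentiated conjugation $\exp(sX)=\mathrm{iso}(\varphi^{-1})\circ\exp(sX_0)\circ\mathrm{iso}(\varphi)$ follows by telescoping once one notes that $\mathrm{iso}(\varphi^{\pm1})$ preserves $\mathrm{ord}$ and hence is sequentially continuous for the formal topology. This is logically legitimate here since Theorem~A's Part~3 relies only on Propositions~\ref{prop: existence-formal-flow-parabolic} and \ref{prop: existence-formal-flow-hyperbolic}, which precede Theorem~B. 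What the paper's route buys is independence from the normal-form theorem and, more importantly, a direct proof that $\log\mathrm{iso}(f)$ is a vector field — a fact that is itself used elsewhere and is needed to match your uniqueness step with the stated formula $f=\exp(X)\cdot\mathrm{id}$ with $X=\log F$. What your route buys is avoiding the Moore--Osgood/grid arguments entirely in the existence step. Your uniqueness argument is essentially the paper's, modulo one wording issue: the hypothesis should be ``$f$ embeds in the $\mathcal{C}^1$-flow of $X_i$'', from which Proposition~\ref{prop:unic} yields $(1,0)\prec\mathrm{ord}(\xi_i)$ and then the exponential formula via Proposition~\ref{prop: existence-formal-flow-parabolic}; starting directly from $\exp(X_i)\cdot\mathrm{id}=f$ presupposes that the exponential is well-defined, which is part of what has to be deduced. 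Finally, note a small cosmetic point: $\varphi'$ itself has a nonzero constant term and hence does not lie in $\mathcal{L}$, so the appeal to Proposition~\ref{prop:properties-of-L} needs to be phrased for $\mathbb{R}+\mathcal{L}$ rather than $\mathcal{L}$ before concluding $\xi\in\mathcal{L}$; the product $(\xi_0\circ\varphi^{-1})(\varphi'\circ\varphi^{-1})$ nevertheless lands in $\mathcal{L}$.
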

Let $f\in\mathcal{L}$ (resp. $\mathcal{L}_{\mathfrak{D}}$) be a
parabolic element as in the statement of Theorem B. Let $F=\text{iso}(f)$:
\[
F.h=h\circ f,\ h\in\mathcal{L}\text{ (resp. \ensuremath{\mathcal{L}_{\mathfrak{D}}})}.
\]
We prove that the vector field $X$ is given by $X=\log F$. Note
that by Lemma~\ref{lem:expo-log_and_log-exp} and Proposition~\ref{prop:derivative},
the operator $X=\log F$ is a well-defined operator on $\mathcal{L}$
(resp. $\mathcal{L}_{\mathfrak{D}}$) and a derivation. The proof
is now given in three steps:
\begin{enumerate}
\item We prove in Lemma \ref{lem:X-differential-operator} that $X=\log F$
is a formal differential operator $\sum_{k}h_{k}\frac{\mathrm{d}^{k}}{\mathrm{d}x^{k}}$,
$h_{k}\in\mathcal{L}$ (resp. $\mathcal{L}_{D}$). 
\item Since $X$ is a derivation and at the same time a formal differential
operator of the above form, we prove that $X$ is necessarily a vector
field. Moreover, we prove that $f$ is the time-one map of $X$. 
\item We prove the uniqueness of the formal vector field whose time-one
map is $f$. \end{enumerate}
\begin{lem}
\label{lem:H-differential-operator} Let $f\in\mathcal{L}^{0}$ (resp.
$\mathcal{L}_{\mathfrak{D}}^{0}$) be parabolic and let $F=\text{iso}(f)\in L(\mathcal{L})$
(resp. $L(\mathcal{L}_{\mathfrak{D}})$). Let $H=F-\mathrm{Id}$.
Then all the iterates $H^{k}$ can be written as well-defined formal
differential operators on $\mathcal{L}$ (resp. $\mathcal{L}_{\mathfrak{D}}$):
\begin{equation}
H^{k}=\sum_{\ell=1}^{\infty}h_{\ell}^{k}\frac{\mathrm{d}^{\ell}}{{\mathrm{d}x}^{\ell}},\ h_{\ell}^{k}\in\mathcal{L}\text{ (resp. \ensuremath{\mathcal{L}_{\mathfrak{D}}})},\ k\in\mathbb{N}.\label{eq:hha}
\end{equation}
\end{lem}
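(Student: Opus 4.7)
The plan is to prove the claim by induction on $k$. For the base case $k=1$, Proposition~\ref{prop:taylor} applied to the parabolic element $f=\mathrm{id}+\varepsilon$ (with $\mathrm{ord}(\varepsilon)\succ(1,0)$) gives directly
\[
H = F-\mathrm{Id} = \sum_{\ell=1}^{\infty} \frac{\varepsilon^{\ell}}{\ell!}\,\frac{\mathrm{d}^{\ell}}{\mathrm{d}x^{\ell}},
\]
which is well-defined in the formal topology because $\mathrm{ord}(\varepsilon^{\ell})$ strictly increases with $\ell$ at a fixed positive rate. So we set $h_{\ell}^{1}:=\varepsilon^{\ell}/\ell!\in\mathcal{L}$ (resp.\ $\mathcal{L}_{\mathfrak{D}}$).

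For the inductive step, assume $H^{k}=\sum_{\ell\ge 1}h_{\ell}^{k}\,\frac{\mathrm{d}^{\ell}}{\mathrm{d}x^{\ell}}$ is a well-defined formal differential operator with $h_{\ell}^{k}\in\mathcal{L}$. I compute $H^{k+1}=H\circ H^{k}$ by distributing and applying the Leibniz rule to each elementary composition:
\[
\frac{\varepsilon^{m}}{m!}\frac{\mathrm{d}^{m}}{\mathrm{d}x^{m}}\circ h_{\ell}^{k}\,\frac{\mathrm{d}^{\ell}}{\mathrm{d}x^{\ell}} \;=\; \sum_{j=0}^{m}\binom{m}{j}\frac{\varepsilon^{m}}{m!}\,(h_{\ell}^{k})^{(m-j)}\,\frac{\mathrm{d}^{\ell+j}}{\mathrm{d}x^{\ell+j}}.
\]
Regrouping terms by the total order of differentiation $p=\ell+j$ produces candidate coefficients
\[
h_{p}^{k+1} \;:=\; \sum_{m\ge 1}\;\sum_{\substack{\ell\ge 1,\,0\le j\le m\\ \ell+j=p}}\binom{m}{j}\,\frac{\varepsilon^{m}}{m!}\,(h_{\ell}^{k})^{(m-j)}.
\]
It then remains to check that (a) each $h_{p}^{k+1}$ is a well-defined element of $\mathcal{L}$ (resp.\ $\mathcal{L}_{\mathfrak{D}}$), and (b) the rearranged series $\sum_{p\ge 1}h_{p}^{k+1}\frac{\mathrm{d}^{p}}{\mathrm{d}x^{p}}$ genuinely represents the operator $H^{k+1}$ on all of $\mathcal{L}$.

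Both (a) and (b) are controlled by the smallness of $H$. The proof of Lemma~\ref{lem:expo-log_and_log-exp} already exhibits a well-ordered set $R\subseteq\mathbb{R}_{>0}\times\mathbb{Z}$ (of finite type in the $\mathcal{L}_{\mathfrak{D}}$ case) with $\mathcal{S}(H.g)\subseteq\mathcal{S}(g)+R$ for every $g$; iterating gives $\mathcal{S}(H^{k+1}.g)\subseteq\mathcal{S}(g)+\langle R\rangle$, which is well-ordered by Neumann's Lemma~\ref{lem:neumann_lemma}. Combined with the strict growth of $\mathrm{ord}(\varepsilon^{m})$ in $m$, Neumann's lemma ensures that for each fixed $p$ only finitely many triples $(m,\ell,j)$ contribute to any given monomial of $h_{p}^{k+1}$, settling (a); and for each $g\in\mathcal{L}$ only finitely many $p$ contribute to any given monomial of $H^{k+1}.g$, which makes the series $\sum_{p}h_{p}^{k+1}g^{(p)}$ converge in the formal topology to the same transseries as $H(H^{k}.g)$, settling (b).

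The main obstacle is the bookkeeping for (b): one must legitimise swapping the composition ``$\circ$'' with the Leibniz expansion and with the regrouping by differentiation order, all inside the formal topology. This is entirely analogous to the rearrangement arguments already used in the proof of Proposition~\ref{prop:taylor} and of Proposition~\ref{lem:series_small_operator}, and once smallness is in hand the finite-type case follows with no extra work, since the supports remain inside a finitely generated sub-semigroup of $\mathbb{R}_{>0}\times\mathbb{Z}$ throughout the iteration.
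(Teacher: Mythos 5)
Your proof follows essentially the same route as the paper: establish the base case from the Taylor expansion of $F=\mathrm{iso}(f)$, then in the inductive step apply $H$ to $H^{k}\cdot g$, expand with Leibniz, and regroup by derivative order, justifying the rearrangement via the smallness of $H$ (so supports stay in $\mathcal{S}(g)+\langle R\rangle$ and, in the formal topology, only finitely many terms touch any given monomial). Your version is a little more explicit about the Leibniz coefficients $\binom{m}{j}\frac{\varepsilon^{m}}{m!}(h_{\ell}^{k})^{(m-j)}$, while the paper packages the same step as a double sum and a grid-rearrangement argument; the substance is identical.
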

\begin{proof}
Let $f=\mathrm{id}+h$, $h\in\mathcal{L}$ with $\left(1,0\right)\prec\text{ord}(h)$.
The lemma is proven by induction. The induction basis ($k=1$) follows
easily by Taylor expansion: 
\[
H\cdot g=g\circ f-g=\sum_{\ell=1}^{\infty}\frac{h^{\ell}}{\ell!}\frac{\mathrm{d}^{\ell}g}{{\mathrm{d}x}^{\ell}},\ g\in\mathcal{L}.
\]
Thus, $H=\sum_{\ell=1}^{\infty}h_{\ell}^{0}\frac{\mathrm{d}^{\ell}}{{\mathrm{d}x}^{\ell}},$
with the coefficients $h_{\ell}^{0}=\frac{h^{\ell}}{\ell!}\in\mathcal{L}$,
$\ell\in\mathbb{N}$. Assume that the operators $H^{m}$, $m\leq k$
can be written in the form \eqref{eq:hha}, with formal convergence
on $\mathcal{L}$. Note that the formal convergence of series $H^{m}\cdot g$
from \eqref{eq:hha} is equivalent to asking that the orders of summands
$\text{ord}(h_{\ell}^{m}g^{(\ell)})$ infinitely increase as $\ell\to\infty$.
We prove \eqref{eq:hha} for the operator $H^{k+1}$. By Taylor expansion,
we have: 
\begin{align}
H^{k+1}\cdot g\,(x)=H(H^{k}\cdot g)\,(x)=H^{k}\cdot g\,(x+h(x))-H^{k}\cdot g\,(x) & =\sum_{i=1}^{\infty}\frac{h(x)^{i}}{i!}\frac{\mathrm{d}^{i}(H^{k}.g)}{{\mathrm{d}x}^{i}}\nonumber \\
 & =\sum_{i=1}^{\infty}\frac{h^{i}}{i!}\frac{\mathrm{d}^{i}}{\mathrm{d}x^{i}}\Big(\sum_{\ell=1}^{\infty}h_{\ell}^{k}\frac{\mathrm{d}^{\ell}g}{{\mathrm{d}}x^{\ell}}\Big)=\sum_{i=1}^{\infty}\Big(\sum_{\ell=1}^{\infty}h_{i\ell}^{k}\frac{\mathrm{d}^{\ell}g}{{\mathrm{d}x}^{\ell}}\Big),\label{eq:tii-1}
\end{align}
with $h_{i\ell}^{k}\in\mathcal{L}$, $i,\,\ell\in\mathbb{N}$. We
represent the double sum by the following grid: 
\begin{equation}
H^{k+1}\cdot g\ :\qquad\begin{array}{ccccc}
 & \stackrel{\ell}{\rightarrow}\\
i\downarrow & h_{11}\frac{\mathrm{d}g}{{\mathrm{d}x}} & h_{12}\frac{\mathrm{d}^{2}g}{{\mathrm{d}x}^{2}} & h_{13}\frac{\mathrm{d}^{3}g}{{\mathrm{d}x}^{3}} & \ldots\\[0.1cm]
 & h_{21}\frac{\mathrm{d}g}{\mathrm{d}x} & h_{22}\frac{\mathrm{d}^{2}g}{{\mathrm{d}x}^{2}} & h_{23}\frac{\mathrm{d}^{3}g}{{\mathrm{d}x}^{3}} & \ldots\\[0.1cm]
 & h_{31}\frac{\mathrm{d}g}{\mathrm{d}x} & h_{32}\frac{\mathrm{d}^{2}g}{{\mathrm{d}x}^{2}} & h_{33}\frac{\mathrm{d}^{3}g}{{\mathrm{d}x}^{3}} & \ldots\\[0.1cm]
 & \vdots & \vdots & \vdots
\end{array}\label{eq:gri}
\end{equation}
The order of the summation in \eqref{eq:tii-1} is by rows. Since
$f$ is parabolic, the Taylor expansion in \eqref{eq:tii-1} converges
in the formal topology. Moreover, we assumed formal convergence of
the differential expansion of $H^{k}.g$. Therefore, the order of
the terms increases indefinitely along the rows and along the columns
of \eqref{eq:gri}. The monomials up to some fixed order exist only
in finitely many first rows and columns. Consequently, we are allowed
to change the order of the summation from summation by rows to summation
by columns, and the following sum converges in $\mathcal{L}$ in the
formal topology: 
\begin{equation}
\sum_{\ell=1}^{\infty}\Big(\sum_{i=1}^{\infty}h_{i\ell}^{k}\frac{\mathrm{d}^{\ell}g}{{\mathrm{d}x}^{\ell}}\Big)=\sum_{\ell=1}^{\infty}h_{\ell}^{k+1}\frac{\mathrm{d}^{\ell}g}{{\mathrm{d}x}^{\ell}}.\label{eq:tii-2}
\end{equation}
Here, $h_{\ell}^{k+1}:=\sum_{i=1}^{\infty}h_{i\ell}^{k}$. By increasing
orders, we immediately obtain $h_{\ell}^{k+1}\in\mathcal{L}$. The
difference of partial sums of \eqref{eq:tii-1} and \eqref{eq:tii-2}
converges to zero in the formal topology, so the limit of both series
is the same, that is, $H^{k+1}\cdot g$. Thus we have: 
\[
H^{k+1}=\sum_{\ell=1}^{\infty}h_{\ell}^{k+1}\frac{\mathrm{d}^{\ell}}{{\mathrm{d}x}^{\ell}},\ h_{\ell}^{k+1}\in\mathcal{L},
\]
with the formal convergence.

Note additionally that from \eqref{eq:hha} we have that $h_{1}^{k}=H^{k}\cdot\mathrm{id}$,
$h_{2}^{k}=\frac{1}{2}H^{k}\cdot x^{2}-xh_{1}^{k}$, etc. by induction,
$k\in\mathbb{N}$. The finitely generated case follows directly. \end{proof}
\begin{lem}
\label{lem:X-differential-operator} Let $f\in\mathcal{L}^{0}$ (resp.
$\mathcal{L}_{D}^{0}$) be parabolic. Let $F=\text{iso}(f)\in L(\mathcal{L})$
(resp. $L(\mathcal{L}_{\mathfrak{D}})$) and $H=F-\mathrm{Id}$. Let
$X=\log F=\log(\mathrm{Id}+H)$. Then $X$ can be written as a well-defined
formal differential operator on $\mathcal{L}$ (resp. $\mathcal{L}_{\mathfrak{D}}$):
\begin{equation}
X=\sum_{\ell=1}^{\infty}h_{\ell}\frac{\mathrm{d}^{\ell}}{{\mathrm{d}x}^{\ell}},\ h_{\ell}\in\mathcal{L}\text{ (resp. \ensuremath{\mathcal{L}_{\mathfrak{D}}})}.\label{termw}
\end{equation}
\end{lem}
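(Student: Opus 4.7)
The plan is to combine the previous Lemma~\ref{lem:H-differential-operator} with the small operator machinery of Proposition~\ref{def:exponential_logarithm_operator} and then interchange two infinite summations, much as in the proof of Lemma~\ref{lem:H-differential-operator}.

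First, I would recall that since $f \in \mathcal{L}^0$ is parabolic, writing $f = \mathrm{id} + h$ with $(1,0) \prec \mathrm{ord}(h)$, the Taylor expansion argument used for Lemma~\ref{lem:expo-log_and_log-exp} shows that $H = F - \mathrm{Id}$ is a small operator on $\mathcal{L}$ (resp. $\mathcal{L}_{\mathfrak{D}}$), with an associated well-ordered exponent set $R \subseteq \mathbb{R}_{\ge 0} \times \mathbb{Z}$ whose elements all have order strictly greater than $(0,0)$. Hence by Proposition~\ref{def:exponential_logarithm_operator}, the operator
\[
X = \log(\mathrm{Id}+H) = \sum_{k=1}^{\infty} \frac{(-1)^{k+1}}{k} H^{k}
\]
is well-defined on $\mathcal{L}$ (resp. $\mathcal{L}_{\mathfrak{D}}$), and the orders $\mathrm{ord}(H^k \cdot g)$ strictly increase by at least $\min\{R\} \succ (0,0)$ as $k$ grows.

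Next, I would substitute the formal differential expansion from Lemma~\ref{lem:H-differential-operator} into the series above:
\[
X \cdot g = \sum_{k=1}^{\infty} \frac{(-1)^{k+1}}{k} \sum_{\ell=1}^{\infty} h_{\ell}^{k} \frac{\mathrm{d}^{\ell} g}{\mathrm{d} x^{\ell}}, \quad g \in \mathcal{L}.
\]
This is a double sum indexed by pairs $(k,\ell)$, and I would display it on a grid analogous to \eqref{eq:gri}. The key point is that, for a fixed monomial $x^{\alpha}\boldsymbol{\ell}^{m}$ in the support of $X \cdot g$, only \emph{finitely many} entries of the grid contribute a nonzero coefficient to that monomial: along the $k$-direction this is ensured by the smallness of $H$ (orders of $H^k\cdot g$ increase indefinitely), and along the $\ell$-direction by the formal convergence of $H^k = \sum_\ell h_\ell^k \mathrm{d}^\ell/\mathrm{d}x^\ell$ established in Lemma~\ref{lem:H-differential-operator}. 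Consequently, as in the passage from \eqref{eq:tii-1} to \eqref{eq:tii-2}, I can invoke the Moore--Osgood theorem to reorder the summation and collect by columns:
\[
X \cdot g = \sum_{\ell=1}^{\infty} h_{\ell}\, \frac{\mathrm{d}^{\ell} g}{\mathrm{d} x^{\ell}}, \qquad h_{\ell} := \sum_{k=1}^{\infty} \frac{(-1)^{k+1}}{k}\, h_{\ell}^{k}.
\]
Each $h_\ell$ lies in $\mathcal{L}$ because its partial sums have supports contained in the fixed well-ordered set $\mathcal{S}(g) + \langle R \rangle$ (with $g = \mathrm{id}, x^2, \dots$ to isolate $h_\ell$ as in the end of the proof of Lemma~\ref{lem:H-differential-operator}) and their coefficients stabilize eventually; the finitely generated case then follows since $R$ is additionally of finite type.

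The main obstacle is justifying the interchange of the two infinite sums, which is exactly the reason for carefully tracking the two independent sources of convergence (smallness of $H$ giving strict increase in $k$, and formal convergence of each $H^k$-expansion giving strict increase in $\ell$). Both facts are already in hand from Lemma~\ref{lem:H-differential-operator} and Proposition~\ref{def:exponential_logarithm_operator}; once they are combined via Moore--Osgood, the differential-operator form \eqref{termw} of $X = \log F$ follows.
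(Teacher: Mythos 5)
Your proposal is essentially correct and follows the same route as the paper: substitute the differential-operator form of each $H^k$ from Lemma~\ref{lem:H-differential-operator} into the logarithmic series for $X=\log(\mathrm{Id}+H)$ (whose formal convergence is guaranteed by the smallness of $H$ via Lemma~\ref{lem:expo-log_and_log-exp} and Proposition~\ref{def:exponential_logarithm_operator}), and then reorder the resulting double sum by columns to collect the coefficients of each $\frac{\mathrm{d}^\ell}{\mathrm{d}x^\ell}$. One small remark: the appeal to the Moore--Osgood theorem is unnecessary in the parabolic case, and the paper does not use it here. Since the orders strictly increase along both rows and columns, only \emph{finitely many} cells of the grid contribute to the coefficient of any given monomial, so the reordering is a finite rearrangement and needs no analytic convergence theorem. (Moore--Osgood is genuinely needed by the paper only in the hyperbolic analogues, Lemmas~\ref{lem:diff_hypH} and \ref{lem:diff_hyp}, where infinitely many cells contribute and absolute convergence of coefficients must be tracked.) You in fact state the correct finiteness fact in the same sentence, so the argument goes through; the Moore--Osgood reference is just harmless overkill.
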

\begin{proof}[Proof of Step (1) ]
By Lemma~\ref{lem:H-differential-operator}, all operators $H^{k}$,
$k\in\mathbb{N}$, can be written as differential operators, with
convergence in the formal topology in $\mathcal{L}$. By Lemma \ref{lem:expo-log_and_log-exp},
the operator $X\cdot g$ given by the logarithmic series 
\begin{align}
X\cdot g= & \log(\text{Id}+H)\cdot g=H\cdot g-\frac{1}{2}H^{2}\cdot g+\frac{1}{3}H^{3}\cdot g+\cdots\label{s}
\end{align}
also converges in the formal topology. We put the convergent expansions
\eqref{eq:hha} for $H^{k}\cdot g$ in \eqref{s}. Due to the formal
convergence of all series, proceeding exactly as in Lemma~\ref{lem:H-differential-operator},
we are allowed to change the order of the summation \emph{from rows
to columns}, that is, to group together the terms in front of the
same derivative of $g$. The new sum again converges formally to the
same limit: 
\begin{align*}
X\cdot g & =H\cdot g-\frac{1}{2}H^{2}\cdot g+\frac{1}{3}H^{3}\cdot g+\cdots=\\
 & =\big(h_{1}^{1}g'+h_{2}^{1}g''+h_{3}^{1}g'''+\cdots\big)+\big(h_{1}^{2}g'+h_{2}^{2}g''+h_{3}^{2}g'''+\cdots\big)+\big(h_{1}^{3}g'+h_{2}^{3}g''+h_{3}^{3}g'''+\cdots\big)+\cdots=\\
 & =\big(h_{1}^{1}+h_{1}^{2}+h_{1}^{3}+\cdots\big)\cdot g'+\big(h_{2}^{1}+h_{2}^{2}+h_{2}^{3}+\cdots\big)\cdot g''+\big(h_{3}^{1}+h_{3}^{2}+h_{3}^{3}+\cdots\big)\cdot g'''+\cdots=\sum_{\ell=1}^{\infty}h_{\ell}g^{(\ell)}.
\end{align*}
Here, $h_{\ell}:=\sum_{k=1}^{\infty}h_{\ell}^{k}\in\mathcal{L}$,
since the orders of the terms increase indefinitely. 
\end{proof}

\begin{proof}[Proof of Step (2)]
\emph{}We now finish the proof of Theorem B. By Lemma~\ref{lem:X-differential-operator},
we have that $X$ is a formal differential operator: 
\begin{equation}
X=\sum_{\ell=1}^{\infty}h_{\ell}\frac{\mathrm{d}^{\ell}}{{\mathrm{d}x}^{\ell}},\ h_{\ell}\in\mathcal{L}\text{ (resp. \ensuremath{\mathcal{L}_{\mathfrak{D}}})}.\label{eq:fiveone}
\end{equation}
We now prove that, due to the Leibniz's property of $X$ ($X$ is
a derivation by Proposition~\ref{prop:derivative}), all the $h_{\ell}$
except $h_{1}$ \emph{vanish}. We apply \eqref{eq:fiveone} successively
to \emph{test monomials} $x^{n},\ n\in\mathbb{N}$, and use the Leibniz's
rule. We deduce from \eqref{eq:fiveone} applied to $g=\mathrm{id}$
that: 
\[
h_{1}=X\cdot\mathrm{id}.
\]
We then apply \eqref{eq:fiveone} to $g(x)=x^{2}$. It follows from
Leibniz's rule that: 
\[
X\cdot x^{2}=2xh_{1}(x)=h_{1}(x)\cdot2x+2h_{2}(x).
\]
It follows that $h_{2}\equiv0$, and, by induction, that $h_{i}\equiv0$,
$i\geq2$. Putting $\xi:=h_{1}$, \eqref{eq:fiveone} becomes: 
\[
X=\xi\frac{\mathrm{d}}{\mathrm{d}x},\ \xi\in\mathcal{L},
\]
which is the desired vector field. By Lemma~\ref{lem:expo-log_and_log-exp},
we have: 
\[
\exp(X)\cdot\mathrm{id}=\exp(\log F)\cdot\mathrm{id}=F\cdot\mathrm{id}=f,
\]
so $f$ is the time-one map of $X$ in the sense of Definition~\ref{def:time-one-map}.
The finitely generated case follows easily. 
\end{proof}

\begin{proof}[Proof of Step $(3)$]
 Let $X=\xi\frac{d}{dx}$, $\xi\in\mathcal{L}$, be any vector field
in whose $\mathcal{C}^{1}$-flow $f$ embeds. Since $f$ is parabolic,
it follows from Proposition~\ref{prop:unic} that $\left(1,0\right)\prec\text{ord}(\xi)$.
By Proposition~\ref{prop: existence-formal-flow-parabolic}, $\exp(tX)\cdot\mathrm{id}$
defines a $\mathcal{C}^{1}$-flow of $X$. Since by Proposition~\ref{prop:unic}
the $\mathcal{C}^{1}$-flow of $X$ is unique, it follows that 
\[
f=\exp(X)\cdot\mathrm{id}.
\]
By Corollary~\ref{cor}, it now necessarily follows that 
\[
\exp(X)\cdot h=h\circ f=F\cdot h,\ h\in\mathcal{L},
\]
so $\exp(X)=F$. By Lemma~\ref{lem:uniq}, $X$ is uniquely given
by $X=\log F$.
\end{proof}

\subsection{Theorem B in the hyperbolic case}

\label{sub:fivefour}
\begin{thm*}[Precise form of Theorem B for hyperbolic elements]
 Let $f\in\mathcal{L}^{H}$ (\emph{resp.} $f\in\mathcal{L}_{\mathfrak{D}}^{H}$)
be hyperbolic. Then there exists a unique formal vector field on $\mathbb{R}$,
\[
X=\xi\frac{\mathrm{d}}{\mathrm{d}x},\ \xi\in\mathcal{L}\text{ (\emph{resp.} }\xi\in\mathcal{L}_{\mathfrak{D}}),
\]
such that $f$ embeds in the flow of $X$ as its time-one map in the
sense of Definition~\ref{def:time-one-map}. Moreover, 
\[
f=\exp(X)\cdot\mathrm{id}.
\]
Here, $\mathrm{ord}(\xi)=(1,0)$ and $\exp(X)$ is a weakly well-defined
operator in $\mathcal{L}$ (resp. $\mathcal{L}_{\mathfrak{D}}$).
\end{thm*}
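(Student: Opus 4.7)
The plan is to mimic the parabolic proof in Subsection~\ref{sub:fivethree}, setting $X = \log F$ where $F = \mathrm{iso}(f)$, but replacing formal convergence with weak convergence throughout and carefully justifying every reordering of summations. First I would reduce to the hyperbolic contraction case: if $f(x) = \lambda x + \cdots$ with $\lambda > 1$, then $f^{-1} \in \mathcal{L}^H$ is a hyperbolic contraction, and once the theorem is established for $f^{-1}$ with vector field $X_{1}$, the vector field for $f$ is $X = -X_{1}$, by the group property of the $\mathcal{C}^{1}$-flow in Proposition~\ref{prop: existence-formal-flow-hyperbolic}. So assume $f$ is a hyperbolic contraction, set $F = \mathrm{iso}(f)$ and $H = F - \mathrm{Id}$. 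By Proposition~\ref{prop:taylor}, $F$ is weakly well-defined as the formal differential operator $F = \mathrm{Id} + \sum_{k\geq 1} \frac{\varepsilon^{k}}{k!}\frac{\mathrm{d}^{k}}{\mathrm{d}x^{k}}$ with $\varepsilon = f - \mathrm{id}$ of order $(1,0)$.

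The first real task is the hyperbolic part of Lemma~\ref{lem:expo-log_and_log-exp}. Here $H$ is \emph{not} a small operator in the sense of Definition~\ref{def:small_operator}, so Proposition~\ref{def:exponential_logarithm_operator} does not apply directly. I would instead prove by hand that, for every $g \in \mathcal{L}$ and every monomial $x^{\alpha}\boldsymbol{\ell}^{m}$, the coefficient of that monomial in $\sum_{k\geq 1} \frac{(-1)^{k+1}}{k} H^{k}\cdot g$ converges absolutely as a real series. The geometric decay provided by $\lambda \in (0,1)$ plays exactly the role it plays in the proof of Proposition~\ref{prop:taylor}(ii), allowing one to compare the coefficient series with a convergent geometric-type series. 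This yields weak well-definedness of $\log F$, and the identity $\exp\log F = F$ then follows from the standard formal exp-log identities by symbolic computation, as in the parabolic case. Since $F$ is a morphism, Proposition~\ref{prop:derivative} immediately gives that $X := \log F$ is a derivation.

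Next, following Lemmas~\ref{lem:H-differential-operator} and \ref{lem:X-differential-operator}, I would show that each iterate $H^{k}$, and therefore $X = \log(\mathrm{Id}+H)$, can be written as a weakly well-defined formal differential operator $X = \sum_{\ell\geq 1} h_{\ell}\frac{\mathrm{d}^{\ell}}{\mathrm{d}x^{\ell}}$ with $h_{\ell}\in\mathcal{L}$. \textbf{This is the main obstacle:} the iterated Taylor expansions now converge only in the weak topology, so interchanging the order of summation between the Taylor index $i$ and the derivative index $\ell$ must be justified by a Moore--Osgood-type argument based on absolute convergence of coefficients, exactly as in the proof of Proposition~\ref{rem:dif}. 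The finitely many nonzero rows in each column (combined with absolute convergence of each coefficient series) permit the exchange. Once $X$ is both a derivation and a formal differential operator, applying it to the test monomials $g(x) = x^{n}$ and invoking Leibniz's rule inductively forces $h_{\ell} \equiv 0$ for $\ell \geq 2$, so $X = \xi\frac{\mathrm{d}}{\mathrm{d}x}$ with $\xi = X\cdot\mathrm{id}\in\mathcal{L}$; comparing leading terms via $F\cdot\mathrm{id}-\mathrm{id} = \varepsilon$ confirms $\mathrm{ord}(\xi) = (1,0)$.

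Finally, for the embedding and for uniqueness, $\exp(X)\cdot\mathrm{id} = \exp(\log F)\cdot\mathrm{id} = F\cdot\mathrm{id} = f$, and by Proposition~\ref{prop: existence-formal-flow-hyperbolic} this exhibits $f$ as the time-one map of the unique $\mathcal{C}^{1}$-flow of $X$. If $f$ also embeds in the $\mathcal{C}^{1}$-flow of some other $X' = \xi'\frac{\mathrm{d}}{\mathrm{d}x}$, then Proposition~\ref{prop:unic} forces $\mathrm{ord}(\xi') = (1,0)$, and Corollary~\ref{cor:formula} gives $f = \exp(X')\cdot\mathrm{id}$. Corollary~\ref{cor} then upgrades this to $\exp(X') = F$ as operators on $\mathcal{L}$, and Lemma~\ref{lem:uniq} yields $X' = \log F = X$. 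The finite-type case $f\in\mathcal{L}_{\mathfrak{D}}^{H}$ is inherited throughout because the supports that appear all stay inside finitely generated sub-semigroups of $\mathbb{R}_{>0}\times\mathbb{Z}$, as in the corresponding parts of the proofs of Lemma~\ref{lem:expo-log_and_log-exp} and Proposition~\ref{prop: existence-formal-flow-hyperbolic}.
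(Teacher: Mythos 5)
Your proposal is correct and follows essentially the same approach as the paper: reduce to a hyperbolic contraction, set $X=\log F$ for $F=\mathrm{iso}(f)$, prove weak well-definedness of $\log F$ via absolute convergence of coefficient series, show $X$ is a formal differential operator by a Moore--Osgood interchange, and invoke Proposition~\ref{prop:derivative} and the Leibniz-rule test on monomials $x^n$ to conclude $X$ is a vector field, with uniqueness settled exactly as you describe. One minor imprecision: in the hyperbolic grid for $H^{k+1}$, a fixed monomial may recur in \emph{infinitely many} rows and columns (unlike the parabolic case and unlike the grid in Proposition~\ref{rem:dif}), so it is the absolute convergence of the full double sum, not finiteness of nonzero rows per column, that licenses the interchange --- but since you also invoke absolute convergence, the argument stands.
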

Let $f(x)=\lambda x+\mathrm{h.o.t.}\in\mathcal{L}^{H}$, $\lambda>0,\ \lambda\neq1$.
In the proof of the theorem we suppose without loss of generality
that $f$ is a formal contraction, that is $0<\lambda<1$. If $\lambda>1$
(a formal expansion), we consider its formal inverse $f^{-1}\in\mathcal{L}^{H}$,
which is a formal contraction. Obviously, $f^{-1}$ embeds in the
flow of $X$ (in the sense of Theorem B) if and only if $f$ embeds
in the flow of $-X$.

In the previous section, the standard notion of a \emph{small operator}
was used to prove the convergence of operator power series in $L(\mathcal{L})$,
see Proposition~\ref{def:exponential_logarithm_operator}. The convergence
of the series was in \emph{formal topology} on $\mathcal{L}$. In
the parabolic case, this notion was sufficient to obtain the embedding
result of Theorem B. Here, we introduce the definition of \emph{small
operator in the weak sense} with the aim of giving a meaning to operator
power series in $L(\mathcal{L})$ under weaker assumptions, needed
for  case. The convergence of operator power series will be in the
\emph{weak topology} on $\mathcal{L}$.
\begin{defn}[Small operator \emph{in the weak sense} with respect to a sequence]
\label{def:sows} An operator $B:\mathcal{L}\to\mathcal{L}$ is \emph{small
in the weak sense with respect to the sequence $(c_{k})_{k\in\mathbb{N}_{0}}$
of real numbers} if: 
\begin{enumerate}[1., font=\textup, nolistsep, leftmargin=0.6cm]
\item there exists a well-ordered set $R\subseteq\mathbb{R}_{\ge0}\times\mathbb{Z}$
of exponents \emph{equal to} or strictly bigger than $\left(0,0\right)$
such that $\mathcal{S}\left(B\cdot f\right)\subseteq\mathcal{S}\left(f\right)+R$,
for every $f\in\mathcal{L}$;
\item For every $f\in\mathcal{L}$ and for every $(\alpha,m)\in\mathcal{S}(f)+\langle R\rangle$,
there exists a sequence $(C_{\alpha,m}^{k})_{k\in\mathbb{N}_{0}}$
of strictly positive numbers such that: 
\begin{equation}
\big|[B^{k}\cdot f]_{\alpha,m}\big|\leq C_{\alpha,m}^{k},\label{eq:so}
\end{equation}
and such that the series 
\begin{equation}
\sum_{k=0}^{\infty}c_{k}C_{\alpha,m}^{k}\label{eq:ser_conv}
\end{equation}
converges absolutely. Here, $[B^{k}\cdot f]_{\alpha,m}$ denotes the
\emph{coefficient} of monomial $x^{\alpha}\boldsymbol{\ell}^{m}$
in $B^{k}\cdot f$ (the notation introduced in Section~\ref{sec:topologies}). 
\end{enumerate}
An operator $B:\mathcal{L}_{\mathcal{D}}\to\mathcal{L}_{\mathcal{D}}$
is \emph{small in the weak sense with respect to the sequence $(c_{k})_{k}$}
if the set $R$ is in addition of finite type.
\end{defn}
Notice that from (1) we obtain by induction that $\mathcal{S}(B^{k}\cdot f)\subseteq\mathcal{S}(f)+\langle R\rangle,\ k\in\mathbb{N}$,
so $(2)$ makes sense.
\begin{prop}[A version of Proposition~\ref{lem:series_small_operator} in the
weak sense]
\label{prop:small_weak_series} Let $(c_{k})_{k\in\mathbb{N}_{0}}$
be a sequence of real numbers and let $B\in L(\mathcal{L})$ (resp.
$L(\mathcal{L}_{D})$) be a small operator in the weak sense with
respect to the sequence $(c_{k})$. Then an operator $B\in L(\mathcal{L})$
($L(\mathcal{L}_{D})$) is \emph{weakly well-defined} by the series
\begin{equation}
B:=\sum_{k=0}^{\infty}c_{k}B^{k}.\ \label{eq:seri1}
\end{equation}
\end{prop}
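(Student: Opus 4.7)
The proof will closely parallel that of Proposition~\ref{lem:series_small_operator}, with the pointwise absolute convergence supplied by condition (2) of Definition~\ref{def:sows} replacing the strict order increase that gave formal convergence in the stronger setting. Fix $f\in\mathcal{L}$ (resp.\ $\mathcal{L}_{\mathfrak{D}}$). By condition (1) and induction on $k$, we have $\mathcal{S}(B^{k}\cdot f)\subseteq\mathcal{S}(f)+\langle R\rangle$ for every $k\in\mathbb{N}_{0}$. Neumann's Lemma~\ref{lem:neumann_lemma} (in its additive form) ensures that $\langle R\rangle$, and hence the Minkowski sum $W:=\mathcal{S}(f)+\langle R\rangle$, is a well-ordered subset of $\mathbb{R}_{\geq0}\times\mathbb{Z}$; moreover $W$ is of finite type when $R$ is. Thus the supports of all iterates $B^{k}\cdot f$ are contained in a common well-ordered set $W$.

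Next, fix a pair $(\alpha,m)\in W$. By condition (2), the real series $\sum_{k=0}^{\infty}c_{k}[B^{k}\cdot f]_{\alpha,m}$ is dominated in absolute value by the convergent series $\sum_{k}|c_{k}|C_{\alpha,m}^{k}$, so it converges absolutely in $\mathbb{R}$. Denote its sum by $\sigma_{\alpha,m}$, and set $\sigma_{\alpha,m}:=0$ for $(\alpha,m)\notin W$. Define $S\cdot f$ as the element of $\mathbb{R}^{\mathbb{R}_{>0}\times\mathbb{Z}}$ with coefficients $[S\cdot f]_{\alpha,m}=\sigma_{\alpha,m}$. Its support lies in $W$, which is well-ordered (and of finite type when appropriate), so $S\cdot f\in\mathcal{L}$ (resp.\ $\mathcal{L}_{\mathfrak{D}}$). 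The partial sums $S_{N}\cdot f:=\sum_{k=0}^{N}c_{k}B^{k}\cdot f$ converge to $S\cdot f$ in the weak topology: for any fixed $(\alpha,m)$ and $\varepsilon>0$, the absolute convergence furnishes $N_{0}$ such that $\bigl|[S\cdot f - S_{N}\cdot f]_{\alpha,m}\bigr|<\varepsilon$ whenever $N\geq N_{0}$.

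Finally, we must check that $S$ so defined is strongly linear, i.e.\ is an operator on $\mathcal{L}$ in the sense of Section~\ref{sub:fiveone}. Writing $f=\sum_{\gamma}a_{\gamma}\gamma\in\mathcal{L}$, the required identity
\[
\sum_{k}c_{k}\,[B^{k}\cdot f]_{\alpha,m}=\sum_{\gamma}a_{\gamma}\sum_{k}c_{k}\,[B^{k}\cdot\gamma]_{\alpha,m}
\]
reduces to the legality of exchanging two summations. For each fixed $k$, only finitely many monomials $\gamma\in\mathcal{S}(f)$ can contribute to the coefficient $[B^{k}\cdot\gamma]_{\alpha,m}$ (by Neumann's Lemma applied to the finitely many decompositions of $(\alpha,m)$ within $\mathcal{S}(f)+\langle R\rangle$), and the inner sum in $k$ converges absolutely with summable bound coming from condition (2). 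A routine application of Fubini for absolutely convergent double series then legitimizes the exchange.

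The main obstacle is essentially bookkeeping: one must be careful that, for each monomial $(\alpha,m)$, both the dominating estimate \eqref{eq:so} and the finiteness of contributing $\gamma\in\mathcal{S}(f)$ hold uniformly enough to apply Fubini. Once this is set up, no genuinely new analytical input is needed beyond Neumann's Lemma and the definition of the weak topology; the statement is in this sense a direct transcription of Proposition~\ref{lem:series_small_operator} to the weak setting.
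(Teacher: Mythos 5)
Your proof is correct and follows essentially the same line as the paper's (extremely terse) argument: Neumann's Lemma~\ref{lem:neumann_lemma} gives a common well-ordered set $\mathcal{S}(f)+\langle R\rangle$ containing the supports of all iterates $B^{k}\cdot f$, and condition (2) of Definition~\ref{def:sows} yields coefficientwise absolute convergence, hence weak convergence of the partial sums. Your third paragraph, verifying that the resulting map is strongly linear and therefore genuinely an element of $L(\mathcal{L})$, is a legitimate addition that the paper leaves implicit (the paper simply asserts the proof is ``straightforward''); it is a point worth making since $L(\mathcal{L})$ is by definition the set of \emph{strongly linear} maps. Note, however, that your appeal to Fubini there is heavier machinery than required: for each fixed $(\alpha,m)$, Neumann's Lemma shows the set of $\gamma\in\mathcal{S}(f)$ with $(\alpha,m)\in\{\gamma\}+\langle R\rangle$ is \emph{finite}, and this set is independent of $k$; so the interchange of $\sum_{k}$ with $\sum_{\gamma}$ is between a convergent series in $k$ and a finite sum over $\gamma$, which follows directly by passing to the limit in finite partial sums over $k$, without needing any absolute-convergence Fubini criterion for genuine double series.
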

\begin{proof}
The proof is straightforward by \eqref{eq:so} and the absolute convergence
of the series $\sum_{k=0}^{\infty}c_{k}C_{\alpha,m}^{k}$. 
\end{proof}
We state and prove in this section the analogous of Lemmas~\ref{lem:expo-log_and_log-exp},
\ref{lem:uniq}, \ref{lem:H-differential-operator} and \ref{lem:X-differential-operator}.
We prove Proposition~\ref{prop: existence-formal-flow-hyperbolic}.
All of them are needed for the proof of Theorem B in the hyperbolic
case. Then the proof in the hyperbolic case follows the same steps
as the proof in the parabolic case, but using the corresponding weak
notions.
\begin{lem}[Lemma~\ref{lem:expo-log_and_log-exp} for hyperbolic elements]
\label{lem:expo_log_hyp} Let $f=\lambda x+\mathrm{h.o.t.}\in\mathcal{L}^{H}$
(resp. $f\in\mathcal{L}_{\mathfrak{D}}^{H}$) be a hyperbolic contraction
$(0<\lambda<1)$. Let $F=\mathrm{iso}(f)\in L(\mathcal{L})$ (resp.
$L(\mathcal{L}_{\mathfrak{D}})$), as in Remark~\ref{ex:automorphism_associated}.
Then $\log F$ is a \emph{weakly well-defined} operator in $L(\mathcal{L})$
(resp. $L(\mathcal{L}_{\mathfrak{D}})$).\end{lem}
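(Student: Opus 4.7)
The plan is to write $\log F = \sum_{k=1}^{\infty} \frac{(-1)^{k+1}}{k} P^{k}$ with $P := F - \mathrm{Id}$ and apply Proposition~\ref{prop:small_weak_series} to the sequence $c_{0} := 0$, $c_{k} := (-1)^{k+1}/k$ for $k \geq 1$. This reduces the task to verifying that $P$ is small in the weak sense with respect to $(c_{k})$, in the sense of Definition~\ref{def:sows}. Write $f = \mathrm{id} + \varepsilon$ with $\varepsilon(x) = (\lambda - 1)x + \Psi(x)$ and $\mathrm{ord}(\Psi) \succ (1,0)$. By Proposition~\ref{prop:taylor}, valid in the hyperbolic contraction case, $P$ admits the weakly convergent expansion $P = \sum_{k \geq 1} \frac{\varepsilon^{k}}{k!} \frac{\mathrm{d}^{k}}{\mathrm{d}x^{k}}$. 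Factoring $\varepsilon^{k} = x^{k}\bigl((\lambda-1) + \Psi/x\bigr)^{k}$ and analyzing each summand yields $\mathcal{S}(P \cdot g) \subseteq \mathcal{S}(g) + R$, where $R$ is the additive sub-semigroup of $\mathbb{R}_{\geq 0} \times \mathbb{Z}$ generated by $(0,1)$ together with $\{(\beta - 1, l) : (\beta, l) \in \mathcal{S}(\Psi)\}$. Since $\mathrm{ord}(\Psi) \succ (1,0)$, every generator is strictly $\succ (0,0)$, and Neumann's Lemma~\ref{lem:neumann_lemma} then ensures that $R$ is well-ordered (and of finite type when $f \in \mathcal{L}_{\mathfrak{D}}^{H}$). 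This establishes condition (1) of Definition~\ref{def:sows}.

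For condition (2), the essential algebraic input is that $P$ is \emph{upper-triangular with small diagonal}. A direct expansion of $f^{\alpha'}$ and $(-\log f)^{-m'}$ gives
\[
P \cdot (x^{\alpha'} \boldsymbol{\ell}^{m'}) = (\lambda^{\alpha'} - 1)\, x^{\alpha'} \boldsymbol{\ell}^{m'} + \sum_{(\gamma, n) \succ (\alpha', m')} d_{\gamma, n}^{(\alpha', m')}\, x^{\gamma} \boldsymbol{\ell}^{n},
\]
and the diagonal coefficient $\lambda^{\alpha'} - 1$ lies in $(-1, 0)$ since $0 < \lambda < 1$ and $\alpha' > 0$. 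The main obstacle is to convert this structural fact into a quantitative bound on iterates. For a fixed target $(\alpha, m) \in (\alpha', m') + \langle R \rangle$, Neumann's lemma bounds by a constant $N = N((\alpha',m'),(\alpha,m))$ the number of strictly order-increasing (``off-diagonal'') moves along any length-$k$ trajectory of $P$-applications from $(\alpha', m')$ to $(\alpha, m)$; the remaining $k - N$ or more steps are diagonal, each contributing a factor $\lambda^{\tilde\alpha} - 1$ for $\tilde\alpha$ in a finite set $A$ of visited $x$-exponents. Counting such paths and bounding the off-diagonal coefficients uniformly over this finite region yields
\[
\bigl|[P^{k} \cdot (x^{\alpha'} \boldsymbol{\ell}^{m'})]_{\alpha, m}\bigr| \le C\, (1 + k)^{N}\, q^{k - N},
\]
where $q := \max_{\tilde\alpha \in A} |\lambda^{\tilde\alpha} - 1| < 1$.

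Finally, strong linearity gives $[P^{k} \cdot g]_{\alpha, m} = \sum_{(\alpha', m') \in \mathcal{S}(g)} [g]_{\alpha', m'} \bigl[P^{k} \cdot (x^{\alpha'} \boldsymbol{\ell}^{m'})\bigr]_{\alpha, m}$, and Neumann's lemma~\ref{lem:neumann_lemma} guarantees that only finitely many sources in $\mathcal{S}(g)$ contribute for each target $(\alpha, m)$. Summing the per-source bounds over this finite set yields $C_{\alpha, m}^{k} := C'(1 + k)^{N'} q^{k}$ with $q < 1$, so that $\sum_{k \geq 1} \frac{1}{k} C_{\alpha, m}^{k}$ converges absolutely. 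Condition (2) of Definition~\ref{def:sows} is thus satisfied, and Proposition~\ref{prop:small_weak_series} yields the weak well-definedness of $\log F$ on $\mathcal{L}$. The finite-type assertion follows along the same lines, noting that $R$ is of finite type when $f \in \mathcal{L}_{\mathfrak{D}}^{H}$.
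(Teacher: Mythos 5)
Your proposal follows the paper's proof essentially verbatim: reduce to Proposition~\ref{prop:small_weak_series}, verify condition (1) of Definition~\ref{def:sows} by computing $\mathcal{S}(P\cdot g)\subseteq\mathcal{S}(g)+R$, and verify condition (2) by analyzing the ``chains'' of monomial evolutions under iterates of $P$, bounding the number of off-diagonal moves via Neumann's Lemma and exploiting the geometric factor $|\lambda^{\gamma}-1|<1$ on the diagonal. One small slip: you assert that $R$ is generated by $(0,1)$ and $(\beta-1,l)$ for $(\beta,l)\in\mathcal{S}(\Psi)$, ``every generator strictly $\succ(0,0)$'', but you must also throw $(0,0)$ into $R$ (as the paper does), since the diagonal term $(\lambda^{\alpha'}-1)\,x^{\alpha'}\boldsymbol{\ell}^{m'}$ of $P\cdot(x^{\alpha'}\boldsymbol{\ell}^{m'})$ sits at exponent $(\alpha',m')=(\alpha',m')+(0,0)$; Definition~\ref{def:sows} explicitly permits $(0,0)\in R$ precisely so this works, and adding it does not affect well-ordering or any subsequent step.
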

\begin{proof}
Put $F=\mathrm{Id}+H$, $H:\mathcal{L}\to\mathcal{L}$. Then 
\begin{equation}
\log F=\log(\mathrm{Id}+H)=\sum_{k=1}^{\infty}(-1)^{k+1}\frac{H^{k}}{k}.\label{eq:logg}
\end{equation}
We prove that operator $H\in L(\mathcal{L})$ is small in the weak
sense (see Definition~\ref{def:sows}) with respect to the sequence
$\big(\frac{(-1)^{k+1}}{k}\big)_{k}$. Applying Proposition~\ref{prop:small_weak_series},
we conclude that the series $\log F$ is a weakly well-defined operator
$\log F:\mathcal{L}\to\mathcal{L}$.

Take $g\in\mathcal{L}$. Then $H\cdot g=g\circ f-g=g\big(\lambda x+\psi(x)\big)-g$.
For $x^{\alpha}\boldsymbol{\ell}^{m}\in\mathcal{S}(g)$, we compute:
\begin{align}
(\lambda x+ & \psi(x))^{\alpha}\boldsymbol{\ell}(\lambda x+\psi(x))^{m}-x^{\alpha}\ell^{m}=\nonumber \\
 & =\lambda^{\alpha}x^{\alpha}\boldsymbol{\ell}^{m}\Big(1+\lambda^{-1}x^{-1}\psi(x)\Big)^{\alpha}\Big(1-\log\lambda\boldsymbol{\ell}-\boldsymbol{\ell}\log\big(1+\lambda^{-1}x^{-1}\psi(x)\big)\Big)^{-m}-\, x^{\alpha}\ell^{m}.\label{eq:com}
\end{align}
Here, $\psi(x)=f(x)-\lambda x$, with $\left(1,0\right)\prec\text{ord}(\psi)$.
We conclude that, for every $g\in\mathcal{L}$, 
\[
\mathcal{S}(H\cdot g)\subseteq\mathcal{S}(g)+R,
\]
where $R$ is a sub-semigroup of $\mathbb{R}_{\geq0}\times\mathbb{Z}$
generated by $(0,1)$ and $(\beta-1,\ell)$ for $(\beta,\ell)\in\mathcal{S}(\psi)$
and containing $(0,0)$. Obviously, since $\left(1,0\right)\prec\text{ord}(\psi)$,
all the elements of $R$ except $(0,0)$ are of order strictly bigger
than $(0,0)$. By Neumann's lemma, $R$ is well-ordered.

Now take any $(\alpha,m)\in R+\mathcal{S}(g)$. If $(\alpha,m)\notin\mathcal{S}(H^{k}.g)$,
for any $k\in\mathbb{N}$, then \eqref{eq:so} holds for every $C_{\alpha,m}>0$.
Suppose that there exists some $\ell\in\mathbb{N}$ such that $(\alpha,m)\in\mathcal{S}(H^{\ell}.g)$.
It can be seen from \eqref{eq:com}, since $\lambda\neq1$, that $(\alpha,m)$
then appears in every $H^{k}\cdot g$, $k\geq\ell$ (unlike the parabolic
case). We prove nevertheless that the coefficient of $x^{\alpha}\boldsymbol{\ell}^{m}$
evolves \emph{controllably} with $k$ in the sense of \eqref{eq:so}
and \eqref{eq:ser_conv}.

To this end, we analyse in what ways we can obtain the monomial $x^{\alpha}\boldsymbol{\ell}^{m}$
in iterates $H^{k}\cdot g$, $k\in\mathbb{N}$. The monomial $x^{\alpha}\boldsymbol{\ell}^{m}$
\emph{evolves} from some \emph{initial monomial} $x^{\beta}\boldsymbol{\ell}^{n}\in\mathcal{S}(g)$
through iterates $H\cdot g$, $H^{2}\cdot g$, etc. The evolution
of coefficients and exponents from one iterate $H^{k}\cdot g$ to
the next one $H^{k+1}\cdot g$ is described by \eqref{eq:com}. Hence,
the rule which governs this evolution is the following: \emph{in each
step, the corresponding monomial $x^{\gamma}\boldsymbol{\ell}^{n}$
either stays the same while its coefficient is multiplied by $\lambda^{\gamma}-1$,
or it is multiplied by a monomial from $R\setminus\{(0,0)\}$}. We
can thus write a finite \emph{chain of changes} corresponding to this
evolution: 
\begin{equation}
x^{\beta}\boldsymbol{\ell}^{n}\rightarrow x^{\beta+\gamma_{1}}\boldsymbol{\ell}^{n+n_{1}}\rightarrow x^{\beta+\gamma_{1}+\gamma_{2}}\boldsymbol{\ell}^{n+n_{1}+n_{2}}\rightarrow\cdots\rightarrow x^{\beta+\gamma_{1}+\cdots+\gamma_{r}}\boldsymbol{\ell}^{n+n_{1}+\cdots+n_{r}}=x^{\alpha}\boldsymbol{\ell}^{m},\label{eq:comb}
\end{equation}
where $(\beta,n)\in\mathcal{S}(g)$, $(\gamma_{i},n_{i})\in R\setminus\{(0,0)\}$,
$i=1,\ldots,r$, $r\in\mathbb{N}_{0}$. By Neumann's lemma, for every
$(\alpha,m)\in R+\mathcal{S}(g)$, there exist only \emph{finitely
many} chains describing the evolution of monomial $x^{\alpha}\ell^{m}$
from elements of $\mathcal{S}\cdot g$. Given a chain as in \eqref{eq:comb},
the integer $r\in\mathbb{N}_{0}$ as well as the pairs $(\beta,n)$
and $(\gamma_{i},n_{i}),\ i=1,\ldots,r,$ do not depend on $k$. We
fix one such chain \eqref{eq:comb} and prove \eqref{eq:so} and \eqref{eq:ser_conv}
only for this chain (in the end we sum up the coefficients of finitely
many chains contributing to $x^{\alpha}\boldsymbol{\ell}^{m}$ and
conclude for the whole $x^{\alpha}\boldsymbol{\ell}^{m}$). For the
coefficient of $x^{\alpha}\boldsymbol{\ell}^{m}$ obtained by this
chain in $H^{k}\cdot g$, we obtain, using \eqref{eq:com}: 
\begin{align*}
 & \big|[H^{k}\cdot g]_{\alpha,m}\big|={k \choose r}\cdot a(\lambda^{\beta}-1)^{k_{1}}\cdot C_{1}\cdot(\lambda^{\beta+\gamma_{1}}-1)^{k_{2}}\cdot C_{2}\cdots(\lambda^{\beta+\gamma_{1}+\cdots+\gamma_{r-1}}-1)^{k_{r}}\cdot C_{r}\cdot(\lambda^{\alpha}-1)^{k_{r+1}},\\
 & \hspace{14cm}\ k\geq k_{0}.
\end{align*}
Here, $k_{0}$ is the index of the first iterate $H^{k_{0}}\cdot g$
in which $x^{\alpha}\boldsymbol{\ell}^{m}$ obtained by chain \eqref{eq:comb}
appears, and $a\in\mathbb{R}$ is the coefficient of the initial monomial,
$ax^{\beta}\boldsymbol{\ell}^{n}$. We choose $r$ iterates (out of
$k$ in total) in which the monomial changes: $H^{k_{1}}\cdot g$,
$H^{k_{1}+k_{2}}\cdot g,\ldots,H^{k_{1}+\cdots+k_{r}}\cdot g$. Note
also that $k_{1}+\cdots+k_{r}+k_{r+1}=k-r$. Note also that the change
of the coefficient in $r$ steps in which the monomial changes depends
only on the fixed chain and not on $k$. That is why, in the above
formula, we multiply by numbers $C_{1},\ldots,C_{r}$ which depend
on $f$ and on the given chain, but not on $k$. The remaining $(k-r)$
steps are characterized by multiplications by respective $\lambda^{\gamma}-1$,
$k-r$ times in total. Therefore, 
\begin{equation}
\big|[H^{k}\cdot g]_{\alpha,m}\big|\leq{k \choose r}\cdot C_{f}^{1}\cdot|\lambda^{\alpha}-1|^{k-r}\leq{k \choose r}C_{f}|\lambda^{\alpha}-1|^{k},\quad k\geq k_{0}.\label{eq:hahh}
\end{equation}
Here, $C_{f}^{1},\ C_{f}$ are \emph{constants} that depend only on
initial $f$, on $g$ and on the fixed chain; they are independent
of $k$.

For the given chain \eqref{eq:comb} contributing to the coefficient
of $x^{\alpha}\boldsymbol{\ell}^{m}$, the inequality \eqref{eq:so}
in Definition~\ref{def:sows} is, by \eqref{eq:hahh}, satisfied
with 
\[
C_{\alpha,m}^{k}={k \choose r}\cdot C_{f}\cdot|\lambda^{\alpha}-1|^{k},\ \ k\geq k_{0}.
\]
Moreover, the series 
\[
\sum_{k=k_{0}}^{\infty}\frac{(-1)^{k+1}}{k}C_{\alpha,m}^{k}=\sum_{k=k_{0}}^{\infty}\frac{(-1)^{k+1}}{k}{k \choose r}\cdot C_{f}\cdot|\lambda^{\alpha}-1|^{k}
\]
converges absolutely (which can easily be checked by, for example,
the ratio test) since $0<\lambda<1$. The operator $H\in L(\mathcal{L})$
is therefore \emph{small in the weak sense} with respect to the sequence
$\big(\frac{(-1)^{k+1}}{k}\big)_{k}$. By Proposition \ref{prop:small_weak_series},
the series $(\log F)\cdot g$, $g\in\mathcal{L}$, converges in $\mathcal{L}$
in the weak topology to an element of $\mathcal{L}$. The operator
$\log F\in L(\mathcal{L})$ is thus weakly well-defined.

Finally, the claim in finitely generated case ($\mathcal{L}_{\mathfrak{D}}$)
follows easily, since $R$ is then finitely generated.
\end{proof}
We prove here the Proposition~\ref{prop: existence-formal-flow-hyperbolic}
stated in Subsection~\ref{sub:fivetwo}. The problem consists in
giving a meaning to the formal time-one map of a vector field $X=\xi\frac{\mathrm{d}}{\mathrm{d}x}$
in the case where $\xi=\lambda x+\mathrm{h.o.t.}$ is \emph{hyperbolic}.
The question is: does $X$ admit a formal one-parameter flow? Hence
we need to study the convergence in $\mathcal{L}$ of the exponential
of $X=\xi\frac{\mathrm{d}}{\mathrm{d}x}$. The problem, compared to
the case $\left(1,0\right)\prec\mathrm{ord}(\xi)$ proven in Subection~\ref{sub:fivetwo},
is that the operator $X=\xi\frac{\mathrm{d}}{\mathrm{d}x}$ is not
\emph{small} any more if $\mathrm{ord}(\xi)=(1,0)$. Hence, its exponential
$H^{t}=\exp\left(tX\right)$, $t\in\mathbb{R}$, is not a well-defined
operator in $L(\mathcal{L})$. Moreover, given $f\in\mathcal{L}$,
the formula: 
\begin{equation}
H^{t}\cdot f=\exp(tX)\cdot f=f+tf'\xi+\frac{t^{2}}{2!}\left(f'\xi\right)'\xi+\frac{t^{3}}{3!}\left(\left(f'\xi\right)'\xi\right)'\xi+\cdots\label{eq:formula-Ht}
\end{equation}
does not converge in $\mathcal{L}$ even with respect to the weaker
product topology with respect to the discrete topology. Indeed, a
fixed monomial is present in infinitely many terms of the sum \eqref{eq:formula-Ht}.
Nevertheless, we show here that operator $X$ is \emph{small in the
weak sense with respect to the sequence $\big(\frac{t^{n}}{n!}\big)_{n}$},
see Definition~\ref{def:sows}. Applying Proposition~\ref{prop:small_weak_series},
we conclude that $H^{t}=\exp\left(tX\right)$ is a \emph{weakly well-defined}
operator in $L(\mathcal{L})$. In other words, the series \eqref{eq:formula-Ht}
converges in $\mathcal{L}$ in the weak topology.
\begin{proof}[Proof of Proposition~\ref{prop: existence-formal-flow-hyperbolic}]
. Let $X=\xi\frac{\mathrm{d}}{\mathrm{d}x}$, with 
\begin{align*}
\xi=\lambda\cdot\mathrm{id}+\psi,\ \lambda\neq0.
\end{align*}
Here, $\psi\in\mathcal{L}$ such that $(1,0)\prec\text{ord}(\psi)$.
Recall that 
\[
H^{t}\cdot f=\sum_{n=0}^{\infty}\frac{t^{n}}{n!}X^{n}\cdot f.
\]
Since $X\cdot f=\xi f',\ f\in\mathcal{L}$, we conclude that $\mathcal{S}(X\cdot f)\subseteq\mathcal{S}(f)+R$,
where $R$ is a sub-semigroup of $\mathbb{R}_{\geq0}\times\mathbb{Z}$
generated by $(0,1)$, $(\beta-1,k)$ for $(\beta,k)\in\mathcal{S}(\psi)$,
and containing $(0,0)$. Note that $\left(0,0\right)\prec\text{ord}(\beta-1,k)$,
since $\left(1,0\right)\prec\text{ord}(\psi)$. Each $X^{n}\cdot f$
is by \eqref{eq:formula-Ht} a sum of $2^{n}$ terms of the type 
\begin{equation}
\big(((f'\cdot*)'\cdot*)'\ldots\big)'\cdot*,\label{str-1}
\end{equation}
with $n$ stars representing either $\lambda x$ or $\psi$.

Fix any $(\alpha,m)\in\cup_{n\in\mathbb{N}_{0}}\mathcal{S}(X^{n}\cdot f)\subseteq\mathcal{S}(f)+R$.
Then $(\alpha,m)\in\mathcal{S}(X^{n_{0}}\cdot f)$, for some $n_{0}\in\mathbb{N}$.
Since $f$ is hyperbolic, we see by \eqref{str-1} that $(\alpha,m)\in\mathcal{S}(X^{n}\cdot f)$,
for infinitely many $n\geq n_{0}$ in general (the coefficient of
$x^{\alpha}\boldsymbol{\ell}^{m}$ may sometimes vanish due to cancellations).
In order to prove \eqref{eq:so} and \eqref{eq:ser_conv}, we analyse
the coefficient of $x^{\alpha}\boldsymbol{\ell}^{m}$ in $X^{n}\cdot f$,
$n\in\mathbb{N}$, $n\geq n_{0}$.

Let $n\geq n_{0}$. The fixed monomial $x^{\alpha}\boldsymbol{\ell}^{m}$
in $X^{n}\cdot f$ is obtained in the course of iterates $X^{\ell}\cdot f$,
$0\leq\ell\leq n$, from some \emph{initial monomials} $ax^{\beta}\boldsymbol{\ell}^{p}\in\mathcal{S}(f)$
, $a\in\mathbb{R}$, which evolve in $n$ steps of iteration to $x^{\alpha}\boldsymbol{\ell}^{m}$.
By \eqref{str-1}, we see that in each step $\ell$ we differentiate
the respective monomial and then multiply by either a monomial from
$\psi$ or by $\lambda x$. After say $k$ multiplications by monomials
from $\psi$ and the remaining $n-k$ multiplications by $\lambda x$,
each following one differentiation, the initial monomial $x^{\beta}\boldsymbol{\ell}^{p}\in\mathcal{S}(f)$
transforms to: 
\[
x^{\beta+(\alpha_{1}-1)+\cdots+(\alpha_{k}-1)}\boldsymbol{\ell}{}^{p+p_{1}+\cdots+p_{k}+r},
\]
where $x^{\alpha_{i}}\boldsymbol{\ell}^{p_{i}}\in\mathcal{S}(\psi)$,
$i=1,\ldots,k,$ and $r\in\mathbb{N}_{0}$, $0\leq r\leq n$ ($r$
corresponding to the number of differentiations of the logarithm part).
In order to obtain all chains of changes of monomials resulting in
$x^{\alpha}\boldsymbol{\ell}^{m}\in\mathcal{S}(X^{n}\cdot f)$, whose
coefficients then add up to the coefficient of $x^{\alpha}\boldsymbol{\ell}^{m}$
in $X^{n}\cdot f$, we search for all $(\beta,p)\in\mathcal{S}(f)$,
$k,\ r\in\mathbb{N}_{0}$ and $(\alpha_{i},p_{i})\in\mathcal{S}(\psi)$,
$i=1,\ldots,k$, such that: 
\[
x^{\beta+(\alpha_{1}-1)+\cdots+(\alpha_{k}-1)}\boldsymbol{\ell}^{p+p_{1}+\cdots+p_{k}+r}=x^{\alpha}\boldsymbol{\ell}^{m}.
\]
By Neumann's lemma, there are only \emph{finitely many} such choices.

For any such choice (of finitely many), put $L:=(k+1)\cdot\max_{i=1\ldots k}\{|p|,|p_{i}|\}+r$.
Its contribution to the coefficient of $x^{\alpha}\boldsymbol{\ell}^{m}$
in $X^{n}\cdot f$ is absolutely bounded by: 
\[
\big|[H^{n}.f]_{\alpha,m}\big|\leq a\cdot\alpha^{n-r}L^{r}{n \choose k}\lambda^{n-k}\leq C_{\xi,f}\cdot{n \choose k}(\alpha\lambda)^{n},\ n\geq N.
\]
Here, $N$ is the smallest iterate $X^{N}\cdot f$ containing $x^{\alpha}\ell^{m}$
obtained in this chain, and $C_{\xi,f}>0$ is a coefficient depending
on the coefficients of $\xi$ and $f$ and on the chosen chain, but
independent of $n$. The term $\alpha^{n-r}L^{r}$ comes from differentiating
$n$ times the initial monomial $x^{\beta}\boldsymbol{\ell}^{p}$.
The term ${n \choose k}\lambda^{n-k}$ comes from $n-k$ multiplications
by $\lambda x$. For the given chain, we put $C_{\alpha,m}^{n}=C_{\xi,f}\cdot{n \choose k}(\alpha\lambda)^{n},\ n\geq N.$
The series 
\[
\sum_{n=N}^{\infty}\frac{t^{n}}{n!}C_{\alpha,m}^{n}=\sum_{n=N}^{\infty}\frac{t^{n}}{n!}C_{\xi,f}\cdot{n \choose k}(\alpha\lambda)^{n}
\]
converges absolutely. Summing contributions to the coefficient of
$x^{\alpha}\ell^{m}$ of all (finitely many) possible chains, we conclude
the same for the absolute convergence of the whole coefficient of
$x^{\alpha}\ell^{m}$. By Proposition~\ref{prop:small_weak_series},
the operator $\exp(tX):\mathcal{L}\to\mathcal{L}$ is weakly well-defined.

The final statements are proven in the same way as in the proof of
Proposition~\ref{prop: existence-formal-flow-parabolic} from Section~\ref{sub:fivetwo}.
Finally, the finitely generated case follows easily, since the semigroup
$R$ is then finitely generated. $\hfill\Box$\end{proof}
\begin{lem}[Lemma~\ref{lem:H-differential-operator} in the hyperbolic case]
\label{lem:diff_hypH} Let $f\in\mathcal{L}^{0}$ (resp. $\mathcal{L}_{\mathfrak{D}}^{0}$)
be a hyperbolic contraction. Let $F=\mathrm{\text{iso}}(f)\in L(\mathcal{L})$
(resp. $L(\mathcal{L}_{\mathfrak{D}})$) and let $H=F-\mathrm{Id}$.
Then all the iterates $H^{k}$ can be written as weakly well-defined
formal differential operators on $\mathcal{L}$ (resp. $\mathcal{L}_{\mathfrak{D}}$):
\begin{equation}
H^{k}=\sum_{\ell=1}^{\infty}h_{\ell}^{k}\frac{\mathrm{d}^{\ell}}{{\mathrm{d}x}^{\ell}},\ h_{\ell}^{k}\in\mathcal{L}\text{ (resp. \ensuremath{\mathcal{L}_{D}})},\ k\in\mathbb{N};\label{eq:hha}
\end{equation}
\end{lem}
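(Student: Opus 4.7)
My plan is to prove Lemma~\ref{lem:diff_hypH} by induction on $k$, mirroring the structure of the parabolic Lemma~\ref{lem:H-differential-operator} but systematically replacing formal-topology arguments with weak-topology arguments supported by absolute convergence of coefficients. For the base case $k=1$, write $f=\mathrm{id}+\varepsilon$ with $\mathrm{ord}(\varepsilon)=(1,0)$. Since $f$ is a hyperbolic contraction, Proposition~\ref{prop:taylor} directly gives
\[
H = F-\mathrm{Id} = \sum_{\ell=1}^{\infty}\frac{\varepsilon^\ell}{\ell!}\frac{\mathrm{d}^\ell}{\mathrm{d}x^\ell}
\]
as a weakly well-defined formal differential operator, with $h_\ell^1 = \varepsilon^\ell/\ell!\in\mathcal{L}$. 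Crucially, Proposition~\ref{prop:taylor} asserts not just weak convergence but absolute convergence of the coefficient of every monomial in $\mathcal{S}(H\cdot g)$ — this is the input we will need in the induction step.

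For the inductive step, assume $H^k=\sum_{\ell\ge 1} h_\ell^k\frac{\mathrm{d}^\ell}{\mathrm{d}x^\ell}$ is weakly well-defined with absolutely convergent coefficients. For any $g\in\mathcal{L}$, set $\tilde g:=H^k\cdot g\in\mathcal{L}$; then $H^{k+1}\cdot g=H\cdot\tilde g$, and applying Proposition~\ref{prop:taylor} to $\tilde g$ yields
\[
H^{k+1}\cdot g \;=\; \sum_{i=1}^{\infty}\frac{\varepsilon^i}{i!}\,\tilde g^{(i)}
\;=\; \sum_{i=1}^{\infty}\sum_{\ell=1}^{\infty}\frac{\varepsilon^i}{i!}\bigl(h_\ell^k\, g^{(\ell)}\bigr)^{(i)}
\]
weakly. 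Expanding each $(h_\ell^k g^{(\ell)})^{(i)}$ by the Leibniz rule produces a triple sum whose generic term is a fixed multiple of $\varepsilon^i (h_\ell^k)^{(i-j)} g^{(\ell+j)}$. The goal is then to rearrange this triple sum, grouping together all contributions to each fixed derivative $g^{(m)}$, so as to define
\[
h_m^{k+1}\;:=\;\sum_{i,\ell,j:\,\ell+j=m}\binom{i}{j}\frac{\varepsilon^i}{i!}(h_\ell^k)^{(i-j)}\;\in\;\mathcal{L}
\]
and conclude $H^{k+1}\cdot g=\sum_{m\ge 1}h_m^{k+1}g^{(m)}$ weakly.

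The main obstacle is the justification of this rearrangement: unlike in the parabolic case (Lemma~\ref{lem:H-differential-operator}), where each fixed monomial appears in only finitely many summands by an order-increase argument, here the hyperbolicity of $f$ allows a single monomial $x^\alpha\boldsymbol{\ell}^m$ to occur in infinitely many rows and columns of the grid, exactly as in the proof of Proposition~\ref{rem:dif}. The remedy is the Moore--Osgood theorem invoked there: I will check that, for every fixed $(\alpha,m)\in\mathbb{R}_{>0}\times\mathbb{Z}$, each row of the expansion contributes only \emph{finitely many} copies of $x^\alpha\boldsymbol{\ell}^m$ (because the outer Taylor expansion of $H$ acts as a small operator in the weak sense in the sense of Definition~\ref{def:sows}, as established in Lemma~\ref{lem:expo_log_hyp}), while the total coefficient obtained by summing rows first converges \emph{absolutely} (by the absolute-convergence clause of Proposition~\ref{prop:taylor} combined with the inductive absolute convergence of the coefficients of $H^k$). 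These two properties are exactly the hypotheses of Moore--Osgood, which permits the interchange of summation order and yields the desired column sums $h_m^{k+1}$ as well-defined elements of $\mathcal{L}$. The finitely generated case then follows automatically because $\mathcal{S}(\varepsilon)$, and hence the semigroup generated by $\mathcal{S}(\varepsilon)-(1,0)$, is of finite type, so the supports of the $h_m^{k+1}$ remain of finite type at each stage.
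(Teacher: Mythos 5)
Your inductive structure (strengthen the hypothesis to include absolute convergence of coefficients, expand $H^{k+1}\cdot g = H\cdot(H^k\cdot g)$ via the Taylor operator, then use Moore--Osgood to regroup by derivative order) matches the paper's approach. But the justification you give for the Moore--Osgood hypothesis contains a false claim, and it is precisely the claim that distinguishes the hyperbolic from the parabolic case. You assert that ``each row of the expansion contributes only finitely many copies of $x^{\alpha}\boldsymbol{\ell}^{m}$,'' citing Lemma~\ref{lem:expo_log_hyp}. This is false here: for fixed $i$, the entries $\varepsilon^{i}\,(h_{\ell}^{k})^{(i-j)}g^{(\ell+j)}$ have orders that do \emph{not} increase with $\ell$ (since $\mathrm{ord}(\varepsilon)=\mathrm{ord}(h_{\ell}^{k}\,g^{(\ell)}$'s leading factor$)=(1,0)$ persists), so a fixed monomial generically occurs in \emph{infinitely many} entries of every row and every column. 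The paper says exactly this: ``Unlike the parabolic case, the order of the terms stays the same along the rows and along the columns, so that one monomial from the support may appear in every term of every row and of every column.'' Lemma~\ref{lem:expo_log_hyp} gives a quantitative bound $|[H^{k}\cdot g]_{\alpha,m}|\le C_{\alpha,m}^{k}$ along the $k$-direction (which is yet another index), not finiteness per row of the grid you are rearranging.

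Moreover, even if your (a) were true, the pair (a)$+$(b) is still not the Moore--Osgood hypothesis: you need $\sum_{i}\sum_{\ell}|a_{i\ell}|<\infty$, whereas (b) only gives $\sum_{i}\big|\sum_{\ell}a_{i\ell}\big|<\infty$, which is weaker whenever there are cancellations within a row. The paper's proof fills this gap with a genuinely quantitative argument: it observes that each fixed $(\alpha,m)$ can only be reached via finitely many ``chains'' of monomial evolution (a Neumann-lemma finiteness), then bounds the coefficient contributed by a fixed chain along each row by an explicit expression involving $\binom{\beta}{\ell-r}(1-\lambda)^{\ell-s}$ (the displayed estimate ending in \eqref{eq:sum}), and shows the resulting double series of absolute values is dominated by a ratio-test--convergent series because $0<1-\lambda<1$. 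That chain-tracking and the explicit geometric bound in $1-\lambda$ is the essential content of this lemma and is missing from your outline; the absolute-convergence statements of Proposition~\ref{prop:taylor} and the inductive hypothesis do not combine to give $\sum_{i}\sum_{\ell}|a_{i\ell}|<\infty$ without it.
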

\begin{proof}
Let $f=\lambda\cdot\mathrm{id}+\psi,\ \psi\in\mathcal{L},\ \text{ord}(\psi)\succ(1,0)$
and $0<\lambda<1$. Let $h=f-\mathrm{id}=(\lambda-1)\cdot\mathrm{id}+\psi.$
Note that $\mathrm{ord}(h)=(1,0)$. The proof is by induction. It
is a more elaborate version of the proof of Lemma~\ref{lem:X-differential-operator}
in the parabolic case. The induction basis ($k=1$) follows easily
by Taylor expansion, which we have proven to converge to $H.g\in\mathcal{L}$
in the weak topology: 
\begin{equation}
H\cdot g=g(x+h)-g(x)=\sum_{\ell=1}^{\infty}\frac{h^{\ell}}{\ell!}\frac{\mathrm{d}^{\ell}g}{{\mathrm{d}x}^{\ell}},\ g\in\mathcal{L}.\label{eq:tay}
\end{equation}
Thus, $H=\sum_{\ell=1}^{\infty}h_{\ell}^{0}\frac{\mathrm{d}^{\ell}}{{\mathrm{d}x}^{\ell}},$
with the coefficients $h_{\ell}^{0}:=\frac{h^{\ell}}{\ell!}\in\mathcal{L}$,
$\ell\in\mathbb{N}$. Note that, unlike the parabolic case, all summands
of the series are \emph{of the same order $\mathrm{ord}(g)$.} For
every monomial of Taylor expansion \eqref{eq:tay}, the series of
its coefficients converges \emph{absolutely}. Assume now that operators
$H^{m}$, $m\leq k$, can be written in the form \eqref{eq:hha} of
a differential operator, where the series converges in the weak topology
on $\mathcal{L}$. That is, if a monomial appears in infinitely many
summands, the series of its coefficients is convergent. Suppose additionally
that the series of coefficients of every monomial in the expansion
\eqref{eq:hha} converges \emph{absolutely}.

\emph{The induction step}: we prove that the operator $H^{k+1}$ can
be written in the differential form \eqref{eq:tay}. Moreover, we
prove the \emph{absolute} convergence of series of coefficients of
every monomial of the support of $H^{k+1}\cdot g$ in this formula.
By Taylor expansion, we obtain: 
\begin{align}
H^{k+1}\cdot g\,(x)=H(H^{k}\cdot g)\,(x)=H^{k}\cdot g\,(x+h(x))-H^{k}\cdot g\,(x) & =\sum_{i=1}^{\infty}\frac{h(x)^{i}}{i!}\frac{\mathrm{d}^{i}(H^{k}\cdot g)}{\mathrm{d}x^{i}}\nonumber \\
 & =\sum_{i=1}^{\infty}\frac{h^{i}}{i!}\frac{\mathrm{d}^{i}}{\mathrm{d}x^{i}}\Big(\sum_{\ell=1}^{\infty}h_{\ell}^{k}\frac{\mathrm{d}^{\ell}g}{{\mathrm{d}x}^{\ell}}\Big)=\sum_{i=1}^{\infty}\Big(\sum_{\ell=1}^{\infty}h_{i\ell}^{k}\frac{\mathrm{d}^{\ell}g}{{\mathrm{d}x}^{\ell}}\Big),\label{eq:tii-1h}
\end{align}
with $h_{i\ell}^{k}\in\mathcal{L}$, $i,\,\ell\in\mathbb{N}$. The
elements of the double sum \eqref{eq:tii-1} can be represented by
the grid: 
\begin{equation}
\begin{array}{rcccc}
H^{k}\cdot g\ \vline & h_{1}^{k}g' & h_{2}^{k}g'' & h_{3}^{k}g''' & \ldots\\[0.1cm]
\hline h\frac{\mathrm{d}}{\mathrm{d}x}(H^{k}\cdot g)\vline & h\frac{\mathrm{d}}{\mathrm{d}x}(h_{1}^{k}g') & h\frac{\mathrm{d}}{\mathrm{d}x}(h_{2}^{k}g'') & \ldots\\[0.1cm]
\frac{h^{2}}{2!}\frac{\mathrm{d}^{2}}{{\mathrm{d}x}^{2}}(H^{k}\cdot g)\vline & \frac{h^{2}}{2!}\frac{\mathrm{d}^{2}}{{\mathrm{d}x}^{2}}(h_{1}^{k}g') & \frac{h^{2}}{2!}\frac{\mathrm{d}^{2}}{{\mathrm{d}x}^{2}}(h_{2}^{k}g'') & \ldots\\[0.1cm]
\frac{h^{3}}{3!}\frac{\mathrm{d}^{3}}{{\mathrm{d}x}^{2}}(H^{k}\cdot g)\vline\\
\vdots\qquad & \vdots & \vdots & \vdots
\end{array}\label{eq:grih}
\end{equation}
Unlike the parabolic case, the order of the terms \emph{stays the
same} along the rows and along the columns, so that one monomial from
the support may appear in every term of every row and of every column.
The order of the summation in \eqref{eq:tii-1h} is by rows. The double
sum \eqref{eq:tii-1h} converges in this order of the summation in
the weak topology on $\mathcal{L}$ (by assumption and by convergence
of Taylor expansions). In the sequel, we prove that the coefficient
of a fixed monomial of the support of $H^{k+1}.g$ converges (to the
same limit) if we change the order of summation from the summation
by rows to the summation by columns. Since each derivative of $g$
appears only in finitely many first columns, we have thus proven that
the following sum converges in $\mathcal{L}$ (in the weak topology),
to the same limit $H^{k+1}\cdot g$: 
\begin{equation}
H^{k+1}\cdot g=\sum_{\ell=1}^{\infty}\Big(\sum_{i=1}^{\infty}h_{i\ell}^{k}\frac{\mathrm{d}^{\ell}g}{{\mathrm{d}x}^{\ell}}\Big)=\sum_{\ell=1}^{\infty}h_{\ell}^{k+1}\frac{\mathrm{d}^{\ell}g}{{\mathrm{d}x}^{\ell}}.\label{eq:tii-2h}
\end{equation}
We also have that $h_{\ell}^{k+1}:=\sum_{i=1}^{\infty}h_{i\ell}^{k}\in\mathcal{L}$.

We use the following version of the \emph{Moore-Osgood theorem} (see
for example \cite[Theorem 8.3]{rudin}): given a real double sequence
$a_{m,n}$, if the sum 
\[
\sum_{m\in\mathbb{N}}\sum_{n\in\mathbb{N}}|a_{m,n}|
\]
converges, then the following iterated sums exist and commute: 
\[
\sum_{m\in\mathbb{N}}\sum_{n\in\mathbb{N}}a_{m,n}=\sum_{n\in\mathbb{N}}\sum_{m\in\mathbb{N}}a_{m,n}.
\]
Therefore, in order to prove the step of the induction, we need to
prove that the double sum of absolute values of coefficients of every
monomial of \eqref{eq:tii-1h} converges in this order of the summation.

By the induction assumption, the convergence of series of coefficients
of every monomial along the first row in \eqref{eq:grih} is absolute.
Fix a monomial $x^{\alpha}\boldsymbol{\ell}^{m}$ from the support
of \eqref{eq:tii-1h}. We prove here that the convergence of its respective
coefficient along every other row is absolute, and that these limits
converge by columns. By the \emph{Moore-Osgood theorem} stated above,
this will prove the step of the induction.

Note that $\mathcal{S}(h_{j}^{k}g^{(j)}),\ \mathcal{S}\big(\frac{h^{\ell}}{\ell!}\frac{\mathrm{d}^{\ell}}{{\mathrm{d}x}^{\ell}}(h_{j}^{k}g^{(j)})\big)\subseteq\mathcal{S}(g)+R$,
for every $j,\ \ell\in\mathbb{N}$. Here, $R$ is a sub-semigroup
generated by $(0,1)$, $(\beta-1,p)$ for $(\beta,p)\in\mathcal{S}(\psi)$,
and containing $(0,0)$. We denote, for a monomial $x^{\alpha}\boldsymbol{\ell}^{m}\in\mathcal{S}(g)+R$,
\[
c_{j}^{0}(\alpha,m):=[h_{j}^{k}g^{(j)}]_{\alpha,m},\ \ c_{j}^{\ell}(\alpha,m):=\Big[\frac{h^{\ell}}{\ell!}\frac{\mathrm{d}^{\ell}}{{\mathrm{d}x}^{\ell}}(h_{j}^{k}g^{(j)})\Big]_{\alpha,m}\qquad,\ j,\,\ell\in\mathbb{N}.
\]
We prove that 
\[
\sum_{\ell\in\mathbb{N}}\sum_{j\in\mathbb{N}}|c_{j}^{\ell}(\alpha,m)|<\infty,\ (\alpha,m)\in\mathcal{S}(g)+R.
\]
In order to bound $|c_{j}^{\ell}(\alpha,m)|$, note that the monomial
$x^{\alpha}\boldsymbol{\ell}^{m}\in\mathcal{S}\big(\frac{h^{\ell}}{\ell!}\frac{\mathrm{d}^{\ell}}{{\mathrm{d}x}^{\ell}}(h_{j}^{k}g^{(j)})\big),\ \ell\in\mathbb{N},$
is obtained from some \emph{initial }monomial $b_{j}^{0}x^{\beta}\boldsymbol{\ell}^{n}\in\mathcal{S}(h_{j}^{k}g^{(j)}),\ b_{j}^{0}=[h_{j}^{k}g^{(j)}]_{\beta,n}$,
undergoing $\ell$ differentiations and the multiplication by 
\[
\frac{h^{\ell}}{\ell!}=\frac{1}{\ell!}\big((\lambda-1)x+\psi(x)\big)^{\ell}=\frac{(\lambda-1)^{\ell}}{\ell!}x^{\ell}\Big(1+x^{-1}\frac{\psi(x)}{\lambda-1}\Big)^{\ell}.
\]
We obtain: 
\begin{align}
b_{j}^{0}\cdot\beta(\beta-1) & \cdots(\beta-(\ell-r)+1)\cdot n(n+1)\cdots(n+r-1)x^{\beta-\ell}\boldsymbol{\ell}^{n+r}\cdot\nonumber \\
 & \cdot\frac{(\lambda-1)^{\ell}}{\ell!}x^{\ell}{\ell \choose s}(\lambda-1)^{-s}b_{1}x^{\beta_{1}-1}\boldsymbol{\ell}^{p_{1}}\cdots b_{s}x^{\beta_{s}-1}\boldsymbol{\ell}^{p_{s}}\ =\ \star\ x^{\alpha}\boldsymbol{\ell}^{m}.\label{eq:see}
\end{align}
Here, $0\leq r\leq\ell$ is the number of derivatives applied on the
logarithmic components, $s\in\mathbb{N}_{0}$, $0\leq s\leq\ell$,
$b_{i}x^{\beta_{i}}\boldsymbol{\ell}^{p_{i}}\in\mathcal{S}(\psi)$,
$i=1\ldots s$. By Neumann's lemma, there are only finitely many choices
for $r,\ s\in\mathbb{N}_{0}$, $(\beta,n)\in\mathcal{S}(g)+R,\ (\beta_{i},p_{i})\in\mathcal{S}(\psi),\ i=1,\ldots,s$.
The choices are independent of $\ell,\, k,\, j$. We analyse here
only one of the combinations (afterwards, we sum up finitely many
bounds to obtain a bound on the whole coefficient $c_{j}^{\ell}(\alpha,m)$):
\[
|c_{j}^{\ell}(\alpha,m)|\leq C\cdot|c_{j}^{0}(\beta,n)|\cdot\Big|{\beta \choose \ell-r}\Big|\cdot{\ell \choose s}\cdot\frac{(1-\lambda)^{\ell-s}}{\ell!},
\]
where $C\geq0$ depends only on the given combination (independent
of $j$, $\ell$ or $k$). Therefore, 
\[
\sum_{\ell\geq r,s}\sum_{j\in\mathbb{N}}|c_{j}^{\ell}(\alpha,m)|\leq C\cdot\sum_{\ell\geq r,s}\Big|{\beta \choose \ell-r}\Big|{\ell \choose s}\cdot\frac{(1-\lambda)^{\ell-s}}{\ell!}\sum_{j\in\mathbb{N}}|c_{j}^{0}(\beta,n)|.
\]
By the induction assumption, $C(\beta,n):=\sum_{j\in\mathbb{N}}|c_{j}^{0}(\beta,n)|<\infty$.
We have now: 
\begin{align}
\sum_{\ell\geq r,s}\sum_{j\in\mathbb{N}}|c_{j}^{\ell}(\alpha,m)| & \leq C\cdot C(\beta,n)\sum_{\ell\geq r,s}\Big|{\beta \choose \ell-r}\Big|{\ell \choose s}\frac{(1-\lambda)^{\ell-s}}{\ell!}\nonumber \\
 & \leq C\cdot C(\beta,n)\sum_{\ell\geq r,s}\Big|{\beta \choose \ell-r}\Big|(1-\lambda)^{\ell-s}<\infty.\label{eq:sum}
\end{align}
The last sum converges by the ratio test, since $0<1-\lambda<1$.
We have thus proven that the summation in \eqref{eq:grih} may be
done by columns instead of by rows, while the sum $H^{k+1}.g$ remains
the same. The formula \eqref{eq:hha} for $H^{k+1}.g$, $g\in\mathcal{L}$,
thus converges in the weak topology. Moreover, in this formula, the
series of absolute values of coefficients of every fixed monomial
$x^{\alpha}\boldsymbol{\ell}^{m}$ converges to \eqref{eq:sum} (more
accurately, to a \emph{finite sum of sums of the type \eqref{eq:sum}},
each for every possible combination).

Note additionally that from \eqref{eq:hha} we have that $h_{1}^{k}=H^{k}\cdot\mathrm{id}$,
$h_{2}^{k}=\frac{1}{2}H^{k}\cdot x^{2}-xh_{1}^{k}$, etc., for every
$k\in\mathbb{N}$. The finitely generated case follows directly.\end{proof}
\begin{lem}[Lemma~\ref{lem:X-differential-operator} in the hyperbolic case]
\label{lem:diff_hyp} Let $f\in\mathcal{L}^{0}$ (resp. $\mathcal{L}_{\mathfrak{D}}^{0}$)
be a hyperbolic contraction. Let $F=\mathrm{iso}(f)\in L(\mathcal{L})$
(resp. $L(\mathcal{L}_{\mathfrak{D}})$) and $H=F-\mathrm{Id}$. Let
$X=\log F=\log(\mathrm{Id}+H)$. Then $X$ can be written as a weakly
well-defined formal differential operator on $\mathcal{L}$ (resp.
$\mathcal{L}_{\mathfrak{D}}$): 
\begin{equation}
X=\sum_{\ell=1}^{\infty}h_{\ell}\frac{\mathrm{d}^{\ell}}{{\mathrm{d}x}^{\ell}},\ h_{\ell}\in\mathcal{L}\text{ (resp. \ensuremath{\mathcal{L}_{\mathfrak{D}}})}.\label{termww}
\end{equation}
\end{lem}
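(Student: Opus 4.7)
The plan is to mimic the argument of Lemma~\ref{lem:X-differential-operator} from the parabolic case, but with every instance of formal convergence replaced by absolute convergence of coefficients, as required by the weak topology. Two ingredients are already available: Lemma~\ref{lem:diff_hypH}, which provides weakly well-defined differential expansions $H^k = \sum_{\ell=1}^{\infty} h_\ell^k \frac{\mathrm{d}^\ell}{\mathrm{d}x^\ell}$ with $h_\ell^k \in \mathcal{L}$ and with absolute summability over $\ell$ of the coefficients of every fixed monomial; and Lemma~\ref{lem:expo_log_hyp}, which ensures that the series $\log F = \sum_{k\ge 1} \frac{(-1)^{k+1}}{k} H^k$ is weakly well-defined, together with the explicit chain-of-changes bound $\big|[H^k \cdot g]_{\alpha,m}\big| \le \binom{k}{r}\, C_{f,g}\, |\lambda^\alpha-1|^k$ (one such bound per chain, with only finitely many chains contributing to a given $(\alpha,m)$ by Neumann's lemma).

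First, I would substitute the expansion of each $H^k$ into the defining series of $X$, producing the formal double sum
\[
X \cdot g \;=\; \sum_{k=1}^{\infty} \frac{(-1)^{k+1}}{k}\, H^k \cdot g \;=\; \sum_{k=1}^{\infty} \frac{(-1)^{k+1}}{k}\, \sum_{\ell=1}^{\infty} h_\ell^k\, \frac{\mathrm{d}^\ell g}{\mathrm{d}x^\ell}.
\]
The target representation is obtained by swapping the order of summation, setting
\[
h_\ell \;:=\; \sum_{k=1}^{\infty} \frac{(-1)^{k+1}}{k}\, h_\ell^k, \qquad X \cdot g \;=\; \sum_{\ell=1}^{\infty} h_\ell\, \frac{\mathrm{d}^\ell g}{\mathrm{d}x^\ell}.
\]

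The main obstacle, and the step that will absorb most of the work, is justifying the swap. I would do this by fixing a monomial $x^\alpha \boldsymbol{\ell}^m \in \mathcal{S}(g)+\langle R\rangle$ (where $R$ is the semigroup attached to $F=\mathrm{iso}(f)$ via Lemma~\ref{lem:expo_log_hyp}) and checking absolute summability of the double array $a_{k,\ell} := \frac{1}{k}\,[h_\ell^k g^{(\ell)}]_{\alpha,m}$, so that the Moore-Osgood theorem (exactly as invoked in Lemma~\ref{lem:diff_hypH}) permits exchanging the order of summation. The row sums $\sum_\ell |a_{k,\ell}|$ are controlled by Lemma~\ref{lem:diff_hypH}, which gives absolute summability of $\sum_\ell |[h_\ell^k g^{(\ell)}]_{\alpha,m}|$ and, by tracking the chain-of-changes bookkeeping from its proof, a majorant of the form $\binom{k}{r}\, C_{f,g}\, |\lambda^\alpha-1|^k$ per chain. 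Summing over $k$ with the weight $1/k$ yields a convergent series because $|\lambda^\alpha-1|<1$ for $0<\lambda<1$ and $\alpha>0$; finiteness of the number of chains reducing to $(\alpha,m)$ then gives finiteness of $\sum_{k,\ell}|a_{k,\ell}|$.

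Once the swap is legitimate, I would read off $h_\ell = \sum_k \frac{(-1)^{k+1}}{k} h_\ell^k$ as an absolutely convergent series of coefficients for each monomial of its support (which lies in $\mathcal{S}(g)+\langle R\rangle$ with $\langle R\rangle$ well-ordered by Neumann's lemma); hence $h_\ell \in \mathcal{L}$. The identity $X\cdot g = \sum_\ell h_\ell\, g^{(\ell)}$ then holds with convergence in the weak topology, since the partial sums in both orderings converge coefficient-wise to the same limit. This yields the stated representation \eqref{termww}, and weak well-definedness of $X$ as a formal differential operator follows. The finitely generated case $\mathcal{L}_{\mathfrak{D}}$ reduces immediately to the above, since the semigroup $\langle R\rangle$ remains of finite type when $f \in \mathcal{L}_{\mathfrak{D}}^0$.
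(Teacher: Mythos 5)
Your overall strategy — substitute the Lemma~\ref{lem:diff_hypH} expansions of $H^k$ into the logarithmic series, then justify swapping the two summations via the Moore--Osgood theorem, reducing everything to absolute summability of the double array $\frac{1}{k}[h_\ell^k g^{(\ell)}]_{\alpha,m}$ — is exactly the route the paper takes. However, there is a genuine gap at the crucial estimate. You bound the row sums $\sum_\ell \big|[h_\ell^k g^{(\ell)}]_{\alpha,m}\big|$ by a per-chain majorant $\binom{k}{r} C_{f,g} |\lambda^\alpha - 1|^k$, citing the ``chain-of-changes bookkeeping.'' But the quantity bounded in Lemma~\ref{lem:expo_log_hyp} by $\binom{k}{r} C_f |\lambda^\alpha-1|^k$ is the coefficient $\big|[H^k\cdot g]_{\alpha,m}\big|$ itself, i.e.\ the value \emph{after} the $\ell$-sum has been performed, with all its cancellations. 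The absolute row sum $\sum_\ell \big|[h_\ell^k g^{(\ell)}]_{\alpha,m}\big|$ is a priori much larger, and a bound on the former does not transfer to the latter.

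Bounding the absolute row sums geometrically in $k$ does work, but it requires more than what you invoke. The recursive estimate coming from Lemma~\ref{lem:diff_hypH} (equation \eqref{eq:sum}) gives, for a step in which the monomial stays fixed, a ratio of the form $\sum_{j}\big|\binom{\beta}{j}\big|(1-\lambda)^{j}$, which in general may be $\ge 1$ (e.g.\ when $\beta$ is a positive integer, the full sum is $(2-\lambda)^\beta>1$). The paper handles this by observing that it suffices to regroup only the tail of the differential series (derivatives of order $\ge j_0$), which allows one to truncate the binomial sum to $j\ge j_0$ and thereby force a ratio $A<1$ depending on the chain; the geometric decay constant is then this $A$, not $|\lambda^\alpha-1|$. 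Your argument skips this truncation and silently uses the wrong geometric ratio, so the absolute summability needed for Moore--Osgood is not actually established. Fixing this means either importing the $j_0$-truncation trick explicitly, or reproving the row-sum bound directly with a correct geometric constant.
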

\begin{proof}
By Lemma \ref{lem:expo_log_hyp}, the operator $X.g$ is given by
the logarithmic series which converges in the weak topology: 
\begin{align}
 & X\cdot g=\log(\text{Id}+H)\cdot g=H\cdot g-\frac{1}{2}H^{2}\cdot g+\frac{1}{3}H^{3}\cdot g+\cdots\nonumber \\
 & \ =\big(h_{1}^{1}g'+h_{2}^{1}g''+h_{3}^{1}g'''+\cdots\big)-\frac{1}{2}\big(h_{1}^{2}g'+h_{2}^{2}g''+h_{3}^{2}g'''+\cdots\big)+\frac{1}{3}\big(h_{1}^{3}g'+h_{2}^{3}g''+h_{3}^{3}g'''+\cdots\big)+\cdots\label{s}
\end{align}
By Lemma~\ref{lem:diff_hypH}, all operators $H^{k}$, $k\in\mathbb{N}$,
can be written as differential operators \eqref{eq:hha}, with convergence
in the weak topology in $\mathcal{L}$. Thus the \emph{double sum}
\eqref{s} converges to $X.g$ in this order of the summation, in
the same topology.

Let us consider a fixed monomial from the support of \eqref{s}, $x^{\alpha}\ell^{m}\in\mathcal{S}(g)+R$,
see Lemma~\ref{lem:diff_hypH}. The monomial may appear in every
term of the double sum \eqref{s}. By the proof of Lemma~\ref{lem:diff_hypH},
in each bracket of \eqref{s}, the series of coefficients of $x^{\alpha}\boldsymbol{\ell}^{m}$
converges \emph{absolutely}. We denote the absolute limit in the $k$-th
bracket by $A_{k}>0$. By the \emph{Moore-Osgood theorem} stated in
the proof of Lemma~\ref{lem:diff_hypH}, to prove that we are allowed
to change the order of the summation in \eqref{s}, that is, to group
the terms in front of every derivative of $g$, we need to prove the
convergence of the sum: 
\begin{equation}
\sum_{k\in\mathbb{N}}\frac{A_{k}}{k}.\label{co}
\end{equation}
The following argument is similar as in the proof of Proposition~\ref{lem:expo-log_and_log-exp}.
In the proof of Lemma~\ref{lem:diff_hypH}, we have described the
iterative step in which the $(k+1)$-st bracket is deduced from the
$k$-th bracket of \eqref{s} (that is, the differential form for
$H^{k+1}.g$ from the differential form for $H^{k}.g$). All monomials
from \eqref{s} belong to $\mathcal{S}(g)+R$. By Neumann's lemma,
there are only finitely many ways in which a fixed monomial $x^{\alpha}\boldsymbol{\ell}^{m}$
belonging to some bracket of \eqref{s} is obtained from previous
brackets and, initially, from monomials of $\mathcal{S}(g)$, \emph{independently
of the bracket}. We adopt the notion of \emph{chains} to describe
the evolution of monomials, similarly as in the proof of Lemma~\ref{lem:expo-log_and_log-exp}.
We fix one (of finitely many chains): an initial monomial $x^{\beta_{0}}\boldsymbol{\ell}^{n_{0}}\in\mathcal{S}(g)$
evolves in $k$ steps to $x^{\alpha}\boldsymbol{\ell}^{m}$, through
(necessarily distinct) monomials $x^{\beta_{i}}\boldsymbol{\ell}^{p_{i}}$,
$i=1,\ldots,r$, $r\in\mathbb{N}_{0}$, as described in \eqref{eq:see}:
\begin{equation}
x^{\beta_{0}}\boldsymbol{\ell}^{n_{0}}\rightarrow x^{\beta_{1}}\boldsymbol{\ell}^{p_{1}}\rightarrow x^{\beta_{2}}\boldsymbol{\ell}^{p_{2}}\rightarrow\cdots\rightarrow x^{\beta_{r}}\boldsymbol{\ell}^{p_{r}}=x^{\alpha}\boldsymbol{\ell}^{m}.\label{eq:combi}
\end{equation}
To estimate $A_{k}$, $k\geq r$, for this fixed combination, we use
the estimate \eqref{eq:sum} from the proof of Lemma~\ref{lem:diff_hypH},
where the sum of absolute values of coefficients of a monomial in
the $(\ell+1)$-st bracket is estimated by the sum of absolute values
of coefficients of its corresponding (for the given chain) monomial
in the $\ell$-th bracket, $\ell\in\mathbb{N}$. Note that the estimate
\eqref{eq:sum} is \emph{independent of $\ell$}. For the above combination
\eqref{eq:combi}, in $k-r$ of total $k$ steps the monomial remains
the same, and changes in the remaining $r$ steps. Let $C_{\ell}(\beta_{i},p_{i})$
denote the sum of absolute values of coefficients of the monomial
$x^{\beta_{i}}\ell^{p_{i}}$ in the $\ell$-th bracket, $\ell\in\mathbb{N}$.
By \eqref{eq:sum}, in the steps where the respective monomial stays
the same (then, $r=s=0$, $C=1$) , we have the estimate: 
\[
C_{\ell+1}(\beta_{i},p_{i})\leq C_{\ell}(\beta_{i},p_{i})\cdot\sum_{j\geq0}\Big|{\beta_{i} \choose j}\Big|(1-\lambda)^{j},\ \ell\in\mathbb{N}.
\]
Note that it is sufficient to prove that the terms of \eqref{s} starting
from some fixed derivative can be regrouped as in the statement of
the lemma (the terms with first finitely many derivatives form a finite
sum of series, so the order of the summation can be changed trivially).
Therefore, in the above sum, without loss of generality we can take
$j\geq j_{0}$ instead of $j\geq0$, for any $j_{0}\in\mathbb{N}$.
To each chain, we associate a number $0<A<1$ and an integer $j_{0}\in\mathbb{N}_{0}$,
such that the sum $\sum_{j\geq j_{0}}\Big|{\beta_{i} \choose j}\Big|(1-\lambda)^{j}$
above is bounded by $A$, for every $(\beta_{i},p_{i})$ of the given
chain. This follows from the convergence of the series and the fact
that there exist only finitely many $\beta_{i}$-s in the given chain.
Notice that the constant $0<A<1$ depends only on the chain. Therefore,
for the steps in which the monomial remains the same, we have: 
\[
C_{\ell+1}(\beta_{i},p_{i})\leq A\cdot C_{\ell}(\beta_{i},p_{i}),\ \ell\in\mathbb{N},\ 0<A<1.
\]
On the other hand, there are only finitely many ($r$) steps (for
the given chain) in which the corresponding monomial $x^{\beta_{i}}\boldsymbol{\ell}^{p_{i}}$
changes to $x^{\beta_{i+1}}\boldsymbol{\ell}^{p_{i+1}}$, $i=0\ldots r-1$.
By \eqref{eq:sum}, we have a simple estimate: 
\[
C_{\ell+1}(\beta_{i+1},p_{i+1})\leq DC_{\ell}(\beta_{i},p_{i}),\ i=0,\ldots,r-1,\ \ell\in\mathbb{N},
\]
where $D>0$ depends only on the chain. We obtain the estimate: 
\[
A_{k}\leq a\cdot D^{r}A^{k-r}\leq CA^{k}.
\]
Here, $a\in\mathbb{R}$ is the coefficient of $x^{\beta_{0}}\boldsymbol{\ell}^{p_{0}}$
in $g$, and $C>0$ and $0<A<1$ depend only on the chain. The series
\eqref{co} thus converges. Since there are only finitely many chains
contributing to the coefficient of $x^{\alpha}\boldsymbol{\ell}^{m}$
in \eqref{s}, the result follows.
\end{proof}

\begin{proof}[Proof of Theorem B in the hyperbolic case]
\emph{} Let $f(x)=\lambda x+\mathrm{h.o.t.}\in\mathcal{L}$, $0<\lambda<1$.
Let $F=\text{iso}(f)\in L(\mathcal{L})$, see Remark~\ref{ex:automorphism_associated}.
By Lemma~\ref{lem:expo_log_hyp}, the operator $\log F\in L(\mathcal{L})$
is weakly well-defined. Using Proposition~\ref{prop:derivative}
and Lemma~\ref{lem:diff_hyp}, we prove (as in the parabolic case
in Section~\ref{sub:fivethree}) that the operator $X=\log F$ is
a vector field: $X=\xi\frac{\mathrm{d}}{\mathrm{d}x}$, $\xi\in\mathcal{L}$
with $\text{ord}(\xi)=(1,0)$. It follows from Lemma~\ref{lem:expo-log_and_log-exp},
that: 
\[
\exp(X)\cdot\mathrm{id}=\exp(\log F)\cdot\mathrm{id}=F\cdot\mathrm{id}=f,
\]
so $f$ is the time-one map of $X$.

We now prove the \emph{uniqueness} of $X$. Let $X=\xi\frac{\mathrm{d}}{\mathrm{d}x}$
be \emph{any} vector field such that $f$ (a hyperbolic contraction)
is its time-one map. Since $f$ is hyperbolic, by Proposition~\ref{prop:unic}
it follows that $\text{ord}(\xi)=(1,0)$. By Proposition~\ref{prop: existence-formal-flow-hyperbolic},
the family $\exp(tX)$ defines a $\mathcal{C}^{1}$-flow of $X$.
By Proposition~\ref{prop:unic}, the $\mathcal{C}^{1}$-flow of $X$
is unique, so it follows that: 
\[
f=\exp\big(\xi\frac{\mathrm{d}}{\mathrm{d}x}\big)\cdot\mathrm{id}=\xi+\frac{1}{2!}\xi'\xi+\frac{1}{3!}(\xi'\xi)'\xi+\cdots
\]
Using the above expansion, we additionally conclude that $\xi(x)=\lambda x+\mathrm{h.o.t.}$
if and only if $f(x)=e^{\lambda}\, x+\mathrm{h.o.t.}$ Since $f$
is a hyperbolic contraction, it follows that $\xi(x)=\lambda x+\mathrm{h.o.t.}$
with $\lambda<0$. By Corollary~\ref{cor}, we have: 
\[
\exp(X)\cdot h=h\circ f=F\cdot h,\ h\in\mathcal{L}.
\]
That is, $\exp X=F$. By Lemma~\ref{lem:uniq}, $X=\log F$ and uniqueness
follows. 
\end{proof}
Finally, we prove Lemma~\ref{lem:expo-log_and_log-exp} and Lemma~\ref{lem:uniq}
from Sections~\ref{sub:fiveone} and \ref{sub:fivetwo} for the hyperbolic
case. 
\begin{proof}[Proof of Lemma~\ref{lem:expo-log_and_log-exp} in the hyperbolic
case]
\emph{}By Lemma~\ref{lem:expo_log_hyp}, the operator $\log F\in L(\mathcal{L})$
is weakly well-defined. As in the proof of Theorem B above, using
Proposition~\ref{prop:derivative} and Lemma~\ref{lem:diff_hyp},
the operator $\log F$ is a vector field. Thus, $\log F=\xi\frac{\mathrm{d}}{\mathrm{d}x}$,
$\xi\in\mathcal{L},$ with $\text{ord}(\xi)=(1,0)$. By Proposition~\ref{prop: existence-formal-flow-hyperbolic},
$\exp(\log F)\in L(\mathcal{L})$ is weakly well-defined. Having proven
that all operators are weakly well-defined, the equality follows by
symbolic computation with formal exp-log series. 
\end{proof}

\begin{proof}[Proof of Lemma~\ref{lem:uniq} in the case $\text{ord}(\xi)=(1,0)$]
\emph{}By Proposition~\ref{prop: existence-formal-flow-hyperbolic},
$\exp(X)\in L(\mathcal{L})$ is a weakly well-defined operator and
an isomorphism associated with $f=\exp(X)\cdot\mathrm{id}$. $f\in\mathcal{L}$
is hyperbolic since $\text{ord}(\xi)=(1,0)$. More precisely, we compute:
\begin{align}
f(x)=\exp(X)\cdot\mathrm{id} & =x+\xi+\frac{1}{2!}\xi'\xi+\frac{1}{3!}(\xi'\xi)'\xi+\cdots\nonumber \\
 & =x+(\lambda x+\mathrm{h.o.t.})+\frac{1}{2!}(\lambda^{2}x+\mathrm{h.o.t.})+\frac{1}{3!}(\lambda^{3}x+\mathrm{h.o.t.})+\cdots=e^{\lambda}x+\mathrm{h.o.t.}
\end{align}
Since $\lambda<0$, we have $0<e^{\lambda}<1$, so $f$ is a hyperbolic
contraction. By Lemma~\ref{lem:expo_log_hyp}, the operator $\log F$
is a weakly well-defined operator. The equality follows by symbolic
computation with formal exp-log series. 
\end{proof}
We illustrate the convergence of coefficients (that is, the convergence
in the weak topology in $\mathcal{L}$ of respective series) in the
hyperbolic case on the simplest hyperbolic elements of $\mathcal{L}$:\\

\begin{example}
~
\begin{enumerate}[1., font=\textup, nolistsep, leftmargin=0.6cm]
\item Let $f(x)=\lambda x$, $0<\lambda<1$. By Theorem B (hyperbolic case),
the embedding vector field for $f$ is given by $X=\xi\frac{\mathrm{d}}{\mathrm{d}x}$,
where 
\[
\xi(x)=\log(F)\cdot\mathrm{id}=(\lambda-1)x-\frac{1}{2}(\lambda-1)^{2}x+\frac{1}{3}(\lambda-1)^{3}x+\cdots=\log\lambda\cdot x\in\mathcal{L}.
\]

\item Let $X=ax\frac{\mathrm{d}}{\mathrm{d}x}$, $a\in\mathbb{R}$. By Proposition~\ref{prop: existence-formal-flow-hyperbolic},
the field $X$ admits a flow $\{f_{t}:\, t\in\mathbb{R}\}\subset\mathcal{L}$:
\[
f_{t}(x)=\exp(tX)\cdot\mathrm{id}=x+t\xi+\frac{t^{2}}{2!}\xi'\xi+\cdots=x+tax+\frac{t^{2}a^{2}}{2!}x+\frac{t^{3}a^{3}}{3!}x+\cdots=e^{ta}\cdot x,\quad t\in\mathbb{R}.
\]

\end{enumerate}
The above series converge in $\mathcal{L}$ in the weak topology.
Note that they \emph{do not} converge in $\mathcal{L}$ neither in
the formal topology nor in the product topology with respect to the
discrete topology, since the monomial $x$ appears in every term.
\end{example}

\subsection{Theorem B in the strongly hyperbolic case\label{sub:fivefive}}

We first observe that a strongly hyperbolic element of $\mathcal{L}^{H}$
does not embed in the $\mathcal{C}^{1}$-flow of a vector field. Indeed,
let $X=\xi\frac{\mathrm{d}}{\mathrm{d}x}$. If $\left(1,0\right)\preceq\mathrm{ord}\left(\xi\right)$,
it follows from Propositions \ref{prop: existence-formal-flow-parabolic}
and \ref{prop: existence-formal-flow-hyperbolic} that all the elements
of the $\mathcal{C}^{1}$-flow of $X$ are either parabolic or hyperbolic.
If $\mathrm{ord}\left(\xi\right)\prec\left(1,0\right)$, it follows
from Proposition \ref{prop:unic} that $X$ does not admit any $\mathcal{C}^{1}$-flow.
We have moreover the following \emph{negative version} of Propositions
\ref{prop: existence-formal-flow-parabolic} and \ref{prop: existence-formal-flow-hyperbolic}:
\begin{prop}
\label{prop:not-weakly-well-defined}Let $X=\xi\frac{\mathrm{d}}{\mathrm{d}x}$,
$\xi\in\mathcal{L}$, such that $\mathrm{ord}\left(\xi\right)\prec\left(1,0\right)$.
Then the exponential operator $\exp\left(tX\right)$, $t\in\mathbb{R}$,
is not weakly well-defined.\end{prop}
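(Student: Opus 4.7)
The plan is to disprove weak well-definedness by evaluating the formal operator series at a concrete test transseries $f(x)=x\in\mathcal{L}$ and showing that $\sum_{n\ge 0}\frac{t^n}{n!}X^n\cdot x$ cannot converge in the weak topology to an element of $\mathcal{L}$ for $t\ne 0$. Writing $\mathrm{Lt}(\xi)=a\,x^{\alpha_0}\boldsymbol{\ell}^{k_0}$ with $a\ne 0$, the hypothesis $(\alpha_0,k_0)\prec(1,0)$ splits into two sub-cases that require separate arguments: either $\alpha_0<1$ (where the iterates escape the support $\mathbb{R}_{>0}\times\mathbb{Z}$) or $\alpha_0=1$ with $k_0<0$ (where the iterates remain in $\mathcal{L}$ but generate a non-well-ordered union of supports).

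The key technical observation, obtained by induction on $n$, is that for any $g\in\mathcal{L}$ whose leading term has nonzero $x$-exponent one has
\[
\mathrm{ord}(X\cdot g)=\mathrm{ord}(g)+(\alpha_0-1,k_0),
\]
with leading coefficient multiplied by $a$ times the leading $x$-exponent of $g$; the logarithmic correction of $(g)'$ contributes only at higher order. Iterating on $g=x$ yields $\mathrm{ord}(X^n\cdot x)=(1+n(\alpha_0-1),\,nk_0)$ with leading coefficient $c_n=a^n\prod_{j=0}^{n-1}(1+j(\alpha_0-1))$, which vanishes at most for the single resonant index $n_0=1/(1-\alpha_0)$ when that happens to be a positive integer; at such $n_0$ the logarithmic part of $(X^{n_0}\cdot x)'$ still provides a nonvanishing leading term at an adjacent order whose $x$-exponent is $\alpha_0-1<0$. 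When $\alpha_0<1$, the first coordinate $1+n(\alpha_0-1)$ tends to $-\infty$, so for $n$ large enough the leading monomial of $X^n\cdot x$ lies outside $\mathbb{R}_{>0}\times\mathbb{Z}$; since leading orders for distinct $n$ are distinct (distinguished by $nk_0$ when $k_0\ne 0$, or by the $x$-exponent when $k_0=0$), these terms cannot cancel in the partial sums $S_N=\sum_{n=0}^{N}\frac{t^n}{n!}X^n\cdot x$, so $S_N$ carries nonzero coefficients at pairs with negative first coordinate for all $N$ large. Hence $S_N\notin\mathcal{L}$, precluding any weak limit in $\mathcal{L}$.

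When $\alpha_0=1$ and $k_0<0$, each $X^n\cdot x$ stays in $\mathcal{L}$ (the $x$-exponent remains at $1$), but the leading term sits at $(1,nk_0)$ with $nk_0\to-\infty$, so the would-be support contains the infinite strictly descending chain $\{(1,nk_0):n\in\mathbb{N}\}$. To conclude it suffices to show that the coefficient of $x\boldsymbol{\ell}^{nk_0}$ in the hypothetical weak limit is nonzero for infinitely many $n$: for $k_0\le -2$ a support analysis shows that only finitely many $X^m\cdot x$ (those with $n\le m\le n|k_0|/(|k_0|-1)$) contribute to $(1,nk_0)$, and the dominant piece $\frac{t^na^n}{n!}$ is not cancelled for generic $t$; for $k_0=-1$ infinitely many iterates contribute, but the corresponding numerical series converges absolutely to a nonzero polynomial in $e^{-at}$, as a direct computation on the monomial model $\xi=ax\boldsymbol{\ell}^{-1}$ confirms. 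In either regime the would-be limit has non-well-ordered support, violating membership in $\mathcal{L}$.

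The main obstacle is the non-cancellation verification in the second case with $k_0=-1$, where infinitely many $X^m\cdot x$ contribute to each extremal coefficient and one must rule out an accidental conspiracy of cancellations; the cleanest route will be to carry out the explicit computation for the pure monomial $\xi=ax\boldsymbol{\ell}^{-1}$ and then reduce the general case either by a perturbative/support stability argument or by invoking Proposition~\ref{prop:unic}(2) to derive a contradiction from the would-be $\mathcal{C}^1$-flow $t\mapsto\exp(tX)\cdot\mathrm{id}$ of a vector field with $\mathrm{ord}(\xi)\prec(1,0)$.
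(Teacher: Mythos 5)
Your strategy matches the paper's own: compute $\mathrm{Lt}(X^n\cdot\mathrm{id})$ and observe that the orders $(1+n(\alpha_0-1),\,nk_0)$ drop without bound once $\mathrm{ord}(\xi)=(\alpha_0,k_0)\prec(1,0)$, so the exponential series cannot converge. The paper compresses this into a single observation (``the orders of the terms in this expansion are unboundedly increasing instead of decreasing'', meaning the lexicographic exponents decrease unboundedly) and then simply asserts failure of convergence in all three topologies, while you actually carry out the verifications. In the sub-case $\alpha_0<1$ your argument is complete: the lowest-order monomial of the partial sum $S_N$ is contributed by the $n=N$ term alone (all iterates with $n<N$ have strictly larger order), its $x$-exponent $1+N(\alpha_0-1)$ eventually becomes nonpositive, so $S_N\notin\mathcal{L}$ and weak convergence in $\mathcal{L}$ is impossible.

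You correctly flag, however, that the sub-case $\alpha_0=1$, $k_0<0$ is not closed, and this is a genuine gap. There the partial sums stay in $\mathcal{L}$, and disproving weak convergence requires showing that the coefficient-wise limit is nonzero at $(1,Nk_0)$ for infinitely many $N$, and this for \emph{every} $t\neq0$, so that its support contains a descending chain with no minimum. The coefficient at $(1,Nk_0)$ is an analytic function of $t$ (a polynomial when $k_0\le-2$, a convergent power series when $k_0=-1$) with lowest-degree term $\tfrac{(at)^N}{N!}$, hence nonzero as a function; this gives non-vanishing for generic $t$ but not for a prescribed $t\neq0$, since the contributions from $m>N$ could conspire to annihilate it at particular values. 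The monomial model $\xi=a x\boldsymbol{\ell}^{-1}$ gives coefficient $\tfrac{(1-e^{-at})^N}{N!}\neq0$ for all $t\neq0$, which is fine, but you do not supply the reduction of general $\xi$ to this model. The proposed fallback via Proposition~\ref{prop:unic}(2) does not immediately close the gap either: that result rules out $\mathcal{C}^{1}$-flows, which carry strictly more structure (a common well-ordered support for all $\mathcal{S}(f^t)$ and $\mathcal{C}^{1}$-dependence of coefficients on $t$) than a family $\bigl(\exp(tX)\cdot\mathrm{id}\bigr)_{t}$ produced by a merely weakly well-defined operator. To be fair, the paper's one-line proof does not explicitly close this sub-case either, but a proof proposal should.
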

\begin{proof}
Consider the expansion
\begin{equation}
\exp\left(tX\right)\cdot\mathrm{id}=\mathrm{id}+t\xi+\frac{t^{2}}{2!}\xi'\xi+\frac{t^{3}}{3!}\left(\xi'\xi\right)'\xi+\cdots\label{eq:expansion-exp-strongly}
\end{equation}
We observe that the orders of the terms in this expansion are unboundedly
increasing instead of decreasing. Hence this exponential series does
not converge in $\mathcal{L}$ in any of the topologies considered
in this work (see Subsection \ref{sub:transfinite-sequences-elements-L}).
\end{proof}
However, the results of Section \ref{sec:proof-theorem-A} lead to
the following embedding statement, which is a \emph{weak} version
of Theorem B for strongly hyperbolic elements:
\begin{thm*}[Weaker version of Theorem B, the strongly hyperbolic case]
\label{prop:weak} Let $f\in\mathcal{L}^{H}$ (resp. $f\in\mathcal{L}_{\mathfrak{D}}^{H}$)
be strongly hyperbolic. Then $f$ embeds in a flow $\left(f^{t}\right)_{t\in\mathbb{R}}$
of elements of $\mathcal{L}^{H}$ (resp. $\mathcal{L}_{\mathfrak{D}}^{H}$).\end{thm*}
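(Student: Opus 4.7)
The plan is to exploit Theorem~A to reduce the problem to embedding the normal form $f_{0}(x)=x^{\alpha}$ with $\alpha>0$, $\alpha\neq 1$, and then to write down an explicit one-parameter family by interpolating the exponent continuously in $t$. By Theorem~A(1)(c), there exists $\varphi\in\mathcal{L}^{0}$ (resp.\ $\mathcal{L}_{\mathfrak{D}}^{0}$) such that $f=\varphi\circ f_{0}\circ\varphi^{-1}$, so the embedding problem for $f$ reduces to the embedding problem for $f_{0}$.

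For the normal form, the natural candidate is $f_{0}^{t}(x):=x^{\alpha^{t}}$, $t\in\mathbb{R}$. Each $f_{0}^{t}$ lies in $\mathcal{L}^{H}$ (and in $\mathcal{L}_{\mathfrak{D}}^{H}$) since its support is the singleton $\{(\alpha^{t},0)\}\subset\mathbb{R}_{>0}\times\mathbb{Z}$, which is trivially well-ordered, of finite type, and satisfies hypothesis $(H)$. A one-line computation using the definition of composition of power monomials in $\mathcal{L}$ gives $f_{0}^{s}\circ f_{0}^{t}(x)=(x^{\alpha^{t}})^{\alpha^{s}}=x^{\alpha^{s+t}}=f_{0}^{s+t}(x)$, together with $f_{0}^{0}=\mathrm{id}$ and $f_{0}^{1}=f_{0}$. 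Hence $(f_{0}^{t})_{t\in\mathbb{R}}$ is a one-parameter group in $\mathcal{L}^{H}$ (resp.\ $\mathcal{L}_{\mathfrak{D}}^{H}$) in the sense of Definition~\ref{def:time-one-map}(1), and $f_{0}$ embeds in it.

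Next I would set $f^{t}:=\varphi\circ f_{0}^{t}\circ\varphi^{-1}$ for every $t\in\mathbb{R}$. By Proposition~\ref{prop:properties-of-L}, the sets $\mathcal{L}^{H}$ and $\mathcal{L}_{\mathfrak{D}}^{H}$ are groups for composition, so $f^{t}\in\mathcal{L}^{H}$ (resp.\ $\mathcal{L}_{\mathfrak{D}}^{H}$). Conjugating the group relation by $\varphi$ yields $f^{s}\circ f^{t}=f^{s+t}$ and $f^{0}=\mathrm{id}$; and $f^{1}=\varphi\circ f_{0}\circ\varphi^{-1}=f$. This gives the desired embedding.

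There is no substantial obstacle, which is precisely what motivates the weakening of the statement relative to the parabolic and hyperbolic cases. Indeed, as $t$ ranges over $\mathbb{R}$, the set $\{\alpha^{t}:t\in\mathbb{R}\}$ equals $(0,\infty)$, so no well-ordered $S\subset\mathbb{R}_{>0}\times\mathbb{Z}$ contains $\mathcal{S}(f_{0}^{t})$ for every $t$. Therefore the family $(f_{0}^{t})$, and consequently $(f^{t})$, fails to satisfy condition $(i)$ of Definition~\ref{def:time-one-map}(2), confirming—in accord with Proposition~\ref{prop:not-weakly-well-defined}—that the embedding produced here is a plain flow and cannot be strengthened to a $\mathcal{C}^{1}$-flow of a vector field in $\mathcal{L}$.
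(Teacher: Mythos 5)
Your proof is correct and follows essentially the same route as the paper: reduce to the normal form $f_{0}(x)=x^{\alpha}$ via Theorem~A(1)(c), observe that $f_{0}$ embeds in the explicit flow $f_{0}^{t}(x)=x^{\alpha^{t}}$, and conjugate back by $\varphi$. The closing remarks explaining why this cannot be upgraded to a $\mathcal{C}^{1}$-flow are accurate but not needed for the statement itself.
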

\begin{proof}
Write $f\left(x\right)=\lambda x^{\alpha}+\mathrm{h.o.t}$, $\lambda\neq0$,
$\alpha\neq1$. According to Theorem A $(c)$, there exists a change
of variables $\varphi\in\mathcal{L}^{0}$ such that $f_{0}\left(x\right)=\varphi^{-1}\circ f\circ\varphi\left(x\right)=x^{\alpha}$.
Obviously, $f_{0}(x)=x^{\alpha}$ embeds in the $f_{0}^{t}\left(x\right)=x^{\left(\alpha^{t}\right)}$,
$t\in\mathbb{R}$. Hence, $f$ embeds in the flow: 
\begin{equation}
f^{t}\left(x\right)=\left(\varphi\circ f_{0}^{t}\circ\varphi^{-1}\right)\left(x\right),\ t\in\mathbb{R}.\label{eq:opg}
\end{equation}
The claim in the finitely generated case follows easily.
\end{proof}
We notice here an important difference between the parabolic or hyperbolic
case and the strongly hyperbolic case. If $f\in\mathcal{L}^{H}$ is
parabolic or hyperbolic, there exists a well-ordered subset $S\subseteq\mathbb{R}_{>0}\times\mathbb{Z}$
which contains the supports of all the elements of the $\mathcal{C}^{1}$-flow
in which $f$ embeds. It is not the case anymore if $f$ is strongly
hyperbolic. Moreover, in this case, the monomials of the $\left(f^{t}\right)_{t}$,
and not only their coefficients, depend on $t\in\mathbb{R}$. The
following example illustrates these facts, as well as other specific
features of the strongly hyperbolic situation.
\begin{example}[A counterexample to the exponential formula for the flow in the strongly
hyperbolic case]
Consider the flow $f_{0}^{t}\left(x\right)=x^{\left(\alpha^{t}\right)}$,
$t\in\mathbb{R}$. The strongly hyperbolic element $f_{0}^{1}\left(t\right)=x^{\alpha}$
embeds in this flow, and all the elements $f_{0}^{t}$ are strongly
hyperbolic. Since $\mathcal{S}\left(f_{0}^{t}\right)=\left\{ \left(\alpha^{t},0\right)\right\} $,
these supports are not contained in a common well-ordered subset of
$\mathbb{R}_{>0}\times\mathbb{Z}$. Hence the family $\left(f_{0}^{t}\right)$,
$t\in\mathbb{R}$, is not a \emph{$\mathcal{C}^{1}$-flow} in the
sense of Definition \ref{def:time-one-map}.\\

Let us now consider 
\[
\xi\left(x\right):=\frac{\mathrm{d}f_{0}^{t}\left(x\right)}{\mathrm{d}t}\Big|_{t=0}=-\log\alpha\cdot x\boldsymbol{\ell}^{-1}\in\mathcal{L}.
\]
It would seem that $\left(f_{0}^{t}\right)$ is a flow of the vector
field $X=\xi\frac{\mathrm{d}}{\mathrm{d}x}$. But we have just noticed
that $\left(f_{0}^{t}\right)$ is not a $\mathcal{C}^{1}$-flow. Moreover,
since $\mathrm{ord}\left(\xi\right)=\left(1,-1\right)\prec\left(1,0\right)$,
we have seen in Proposition \ref{prop:unic} that $X$ does not admit
any $\mathcal{C}^{1}$-flow.

It is nevertheless interesting to observe the result of the exponential
formula \eqref{eq:exponential-operator} for the the field $X=\xi\frac{\mathrm{d}}{\mathrm{d}x}$.
We obtain: 
\begin{align*}
\exp\big(-\log\alpha\cdot x\boldsymbol{\ell}^{-1}\cdot\frac{\mathrm{d}}{\mathrm{d}x}\big)\cdot\mathrm{id}=x- & \log\alpha\cdot x\boldsymbol{\ell}^{-1}+\\
+ & \frac{\log^{2}\alpha}{2!}(x\boldsymbol{\ell}^{-2}-x\boldsymbol{\ell}^{-1})+\\
+ & \frac{\log^{3}\alpha}{3!}(-x\boldsymbol{\ell}^{-3}x+3x\boldsymbol{\ell}^{-2}-x\boldsymbol{\ell}^{-1})+\\
+ & \frac{\log^{4}\alpha}{4!}(x\boldsymbol{\ell}^{-4}-6x\boldsymbol{\ell}^{-3}+7x\boldsymbol{\ell}^{-2}-x\boldsymbol{\ell}^{-1})+\cdots
\end{align*}
The order of terms obviously increases, and the above sum \emph{does
not converge in $\mathcal{L}$ in any of the mentioned topologies}.
Formally, it is not even an element of $\mathcal{L}$. However, if
we regroup and sum the terms along the diagonals going from bottom
to top, using the convergence of the exponential series, we obtain:
\begin{align*}
\exp\big(-\log & \alpha\cdot x\boldsymbol{\ell}^{-1}\frac{\mathrm{d}}{\mathrm{d}x}\big)\cdot\mathrm{id}=\\
= & \, x-x\boldsymbol{\ell}^{-1}\cdot(e^{\log\alpha}-1)+x\boldsymbol{\ell}^{-2}\cdot\frac{1}{2!}\cdot(e^{\log\alpha}-1)^{2}-x\boldsymbol{\ell}^{-3}\cdot\frac{1}{3!}\cdot(e^{\log\alpha}-1)^{3}+\cdots=\\
= & \, x-x\boldsymbol{\ell}^{-1}\cdot(\alpha-1)+\frac{1}{2!}x\boldsymbol{\ell}^{-2}\cdot(\alpha-1)^{2}-\frac{1}{3!}x\boldsymbol{\ell}^{-3}\cdot(\alpha-1)^{3}+\cdots=x\cdot e^{(\alpha-1)\log x}=x^{\alpha}\in\mathcal{L}.
\end{align*}
Hence in some sense $f_{0}$ embeds in the flow of $X$, but not as
it is defined in the present work. We intend to give a precise meaning
to the above computations in a subsequent work.
\end{example}

\section{Examples\label{examp}}
\begin{example}
\label{five} 
\[
f(x)=x+x\boldsymbol{\ell}+\mathrm{h.o.t.}
\]
By Theorem~A, we obtain the formal normal form $f_{0}$ and its embedding
vector field $X_{0}$: 
\begin{align*}
 & f_{0}(x)=x+x\boldsymbol{\ell}+bx\boldsymbol{\ell}^{3},\\
 & \widehat{f}_{0}=\exp(X_{0}).\text{id},\ \ X_{0}=\frac{x}{\boldsymbol{\ell}^{-1}+1/2+(1/2+b)\boldsymbol{\ell}}\frac{\mathrm{d}}{\mathrm{d}x}.
\end{align*}
Here, $b\in\mathbb{R}$ depends on the terms of $f$ up to $x\boldsymbol{\ell}^{3}$. 
\end{example}
In the next example we explain on a very simple example of a Dulac
germ why we need a \emph{transfinite sequence} of power-logarithmic
changes of variables to derive the finite formal normal form from
Theorem~A. That is, we illustrate why a standard sequence of changes
of variables is not sufficient for elimination.

\begin{example}[Dulac germ]
\label{insuffic} Take $f(x)=x+x^{2}\boldsymbol{\ell}^{-1}+x^{2}$.
This germ is of \emph{Dulac type} - it has the expansion $f(x)=x+x^{2}P_{1}(-\log x)$,
where $P_{1}(x)=x+1$. By Theorem~A, the finite formal normal form
of $f$ in $\mathcal{L}$ is: 
\[
f_{0}(x)=x+x^{2}\boldsymbol{\ell}^{-1}+bx^{3}\boldsymbol{\ell}^{-1},\ b\in\mathbb{R}.
\]
Let us illustrate on this example the process used in the proof of
Theorem A. We first eliminate the term $x^{2}$ from $f$. Computing
the first finitely many (important) terms of $f\circ\varphi-\varphi\circ f$,
for a change of variables $\varphi(x)=x+cx^{\beta}\boldsymbol{\ell}^{\ell}$,
$(\beta,\ell)\succ(1,0)$, $c\in\mathbb{R}$, we obtain: 
\[
f\circ\varphi-\varphi\circ f=c(\beta-2)x^{\beta+1}\boldsymbol{\ell}^{\ell-1}+c(\ell+1)x^{\beta+1}\boldsymbol{\ell}^{\ell}-c^{2}x^{\beta+1}\boldsymbol{\ell}^{2\ell-1}+c(2-\beta)(-1)^{m}x^{\beta+1}\boldsymbol{\ell}^{\ell-m}+\mathrm{h.o.t}.
\]
We conclude: by a change of variables $\varphi(x)=x+cx\boldsymbol{\ell}^{-m+1}$,
$m\leq0$, for an appropriate $c\in\mathbb{R}$, we eliminate the
term $x^{2}\boldsymbol{\ell}^{-m}$, but at the same time we generate
the \emph{next one}: $x^{2}\boldsymbol{\ell}^{-m+1}$. Thus we need
a transfinite sequence of changes of variables: 
\begin{align*}
f(x)=x+x^{2}\boldsymbol{\ell}^{-1}+x^{2} & \stackrel{\varphi_{1}(x)=x+c_{1}x\boldsymbol{\ell}}{\longrightarrow}f_{1}(x)=x+x^{2}\boldsymbol{\ell}^{-1}+a_{1}x^{2}\boldsymbol{\ell}+\mathrm{h.o.t}.\\
 & \stackrel{\varphi_{2}(x)=x+c_{2}x\boldsymbol{\ell}^{2}}{\longrightarrow}f_{2}(x)=x+x^{2}\boldsymbol{\ell}^{-1}+a_{2}x^{2}\boldsymbol{\ell}^{2}+\mathrm{h.o.t}.\longrightarrow\cdots
\end{align*}

\end{example}

\begin{example}[Formal normal forms in $\mathcal{L}$ of formal power series]
 \label{fps} Let $f\in\mathbb{R}[[x]]$ be a parabolic formal diffeomorphism,
\[
f(x)=x+x^{k+1}+o(x^{k+1}),\ k\in\mathbb{N}
\]
The standard formal normal form in $\mathbb{R}[[x]]$ is equal to:
\[
f_{s}(x)=x+x^{k+1}+bx^{2k+1},\ b\in\mathbb{R}.
\]

On the other hand, Theorem A gives a normal form $f_{0}$ of $f$
in the wider class $\mathcal{L}^{0}$ of changes of variables. Note
that $\mathbb{R}[[x]]\subset\mathcal{L}$. We prove here that $f_{0}$
is equal to 
\begin{equation}
f_{0}(x)=x+x^{k+1}.\label{fonf}
\end{equation}
Note that, allowing wider class of logarithmic changes of variables,
we remove also the residual term $x^{2k+1}$ from $f_{s}$.

Let us now prove \eqref{fonf}. By Theorem~A, we have: 
\[
f_{0}(x)=x+x^{k+1}+bx^{2k+1}\boldsymbol{\ell},\ b\in\mathbb{R}.
\]
By the algorithm in the proof of Theorem~A applied to parabolic power
series $f$, we show that its residual coefficient $b$ is actually
equal to $0$. Indeed, in order to eliminate all the terms before
the residual one, we use in the algorithm non-logarithmic changes
of variables $\varphi_{m,0}(x)=x+cx^{m}$, $c\in\mathbb{R}$, $m\in\{2,3,\ldots,k\}$.
By these changes of variables, no logarithmic terms are generated
in $f$. Therefore, $f$ is transformed into: 
\begin{equation}
f(x)=x+x^{k+1}+bx^{2k+1}+dx^{2k+2}+\text{h.o.t.},\ b,\, d\in\mathbb{R}.\label{r1}
\end{equation}
In the next step, we remove the residual term $x^{2k+1}$ from $f$.
By \eqref{taa}, we use a logarithmic change of variables $\varphi_{k+1,-1}(x)=x+cx^{k+1}\boldsymbol{\ell}^{-1},\ c\in\mathbb{R}$.
We compute: 
\begin{equation}
\varphi_{k+1}^{-1}\circ f\circ\varphi_{k+1}=f+cx^{2k+1}+r(x).\label{r2}
\end{equation}
Here, $r\in\mathcal{L}$ may contain logarithmic terms, but its leading
monomial is of order at least $(2k+2)$ in $x$. Choosing $c=-b$,
the term $x^{2k+1}$ is eliminated from $F$. Therefore, by \eqref{r1}
and \eqref{r2}, 
\[
\varphi_{k+1}^{-1}\circ f\circ\varphi_{k+1}=x+x^{k+1}+(dx^{2k+2}+\text{h.o.t.})+r(x).
\]
The terms after $x^{k+1}$ are of strictly higher order than the residual
order $(2k+1,1)$, so they are eliminated by changes of variables
from $\mathcal{L}^{0}$.
\end{example}

\section{\label{sec:Appendix}Appendix}

In the proof of Theorem A in Subsection~\ref{sub:proof-precise-form-A},
\emph{Part 2}, in order to prove the existence of a formal normalizing
change of variables $\varphi\in\mathcal{L}^{0}$ for $f\in\mathcal{L}^{H}$
as the composition of a \emph{transfinite sequence} of elementary
changes of variables, we index the set of all elementary changes of
variables used in the normalization by their orders. We describe here
explicitely the set of orders of all elementary changes needed for
reduction of $f$ to a formal normal form $f_{0}$. In addition, this
description allows us to prove easily that, if $f\in\mathcal{L}_{\mathfrak{D}}^{H}$
(that is, if $f$ is of finite type), then $\varphi$ is also of finite
type.

In Subsection~\ref{sub:one}, we give the main lemma of the appendix.
It explains how the support of a transseries behaves under the action
of an elementary change of variables. In Subsection~\ref{sub:two},
we use this lemma to control the orders of the normalizing elementary
changes of variables. Finally, we discuss the finite type cases in
Subsection~\ref{sub:three}.

We analyze only the case when $f$ is \emph{parabolic}. The analysis
for other two cases can be done similarly and we omit it.

\subsection{The action of an elementary change of variables on the support}

\label{sub:one}

Consider $f\in\mathcal{L}^{H}$ parabolic, $f\left(x\right)=x+ax^{\alpha}\boldsymbol{\ell}^{p}+\mathrm{h.o.t},\ (\alpha,p)\succ(1,0).$
Let $S=\mathcal{S}\left(f-\mathrm{id}\right)$, $\left(\alpha,p\right)=\min\left(S\right)$,
and $\overline{S}=S\setminus\left\{ \left(\alpha,p\right)\right\} $.
Recall that we denote by $\left\langle A\right\rangle $ the additive
semigroup generated by a subset $A$ of $\mathbb{R}_{\geq0}\times\mathbb{Z}$.
We introduce the set 
\[
\mathcal{R}=\left\langle \overline{S}-\left(\alpha,p+1\right)\right\rangle +\mathbb{N}_{*}\left(\alpha-1,p\right)+\left\{ 1\right\} \times\mathbb{N}_{*},
\]
where $\mathbb{N}_{*}$ means $\mathbb{N}\setminus\left\{ 0\right\} $.
It follows from Neumann's Lemma that $\mathcal{R}$ is well-ordered.
Moreover, it is easily seen that all elements of $\overline{S}-\left(\alpha,p+1\right)$
are bigger than or equal to $\left(0,0\right)$.

Notice that $S\subseteq\mathcal{R}$ (this remark will allow us to
initiate a transfinite induction in the next subsection). Indeed,
if $\left(\alpha_{1},p_{1}\right)\in S$, we write: 
\begin{align*}
\left(\alpha_{1},p_{1}\right) & =\left(\alpha_{1}-\alpha,p_{1}-p-1\right)+\left(\alpha,p+1\right)\\
 & =\left(\alpha_{1}-\alpha,p_{1}-p-1\right)+\left(\alpha-1,p\right)+\left(1,1\right)\in\mathcal{R}.
\end{align*}
We now prove the main lemma of the appendix. 

\begin{lem} \label{lem:appendix-main-lemma}Consider a parabolic
series $f\left(x\right)=x+ax^{\alpha}\boldsymbol{\ell}^{p}+a_{1}x^{\gamma_{1}}\boldsymbol{\ell}^{r_{1}}+\cdots\in\mathcal{L}^{H}$
such that all exponents $\left(\gamma_{i},r_{i}\right)$ belong to
$\mathcal{R}$. Let $\varphi\left(x\right)=x+cx^{\beta}\boldsymbol{\ell}^{m}$,
$(\beta,m)\succ(1,0)$, be such that 
\begin{equation}
\left(\beta,m\right)=\left(\gamma_{1}-\alpha+1,r_{1}-p\right)\text{ or }\left(\gamma_{1}-\alpha+1,r_{1}-p-1\right).\label{eq:rel}
\end{equation}
Then $\mathcal{S}\left(\varphi^{-1}\circ f\circ\varphi-\mathrm{id}-ax^{\alpha}\boldsymbol{\ell}^{p}\right)$
is contained in $\mathcal{R}$.\end{lem}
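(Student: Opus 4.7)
My plan is to deduce the conclusion from an explicit Taylor expansion of $\varphi^{-1}\circ f\circ\varphi$ and a careful support analysis via Neumann's lemma. Writing $\varepsilon = f - \mathrm{id}$ and $\eta = \varphi - \mathrm{id} = cx^{\beta}\boldsymbol{\ell}^{m}$, the same Taylor computation as in \eqref{eq:adjoint} and \eqref{eq:lt} gives
\[
\varphi^{-1}\circ f\circ\varphi - \mathrm{id} = \varepsilon + (\varepsilon'\eta - \eta'\varepsilon) + \tfrac{1}{2}(\varepsilon''\eta^{2} - \eta''\varepsilon^{2}) + \cdots,
\]
each subsequent summand being a finite sum of iterated Lie-bracket-type products in $\varepsilon$, $\eta$ and their derivatives. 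The monomial $ax^{\alpha}\boldsymbol{\ell}^{p}$ survives unchanged from $\varepsilon$, because every bracket past $\varepsilon$ strictly raises the order by at least $(\beta,m)-(1,0) \succ (0,0)$, so no other term in the expansion contributes to the coefficient of $x^{\alpha}\boldsymbol{\ell}^{p}$.

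Next, I would enumerate the exponents of the remaining monomials. By Neumann's lemma, the support of the corrections past $\varepsilon$ lies in the semigroup generated by
\[
\mathcal{S}(\varepsilon) \cup \{(\delta-k, s+j): (\delta,s)\in\mathcal{S}(\varepsilon),\ k,j\in\mathbb{N}_{0}\} \cup \{(\beta,m)\} \cup \{(\beta-k, m+j): k,j\in\mathbb{N}_{0}\},
\]
subject to the structural constraint (coming from the bracket form of the expansion) that every monomial carries at least one distinguished factor coming from $\mathcal{S}(\varepsilon)$.

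The verification that each such exponent lies in $\mathcal{R}$ rests on the identity, valid in both admissible forms of $(\beta,m)$,
\[
(\beta,m) + (\alpha-1,p) = (\gamma_{1}, r_{1}) \quad\text{or}\quad (\gamma_{1}, r_{1}-1),
\]
combined with the hypothesis that every $(\gamma_{i}, r_{i})$ already lies in $\mathcal{R}$. For instance, the lowest-order correction at $(\alpha+\beta-1, p+m+j) = (\gamma_{1}, r_{1}+j)$ admits the direct decomposition
\[
(\gamma_{1}, r_{1}+j) = \bigl[(\gamma_{1}, r_{1}) - (\alpha, p+1)\bigr] + (\alpha-1, p) + (1, j+1),
\]
placing it in $\langle\overline{S}-(\alpha,p+1)\rangle + \mathbb{N}_{*}(\alpha-1,p) + \{1\}\times\mathbb{N}_{*} = \mathcal{R}$. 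General exponents are brought to the required form $A + n(\alpha-1,p) + (1,k)$, with $A \in \langle\overline{S}-(\alpha,p+1)\rangle$, $n \ge 1$ and $k \ge 1$, by iteratively absorbing pairs $(\beta,m)+(\alpha-1,p)$ into $\langle\overline{S}-(\alpha,p+1)\rangle$ via the above identity, and converting the remaining logarithmic shifts into the $\{1\}\times\mathbb{N}_{*}$ summand.

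The main obstacle is the combinatorial bookkeeping at higher orders: a term coming from an iterated bracket may involve several free copies of $(\beta, m)$ and of $(\beta-k, m+j)$, and each of them must be ``paid for'' by an $(\alpha-1, p+j')$ shift in order to be absorbed into $\langle\overline{S}-(\alpha,p+1)\rangle$. The bracket structure couples every $\eta$-factor to a differentiated copy of $\varepsilon$, and this coupling is precisely what guarantees that enough $(\alpha-1, p+j')$ shifts are present for the bookkeeping to close up and for the final exponent to end up in $\mathcal{R}$.
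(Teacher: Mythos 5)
Your opening observations (that the Lie-bracket expansion of $\varphi^{-1}\circ f\circ\varphi$ preserves the coefficient of $x^{\alpha}\boldsymbol{\ell}^{p}$, and that the key numerical identity is $(\beta,m)+(\alpha-1,p)=(\gamma_1,r_1)$ or $(\gamma_1,r_1-1)$) are correct and match the paper's computation for the lowest-order correction. But the argument has a genuine gap exactly at the point you yourself flag as the \emph{main obstacle}: the bookkeeping for higher-order terms. You assert that each $\eta$-factor must be ``paid for'' by an $(\alpha-1,p+j')$ shift and that ``the bracket structure couples every $\eta$-factor to a differentiated copy of $\varepsilon$,'' guaranteeing enough shifts are available. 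Neither claim is right. First, the coupling is false: a term such as $\varepsilon''\eta^{2}$ carries two $\eta$-factors but only one $\varepsilon$-factor, so there is no one-to-one pairing. Second, and more to the point, the correct mechanism does not require any $(\alpha-1,p)$ payment for $\eta$-factors. Thanks to \eqref{eq:rel}, a once-derived $\eta$-exponent $(\beta-1,m+j)$ already equals $\big[(\gamma_1,r_1)-(\alpha,p+1)\big]+(0,j')\in\big(\overline{S}-(\alpha,p+1)\big)+\{0\}\times\mathbb{N}_{0}$ on its own, i.e. it lands in $\mathcal{R}_{1}$ \emph{with zero copies of $(\alpha-1,p)$}, while the single-but-mandatory $\varepsilon$-factor supplies the $\mathbb{N}_{*}(\alpha-1,p)+\{1\}\times\mathbb{N}_{*}$ part by writing $(\gamma,r)-(1,0)=\big[(\gamma,r)-(\alpha,p+1)\big]+(\alpha-1,p)+(0,1)$. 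This is precisely what makes the paper's four-step decomposition (control of $\mathcal{S}(\varphi)$, then $\mathcal{S}(\varphi^{-1})$, then $\mathcal{S}(\varphi^{-1}\circ f)$, then $\mathcal{S}(\varphi^{-1}\circ f\circ\varphi)$) close up; your sketch replaces that verification by an appeal to a coupling that does not exist.

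A secondary inaccuracy: the displayed identity $\varphi^{-1}\circ f\circ\varphi-\mathrm{id}=\varepsilon+(\varepsilon'\eta-\eta'\varepsilon)+\tfrac12(\varepsilon''\eta^{2}-\eta''\varepsilon^{2})+\cdots$ is not an exact expansion of the conjugate. The paper's \eqref{eq:formule-conjugaison} and \eqref{eq:adjoint} only give that $\varphi^{-1}\circ f\circ\varphi-f$ agrees with $f\circ\varphi-\varphi\circ f$ to leading order; the full support analysis (and hence the lemma) requires tracking all terms, which is why the paper works through $\varphi^{-1}\circ f$ explicitly rather than through the commutator series. To repair the proposal you would need (i) to replace the commutator expansion by an exact Taylor expansion of the iterated composition, and (ii) to replace the ``coupling'' heuristic by the explicit membership $(\beta-1,m+j)\in\mathcal{R}_{1}$ coming from \eqref{eq:rel}, after which the bookkeeping does close as in the paper.
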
 

\begin{rem}\label{rem:hom} The change of variables $\varphi(x)=x+cx^{\beta}\boldsymbol{\ell}^{m}$
with $(\beta,m)$ as in \eqref{eq:rel} eliminates the term $a_{1}x^{\gamma_{1}}\boldsymbol{\ell}^{r_{1}}$
from $f$. The exponent $\left(\beta,m\right)$ is given by the homological
equation \eqref{solv}, as the proof of Theorem A. Notice that we
denote the elementary change of variables here by $\varphi$ instead
of $\varphi_{\beta,m}$ for easier reading of the forthcoming computations.\end{rem}
\begin{proof}[Proof of Lemma \ref{lem:appendix-main-lemma}]
 We proceed in several steps: we have to control the supports $\mathcal{S}\left(\varphi\right)$,
$\mathcal{S}\left(\varphi^{-1}\right)$, then $\mathcal{S}\left(\varphi^{-1}\circ f\right)$,
and finally of $\mathcal{S}\left(\varphi^{-1}\circ f\circ\varphi\right)$,
for $f$ and $\varphi$ as in Lemma~\ref{lem:appendix-main-lemma}.

\noindent 1. \textbf{Control of the support $\mathcal{S}\left(\varphi\right)$.}
Assume $\left(\beta,m\right)=\left(\gamma_{1}-\alpha+1,r_{1}-p\right)\text{ or }\left(\gamma_{1}-\alpha+1,r_{1}-p-1\right).$
We claim that 
\[
\left(\beta,m\right)\in\mathcal{R}_{1}:=\left\langle \overline{S}-\left(\alpha,p+1\right)\right\rangle +\mathbb{N}_{0}\left(\alpha-1,p\right)+\left\{ 0\right\} \times\mathbb{N}_{0}.
\]
Notice that $\mathcal{R}_{1}$ is well-ordered.\\

We observe that, if $\left(\beta,m\right)=\left(\gamma_{1},r_{1}\right)-\left(\alpha,p+1\right)+\left(1,0\right)$,
then, for all $k\in\mathbb{N}_{0}$, $\left(\beta,m\right)+\left(-1,k\right)\in\mathcal{R}_{1}$.
In the same way, if $\left(\beta,m\right)=\left(\gamma_{1},r_{1}\right)-\left(\alpha,p\right)+\left(1,0\right)$,
then 
\[
\left(\beta,m\right)+\left(-1,k\right)=\left(\gamma_{1},r_{1}\right)-\left(\alpha,p+1\right)+\left(0,k+1\right)\in\mathcal{R}_{1}.
\]
In particular, we have that $(\beta,m)\in\mathcal{R}_{1}$ and $(\beta-1,m)\in\mathcal{R}_{1}$.

From now on, we suppose that $\left(\beta,m\right)=\left(\gamma_{1},r_{1}\right)-\left(\alpha,p+1\right)+\left(1,0\right)$.
\\

2. \textbf{Control of the support $\mathcal{S}\left(\varphi^{-1}\right)$.}
For this purpose, we consider the isomorphism $\Phi$ associated to
the change of variables $\varphi$. It holds that $\Phi.\mathrm{id}=\varphi$.
We analyse $\varphi^{-1}$ using the inverse operator: $\varphi^{-1}=\Phi^{-1}.\text{id}$.
Since $\varphi\left(x\right)=x+cx^{\beta}\boldsymbol{\ell}^{m}$,
we have:

\noindent 
\[
\Phi\left(h\right)\left(x\right)=h\big(\varphi(x)\big)=h\left(x+cx^{\beta}\boldsymbol{\ell}^{m}\right)=h\left(x\right)+\sum_{i=1}^{\infty}c_{i}h^{\left(i\right)}(x)\, x^{i\beta}\boldsymbol{\ell}^{im},\ h\in\mathcal{L}.
\]
Hence, $\Phi=\text{Id}+\sum_{i=1}^{\infty}c_{i}x^{i\beta}\boldsymbol{\ell}^{im}\frac{\mathrm{d}^{i}}{\mathrm{d}x^{i}}=\text{Id}+P$.
It is easily seen that $P$ is a small operator, so that $\Phi^{-1}$
is well-defined by the convergent series $\Phi^{-1}=\sum_{k=0}^{\infty}\left(-1\right)^{k}P^{k}$
and $\varphi^{-1}\left(x\right)=x+\sum_{k=1}^{\infty}\left(-1\right)^{k}P^{k}\left(x\right)$.

We claim that: 
\[
\mathcal{S}\left(\varphi^{-1}\left(x\right)-x\right)\subseteq\mathbb{N}_{*}\left(\beta-1,m\right)+\left\{ 1\right\} \times\mathbb{N}.
\]
Indeed, $P\left(x\right)=c_{1}x^{\beta}\boldsymbol{\ell}^{m}$, and
we can write $\left(\beta,m\right)=\left(\beta-1,m\right)+\left\{ 1\right\} \times\mathbb{N}$.
Inductively, a consecutive action of $P$ leads to a series in terms:
\[
x^{i\beta}\boldsymbol{\ell}^{im}\frac{\mathrm{d}^{i}}{\mathrm{d}x^{i}}\left(x^{k\left(\beta-1\right)+1}\boldsymbol{\ell}^{km+j}\right),\hspace{1em}i\ge0,k\ge1,j\in\mathbb{N}.
\]
Hence, the elements of their support can be written 
\[
\begin{alignedat}{1}(i\beta & +k\left(\beta-1\right)+1-i,im+km+j+s)\\
 & =\left(\left(i+k\right)\left(\beta-1\right)+1,\left(i+k\right)m+j+s\right)\\
 & =\left(i+k\right)\left(\beta-1,m\right)+\left(1,j+s\right)\in\mathbb{N}_{*}\left(\beta-1,m\right)+\left\{ 1\right\} \times\mathbb{N},
\end{alignedat}
\]
which proves the claim. Since we have proven above that $(\beta-1,m)\in\mathcal{R}_{1}$,
it follows that 
\[
\mathcal{S}(\varphi^{-1}-\mathrm{id})\subseteq\mathcal{R}_{1}.
\]

\noindent 3. \textbf{Control of the support $\mathcal{S}\left(\varphi^{-1}\circ f\right)$.}
Let us write $f\left(x\right)=x+\varepsilon\left(x\right)$, $\text{ord}(\varepsilon)\succ(1,0)$,
and $\varphi^{-1}\left(x\right)=x+\sum b_{\mu s}x^{\mu}\boldsymbol{\ell}^{s}$.
Then 
\begin{align*}
\varphi^{-1}\left(f\left(x\right)\right) & =\varphi^{-1}\left(x+\varepsilon\left(x\right)\right)=\varphi^{-1}\left(x\right)+\sum_{k=1}^{\infty}\frac{1}{k!}\left(\varphi^{-1}\right)^{\left(k\right)}\left(x\right)\varepsilon^{k}\\
 & =\varphi^{-1}\left(x\right)+\varepsilon\left(x\right)+\left(\sum b_{\mu s}x^{\mu}\boldsymbol{\ell}^{s}\right)'\varepsilon\left(x\right)+\sum_{k=2}^{\infty}\frac{1}{k!}\left(\sum b_{\mu s}x^{\mu}\boldsymbol{\ell}^{s}\right)^{\left(k\right)}\varepsilon\left(x\right)^{k}.
\end{align*}
We already know that $\mathcal{S}\left(\varepsilon\right)\subseteq\mathcal{R}$.
Let us study the exponents of the series $\left(x^{\mu}\boldsymbol{\ell}^{s}\right)^{\left(k\right)}\varepsilon\left(x\right)^{k}$.
They are of the form: 
\begin{align*}
\left(\mu-k,s+\nu\right) & +\big(\left(\gamma_{i_{1}},r_{i_{1}}\big)+\cdots+\left(\gamma_{i_{k}},r_{i_{k}}\right)\right)=\left(\mu,s\right)+\left(\gamma_{i_{1}}-1,r_{i_{1}}\right)+\cdots+\left(\gamma_{i_{k}}-1,r_{i_{k}}\right)+\left(0,\nu\right)\\
 & =\left(\mu,s\right)+\left(\gamma_{i_{1}}-\alpha,r_{i_{1}}-p-1\right)+\cdots+\left(\gamma_{i_{k}}-\alpha,r_{i_{k}}-p-1\right)+k\left(\alpha-1,p\right)+\left(0,\nu+k\right),
\end{align*}
where $\nu\in\mathbb{N}$, $k\ge1$ and $\left(\gamma_{i_{1}},r_{i_{1}}\right),\ldots,\left(\gamma_{i_{k}},r_{i_{k}}\right)\in\mathcal{S}(\varepsilon)\subseteq\mathcal{R}$.
Each pair $\left(\gamma_{i_{j}}-\alpha,r_{i_{j}}-p-1\right)$ belongs
to $\mathcal{R}_{1}$. Recall from the previous step that $\mathcal{S}(\varphi^{-1}-\mathrm{id})\subseteq\mathcal{R}_{1}$.
Hence, these exponents can be written as 
\[
\left(\bar{\mu},\bar{s}\right)+k\left(\alpha-1,p\right)+\left(1,\nu+k\right),
\]
where $\left(\bar{\mu},\bar{s}\right)\in\mathcal{R}_{1}$. So they
belong to $\mathcal{R}$. Hence, we can write $\varphi^{-1}\left(f\left(x\right)\right)=x+g\left(x\right)$,
with $\mathcal{S}\left(g\right)\subseteq\mathcal{R}$.\\

\noindent 4. \textbf{Control of the support $\mathcal{S}\left(\varphi^{-1}\circ f\circ\varphi\right)$.}
We have 
\[
\varphi^{-1}\left(f\left(\varphi\left(x\right)\right)\right)=x+g\left(\varphi\left(x\right)\right)=x+g\left(x+cx^{\beta}\boldsymbol{\ell}^{m}\right),
\]
where $\mathcal{S}(g)\subseteq\mathcal{R}$ from the previous step.
The elements of the support of $\varphi^{-1}\circ f\circ\varphi$
can be written 
\begin{align*}
\left(\tau,l\right) & =\big(\mu+k(\beta+1),s+km+j\big)\\
 & =\left(\mu,s\right)+k\left(\beta-1,m\right)+\left(0,j\right),\quad\left(\mu,s\right)\in\mathcal{S}(g)\subseteq\mathcal{R},\, k\ge0,\, j\in\mathbb{N}.
\end{align*}
Since we have shown above that $\left(\beta-1,m\right)\in\mathcal{R}_{1}$,
so is $\left(\tau,l\right)\in\mathcal{R}$. 
\end{proof}

\subsection{The control of the orders of the normalizing elementary changes of
variables}

\label{sub:two}

Let $(\varphi_{\beta,m})$ be a transfinite sequence of elementary
changes of variables used to normalize $f$. Let $(\psi_{\beta,m})$
denote their partial compositions and $\varphi$ the limit of these
partial compositions (hence $\varphi$ is the change of variables
which normalizes $f$). We prove first that the supports of $(f_{\beta,m}-\mathrm{id})$,
$f_{\beta,m}:=\psi_{\beta,m}^{-1}\circ f\circ\psi_{\beta,m}$, are
all contained in the set $\mathcal{R}$ of the previous subsection.
This is based on a straightforward transfinite induction:
\begin{enumerate}[i), font=\textup, nolistsep, leftmargin=0.6cm]
\item We have already noticed that $\mathcal{S}\left(f-\mathrm{id}\right)$
is contained in $\mathcal{R}$.
\item The non-limit case follows directly by Lemma \ref{lem:appendix-main-lemma}:
if the support of $f$ is contained in $\mathcal{R}$, so is the support
of $\varphi_{\beta,m}^{-1}\circ f\circ\varphi_{\beta,m}$.
\item The limit case comes from the obvious classical fact in Hahn fields:
\emph{Consider a transfinite sequence $\left(g_{\mu}\right)_{\mu<\theta}$
of elements of a Hahn field, which admits a limit $g$ and whose supports
are contained in a common well-ordered set $W$. Then $\mathcal{S}\left(g\right)\subseteq W$.}
\end{enumerate}
By Lemma~\ref{lem:appendix-main-lemma} and Remark~\ref{rem:hom},
the supports of all elementary changes, $\mathcal{S}(\varphi_{\beta,m}-\mathrm{id})$,
are contained in $\mathcal{R}_{1}$. By an easy computation in the
non-limit case and using the classical result mentioned under $(iii)$
above in the limit case, we conclude that $\mathcal{S}(\psi_{\beta,m}-\mathrm{id}),\ \mathcal{S}(\varphi-\mathrm{id})\subseteq\mathcal{R}_{1}$.

\subsection{Finite type cases.}

\label{sub:three} The goal of this subsection is to show that if
a parabolic series $f$ is in addition of finite type, then so is
the normalizing change of variables $\varphi$ built in the proof
of Theorem A in Section \ref{sub:proof-precise-form-A}. We keep all
the notations as above.

Assume that $f$ is of finite type. We prove that in this case the
sets $\mathcal{R}$ and $\mathcal{R}_{1}$, as well as the support
of the composition $\mathcal{S}(\varphi)$, are of finite type. These
claims follow from the following easy result:
\begin{lem}
\label{lem:finitely_generated_set}Consider a subset $A$ of finite
type of $\mathbb{R}_{>0}\times\mathbb{Z}$. Let $\left(\alpha,p\right)\in\mathbb{R}_{>0}\times\mathbb{Z}$
and $\overline{A}:=\{\left(\beta,m\right)\in A\colon\left(\beta,m\right)\succeq\left(\alpha,p\right)\}$.
Then the set $\overline{A}-\left(\alpha,p\right)$ is also of finite
type.\end{lem}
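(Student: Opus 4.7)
The plan is to reduce the statement to a well-known fact about the semigroup $(\mathbb{N}_0^d,\le)$: \emph{Dickson's Lemma}, which asserts that any subset of $\mathbb{N}_0^d$ has only finitely many minimal elements under the componentwise order. Since $A$ is of finite type, one can fix finitely many elements $g_1,\ldots,g_r\in\mathbb{R}_{\ge0}\times\mathbb{Z}$ such that $A$ is contained in the additive submonoid
$M:=\mathbb{N}_0(0,1)+\mathbb{N}_0 g_1+\cdots+\mathbb{N}_0 g_r$.
I would introduce the coordinate surjection $\pi\colon\mathbb{N}_0^{r+1}\to M$ given by $\pi(n_0,n_1,\ldots,n_r)=n_0(0,1)+\sum_{i=1}^r n_i g_i$, and the preimage $T:=\pi^{-1}(\overline{A})\subseteq\mathbb{N}_0^{r+1}$.

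Next I would apply Dickson's Lemma to $T$ to extract finitely many minimal elements $\mathbf{m}_1,\ldots,\mathbf{m}_s$ of $T$, and set $w_j:=\pi(\mathbf{m}_j)\in\overline{A}$. Because $w_j\in\overline{A}$, the lexicographic inequality $w_j\succeq(\alpha,p)$ holds, which forces the first coordinate of $w_j$ to be at least $\alpha$. Consequently, $v_j:=w_j-(\alpha,p)$ belongs to $\mathbb{R}_{\ge0}\times\mathbb{Z}$ for each $j=1,\ldots,s$.

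Finally, for an arbitrary $w\in\overline{A}$, any choice of preimage $\mathbf{n}\in T$ with $\pi(\mathbf{n})=w$ satisfies $\mathbf{n}\ge\mathbf{m}_j$ componentwise for some $j\in\{1,\ldots,s\}$. Then $\mathbf{n}-\mathbf{m}_j\in\mathbb{N}_0^{r+1}$, and linearity of $\pi$ yields the decomposition
$w-(\alpha,p)=v_j+\pi(\mathbf{n}-\mathbf{m}_j)$,
which exhibits $w-(\alpha,p)$ as an element of the additive submonoid generated by the finite set $\{(0,1),g_1,\ldots,g_r,v_1,\ldots,v_s\}\subseteq\mathbb{R}_{\ge0}\times\mathbb{Z}$. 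This is exactly the definition of finite type for $\overline{A}-(\alpha,p)$.

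The statement is genuinely a routine consequence of Dickson's Lemma, and I do not expect a serious obstacle. The only point requiring a moment of attention is the verification that each $v_j$ has nonnegative first coordinate; this is automatic from the definition of the lexicographic order applied to $w_j\succeq(\alpha,p)$, and it is precisely what makes the resulting finite generating set lie in $\mathbb{R}_{\ge0}\times\mathbb{Z}$ as required.
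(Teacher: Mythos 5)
Your argument is correct, and it takes a genuinely different route from the paper's. The paper works directly with the generators $(\alpha_1,p_1),\ldots,(\alpha_k,p_k)$ of the semigroup containing $A$: for each $i$ it introduces the threshold $N_i$, the least positive integer with $N_i(\alpha_i,p_i)\succeq(\alpha,p)$, and then splits $\overline{A}$ into the (finitely many) elements $\sum_i n_i(\alpha_i,p_i)$ with all $n_i<N_i$, which are taken as extra generators after shifting by $-(\alpha,p)$, and those with some $n_i\ge N_i$, for which $(\gamma,r)-(\alpha,p)$ is rewritten using $N_i(\alpha_i,p_i)-(\alpha,p)$ as a new generator. You instead pull $\overline{A}$ back along the coordinate surjection $\pi\colon\mathbb{N}_0^{r+1}\to M$ and invoke Dickson's Lemma to get finitely many minimal preimages $\mathbf{m}_1,\ldots,\mathbf{m}_s$; shifting their images by $-(\alpha,p)$ gives the finitely many extra generators, and the decomposition $w-(\alpha,p)=v_j+\pi(\mathbf{n}-\mathbf{m}_j)$ does the rest. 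Both arguments are elementary and land on the same conclusion, but yours has the modest advantage of treating the generator $(0,1)$ on exactly the same footing as the $g_i$, whereas the paper's threshold $N_i$ only makes sense for generators with strictly positive first coordinate and therefore requires a short separate remark to cover the $(0,1)$ direction. The one computational check you flag---that $v_j\in\mathbb{R}_{\ge0}\times\mathbb{Z}$ because $w_j\succeq(\alpha,p)$ in the lexicographic order forces the first coordinate of $w_j$ to be $\ge\alpha$---is indeed the only point that needs verification, and it is correct.
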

\begin{proof}
Suppose $A$ is contained in the sub-semigroup $G$ of $\mathbb{R}_{\ge0}\times\mathbb{Z}$
generated by the elements $\left(\alpha_{1},p_{1}\right)$,\ldots{},$\left(\alpha_{k},p_{k}\right)$
of $\mathbb{R}_{>0}\times\mathbb{Z}$. For each $i=1,\ldots,k,$ denote
by $N_{i}$ the smallest positive integer such that $N_{i}\left(\alpha_{i},p_{i}\right)\succeq\left(\alpha,p\right)$.
Consider an element $\left(\gamma,r\right)\in A\subseteq G,\ (\gamma,r)\succeq\left(\alpha,p\right)$.
Then $\left(\gamma,r\right)=\sum_{i=1}^{k}n_{i}\left(\alpha_{i},p_{i}\right)$,
$n_{i}\in\mathbb{N}_{0}$. There exist only \emph{finitely many} elements
$(\gamma,r)\in A$ such that the respective $n_{i}$-s satisfy $n_{i}<N_{i}$,
for all $i=1,\ldots,k.$ On the other hand, if one of the $n_{i}$'s,
say $n_{1}$, is greater than or equal to $N_{1}$, we write 
\[
\left(\gamma,r\right)-\left(\alpha,p\right)=\left(n_{1}-N_{1}\right)\left(\alpha_{1},p_{1}\right)+n_{2}\left(\alpha_{2},p_{2}\right)+\cdots+n_{k}\left(\alpha_{k},p_{k}\right)+N_{1}\left(\alpha_{1},p_{1}\right)-(\alpha,p).
\]
This shows that $\overline{A}-\left(\alpha,p\right)$ is contained
in the sub-semigroup of $\mathbb{R}_{\ge0}\times\mathbb{Z}$ finitely
generated by the elements $\left(\alpha_{1},p_{1}\right)$,\ldots{},$\left(\alpha_{k},p_{k}\right)$,
$N_{i}(\alpha_{i},p_{i})-(\alpha,p)$, $i=1,\ldots,k$ (note that
$N_{i}(\alpha_{i},p_{i})-(\alpha,p)\succeq(0,0)$), and the elements
$(\gamma,r)-\left(\alpha,p\right)$, for the \emph{finitely many}
$(\gamma,r)\succeq(\alpha,p)$ for which the respective $n_{i}$-s
satisfy $n_{i}<N_{i}$, for all $i=1,\ldots,k.$ 
\end{proof}
If $f$ is of the finite type, we apply Lemma~\ref{lem:finitely_generated_set}
to the set $\overline{S}-\left(\alpha,p+1\right)$ from the previous
section. It follows that the sets $\mathcal{R}$ and $\mathcal{R}_{1}$
are also of the finite type. Since, by Sections~\ref{sub:one} and
\ref{sub:two}, we have that $\mathcal{S}(\varphi-\mathrm{id})\in\mathcal{R}_{1}$
and $\mathcal{S}(f_{0}-\mathrm{id})\in\mathcal{R}$, we deduce that
$\varphi$ and $f_{0}$ are of the finite type.

\address{$^{1}$ and $^{3}$ : Universit\'e de Bourgogne, D\'epartment de
Math\'ematiques, Institut de Math\'ematiques de Bourgogne, B.P.
47 870-21078-Dijon Cedex, France }

$^{2}$ and $^{4}$ : University of Zagreb, Department of Applied
Mathematics, Faculty of Electrical Engineering and Computing, Unska
3, 10000 Zagreb, Croatia

\begin{thebibliography}{10}
\bibitem{abate} M. Abate, \emph{Discrete holomorphic local dynamical
systems}, Holomorphic dynamical systems, 1\textendash{}55, Lecture
Notes in Math., 1998, Springer, Berlin, 2010.

\bibitem{arnold} V.I.~Arnold, \emph{Geometrical Methods in the Theory
of Ordinary Differential Equations }(Grundlehren der mathematischen
Wissenschaften(v. 250)), 2nd edition, Springer-Verlag, New York (1988)

\bibitem{dries} L.~van den Dries, A.~Macintyre, D.~Marker, \emph{Logarithmic-exponential
series}. Proceedings of the International Conference 'Analyse \& Logique'
(Mons, 1997). Ann. Pure Appl. Logic 111 (2001), no. 1-2, 61-113. 


\bibitem{Dulac} H. Dulac, \emph{Sur les cycles limites}, Bull. Soc.
Math. France 51 (1923), 45-188.

\bibitem{ecalle-acc-operators} J. {\'E}calle, \emph{The acceleration
operators and their applications to differential equations, quasianalytic
functions, and the constructive proof of Dulac's conjecture}. Proceedings
of the International Congress of Mathematicians, Vol. I, II (Kyoto,
1990), Math. Soc. Japan, Tokyo (1991), 1249-1258

\bibitem{ecalleb}J. {\'E}calle, \emph{Introduction aux fonctions
analysables et preuve constructive de la conjecture de {D}ulac},
Actualit\'es Math\'ematiques. {[}Current Mathematical Topics{]}.
Hermann, Paris, 1992.

\bibitem{gukenheimer} J.~Guckenheimer, P.~Holmes, \emph{Nonlinear
Oscillations, Dynamical Systems, and Bifurcations of Vector Fields
(Applied Mathematical Sciences)}. Springer-Verlag, New York (1983)

\bibitem{ilya} Y.~Ilyashenko, S.~Yakovenko, {\em Lectures on
Analytic Differential Equations, Graduate Studies in Mathematics},
86. American Mathematical Society, Providence, RI, xiv+625 pp (2008)

\bibitem{ilyalim} Y.~Ilyashenko, {\em Finiteness theorems for
limit cycles}, Russ. Math. Surv. 45 (2), 143--200 (1990), {\em
Finiteness theorems for limit cycles}, Transl. Amer. Math. Soc. 94
(1991).

\bibitem{loray} F.~Loray, \emph{Pseudo-Groupe d'une Singularit\'e
de Feuilletage Holomorphe en Dimension Deux}, Pr\'epublication IRMAR,
ccsd-00016434 (2005)

\bibitem{mrz} P.~Marde{\v s}i{\'c}, M.~Resman, V.~{\v Z}upanovi\'{c},
\textit{Multiplicity of fixed points and $\varepsilon$-neighborhoods
of orbits}, J. Differ. Equ. \textbf{253} (2012), 2493--2514

\bibitem{milnor} J.~Milnor, {\em Dynamics in one complex variable,
Introductory Lectures, $2^{nd}$ Edition}, Friedr. Vieweg. \& Sohn
Verlagsgesellschaft mbH, Braunschweig/Wiesbaden (2000)

\bibitem{neumann} B.~H.~Neumann, {\em On ordered division rings},
Trans. Amer. Math. Soc. 66 (1949), 202--252

\bibitem{resman} M.~Resman, \textit{$\varepsilon$-neighborhoods
of orbits and formal classification of parabolic diffeomorphisms},
Discrete Contin.\ Dyn.\ Syst.\ \textbf{33}, 8 (2013), 3767--3790

\bibitem{nonlin} M. Resman, \emph{$\varepsilon$-neighborhoods of
orbits of parabolic diffeomorphisms and cohomological equations}.
Nonlinearity 27 (2014), 3005--3029

\bibitem{roussarie} R.~Roussarie, \emph{Bifurcations of planar vector
fields and Hilbert's sixteenth problem}, Birkhäuser Verlag, Basel
(1998)

\bibitem{rudin} W. Rudin, \emph{Principles of mathematical analysis,
3rd edition}, McGraw-Hill, Inc., USA (1976)

\bibitem{takens} F. Takens, \emph{Normal forms for certain singularities
of vector fields}, Ann. Inst. Four., \textbf{23}, 2 (1973), 163--195

\bibitem{takens2} F. Takens, \emph{Singularities of vector fields},
Publications Math\'ematiques de l'IH\'ES, 43 (1974), 47--100 


\bibitem{belgproc} D. {\v Z}ubrini\'{c}, V. {\v Z}upanovi\'{c},
\textit{Poincar{\'e} map in fractal analysis of spiral trajectories
of planar vector fields}, Bull. Belg. Math. Soc. S. Stevin, \textbf{15}
(2008) 947--960

\end{thebibliography}
\end{document}